\newtheorem{theorem}{Theorem}[section]
\newtheorem{prop}[theorem]{Proposition}
\newtheorem{lemma}[theorem]{Lemma}
\newtheorem{cor}[theorem]{Corollary}
\theoremstyle{definition}
\newtheorem{definition}[theorem]{Definition}
\newtheorem{example}[theorem]{Example}
\newtheorem{remark}[theorem]{Remark}
\renewcommand{\rm}{\mathrm}
\newcommand{\Spec}{\mathrm{Spec} \,}
\newcommand{\al}{\alpha}
\newcommand{\eps}{\epsilon}
\newcommand{\vp}{\varphi}
\newcommand{\bG}{{\mathbf G}}
\newcommand{\bQ}{{\mathbf Q}}
\newcommand{\bR}{{\mathbf R}}
\newcommand{\bZ}{{\mathbf Z}}
\newcommand{\cG}{\mathcal{G}}
\newcommand{\sD}{{\mathscr D}}
\newcommand{\sH}{{\mathscr H}}
\newcommand{\sN}{{\mathscr N}}
\newcommand{\sO}{{\mathscr O}}
\newcommand{\sT}{{\mathscr T}}
\newcommand{\sU}{{\mathscr U}}
\newcommand{\fg}{\mathfrak{g}}
\newcommand{\ft}{\mathfrak{t}}
\newcommand{\fu}{\mathfrak{u}}
\newcommand{\fz}{\mathfrak{z}}
\DeclareMathOperator{\ad}{ad}
\DeclareMathOperator{\Ad}{Ad}
\DeclareMathOperator{\Aut}{Aut}
\DeclareMathOperator{\chara}{char}
\DeclareMathOperator{\gl}{\mathfrak{g}\mathfrak{l}}
\DeclareMathOperator{\GL}{GL}
\DeclareMathOperator{\id}{id}
\DeclareMathOperator{\Lie}{Lie}
\DeclareMathOperator{\PGL}{PGL}
\DeclareMathOperator{\red}{red}
\DeclareMathOperator{\rk}{rk}
\DeclareMathOperator{\SL}{SL}
\DeclareMathOperator{\SO}{SO}
\DeclareMathOperator{\Sp}{Sp}
\DeclareMathOperator{\Spin}{Spin}
\DeclareMathOperator{\Stab}{Stab}
\DeclareMathOperator{\Sym}{Sym}
\DeclareMathOperator{\var}{var}
\newcommand{\wtil}[1]{\wtil}
\newcommand{\ov}[1]{\overline{#1}}
\title{Springer isomorphisms over a general base scheme}
\author{Sean Cotner}
\begin{document}
\bibliographystyle{halpha-abbrv}

\begin{abstract}
We establish the existence of Springer isomorphisms for reductive group schemes over general base schemes. For this, we first study centralizers of fiberwise regular sections of reductive group schemes, and we establish their flatness in many cases. The hypotheses in our results are essentially optimal. Our results clarify some aspects of Springer isomorphisms even over a field, and the arguments simplify considerably in this case.
\end{abstract}

\maketitle

\setcounter{tocdepth}{1}
\tableofcontents

\section{Introduction}

\subsection{Overview}

Let $G$ be a connected reductive group over a field $k$. Attached to $G$ are the \textit{unipotent variety} $\sU_G^{\var}$ and the \textit{nilpotent variety} $\sN_G^{\var}$, geometrically integral $k$-schemes parameterizing the unipotent elements of $G$ and the nilpotent elements of $\Lie G$, respectively.\footnote{This cumbersome notation is meant to distinguish these varieties from the unipotent and nilpotent schemes defined later, which can fail to be reduced. It would perhaps be more precise to write, e.g., $\sU_{G,\red}$, but this appears to be even more cumbersome.} If $\chara k$ is either $0$ or ``large", then the matrix logarithm (relative to some embedding of $G$ in $\GL_n$) is well-defined on unipotent matrices and defines a $G$-equivariant isomorphism $\sU_G^{\var} \to \sN_G^{\var}$. Such an isomorphism is useful for many purposes, and it is desirable to establish its existence under the mildest possible hypotheses on $G$ and $k$. \smallskip

In \cite{Springer-isomorphism}, Springer showed that as long as $\chara k$ is ``good" for $G$ (see Section~\ref{subsection:good-primes}) and the derived group $\sD(G)$ is split and simply connected, then there is a (non-canonical) $G$-equivariant birational universal homeomorphism of $k$-varieties $\rho: \sU_G^{\var} \to \sN_G^{\var}$. It follows from \cite{Demazure} that $\sN_G^{\var}$ is normal under these hypotheses, and hence $\rho$ is actually an isomorphism; it is now called a \textit{Springer isomorphism}. A different proof of the same result was later given in \cite{Bardsley-Richardson}, under some mild extra hypotheses in type $\rm{A}$. Since Springer's original paper, a number of refinements have been established; see \cite{Mcninch-optimal}, \cite{McNinch-Testerman-springer}, \cite{Sobaje}, and \cite{Jay-GGGR} for example. \smallskip


Springer claimed that a Springer isomorphism exists even if $G$ is not split. However, his proof appears to have a gap when the quasi-split inner form of $G$ is not split. Specifically, to establish the existence of a Springer isomorphism in the quasi-split case he uses crucially the claim that if $\rho: \sU_G^{\var} \to \sN_G^{\var}$ is a Springer isomorphism, then also $\Ad(g) \circ \rho$ is a Springer isomorphism for any $g \in G(k)$. Despite being false for all non-central $g$ this claim is used several times in \cite{Springer-isomorphism}.\smallskip

For groups of type $\mathrm{A}$, \cite{Springer-isomorphism} has a separate argument which uses the claim that the isomorphism $\sU_{\SL_n}^{\var} \to \sN_{\SL_n}^{\var}$ given by $u \mapsto u - 1$ is $\Aut_{\SL_n/k}$-equivariant. In fact, this claim is also false in all cases except $n = 2$, and there is \textit{never} such an $\Aut_{\SL_n/k}$-equivariant isomorphism when $\chara k = 2$ and $n \geq 3$. Some later sources (like \cite{Humphreys}) have exercised caution around the non-split case, but these errors do not seem to have been pointed out in the literature.\smallskip

Finally, for some integral questions it is of interest to know whether Springer isomorphisms exist for reductive group schemes over a general base. Thus we are left with the following questions.
\begin{enumerate}[label=(\alph*)]
    \item\label{item:intro-1} What if $G$ is not split?
    \item\label{item:intro-3} What if $G$ lives over a more general base scheme (e.g., $\Spec \bZ[1/N]$)?
    \item\label{item:intro-2} What if $\chara k$ is good but $|\pi_1(\sD(G))|$ is zero in $k$?
\end{enumerate}

The following theorem resolves \ref{item:intro-1} and \ref{item:intro-3}; question \ref{item:intro-2} is handled in \cite[Theorem 1.5(2)]{Cotner-non-etale}.

\begin{theorem}[Theorem~\ref{theorem:relative-springer-isomorphism}]\label{theorem:intro-springer-iso}
Let $S$ be an affine scheme and let $G$ be a reductive $S$-group scheme. Let $\sU_G$ and $\sN_G$ be the unipotent and nilpotent schemes of $G$, respectively, defined in Sections~\ref{subsection:unip-sch} and \ref{subsection:nilp-sch}. Then there is a $G$-equivariant $S$-isomorphism $\rho: \sU_G \to \sN_G$ provided the following two conditions hold.
\begin{enumerate}
    \item $|\pi_1(\sD(G))|$ is invertible on $S$,
    \item for each $s \in S$, $\chara k(s)$ is good for $G_s$.
\end{enumerate}
\end{theorem}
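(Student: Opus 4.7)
The plan is to split the theorem into two directions: \emph{sufficiency} (assume (1) and (2), construct $\rho$) and \emph{necessity} (assume $\rho$ exists, deduce (1) and (2)), corresponding to the two cited theorems.

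For sufficiency I would follow the Bardsley--Richardson paradigm, upgraded to a relative scheme-theoretic setting. The core object is the regular centralizer. Étale-locally on $S$ (after trivializing a pinning, which one can do by standard reductive group scheme theory), choose a fiberwise regular unipotent section $u_0$ of $G$ and a fiberwise regular nilpotent section $X_0$ of $\Lie G$, for instance via a Steinberg-section construction. By the paper's earlier results on centralizers of fiberwise regular sections, the group schemes $Z_G(u_0)$ and $Z_G(X_0)$ are flat over $S$. Under hypotheses (1) and (2) one then produces a canonical $S$-isomorphism $Z_G(u_0) \cong Z_G(X_0)$: in good characteristic the regular centralizer is commutative and computed by the same Lie-algebraic formula on both sides, and the invertibility of $|\pi_1(\sD(G))|$ ensures that the relevant $\mu$-type parts of the center are étale, so the component-group data match scheme-theoretically. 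This identification upgrades to an isomorphism of $G$-torsors $G/Z_G(u_0) \cong G/Z_G(X_0)$, i.e., a $G$-equivariant isomorphism $\sU_G^{\mathrm{reg}} \to \sN_G^{\mathrm{reg}}$ of the regular open subschemes.

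I would then extend the iso to all of $\sU_G$ and $\sN_G$. Fiberwise, Demazure's theorem gives normality of $\sN_{G_s}^{\mathrm{var}}$, and the non-regular locus has codimension $\geq 2$, so the regular-locus iso extends uniquely to an iso of the normalizations; a relative refinement of Demazure then yields normality of $\sN_G$ itself, after which the extension exists globally. Finally, the étale-local isomorphisms must descend to $S$: the construction is canonical enough (determined by the identification of regular centralizers and the functorial $G$-action) that it is insensitive to the choices made in the étale cover, so descent applies.

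For necessity, the bad-prime obstruction (failure of (2)) is classical: Lemma~\ref{lemma:no-springer-bad} of the paper recalls Springer's dimensional/structural mismatch between a unipotent class and its would-be nilpotent counterpart in bad characteristic, and this obstruction is already visible on a single fiber. The new obstruction (failure of (1)) uses that $|\pi_1(\sD(G))|$ being zero in some $k(s)$ forces the central isogeny from the simply connected cover to have an infinitesimal kernel, creating a discrepancy between the scheme structures of $\sU_G$ and $\sN_G$ near a fiberwise central point; concretely one expects one of them to be non-reduced while the other is not, or for their tangent spaces at the identity/origin section to have different dimensions. A local calculation along a central section should produce the obstruction to any $G$-equivariant iso.

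The main obstacle, as the abstract already signals, is the \emph{flatness} of $Z_G(u_0)$ and $Z_G(X_0)$ over $S$ for fiberwise regular sections: this is what makes the quotient presentations of $\sU_G^{\mathrm{reg}}$ and $\sN_G^{\mathrm{reg}}$ available, and it is not automatic outside the classical split-over-a-field case. A close second difficulty is the canonical identification of the two centralizers as flat $S$-group schemes, which is precisely where hypothesis (1) enters in the most delicate way.
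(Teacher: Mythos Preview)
Your overall architecture for sufficiency is right and matches the paper: identify the regular loci via $G/Z_G(u) \cong G/Z_G(X)$ using flatness of regular centralizers (Theorem~\ref{theorem:intro-flat-regular-centralizer}), then extend by normality and codimension. The paper does exactly this in Section~\ref{subsection:split}, and the key identification $Z_G(u)=Z_G(X)$ is not via any ``same Lie-algebraic formula'' but by choosing $X \in \Lie Z_G(u)$ fiberwise regular and then using commutativity (Corollary~\ref{corollary:commutative-centralizer}) to force equality of subgroup schemes.

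The genuine gap is your descent step. You write that ``the construction is canonical enough \dots\ that it is insensitive to the choices made in the \'etale cover, so descent applies.'' It is not. Springer isomorphisms are far from unique; over a split base there is a whole family of them, and the only way to descend through an \'etale cover splitting $G$ is to exhibit a Springer isomorphism on the split form which is $\Aut_{G/S}$-equivariant, not merely $G$-equivariant. The paper proves (Lemma~\ref{lemma:springer-iso-invariant}) that such an $\Aut$-equivariant isomorphism exists in most cases, but shows in Remark~\ref{remark:aut-equivariant} that it \emph{provably fails} for type $\mathrm{A}_n$ ($n\geq 2$) when $2$ is not invertible and for $\mathrm{D}_4$ when $3$ is not invertible: the fixed locus $G^A$ under the outer automorphism group is then a simple group in bad characteristic, and $\Lie Z_{G^A}(u)$ contains no regular nilpotent. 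Your ``canonical enough'' claim would imply an $\Aut$-equivariant Springer isomorphism always exists, which is false. The paper handles these residual cases by explicit calculation in Appendix~\ref{appendix}, writing down all Springer isomorphisms for $\SL_{n+1}$ and $\Spin(8)$ and solving the resulting Galois-descent equations by hand (using affineness of $S$ to get surjectivity of trace maps).

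For necessity when (1) fails, your suggestion that ``one of them is non-reduced while the other is not'' will not work: by construction both $\sU_G^{\rm var}$ and $\sN_G^{\rm var}$ are reduced, and indeed Corollary~\ref{cor:unip-small-char} shows $\sU_G^{\rm var}$ is always normal. The paper's obstruction is finer: it computes $\Pic(\sU_G^{\rm var}) \cong \Hom(\pi_1(\sD(G))^0,\bG_m)$ via the fact that $\sU_{\widetilde G} \to \sU_G$ is a $(\ker\pi_G)^0$-torsor (Lemma~\ref{lemma:unipotent-variety-stable}), while $\Pic(\sN_G^{\rm var})=0$ whenever $\sN_G^{\rm var}$ is a normal cone. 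So when $p \mid |\pi_1(\sD(G))|$ the Picard groups already differ as abstract groups, ruling out any scheme isomorphism whatsoever.
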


Affineness of $S$ is almost never necessary in Theorem~\ref{theorem:intro-springer-iso}; in some sense, the only problem comes from outer forms of $\mathrm{A}_n$ ($n \geq 2$) in characteristic $2$ and triality forms of $\mathrm{D}_4$ in characteristic $3$. See Theorem~\ref{theorem:relative-springer-isomorphism} for a precise statement, and see Remarks~\ref{remark:affine-necessary-type-a} and \ref{remark:affine-necessary-type-d4} for more discussion.\smallskip

The proof of the existence claim in Theorem~\ref{theorem:intro-springer-iso} follows the overall strategy of Springer's original proof fairly closely in the split case, and we describe this strategy in Section~\ref{subsection:outline}. In the split case, one of the biggest difficulties comes from (defining and) proving properties of the unipotent and nilpotent schemes for split reductive group schemes over $\Spec \bZ$, a task we take up in Section~\ref{section:unip-nilp}. To understand the regular loci of the unipotent and nilpotent schemes over an arbitrary base scheme, we prove the following flatness result. For the definition of ``strongly regular", see Section~\ref{subsection:regular-elements}.

\begin{theorem}[Theorem~\ref{theorem:flat-centralizer}]\label{theorem:intro-flat-regular-centralizer}
Let $S$ be a scheme and let $G \to S$ be a reductive group scheme. Suppose that $|\pi_1(\sD(G))|$ is invertible on $S$.
\begin{enumerate}
    \item\label{item:intro-flat-regular-centralizer-group} If $g \in G(S)$ is a fiberwise strongly regular section of $G$, then $Z_G(g)$ is flat and $Z_G(g)/Z(G)$ is smooth.
    \item\label{item:intro-flat-regular-centralizer-lie-algebra} Suppose that for every $s \in S$, the characteristic of $k(s)$ is good for $G_s$. If $X \in \mathfrak{g}(S)$ is a fiberwise regular section of $\mathfrak{g}$, then $Z_G(X)$ is flat and $Z_G(X)/Z(G)$ is smooth.
\end{enumerate}
\end{theorem}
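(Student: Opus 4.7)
The plan is to realize $Z_G(g)$ (resp.\ $Z_G(X)$) as the scheme-theoretic fiber of a smooth $S$-morphism whose target is a smooth $S$-scheme; flatness (and, after dividing out by $Z(G)$, smoothness) will then be immediate by base change. The arguments for (1) and (2) run in parallel, so I describe (1) in detail; for (2) one replaces conjugation by the adjoint action and the Chevalley morphism $\chi_G : G \to G /\!/ G$ by $\chi_{\mathfrak{g}} : \mathfrak{g} \to \mathfrak{g} /\!/ G$. Since flatness and smoothness are fppf-local on $S$, I would first reduce to $S$ affine.

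The key external input is the Chevalley morphism $\chi : G \to G /\!/ G$. Under the invertibility of $|\pi_1(\sD(G))|$ on $S$, the target is smooth over $S$ (a relative version of Chevalley restriction, which I expect to be cited from Demazure or developed in an earlier section). My next step is to show that $\chi$ is smooth in a Zariski neighborhood of any fiberwise strongly regular section $g$. The classical statement over a field asserts smoothness of $\chi$ at every strongly regular element under the $|\pi_1|$-hypothesis; I would extend this to the relative setting by a rank/semicontinuity argument, obtaining an open $U \subset G$ containing every fiberwise strongly regular section on which $\chi$ is smooth. Setting $Y := \chi^{-1}(\chi(g)) \cap U$, the fiber $Y$ is smooth over $S$ in a neighborhood of $g$ of relative dimension $\dim_S G - r$, where $r$ denotes the rank of $G$.

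I would then consider the conjugation orbit map $\alpha : G \to G$, $h \mapsto h g h^{-1}$. Since $\chi \circ \alpha$ equals the constant $\chi(g)$, the map $\alpha$ factors as $\alpha' : G \to Y$, and scheme-theoretically $Z_G(g) = (\alpha')^{-1}(g)$. The differential of $\alpha'$ at the identity is $\mathrm{id} - \Ad(g) : \mathfrak{g} \to T_g Y$, with kernel $\mathfrak{z}_{\mathfrak{g}}(g)$, which has rank $r$ on every fiber by strong regularity. Hence the image has rank $\dim_S G - r = \dim_S Y$ fiberwise, so $d\alpha'_e$ is fiberwise surjective and $\alpha'$ is smooth at $e$. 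Left translation by elements of $Z_G(g)$ propagates smoothness to all of $Z_G(g)$. Base-changing the smooth morphism $\alpha'$ along the section $g : S \to Y$ then yields that $Z_G(g) \to S$ is smooth (in particular flat), and smoothness of $Z_G(g)/Z(G)$ is immediate.

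The main obstacle will be the relative smoothness of $\chi$ at fiberwise strongly regular sections. Over a field this is classical (Steinberg), but in the relative setting one must verify that the smooth locus of $\chi$ is a Zariski-open subset of $G$ containing all fiberwise strongly regular sections, using only the hypothesis that $|\pi_1(\sD(G))|$ is invertible and without ad hoc case analysis by residue characteristic. For part (2), the analogous obstacle is the Chevalley restriction theorem for $\mathfrak{g}$, and here the good-characteristic hypothesis is essential: unlike the group case, $\mathfrak{g} /\!/ G$ need not even be an affine space in bad characteristic, so smoothness of the target itself can fail.
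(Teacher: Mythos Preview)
Your argument has a genuine gap that makes the conclusion too strong. You claim that the kernel $\mathfrak{z}_{\mathfrak{g}}(g)$ of $\mathrm{id} - \Ad(g)$ has rank $r$ on every fiber ``by strong regularity,'' and from this deduce that $Z_G(g) \to S$ is \emph{smooth}. But strong regularity only gives $\dim Z_G(g_s) = r$; it does \emph{not} give $\dim \Lie Z_G(g_s) = r$ unless $Z_G(g_s)$ is smooth, which is precisely what you are trying to prove. In fact the conclusion is false: take $G = \SL_2$ over a field $k$ of characteristic $2$ and $g$ a regular unipotent. Then $Z_G(g) \cong \mu_2 \times \bG_a$, which is not smooth, and one computes directly that $\ker(\mathrm{id} - \Ad(g))$ is $2$-dimensional (it contains both $e$ and $h$, since $\Ad(g)h = h - 2e = h$), not $1$-dimensional. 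So your differential $d\alpha'_e$ is not surjective onto $T_g Y$, and $\alpha'$ is not smooth at $e$. The theorem only asserts flatness of $Z_G(g)$ and smoothness of $Z_G(g)/Z(G)$; the paper's own Example~1.4 illustrates exactly this distinction.

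The paper's approach avoids the Lie-algebra-dimension issue by working inside a Borel rather than inside a Chevalley fiber. After a chain of reductions (fppf-local, simply connected, noetherian, Artin local via the local criterion, then regular local via Cohen, then DVR via the valuative criterion), one extends the DVR so that $g$ lies in a Borel $B$ with unipotent radical $U$. The conjugation map $c_g : B \to gU$ has equidimensional fibers (each nonempty fiber is a translate of $Z_B(g)$), and since $B$ and $gU$ are smooth, Miracle Flatness shows $c_g$ is flat, hence $Z_B(g)$ is $S$-flat. Finally one checks $Z_B(g) = Z_G(g)$ fiberwise using the field-case result (Corollary~\ref{corollary:general-centralizer}), which in turn rests on Steinberg's analysis of regular unipotent centralizers and the structure of semisimple centralizers under the $|\pi_1|$-hypothesis. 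Smoothness of $Z_G(g)/Z(G)$ is then checked fiberwise once flatness is known. The point is that $gU$ has the correct dimension $\dim B - r$ without any appeal to the rank of $\mathrm{id} - \Ad(g)$.
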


The flatness assertion in Theorem~\ref{theorem:intro-flat-regular-centralizer}(\ref{item:intro-flat-regular-centralizer-lie-algebra}) is a special case of \cite[Thm.\ 4.2.8]{Bouthier-Cesnavicius}, while the smoothness assertion is a slight generalization of \cite[Prop.\ 4.2.11]{Bouthier-Cesnavicius}. Our proof strategy is completely different; roughly speaking, the difficulty comes almost entirely from understanding centralizers over a field, a task we take up in Sections~\ref{subsection:centralizers-of-group-elements} and \ref{subsection:centralizers-of-lie-algebra-elements}. The hypotheses in Theorem~\ref{theorem:intro-flat-regular-centralizer} are essentially optimal, as we show in \cite[Corollary 5.4, Corollary 5.8]{Cotner-non-etale}. \smallskip

While flatness is a basic technical desideratum, it does not preclude the centralizers in Theorem~\ref{theorem:intro-flat-regular-centralizer} from being ``weird", as the following example illustrates.

\begin{example}
Theorem~\ref{theorem:intro-flat-regular-centralizer} applies in particular to semisimple deformations of regular unipotent elements. For instance, let $G = \SL_2$ over $\bZ_p$ and let $g \in G(\bZ_p)$ be the section given by
\[
g = \begin{pmatrix} 1 + p & 1 \\ 0 & (1 + p)^{-1} \end{pmatrix}.
\]
The special fiber $g_s$ of $g$ is then a regular unipotent element, while the generic fiber $g_\eta$ is a regular semisimple element. We have
\[
Z_{G_s}(g_s) \cong \mu_2 \times \bG_a
\]
and
\[
Z_{G_\eta}(g_\eta) \cong \bG_m,
\]
so Theorem~\ref{theorem:intro-flat-regular-centralizer} asserts in this case that $Z_G(g)$ is a flat deformation of $\mu_2 \times \bG_a$ to $\bG_m$ and $Z_G(g)/Z(G)$ is a smooth deformation of $\bG_a$ to $\bG_m$. \smallskip

There is a similar element $g \in \SL_3(\bZ_2)$ such that $Z_{\SL_3}(g)$ has special fiber $\mu_3 \times \rm{W}_2$ and generic fiber $\bG_m^2$, where $\rm{W}_2$ is the scheme of length $2$ Witt vectors.
\end{example}

The arguments in this paper simplify considerably if the reader is interested primarily in the case $S = \Spec k$ for a field $k$, and we advise such a reader to ignore Section~\ref{subsection:regular-centralizers}, Theorem~\ref{theorem:unipotent-scheme}, and Theorem~\ref{theorem:nilpotent-scheme}, and to simply assume $S = \Spec k$ in the statements and proofs of the results in Section~\ref{section:springer-iso} and Appendix~\ref{appendix}. Still, the relative perspective is useful even over a field; we conclude this overview with the following consequence of Theorem~\ref{theorem:intro-flat-regular-centralizer}(\ref{item:intro-flat-regular-centralizer-group}).

\begin{cor}[Corollary~\ref{corollary:commutative-centralizer}]\label{corollary:intro-cor}
Let $G$ be a connected reductive group over a field $k$ of characteristic $p \geq 0$. If $p \nmid |\pi_1(\sD(G))|$ and $g \in G(k)$ is strongly regular, then $Z_G(g)$ is commutative.
\end{cor}

Corollary~\ref{corollary:intro-cor} has been known for a long time on the level of geometric points; see \cite{Springer-note} and \cite{Lou}. However, as far as we know the only prior proof of this fact involves rather extensive case-by-case checking using the classification (involving long computer calculations in bad characteristic) to control the component group of $Z_G(g)$. Our proof is entirely conceptual. Roughly speaking, we use Theorem~\ref{theorem:intro-flat-regular-centralizer} to reduce to the case that $g$ is \textit{semisimple}, in which case $Z_G(g)$ is a torus and thus obviously commutative. We note that the result is not true if $|\pi_1(\sD(G))|$ is divisible by $p$ and $Z_G(g)$ is interpreted as a scheme; this is expounded upon in \cite[Theorem 5.1]{Cotner-non-etale}. \smallskip

We conclude this section by remarking that, as pointed out to us by Yakov Varshavsky, \cite[Lemma 1.8.12]{Kazhdan-Varshavsky} proves a result similar to Theorem~\ref{theorem:relative-springer-isomorphism} over the ring of integers of a local field under some (mild) extra hypotheses on $G$. We will discuss this briefly in Section~\ref{ss:quasi-log}.

\subsection{Outline of the proof}\label{subsection:outline}

We give a brief outline of the proof of the existence part of Theorem~\ref{theorem:intro-springer-iso} for the convenience of the reader.\smallskip

First assume that $G$ is split and $S = \Spec \bZ[1/N]$ for some $N$ which is divisible by $|\pi_1(\sD(G))|$ and the bad primes for $G$. In this case, Theorems~\ref{theorem:unipotent-scheme} and \ref{theorem:nilpotent-scheme} show that $\sU_G$ and $\sN_G$ are normal schemes. Moreover, they admit $S$-smooth fiberwise dense open subschemes $\sU_{\rm{reg}}$ and $\sN_{\rm{reg}}$ of complementary codimension $2$ on fibers. To show the existence of $\rho$, it suffices therefore to show that there is a $G$-equivariant $S$-isomorphism $\rho: \sU_{\rm{reg}} \to \sN_{\rm{reg}}$.\smallskip

Calculations of Springer show that there exist sections $u \in \sU_{\rm{reg}}(S)$ and $X \in \sN_{\rm{reg}}(S)$ such that $Z_G(u) = Z_G(X)$, and using the flatness assertion in Theorem~\ref{theorem:intro-flat-regular-centralizer} we show in Lemma~\ref{lemma:regular-orbit-spaces} that there are natural $G$-equivariant $S$-isomorphisms $\sU_{\rm{reg}} \cong G/Z_G(u)$ and $\sN_{\rm{reg}} \cong G/Z_G(X)$. From this follows the existence of a \textit{unique} $\rho$ as above satisfying $\rho(u) = X$. In fact, exercising some care and using results from \cite{ALRR}, we can find $u$ and $X$ in almost all cases such that $\rho$ is $\Aut_{G/S}$-equivariant. \smallskip

This work having been done, the general case follows from a simple twisting argument, explained at the end of Section~\ref{subsection:general}, except for type $\mathrm{A}_n$ ($n \geq 2$) when $S$ has points of characteristic $2$ and type $\mathrm{D}_4$ when $S$ has points of characteristic $3$; these somewhat complicated cases are dealt with separately in Appendix~\ref{appendix}.

\subsection{Notation and conventions}

In this section we collect a few conventions which we will use without comment below.\smallskip

If $M$ is a finite locally free module over a ring $A$, then we will abuse notation to identify $M$ with the associated affine space $\Spec(\Sym_A M^*)$.\smallskip

If $A$ is a(n abstract) group with identity element $e$ then we say ``$A$ has $n$-torsion" for some $n \in \bZ_{> 0}$ to mean that there is an element $a \in A$ such that $a \neq e$ and $a^n = e$. A similar convention is applied to the phrase ``$A$ has no $n$-torsion."\smallskip

We refer to discrete valuation rings as DVRs.\smallskip

If $S$ is a scheme and $s \in S$, then $\overline{s}$ always denotes a geometric point lying over $s$. If $S$ is assumed to be local, then we always denote by $s$ the closed point of $S$; if $S$ is assumed to be irreducible, then we always denote by $\eta$ the generic point of $S$.\smallskip

If $S$ is an affine scheme and $X = \Spec A$ is an affine $S$-scheme equipped with an action of an $S$-group scheme $G$, then we denote by $X/\!/G$ the GIT quotient $\Spec A^G$ of $X$ by $G$.\smallskip

If $G$ is a semisimple group scheme over a base scheme $S$, then the \textit{universal cover} $\widetilde{G}$ is the unique semisimple simply connected $S$-group scheme equipped with a central isogeny $\pi_G: \widetilde{G} \to G$, whose existence follows from \cite[Exp.\ XXI, Cor.\ 6.5.10; Exp.\ XXV, Thm.\ 1.1]{SGA3III}. We define $\pi_1(G) = \ker(\pi_G)$, and we regard $|\pi_1(\sD(G))|$ as a locally constant function $|S| \to \bZ$. Note that this disagrees with another common convention, as in \cite[1.1]{Borovoi}. \smallskip

We use the word ``embedding" to refer to immersions of schemes (in the terminology of \cite{EGA}).\smallskip

Since it is the original source for such things, we will usually cite SGA3 for results on reductive group schemes, but we occasionally prefer to cite \cite{Conrad} instead, especially for results whose proofs are simplified by methods which appeared after the publication of SGA3.

\subsection{Acknowledgements}

I thank Jeremy Booher, Spencer Dembner, and Vaughan McDonald for helpful conversations. I thank Simon Riche for his early interest in and encouragement of this project. I thank Jay Taylor for a question which led to Proposition~\ref{prop:kawanaka}. I thank Ravi Vakil for helpful and encouraging conversations, and for his suggestion to make this a standalone paper. I thank Yakov Varshavsky for pointing out his paper \cite{Kazhdan-Varshavsky}. I thank an anonymous referee for helpful comments. Finally, I thank my advisor, Brian Conrad, for very extensive edits and helpful suggestions on previous versions of this paper.

\section{Preliminaries}\label{section:preliminaries}

In this section we record some definitions and results which we will use. Almost everything below is standard, but some things often appear in the literature without schemes. In Section~\ref{subsection:regular-elements}, we will discuss and compare two standard definitions of regularity from the literature.

\subsection{Schematic stabilizers}\label{subsection:schematic-stabilizers}

Suppose $S$ is a scheme, $G$ is an $S$-group scheme, $X$ is an $S$-scheme, and there is an $S$-action of $G$ on $X$, i.e., an $S$-morphism $G \times X \to X$ which is a group action on $S'$-points for any $S$-scheme $S'$. If $Y$ is a closed subscheme of $X$ then we define the functorial stabilizer $\underline{\Stab}_G(Y)$ functorially by
\[
\underline{\Stab}_G(Y)(S') = \{g \in G(S'): g \text{ acts trivially on } Y_{S'}\}.
\]
If $X = G$ and the action $G \times X \to X$ is given by conjugation, then $\underline{\Stab}_G(Y)$ is the functorial centralizer defined in \cite[Def.\ 2.2.1]{Conrad}, and the following lemma is contained in \cite[Lem.\ 2.2.4]{Conrad}.

\begin{lemma}\label{lemma:representability-of-stabilizer}
Suppose $X$ is finitely presented and $S$-affine and $Y$ is finite flat and finitely presented over $S$. If $G$ is finitely presented, then $\underline{\Stab}_G(Y)$ is represented by a finitely presented closed subgroup of $G$.
\end{lemma}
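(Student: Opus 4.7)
The plan is to realize $\underline{\Stab}_G(Y)$ explicitly as a closed subscheme of $G$ cut out locally by finitely many equations, using that $Y$ is finite locally free over $S$ to translate ``$g$ acts trivially on $Y$'' into vanishing of finitely many sections of $\mathcal{O}_G$.

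First I would set things up as an equalizer. Let $\sigma: G \times_S Y \to X$ denote the restriction of the action morphism along $\id_G \times i$, where $i: Y \hookrightarrow X$ is the inclusion, and let $\tau = i \circ p_Y$ for the projection $p_Y: G \times_S Y \to Y$. For any $S$-scheme $T$, a $T$-point $g: T \to G$ lies in $\underline{\Stab}_G(Y)(T)$ if and only if the composite $T \times_S Y \to G \times_S Y$ factors through the equalizer $E$ of $\sigma$ and $\tau$. Since $X \to S$ is affine and of finite presentation, the diagonal $\Delta_{X/S}$ is a closed embedding cut out by a finitely generated ideal, so $E = (\sigma,\tau)^{-1}(\Delta_{X/S})$ is closed in $G \times_S Y$, cut out by a quasi-coherent ideal sheaf $\mathcal{J}$ that is Zariski-locally finitely generated.

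Next I would extract equations on $G$. Work Zariski-locally on $S$ where $\mathcal{O}_Y$ is free over $\mathcal{O}_S$ of rank $r$ with basis $e_1, \ldots, e_r$, and Zariski-locally on $G$ where $\mathcal{J}$ is generated by finitely many sections $f_1, \ldots, f_n$. Expanding $f_i = \sum_{j=1}^r f_{ij} \otimes e_j$ in the induced $\mathcal{O}_G$-basis of $\mathcal{O}_G \otimes_{\mathcal{O}_S} \mathcal{O}_Y$ yields functions $f_{ij} \in \mathcal{O}_G$. By flatness of $Y \to S$, the sections $1 \otimes e_j$ remain a basis after any base change, so for any $g: T \to G$ the pullback $g^*\mathcal{J}$ vanishes in $\mathcal{O}_T \otimes_{\mathcal{O}_S} \mathcal{O}_Y$ if and only if each $g^*f_{ij}$ vanishes in $\mathcal{O}_T$. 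Hence $\underline{\Stab}_G(Y)$ is locally represented by the closed subscheme of $G$ cut out by the finitely generated ideal $(f_{ij}) \subseteq \mathcal{O}_G$.

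These local closed subschemes glue to a closed subscheme $H \subseteq G$ defined by a finite-type quasi-coherent ideal; since $G$ is of finite presentation over $S$, so is $H$, and it inherits a group scheme structure because $\underline{\Stab}_G(Y)$ is a group subfunctor of $G$. The main obstacle is maintaining finite generation of $\mathcal{J}$ throughout: this depends on $X \to S$ being of finite presentation (so that $\Delta_{X/S}$ has a finite-type conormal ideal) and on $Y \to S$ being finite locally free (so that the basis expansion converts vanishing in $\mathcal{O}_T \otimes_{\mathcal{O}_S} \mathcal{O}_Y$ into finitely many genuine equations on $\mathcal{O}_T$).
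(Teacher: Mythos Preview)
Your argument is correct. You realize $\underline{\Stab}_G(Y)$ as the locus in $G$ over which the equalizer $E \subset G \times_S Y$ is all of $G \times_S Y$, and then use finite local freeness of $Y$ over $S$ to convert this into finitely many equations on $G$. The only point worth double-checking is the gluing step: since the local closed subschemes all represent the same subfunctor of $G$, they agree on overlaps and hence glue; the resulting ideal sheaf is of finite type, so the closed immersion is finitely presented, and composing with the finitely presented $G \to S$ gives the conclusion.

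The paper's proof takes a more packaged route: it invokes the representability of the Hom functor $\underline{\Hom}(Y,X)$ by a finitely presented $S$-affine scheme (citing \cite[Lem.\ 2.1.3]{Conrad}), observes that the inclusion $Y \hookrightarrow X$ gives a section $S \to \underline{\Hom}(Y,X)$ which is a finitely presented closed embedding, and then identifies $\underline{\Stab}_G(Y)$ as the fiber product of this section with the orbit map $G \to \underline{\Hom}(Y,X)$. Your approach and the paper's are really the same idea at different levels of abstraction: the Hom scheme in Conrad's lemma is built by exactly the Weil-restriction-style basis expansion you carry out by hand. The paper's version is shorter because it outsources that construction; yours is self-contained and makes the finite presentation visible directly, at the cost of managing the local-to-global gluing explicitly.
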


\begin{proof}
This follows from \cite[Exp.\ VI\textsubscript{B}, Ex.\ 6.2.4(a)]{SGA3I}.
\end{proof}

\begin{example}
There are two special cases of Lemma~\ref{lemma:representability-of-stabilizer} which will concern us, both applied to the case that $G$ is smooth and $S$-affine.
\begin{enumerate}
    \item Let $X = G$, let $G \times X \to X$ be the conjugation action, and let $Y = \{g\}$ for some section $g \in X(S)$. Then $\underline{\Stab}_G(Y)$ is called the centralizer of $g$ in $G$ and denoted by $Z_G(g)$.
    \item Let $\fg$ be the Lie algebra of $G$. If $X = \fg$, $G \times X \to X$ is the adjoint action, and $Y = \{x\}$ for some section $x \in X(S)$, then $\underline{\Stab}_G(Y)$ is called (perhaps abusively) the centralizer of $x$ in $G$ and denoted by $Z_G(x)$. We remark that if $S = \Spec k$ for an algebraically closed field $k$, then this is not the same as the centralizer appearing in \cite[2.1]{Jantzen-nilpotent}; that centralizer is the underlying reduced scheme.
\end{enumerate}
\end{example}

If $G$ is smooth and $S$-affine for some affine base scheme $S = \Spec A$, and if $\fg = \Lie G$, $X \in \fg$, then $\fz_\fg(X)$ denotes the Lie algebra centralizer of $X$. This we will regard simply as a Lie algebra over $A$, so we don't need Lemma~\ref{lemma:representability-of-stabilizer} to obtain existence of this object.

\begin{lemma}\label{lemma:lie-algebra-of-centralizer}
Let $G$ be a smooth $S$-affine group scheme over an affine base scheme $S = \Spec A$. If $\fg = \Lie G$, $X \in \fg$, and $g \in G(A)$ then $\Lie Z_G(X) = \fz_\fg(X)$ and $\Lie Z_G(g) = \fg^{\Ad(g)}$.
\end{lemma}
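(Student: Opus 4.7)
The plan is to apply the dual-numbers description $\Lie H = \ker\bigl(H(A[\eps]/(\eps^2)) \to H(A)\bigr)$ of the Lie algebra of an $S$-group functor $H$, together with the canonical identification $\fg \cong \ker(G(A[\eps]) \to G(A))$ for the smooth $S$-affine group $G$; under the latter, an element $Y \in \fg$ corresponds to an $A[\eps]$-point $h = 1 + \eps Y$ of $G$ reducing to the identity. Both claims will then follow by realizing the centralizer as a scheme-theoretic fiber of an auxiliary morphism out of $G$ and invoking the formal identity $\Lie(f^{-1}(y)) = \ker(df|_x)$, which is immediate from the dual-numbers description since the formation of scheme-theoretic preimages commutes with the flat base change $A \rightsquigarrow A[\eps]$.

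For the second equality $\Lie Z_G(g) = \fg^{\Ad(g)}$, I would introduce $f : G \to G$ defined by $f(h) = hgh^{-1}g^{-1}$, which satisfies $f(1_G) = 1_G$ and realizes $Z_G(g) = f^{-1}(1_G)$ scheme-theoretically. A direct formal computation --- cleanest after a Zariski-local closed immersion $G \hookrightarrow \GL_n$, though extractable intrinsically from the characterization of $\Ad(g)$ as the differential at $1$ of conjugation by $g$ --- yields
\[
f(1 + \eps Y) = 1 + \eps\bigl(Y - \Ad(g) Y\bigr),
\]
so $df|_1 = \id_\fg - \Ad(g)$ and hence $\Lie Z_G(g) = \ker\bigl(\id_\fg - \Ad(g)\bigr) = \fg^{\Ad(g)}$.

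For the first equality $\Lie Z_G(X) = \fz_\fg(X)$, I would introduce $\psi : G \to \fg$ defined by $\psi(h) = \Ad(h) X - X$, which satisfies $\psi(1_G) = 0$ and realizes $Z_G(X) = \psi^{-1}(0)$. The defining property $d\Ad|_1 = \ad$ of the adjoint representation, combined with the fact that $\fg$ is a vector space (and so canonically its own tangent space at any point), gives $d\psi|_1(Y) = \ad(Y)(X) = [Y, X]$, whence $\Lie Z_G(X) = \fz_\fg(X)$. The only step requiring any care is the first differential computation, for which a local embedding into $\GL_n$ trivializes matters; everything else is formal, and no serious obstacle arises.
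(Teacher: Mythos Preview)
Your proposal is correct and takes essentially the same approach as the paper: both arguments realize $Z_G(X)$ and $Z_G(g)$ as scheme-theoretic fibers of the maps $h \mapsto \Ad(h)X - X$ and (up to a translation by $g$) $h \mapsto hgh^{-1}$, and then identify the Lie algebra of the fiber with the kernel of the differential, using $d\Ad|_1 = \ad$ and $d(c_g)|_1 = \Ad(g)$ respectively. The paper packages the ``$\Lie$ of a fiber equals kernel of the differential'' step via an explicit equalizer diagram on $A[\eps]$-points, but the content is identical to your dual-numbers computation.
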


\begin{proof}
Define a morphism $\phi: (G, 1) \to (\fg, 0)$ via $g \mapsto \Ad(g)X - X$. Consider the following diagram with rows which are equalizer sequences.
\[
\begin{tikzcd}
Z_G(X)(A[\varepsilon]/(\varepsilon^2))) \arrow[r] \arrow[d]
    &G(A[\varepsilon]/(\varepsilon^2)) \arrow[r, shift left, "\phi"] \arrow[r, shift right, "0", labels=below] \arrow[d]
    &\fg \otimes_A A[\varepsilon]/(\varepsilon^2) \arrow[d] \\
Z_G(X)(A) \arrow[r]
    &G(A) \arrow[r, shift left, "\phi"] \arrow[r, shift right, "0", labels=below]
    &\fg
\end{tikzcd}
\]
Passing to ``vertical" fibers over the respective identity elements, a diagram chase shows that this induces an equalizer sequence
\[
\begin{tikzcd}
\Lie Z_G(X) \arrow[r]
    &\fg \arrow[r, shift left, "d\phi"] \arrow[r, shift right, "0", labels=below]
    &\fg
\end{tikzcd}
\]
The differential of $\Ad: G \to \GL(\fg)$ is $\ad: \fg \to \gl(\fg)$, so $d\phi$ is given by $(d\phi)(Y) = [Y, X]$, and we conclude $\Lie Z_G(X) = \fz_\fg(X)$. The argument for $\Lie Z_G(g)$ is completely similar, using that the differential at $1$ of the morphism $G \to G$, defined by $x \mapsto gxg^{-1}$ is $Y \mapsto \Ad(g)Y$.
\end{proof}

\subsection{Reductive group schemes}

We recall some basic notions from the theory of reductive group schemes; see \cite{Conrad} or \cite{SGA3III} for more comprehensive treatments. We recall first that a smooth affine group scheme $G$ over a field $k$ is called \textit{reductive} if its geometric unipotent radical is trivial; i.e., if $G_{\overline{k}}$ contains no nontrivial smooth connected normal unipotent subgroup. Since the unipotent radical of $G_{\overline{k}}$ is a characteristic subgroup scheme of $G_{\overline{k}}^0$, this condition is equivalent to the condition that $G^0$ is connected reductive. A reductive $k$-group $G$ is called \textit{semisimple} if moreover its center $Z(G)$ is finite.

\begin{definition}
Let $S$ be a scheme. An $S$-group scheme $G$ is \textit{reductive} if it is smooth, $S$-affine, and $G_{\overline{s}}$ is connected reductive for every geometric point $\overline{s}$ of $S$. Similarly, $G$ is \textit{semisimple} if it is reductive and $G_{\overline{s}}$ is semisimple for every geometric point $\overline{s}$ of $S$.
\end{definition}

We note the slight discrepancy in definitions: over a field, we have not required that $G$ be connected, but over a general base scheme we have required that $G$ have connected fibers. For the remainder of this paper, any result stated exclusively over a field will use the term in the first more general sense, and any result stated over a more general base will use the term in the second more restricted sense. \smallskip

Every reductive group scheme $G \to S$ admits a split fiberwise maximal torus $T$ over some etale cover of $S$ \cite[Lem.\ 5.1.3]{Conrad}, and (locally on $S$) the pair $(G, T)$ has a root system $\Phi$ coming from a decomposition of the Lie algebra $\fg$ of $G$ into weight spaces for $T$ under the adjoint action. As in the classical setting, the functorial center $Z(G)$ is a subgroup scheme of $G$ of multiplicative type over $S$, and it is the kernel of the adjoint action $\Ad_G: G \to \GL(\fg)$ \cite[Prop.\ 3.3.8]{Conrad}. Many other classical results and constructions concerning reductive groups extend to this more general setting: the theory of Borel and parabolic subgroups \cite[\S 5.2]{Conrad}; the theory of central isogenies, universal covers, and adjoint quotients \cite[\S 6]{Conrad}; the decomposition of simply connected and adjoint type groups into simple components \cite[Thm.\ 5.1.19]{Conrad}; and the theory of the derived group \cite[Thm.\ 5.3.1]{Conrad}. We will use all of these notions frequently without comment. \smallskip

See \cite[Lem.\ 2.14, Thm.\ 2.15]{SteinbergTorsion} for the indication of a proof of the following theorem.

\begin{theorem}\label{theorem:steinberg-connectedness}
Let $G$ be a reductive group over a field $k$, and let $g \in G(k)$ be semisimple. Then
\begin{enumerate}
    \item $Z_G(g)$ is a reductive group,
    \item If $G$ is connected and $\sD(G)$ is simply connected, then $Z_G(g)$ is connected.
\end{enumerate}
\end{theorem}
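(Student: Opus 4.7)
The plan is to reduce to the case of an algebraically closed field and then analyze the centralizer root-theoretically. Reductivity of $Z_G(g)^0$ can be checked after a faithfully flat base change, and connectedness of $Z_G(g)$ can be checked after a separable extension (e.g.\ the algebraic closure), so we may assume $k = \ov{k}$. Since $g$ is semisimple, it lies in some maximal torus $T$ of $G$, and Lemma~\ref{lemma:lie-algebra-of-centralizer} gives
\[
\Lie Z_G(g) = \fg^{\Ad(g)} = \ft \oplus \bigoplus_{\alpha \in \Phi_g} \fg_\alpha, \qquad \Phi_g := \{\alpha \in \Phi(G,T) : \alpha(g) = 1\},
\]
because $\Ad(g)$ acts on $\fg_\alpha$ by the scalar $\alpha(g)$ and acts trivially on $\ft$.

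For part (1), let $H \subset G$ be the smooth closed connected subgroup generated by $T$ and by the root groups $U_\alpha$ for $\alpha \in \Phi_g$. The set $\Phi_g$ is the intersection of $\Phi$ with the hyperplane cut out by the character $\alpha \mapsto \alpha(g)$, so it is a closed subsystem of $\Phi$; hence $H$ is reductive with maximal torus $T$ and root system $\Phi_g$, and $\dim H = \dim T + |\Phi_g|$. Each $U_\alpha$ with $\alpha \in \Phi_g$ is obviously centralized by $g$, so $H \subseteq Z_G(g)^0$. On the other hand, the inequality $\dim Z_G(g) \leq \dim \Lie Z_G(g) = \dim T + |\Phi_g|$ (valid for any closed subgroup scheme of $G$) forces $\dim Z_G(g)^0 \leq \dim H$, so $Z_G(g)^0 = H$. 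This proves reductivity, and as a byproduct $Z_G(g)$ is smooth.

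For part (2), we compute the component group. By the Frattini-type argument $Z_G(g) = Z_G(g)^0 \cdot (N_G(T) \cap Z_G(g))$, together with the observation that $n \in N_G(T)$ centralizes $g$ iff the image $w$ of $n$ in $W := N_G(T)/T$ fixes $g$; since $T \subseteq Z_G(g)^0$, this yields
\[
\pi_0(Z_G(g)) \;\cong\; W_g / W_g^{\circ},
\]
where $W_g := \Stab_W(g)$ and $W_g^{\circ} := \langle s_\alpha : \alpha \in \Phi_g\rangle$ is the Weyl group of $Z_G(g)^0$ relative to $T$. Thus connectedness of $Z_G(g)$ is equivalent to the purely combinatorial assertion that $W_g$ is generated by the reflections $s_\alpha$ for $\alpha \in \Phi_g$. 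The main obstacle is precisely this reflection-generation statement for stabilizers in $W$. I would prove it by reducing via the universal cover $\widetilde{G} \to G$ to the simply connected case, then using that simple connectivity of $\sD(G)$ means $X_*(T \cap \sD(G))$ coincides with the coroot lattice, so that one can argue by induction on $\dim T$ (removing a suitable simple root at each step, using that $g$ lies in the kernel of some character pulled back from a smaller-rank quotient) as in Steinberg's original argument. This is the genuinely hard combinatorial input; the rest of the proof is bookkeeping given part (1).
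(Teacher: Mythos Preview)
The paper does not give its own proof of this theorem; it simply cites \cite[Lem.\ 2.14, Thm.\ 2.15]{SteinbergTorsion}. So there is nothing to compare against directly, and your argument for part (1) is a complete and correct version of the standard proof: the identification $Z_G(g)^0 = \langle T, U_\alpha : \alpha \in \Phi_g\rangle$ via the dimension count on $\Lie Z_G(g)$ is exactly right, and the smoothness of $Z_G(g)$ follows as you say from $\dim Z_G(g) = \dim \fg^{\Ad(g)}$.

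Part (2), however, has a genuine gap. You correctly reduce connectedness to the statement that $W_g = \langle s_\alpha : \alpha \in \Phi_g\rangle$, and this reduction (via the Frattini-type argument and the identification of the Weyl group of $Z_G(g)^0$) is fine. But you then stop, saying only that you ``would prove it by reducing via the universal cover \ldots\ to the simply connected case'' and ``argue by induction on $\dim T$.'' This is not a proof: the hypothesis already has $\sD(G)$ simply connected, so there is no further reduction to perform, and the inductive scheme you gesture at (``removing a suitable simple root at each step'') is too vague to evaluate. The actual argument in Steinberg's paper is a delicate affine-Weyl-group computation (closely related to the alcove analysis the present paper reproduces in the proof of Theorem~\ref{theorem:steinberg-torsion}): one shows that any $w \in W$ fixing $g$ lies in the subgroup generated by reflections through walls of an alcove containing a lift of $g$, and this uses simple connectedness via the identification of $X_*(T)$ with the coroot lattice so that the affine Weyl group $W \ltimes X_*(T)$ acts simply transitively on alcoves. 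You have identified the right input but not supplied it; as you yourself concede, ``this is the genuinely hard combinatorial input,'' and it is precisely what is missing from your write-up.
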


\subsection{Jordan decomposition}

The following lemma is useful, in light of Theorem~\ref{theorem:steinberg-connectedness}, for reducing questions about centralizers of elements of connected reductive groups to questions about centralizers of unipotent elements.

\begin{lemma}\label{lemma:jordan-decomposition-centralizer}
Let $k$ be a perfect field and let $G$ be a linear algebraic group over $k$. If $g \in G(k)$ has Jordan decomposition $g = tu$, then as group schemes
\[
Z_G(g) = Z_{Z_G(t)}(u).
\]
If $\mathfrak{g}$ is the Lie algebra of $G$ and $X \in \mathfrak{g}$ has Jordan decomposition $X = X_{\mathrm{ss}} + X_{\mathrm{n}}$, then as group schemes
\[
Z_G(X) = Z_{Z_G(X_{\mathrm{ss}})}(X_{\mathrm{n}}).
\]
\end{lemma}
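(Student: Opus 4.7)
The plan is to verify the equalities at the level of functors of points, i.e., on $R$-points for each $k$-algebra $R$. Fix a closed embedding $i\colon G \hookrightarrow \GL(V)$ for some finite-dimensional $k$-vector space $V$, which exists since $G$ is a linear algebraic group.

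For the multiplicative statement, one inclusion is tautological: any point centralizing both $t_R$ and $u_R$ automatically centralizes their product $(tu)_R = g_R$. For the nontrivial inclusion I would exploit the classical fact recalled in the excerpt that the semisimple and unipotent parts in $\GL(V)$ can be written as polynomials in the original element with coefficients in $k$, using the perfectness of $k$. Concretely, by uniqueness of Jordan decomposition in $\GL(V)$, the factorization $i(g) = i(t)\cdot i(u)$ \emph{is} the Jordan decomposition of $i(g)$, so there exist $p, q \in k[X]$ with $i(t) = p(i(g))$ and $i(u) = q(i(g))$ in $\End(V)$ (for $q$ one uses Cayley--Hamilton to write $i(t)^{-1}$ as a polynomial in $i(t)$, hence in $i(g)$). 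For any $k$-algebra $R$ and any $h \in Z_G(g)(R)$, the matrix $i(h) \in \GL(V \otimes_k R)$ commutes with $i(g_R)$, so it also commutes with the base-changed polynomial expressions $p(i(g_R)) = i(t_R)$ and $q(i(g_R)) = i(u_R)$. Since $i$ is a closed embedding, it is a monomorphism on $R$-points, so $i(h t_R h^{-1}) = i(t_R)$ forces $h t_R h^{-1} = t_R$, and similarly for $u_R$. Hence $h \in Z_{Z_G(t)}(u)(R)$.

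The additive statement is essentially identical. The adjoint action of $G$ on $\fg$ is the restriction (via $di$) of the conjugation action on $\End(V)$, so $h \in G(R)$ lies in $Z_G(X)(R)$ iff $i(h)$ commutes with $di(X) \otimes 1$ in $\End(V \otimes_k R)$. Writing $di(X_{\mathrm{ss}}) = p(di(X))$ and $di(X_{\mathrm{n}}) = q(di(X))$ for polynomials $p, q \in k[X]$ (one may arrange $p(0) = q(0) = 0$, though this is not needed), the same polynomial-commutation argument transfers the centralizing relation from $X$ to $X_{\mathrm{ss}}$ and $X_{\mathrm{n}}$, giving $Z_G(X)(R) \subseteq Z_{Z_G(X_{\mathrm{ss}})}(X_{\mathrm{n}})(R)$. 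The reverse inclusion is again immediate from the additivity of the adjoint action on $X = X_{\mathrm{ss}} + X_{\mathrm{n}}$.

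No step here presents a genuine obstacle; the argument reduces entirely to the polynomial description of the Jordan components, and the only subtlety is ensuring that these polynomials can be chosen over $k$, which is exactly the content of perfectness of $k$. It is also worth noting that the embedding $i$ is only an auxiliary tool: the final statement is intrinsic to $G$, and the uniqueness of the Jordan decomposition guarantees independence of the choice of $i$.
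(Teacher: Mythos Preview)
Your proof is correct and follows essentially the same approach as the paper: embed $G$ in $\GL(V)$, use the polynomial description of the Jordan components over the perfect field $k$, and verify the equality on $R$-points for every $k$-algebra $R$. The only cosmetic difference is that the paper first reduces to $G = \GL(V)$ via $Z_G(g) = i^{-1}(Z_{\GL(V)}(i(g)))$ and writes only $t$ as a polynomial in $g$ (getting $u = t^{-1}g$), whereas you work directly with the embedding and write both $t$ and $u$ as polynomials; the content is the same.
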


\begin{proof}
This follows from \cite[3.3]{Slodowy}.
\end{proof}

\subsection{Good primes}\label{subsection:good-primes}

We will give a brief summary of the notions of good and very good primes, following \cite{Springer-Steinberg}.

\begin{definition}
Given a reduced root system $\Phi$, we say that $p$ is a \textit{bad prime} for $\Phi$ if there is a closed subsystem $\Sigma \subset \Phi$ such that $\bZ\Phi/\bZ\Sigma$ has $p$-torsion. Similarly, $p$ is a \textit{torsion prime} if there is some such $\Sigma$ so that $\bZ\Phi^\vee/\bZ\Sigma^\vee$ has $p$-torsion. If $p$ is not a bad prime, then it is a \textit{good prime}.
\end{definition}

The following definition is non-standard, but we find it clarifying.

\begin{definition}
If $V = \bZ\Phi \otimes_Z \bQ$, set $P = (\bZ\Phi^\vee)^* \subset V$. We say that $p$ is a \textit{singular prime} if $p$ divides $|P/\bZ\Phi|$. If $p$ is not a singular prime, then it is a \textit{smooth prime}.
\end{definition}

Finally, we say that $p$ is a \textit{very good prime} if $p$ is both a good prime and a smooth prime. If $G$ is a split reductive group over a field $k$ of characteristic $p > 0$, then we will say that that $p$ is a bad (resp.\ torsion, resp.\ singular, resp.\ good, resp.\ smooth, resp.\ very good) prime for $G$ provided that it is the same for the root system $\Phi$. In general, we will say that $\chara k$ is \textit{good} for $G$ provided that either $\chara k = 0$ or $\chara k$ is a good prime for $G$. We remark that if $G$ is connected, semisimple and simply connected, then the quotient $P/\bZ\Phi$ above is Cartier dual to the center $Z(G)$. On the other hand, if $G$ is connected, semisimple, and of adjoint type, then $P/\bZ\Phi$ is Cartier dual to the fundamental group of $G$. In general, if $G$ is connected and semisimple, then $p$ is a smooth prime if and only if the map $\pi_G: \widetilde{G} \to G$ from the universal cover is smooth and the center $Z(G)$ of $G$ is smooth, whence the name. \smallskip

In \cite[I, 4.3, 4.4]{Springer-Steinberg}, the bad primes and torsion primes are determined for all irreducible reduced $\Phi$. In particular, every torsion prime is a bad prime, and all primes greater than $5$ are good for every $\Phi$. The singular primes may also be determined from the tables in \cite[Chap.\ VI, \S 4]{Bourbaki}. Table~\ref{table:1} shows the bad primes, torsion primes, and singular primes for the various irreducible root systems. Notice in particular that every torsion prime is a bad prime, and a prime $p$ is very good when it is good and there is no component of type $\rm{A}_n$ for $p \mid n+1$.

\begin{table}
\centering
\begin{tabular}{ |c|c|c|c| } 
\hline
Type & Bad primes & Torsion primes & Singular primes \\
\hline
$\rm{A}_n$ ($n \geq 1$) & none & none & $p \mid n + 1$ \\ 
$\rm{B}_n$ ($n \geq 2$) & $2$ & $2$ ($n \geq 3$) & $2$ \\ 
$\rm{C}_n$ ($n \geq 3$) & $2$ & none & $2$ \\ 
$\rm{D}_n$ ($n \geq 4$) & $2$ & $2$ & $2$ \\
$\rm{E}_6$ & $2, 3$ & $2, 3$ & $3$ \\
$\rm{E}_7$ & $2, 3$ & $2, 3$ & $2$ \\
$\rm{E}_8$ & $2, 3, 5$ & $2, 3, 5$ & none \\
$\rm{F}_4$ & $2, 3$ & $2, 3$ & none \\
$\rm{G}_2$ & $2, 3$ & $2$ & none \\
\hline
\end{tabular}
\caption{Bad, torsion, and singular primes for the various types}
\label{table:1}
\end{table}

\subsection{Weights and representations}

If $G$ is a split reductive group over a field $k$, and we are given a (split) maximal torus $T$ inside a Borel subgroup $B$, then there is an associated root system $\Phi = \Phi(G, T)$ with a system of simple roots $\Delta = \Delta(B, T) = \{\alpha_1, \dots, \alpha_r\}$. We let $\alpha_1^{\vee}, \dots, \alpha_r^{\vee}$ be the associated system of simple coroots. If (and only if) $G$ is semisimple and simply connected, the simple coroots form a basis for the cocharacter lattice $X_*(T)$, and we let $\omega_1, \dots, \omega_r$ be the dual basis for the character lattice $X(T)$, called the \textit{fundamental weights}. In any case, we let $X(T)_+$ denote the set of \textit{dominant characters}; i.e., the set of characters $\lambda \in X(T)$ such that $\langle \alpha_i^{\vee}, \lambda \rangle \geq 0$ for all $i$. By \cite[Chap.\ VI, \S 1, Thm.\ 2]{Bourbaki}, if $W = N_G(T)/T$ is the Weyl group of $(G, T)$, then every element of $X(T)$ is $W$-conjugate to a unique dominant character. If $G$ is semisimple and simply connected, then $X(T)_+$ is a free semigroup, generated by the fundamental weights $\omega_1, \dots, \omega_r$. Associated to each fundamental weight $\omega_i$ there is a representation $V_i$ of $G$, called the \textit{$i$th fundamental representation}, with highest weight $\omega_i$, see \cite[II, \S 2]{Jantzen}. We will maintain this notation below whenever given a triple $(G, B, T)$ as above.

\subsection{Regular elements}\label{subsection:regular-elements}

We briefly recall Steinberg's theory of regular elements of a connected reductive group $G$. Let $r$ denote the rank of $G$; i.e., the dimension of a maximal torus of $G$. If $g \in G(k)$, then $Z_G(g)$ is of dimension $\geq r$. Indeed, suppose $k$ is algebraically closed and $B$ is a Borel subgroup of $G$ containing $g$ with unipotent radical $U$. Then the orbit map $B/Z_B(g) \to B \to B/U$ under the conjugation action is constant since $B/U$ is commutative. So the image of the orbit map is contained in $gU$ and we obtain the dimension bound. We say that $g$ is \textit{regular} if equality holds, i.e., $Z_G(g)$ is of dimension $r$. If $g$ is semisimple, this implies that $Z_G(g)^0$ is a torus, but $g$ need not be semisimple in this definition: in general, if $g = tu$ is the Jordan decomposition of $g$, then $Z_G(t)^0$ is a connected reductive group of rank $r$, so that $u$ is a regular unipotent element of $Z_G(t)^0$. We summarize some results of Steinberg on regular unipotent elements in the following theorem.

\begin{theorem}\label{theorem:properties-of-regular-unipotents}
Let $G$ be a connected reductive group over an algebraically closed field $k$.
\begin{enumerate}
    \item \cite[Thm.\ 3.1]{SteinbergReg} Regular unipotent elements exist in $G(k)$, and any two are $G(k)$-conjugate.
    \item \cite[Lem.\ 3.2, Thm.\ 3.3]{SteinbergReg} If $u \in G(k)$ is regular unipotent, then there is a unique Borel subgroup $B$ of $G$ containing $u$.
    \item\label{item:lie-algebra-dimension-bound} \cite[\S 4]{SteinbergReg} If $\sD(G)$ is simply connected and $u \in G(k)$ is regular unipotent, then the fixed point algebra $\fg^{\Ad(u)}$ has dimension $\leq \dim \fz + r_{\rm{ss}}$, where $\fz$ is the center of the Lie algebra $\fg = \Lie G$ and $r_{\rm{ss}}$ is the semisimple rank of $G$.
\end{enumerate}
\end{theorem}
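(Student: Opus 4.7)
The plan is to address the three parts in sequence using the structure theory of reductive groups over an algebraically closed field. Throughout I fix a maximal torus $T$ inside a Borel $B = TU$ of $G$, with simple roots $\Delta$ and root-group parametrizations $x_\alpha \colon \bG_a \xrightarrow{\sim} U_\alpha$ for each $\alpha \in \Delta$.

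For part (1), I would write down the explicit element $u_0 = \prod_{\alpha \in \Delta} x_\alpha(1)$ (product in any fixed order) and verify regularity by direct computation. The $T$-action on $U/[U,U] \cong \prod_{\alpha \in \Delta} U_\alpha$ makes the open locus where every coordinate is nonzero into a single $T/Z(G)$-orbit; a descending induction along the lower central series of $U$ then shows that $Z_U(u_0)$ is trivial modulo the center, giving $\dim Z_G(u_0) \leq \dim T = r$ (the reverse inequality is the general bound recalled in the paragraph preceding the theorem). For the conjugacy statement, every unipotent element of $G(k)$ lies in some Borel, and after $G(k)$-conjugation we may assume it is in $U$; by regularity its image in $U/[U,U]$ has all coordinates nonzero, and $B$-transitivity on that locus combined with $U$-conjugation in each central stratum of $U$ yields conjugacy to $u_0$.

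For part (2), the cleanest plan is to use the Springer fiber $\cB_u = \{B' : u \in B'\} \subset G/B$ together with the standard dimension estimate $\dim \cB_u \leq \tfrac{1}{2}(\dim Z_G(u) - r)$, which vanishes when $u$ is regular. Hence $\cB_u$ is finite, and Spaltenstein's theorem that $\cB_u$ is connected for every unipotent $u$ reduces $\cB_u$ to a single point. Alternatively, one can argue directly: if $u \in B \cap B'$, then passing to the reductive quotient of $(B \cap B')^\circ_{\rm{red}}$ by its unipotent radical produces a proper Levi subgroup of $G$ in which $u$ would remain regular unipotent, contradicting a rank comparison unless $B = B'$.

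Part (3) is the main obstacle, and I would follow Steinberg's original strategy. First reduce to the case that $G$ is semisimple simply connected by exploiting the central isogeny $Z(G)^\circ \times \sD(G) \to G$; the possible inseparability of this isogeny in bad characteristic is precisely what forces the term $\dim \fz$ (rather than $\dim \Lie Z(G)$) into the bound. For semisimple simply connected $G$ the key input is that in each fundamental representation $V_i$, a regular unipotent $u$ preserves a unique line (the highest-weight line, since $u \in B$) and $\Ad(u) - 1$ acts with rank $\dim V_i - 1$. Embedding the adjoint representation into a suitable tensor construction on the $V_i$ and tracking $u$-fixed vectors then yields the bound $\dim \fg^{\Ad(u)} \leq \dim \fz + r_{\rm{ss}}$. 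The genuinely delicate point is controlling the adjoint representation in bad characteristic, where $\fg$ need not be semisimple as a $G$-module and Jordan block sizes for $\Ad(u)$ can degenerate; this degeneracy is exactly what the $\fz$-term accommodates, and handling it cleanly is the hard work of the proof.
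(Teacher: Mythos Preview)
Your sketches for parts (1) and (2) are reasonable and broadly in line with Steinberg's arguments; the paper simply cites \cite{SteinbergReg} for these and notes that the extension from semisimple to reductive is immediate.

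Part (3) is where your proposal diverges substantially from the paper and contains a genuine gap. Your ``key input'' that a regular unipotent $u$ preserves a unique line in each fundamental representation $V_i$ (equivalently, that $u - 1$ has rank $\dim V_i - 1$ on $V_i$) is false in small characteristic: since $u^{p^m} = 1$ for some $m$, every Jordan block of $u$ on $V_i$ has size at most $p^m$, so as soon as $\dim V_i > p^m$ the fixed space $(V_i)^u$ is strictly larger than the highest-weight line. This is exactly the regime where the bound in (3) is delicate. Moreover, even granting something about the $V_i$, ``embedding the adjoint representation into a suitable tensor construction'' and tracking fixed vectors through tensor products is not a mechanism that controls $\dim \fg^{\Ad(u)}$ in the way you suggest. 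Finally, your proposed reduction to the semisimple case via the isogeny $Z(G)^\circ \times \sD(G) \to G$ is problematic for the reason you yourself flag (inseparability), and you do not explain how to actually convert this into the $\dim \fz$ correction; note that $\fz$ need not equal $\Lie Z(G)^\circ$.

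The paper instead follows Steinberg's actual strategy in \cite[\S4]{SteinbergReg}, which is \emph{not} a representation-theoretic rank argument but rather an argument about the Coxeter element $w = s_1 \cdots s_r$ acting on $\ft = \Lie T$. Steinberg's Lemma 4.3 reduces the dimension bound to showing that $\ker(1 - \Ad(w))|_{\ft} = \fz$. Steinberg proves this for semisimple $G$; the paper extends it directly to reductive $G$ with $\sD(G)$ simply connected by choosing arbitrary lifts $\omega_i \in X(T)$ of the fundamental weights $\omega_i' \in X(T')$ (where $T' = T \cap \sD(G)$), completing to a basis of $X(T)$ by characters of $T/T'$, and then running Steinberg's inductive argument on the simple reflections to conclude $(d\alpha_i)(H) = 0$ for all $i$ whenever $(1 - \Ad(w))H = 0$. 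No isogeny reduction is performed.
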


\begin{proof}
All points are proved in \cite{SteinbergReg} under the further assumption that $G$ is semisimple. Since all unipotent elements of $G(k)$ lie in $\sD(G)(k)$, the first two points extend immediately to the more general reductive case. The third point extends to reductive $G$ in a less straightforward manner, and we will indicate how to perform this extension. Let $T$ be a maximal torus of $G$, and let $T' = T \cap \sD(G)$ be the maximal torus of $\sD(G)$ contained in $T$. Let $\alpha_1, \dots, \alpha_r$ be a system of simple roots for $T$ corresponding to a Borel $B$, with corresponding fundamental weights $\omega_1', \dots, \omega_r'$ for $T'$, and let $w = s_1 \cdots s_r$ denote the Coxeter element corresponding to this system of simple roots. By \cite[Theorem 3.3, Lemma 4.5]{SteinbergReg}, we may pass to a conjugate of $u$ to assume that $u \in B(k)wB(k)$. The only part of the argument in \cite[\S 4]{SteinbergReg} using semisimplicity (or simple connectedness!) of $G$ is step 4) in the proof of \cite[Lem.\ 4.3]{SteinbergReg}, where it is proved (in the semisimple case) that the kernel of $1 - \Ad(w)$ on $\Lie T$ is $\fz$ (certainly it contains $\fz$). We prove this below under our more general hypotheses, following the main strategy of \cite{SteinbergReg}. Recall that $\fz$ is the subalgebra of $\Lie T$ consisting of elements vanishing along each root differential $d\alpha_i$. \smallskip

Choose lifts $\omega_1, \dots, \omega_r \in X(T)$ of $\omega_1', \dots, \omega_r'$ arbitrarily. We note that $s_j \cdot \omega_i = \omega_i - \langle \omega_i, \alpha_j^\vee \rangle \alpha_j$, so by \cite[Chap.\ VI, \S 1.10]{Bourbaki} we have
\[
s_j \cdot \omega_i = \omega_i - \delta_{ij} \alpha_i
\]
by the corresponding relation for $\omega_i^\vee$ and the fact that $\alpha_j^\vee$ takes values in $T'$. Choose $\lambda_1, \dots, \lambda_s \in X(T/T')$ forming a basis of $X(T/T')$, so that $\omega_1, \dots, \omega_r, \lambda_1, \dots, \lambda_s$ forms a basis of $X(T)$. In particular, $d\omega_1, \dots, d\omega_r, d\lambda_1, \dots, d\lambda_s$ forms a basis of $\ft^*$. Suppose that $H \in \ft$ is an element such that $(1 - \Ad(w))H = 0$. Applying $\Ad(s_1)$, we find
\[
(1 - \Ad(s_1))H = (1 - \Ad(s_2 \cdots s_r))H.
\]
Applying $d\lambda_i$ to either side yields $0$ because $W$ acts trivially on $X(T/T')$. Similarly, applying $d\omega_2, \dots, d\omega_r$ to the left side yields $0$, and applying $d\omega_1$ to the right side yields $0$, both by the above displayed relations for $\omega_i$. Induction shows $(1 - \Ad(s_i))H = 0$ for all $i$. Applying $d\omega_i$, we find $(d\alpha_i)(H) = 0$ for all $i$. Thus $H \in \fz$, as desired.
\end{proof}

The following nonstandard definition will also be used later.

\begin{definition}\label{definition:strongly-regular}
We say that an element $g \in G(k)$ is \textit{strongly regular} if it is regular and in the Jordan decomposition $g_{\overline{k}} = tu$, the centralizer $Z_{G_{\overline{k}}}(t)$ is connected.
\end{definition}

Note that if $\sD(G)$ is simply connected, then regularity and strong regularity are equivalent conditions by Theorem~\ref{theorem:steinberg-connectedness}.\smallskip

There is also a notion of regular element for elements of the Lie algebra $\mathfrak{g}$ of $G$. By the same argument as before, if $X \in \mathfrak{g}$, then $\dim Z_G(X) \geq r$. If equality holds, we will say that $X$ is \textit{regular}. Unlike in the group case, regular semisimple elements need not exist: see Lemma~\ref{lemma:chevalley-regular}. On the other hand, \cite[Lem.\ 3.1.1]{Riche-universal-centralizer} shows that regular nilpotent elements always exist. If $X \in \fg$ is regular nilpotent, then $X$ is contained in the Lie algebra of a unique Borel subgroup of $G$ by \cite[Lem.\ 5.3]{Springer}. \smallskip

There is another useful definition of regularity for elements of a Lie algebra which is common in the literature (as in for instance \cite[Exps.\ XIII, XIV]{SGA3II}). This definition plays no role in the present paper, and we discuss it only for the sake of completeness. Given a Lie algebra $\mathfrak{g}$ over a ring $k$ and an element $X \in \mathfrak{g}$ there is a characteristic polynomial $P(X, t) = t^n + c_{n-1}(X)t^{n-1} + \dots + c_r(X)t^r$ for the adjoint action of $X$ on $\mathfrak{g}$. Here the $c_i$ are functorial in $k$, so they are given by elements of the symmetric algebra of $\mathfrak{g}^*$ (well-defined even if $k$ is finite), and we assume that $c_r$ is not the $0$ element. If $k$ is a field, then this $r$ is called the \textit{nilpotent rank} of $\mathfrak{g}$. We note that if $\fg$ is the Lie algebra of a connected semisimple group $G$ over a field $k$, then $r$ is at least the rank of $G$, as can be seen from the fact that for any $X \in \fg$ we have $\Lie Z_G(X) = \fz_{\fg}(X)$ (by Lemma~\ref{lemma:lie-algebra-of-centralizer}). For the rest of this section, $k$ is assumed to be a field.

\begin{definition}
We will say that an element $X \in \mathfrak{g}$ is \textit{Chevalley regular} if $c_r(X) \neq 0$.
\end{definition}

This is nonstandard terminology which we introduce only to distinguish it from our previous definition of regularity. See \cite[Exps.\ XIII, XIV]{SGA3II} for a comprehensive treatment of this notion of regularity, as well as some interesting applications to Zariski-local torus lifting in reductive group schemes. The following lemma relates the two notions. Recall from \cite[\S 4.1.1]{Bouthier-Cesnavicius} that $G$ is \textit{root-smooth} if for every maximal $\ov k$-torus $T \subset G_{\ov k}$, every root $\alpha\colon T \to \bG_m$ is smooth. As explained in \textit{loc.\ cit.}, a split simple $k$-group $G$ is root-smooth unless $\chara k = 2$ and $G \cong \Sp_{2n}$ for some $n \geq 1$.

\begin{lemma}\label{lemma:chevalley-regular}
Let $G$ be a semisimple group over a field $k$ with Lie algebra $\mathfrak{g}$. If $k$ is infinite, then Chevalley regular elements always exist in $\mathfrak{g}$. Moreover:
\begin{enumerate}
    \item\label{item:regular-semisimple-implies-chevalley} If $X \in \mathfrak{g}$ is regular and semisimple, then it is Chevalley regular.
    \item\label{item:chevalley-implies-regular-semisimple} If $G$ is root-smooth, then an element $X \in \mathfrak{g}$ is Chevalley regular if and only if it is regular and semisimple.
    \item\label{item:no-semisimple-regular-elements} If $G$ is not root-smooth, then there are no semisimple regular elements in $\mathfrak{g}$.
\end{enumerate}
\end{lemma}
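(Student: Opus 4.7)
The plan is to handle existence first and then the three comparisons in a common framework. When $k$ is infinite, $c_r$ is by construction a nonzero element of $\Sym(\fg^*)$, hence gives a nonzero polynomial function on $\fg$; since $k$ is infinite, some $X \in \fg$ satisfies $c_r(X) \neq 0$ and is therefore Chevalley regular.

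For (1)--(3) I pass to $\overline{k}$ (both regularity and Chevalley regularity are preserved by base change, since centralizers and the $c_i$ are defined functorially) and work with a fixed maximal torus $T \subseteq G_{\overline{k}}$ and the root-space decomposition $\fg = \ft \oplus \bigoplus_\alpha \fg_\alpha$. For $H \in \ft$, $\ad H$ acts by $d\alpha(H)$ on $\fg_\alpha$ and by zero on $\ft$, so the characteristic polynomial of $\ad H$ is $t^\ell \prod_\alpha (t - d\alpha(H))$; letting $m(X)$ denote the multiplicity of $0$ as a root of the characteristic polynomial of $\ad X$, we get $m(H) = \ell + \#\{\alpha : d\alpha(H) = 0\} = \dim \fz_\fg(H)$. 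For an arbitrary semisimple $X \in \fg$, $\ad X$ is semisimple (using a faithful embedding $i\colon G \hookrightarrow \GL(V)$ and noting that $\ad X$ on $\End V$ is the commutator with the diagonalizable $di(X)$), so $m(X) = \dim \ker(\ad X) = \dim \fz_\fg(X) = \dim \Lie Z_G(X)$; combining this with the orbit-stabilizer inequality $\dim Z_G(X) \geq \dim \fg - \dim \mathrm{Im}(\ad X) = m(X)$ forces $\dim Z_G(X) = m(X)$ (and incidentally $Z_G(X)$ smooth). I also need the standard fact that every semisimple element of $\fg$ over $\overline{k}$ is $G(\overline{k})$-conjugate to an element of $\ft$.

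With these tools, the three statements are quick. For (1), if $X$ is regular semisimple then $\ell = \dim Z_G(X) = m(X) \geq r \geq \ell$ (the last bound from Lemma~\ref{lemma:lie-algebra-of-centralizer}), so $r = \ell$ and $X$ is Chevalley regular. For (3), under the hypothesis that some root $\alpha$ satisfies $d\alpha = 0$, every $H \in \ft$ has $\fg_\alpha \subseteq \fz_\fg(H)$ and hence $\dim Z_G(H) = m(H) \geq \ell + 1$; any semisimple $X \in \fg$ is conjugate to such an $H$, so $X$ cannot be regular. For the forward direction of (2), the hypothesis $d\alpha \neq 0$ for all $\alpha$ implies (using $\overline{k}$ infinite) that generic $H \in \ft$ satisfy $m(H) = \ell$, forcing $r = \ell$. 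If $X$ is Chevalley regular with Jordan decomposition $X = X_{\mathrm{ss}} + X_{\mathrm{n}}$, then $\ad X = \ad X_{\mathrm{ss}} + \ad X_{\mathrm{n}}$ is the Jordan decomposition of $\ad X$, so $m(X_{\mathrm{ss}}) = m(X) = \ell$; hence $X_{\mathrm{ss}}$ is regular semisimple, and after conjugating so that $X_{\mathrm{ss}} = H \in \ft$ we find $\fz_\fg(X_{\mathrm{ss}}) = \ft$, forcing $X_{\mathrm{n}} \in \ft$. But elements of $\ft$ act diagonally in every $G$-representation (a faithful representation of $T$ decomposes into weight spaces), so they are semisimple in $\fg$; since $X_{\mathrm{n}}$ is also nilpotent, $X_{\mathrm{n}} = 0$ and $X = X_{\mathrm{ss}}$ is regular semisimple. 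The reverse direction of (2) is (1).

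The only step that is not routine dimension counting with the root space decomposition is the assertion (used in (2) and (3)) that every semisimple $X \in \fg$ over $\overline{k}$ lies in the Lie algebra of some maximal torus; this is standard but slightly delicate in bad characteristic. Accepting it, the rest of the argument is essentially bookkeeping built around the identity $\dim Z_G(X) = m(X)$ for semisimple $X$.
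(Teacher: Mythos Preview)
Your overall strategy is sound and genuinely different from the paper's: you build everything on the identity $\dim Z_G(X)=m(X)$ for semisimple $X$ and then use the Jordan decomposition to handle (\ref{item:chevalley-implies-regular-semisimple}), whereas the paper invokes the SGA3 theory of Cartan subalgebras (\cite[Exp.\ XIII--XIV]{SGA3}) to show that a Chevalley regular element lies in a unique Cartan subalgebra which is the Lie algebra of a maximal torus. Your route is more elementary and more unified across the three parts, and the reduction $X_{\rm n}\in\fz_\fg(X_{\rm ss})=\ft\Rightarrow X_{\rm n}=0$ is a clean replacement for the paper's normalizer argument.

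There is, however, one real slip. Your ``orbit-stabilizer inequality'' $\dim Z_G(X)\ge \dim\fg-\dim\mathrm{Im}(\ad X)$ is not a consequence of orbit-stabilizer in positive characteristic: orbit-stabilizer gives $\dim(G\cdot X)=\dim G-\dim Z_G(X)$, and since the orbit is smooth (being homogeneous under the smooth group $G$) one only gets $[\fg,X]\subset T_X(G\cdot X)$, hence $\dim Z_G(X)\le m(X)$, the \emph{opposite} inequality. (The Frobenius map already shows that ``image dimension $\le$ rank of differential'' fails in characteristic $p$.) What you actually need is that $Z_G(X)$ is \emph{smooth} for semisimple $X$, which gives $\dim Z_G(X)=\dim\Lie Z_G(X)=\dim\fz_\fg(X)=m(X)$ directly. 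This is exactly the content of Proposition~\ref{prop:semisimple-centralizer-lie-algebra} (via \cite[13.19]{Borel}); alternatively, once you have placed $X$ in $\ft$, the reductive subgroup $\langle T,\,U_\alpha:\, d\alpha(X)=0\rangle$ is smooth of dimension $m(X)$ and sits inside $Z_G(X)$, giving the missing lower bound. With this correction your argument goes through; note that smoothness of $Z_G(X)$ is then an \emph{input}, not an incidental consequence.
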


\begin{proof}[Proof of Lemma~\ref{lemma:chevalley-regular}]
The claim about existence of Chevalley regular elements when $k$ is infinite is true for all Lie algebras, as is easy to see since the locus of $x \in \fg$ such that $c_r(x) \neq 0$ is open and dense in $\fg$. In fact, \cite[Exp.\ XIV, App.]{SGA3II} shows that Chevalley regular elements always exist when $G$ is semisimple of adjoint type, even if $k$ is finite. For the remainder of the proof, we may and do assume that $k$ is algebraically closed. \smallskip

For (\ref{item:regular-semisimple-implies-chevalley}), suppose that $X \in \mathfrak{g}$ is regular and semisimple. Then by Proposition~\ref{prop:semisimple-centralizer-lie-algebra}, $Z_G(X)^0$ is a connected reductive group of dimension equal to the rank of $G$. The proof of Proposition~\ref{prop:semisimple-centralizer-lie-algebra} also shows that $X$ lies in the Lie algebra of a maximal torus $T$ of $G$. Thus $T \subset Z_G(X)^0$, and dimension considerations combine with smoothness to show that $T = Z_G(X)^0$. Since $\Lie Z_G(X) = \fz_{\fg}(X)$, the nilpotent rank of $\fg$ is equal to the rank of $G$ and thus $X$ is Chevalley regular. \smallskip

Point (\ref{item:chevalley-implies-regular-semisimple}) is proven in \cite[\S 4.1.5, discussion preceding Lemma 4.1.6]{Bouthier-Cesnavicius}. Point (\ref{item:no-semisimple-regular-elements}) follows immediately from \cite[Exp.\ XIII, Prop.\ 4.6]{SGA3II}.
\end{proof}

\section{Centralizers of regular elements}\label{section:regular-centralizers}
In this section, we will prove Theorem~\ref{theorem:intro-flat-regular-centralizer}. To do so, we first need to understand centralizers of regular elements over a field, so in Section~\ref{subsection:centralizers-of-group-elements} our main aim is to establish Corollary~\ref{corollary:general-centralizer}. After this we use the case-checking in \cite{Springer} to establish smoothness results for centralizers of regular elements of the Lie algebra in Section~\ref{subsection:centralizers-of-lie-algebra-elements}. Finally, Section~\ref{subsection:regular-centralizers} combines these results to deduce Theorem~\ref{theorem:intro-flat-regular-centralizer}. \smallskip

Throughout Sections~\ref{subsection:centralizers-of-group-elements} and \ref{subsection:centralizers-of-lie-algebra-elements}, $G$ is a connected reductive group over a field $k$ of characteristic $p \geq 0$, $T$ is a maximal $k$-torus of $G$, and $\Phi = \Phi(G_{\overline{k}}, T_{\overline{k}})$ and $\Phi^\vee$ are the root system and the coroot system corresponding to this data, respectively. We recall that $\pi_1(\sD(G_{\overline{k}}))$ (defined as the kernel of the universal cover of $\sD(G_{\overline{k}})$) is Cartier dual to the constant $\overline{k}$-group $((X_*(T_{\overline{k}})/\bZ \Phi^\vee)_{\rm{tors}})^*$ (where the asterisk denotes the dual finite abelian group), and $Z(G_{\overline{k}})$ is Cartier dual to the constant $\overline{k}$-group $X(T_{\overline{k}})/\bZ \Phi$. Note that our definition of the fundamental group differs from \cite[1.1]{Borovoi}.

\subsection{Centralizers of group elements}\label{subsection:centralizers-of-group-elements}

\begin{theorem}[{\cite[Thm.\ 2.21]{SteinbergTorsion}}] \label{theorem:steinberg-torsion}
Suppose $t \in G(k)$ is semisimple, and suppose $n$ is a positive integer such that $t^n \in Z(G)(k)$. Suppose $t \in T$, and let $\Phi_t$ be the root system corresponding to the pair $(Z_{G_{\overline{k}}}(t)^0, T)$.
\begin{enumerate}
    \item\label{item:steinberg-torsion-1} If $\bZ \Phi^\vee/\bZ (\Phi_t)^\vee$ has $\ell$-torsion for a prime number $\ell$, then $\ell$ divides $n$.
    \item\label{item:steinberg-torsion-2} If $\bZ \Phi/\bZ \Phi_t$ has $\ell$-torsion for a prime number $\ell$, then $\ell$ divides $n$.
    \item\label{item:steinberg-torsion-3} If $G$ is absolutely simple, then $(\bZ \Phi^\vee/\bZ \Phi_t^\vee)_{\rm{tors}}$ and $(\bZ \Phi/\bZ\Phi_t)_{\rm{tors}}$ are cyclic groups whose orders divide $n$.
    \item\label{item:steinberg-torsion-4} If $G$ is absolutely simple, $(\bZ \Phi/\bZ\Phi_t)_{\rm{tors}}$ has order $m$, and $\alpha \in \bZ \Phi$ maps to a generator, then the value $\alpha(t) \in k^\times$ has (exact) order $m$.
\end{enumerate}
\end{theorem}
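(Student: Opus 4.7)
My plan is to follow Steinberg's argument in \cite[\S 2]{SteinbergTorsion}. I first pass to $\overline{k}$, since both the hypotheses and conclusions are geometric. I then use the standard structural fact that for a semisimple $t \in T(k)$ in a connected reductive group, $Z_G(t)^0$ is generated by $T$ together with those root subgroups $U_\alpha$ satisfying $\alpha(t) = 1$, so $\Phi_t = \{\alpha \in \Phi : \alpha(t) = 1\}$, and this is a closed subroot system of $\Phi$.

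For (2), the central device is the evaluation homomorphism $\chi_t : \bZ\Phi \to k^\times$, $\lambda \mapsto \lambda(t)$. Since $Z(G) = \bigcap_{\alpha \in \Phi} \ker\alpha$ and $t^n \in Z(G)$, the image of $\chi_t$ lies in $\mu_n(k)$; and since $\bZ\Phi_t \subseteq \ker\chi_t$, we obtain an induced map $\overline{\chi}_t : \bZ\Phi/\bZ\Phi_t \to \mu_n(k)$. The crux is to show that $\overline{\chi}_t$ is injective on the torsion subgroup: given this injectivity, an order-$\ell$ torsion element of $\bZ\Phi/\bZ\Phi_t$ maps to an order-$\ell$ element of $\mu_n(k)$, forcing $\ell \mid n$. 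The injectivity is equivalent to $\ker\chi_t / \bZ\Phi_t$ being torsion-free, which I would prove using that $\Phi_t = \Phi \cap \ker\chi_t$ by definition, combined with a saturation argument inside $X(T)$ exploiting the closedness of $\Phi_t$ as a subroot system of $\Phi$.

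Part (1) follows via a dual argument. Passing to the simply connected cover $\pi : \widetilde{G} \to G$ and a semisimple lift $\widetilde{t} \in \widetilde{G}(k)$, we have $\widetilde{t}^n \in Z(\widetilde{G})$ (since $\pi^{-1}(Z(G)) = Z(\widetilde{G})$), and the pair $(\Phi, \Phi_t)$ pulls back identically. So I may assume $G$ simply connected, in which case $X_*(T) = \bZ\Phi^\vee$. Then an $\ell$-torsion element $\mu^\vee \in \bZ\Phi^\vee/\bZ\Phi_t^\vee$ gives, via Cartier duality applied to the sub-multiplicative-type group of $T$ with cocharacter lattice $\bZ\Phi_t^\vee$, a $\mu_\ell$-quotient onto which the image of $\widetilde{t}$ must surject with order dividing $n$; the saturation analogue of (2) shows this image has exact order $\ell$, forcing $\ell \mid n$.

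For parts (3) and (4), which require absolute simplicity, I invoke the classification: the tables in \cite[Chap.\ VI, \S 4]{Bourbaki} show that for any irreducible root system $\Phi$ and any closed subroot system $\Phi'$, both $(\bZ\Phi/\bZ\Phi')_{\rm{tors}}$ and $(\bZ\Phi^\vee/\bZ\Phi'^\vee)_{\rm{tors}}$ are cyclic. Combined with (1) and (2) this gives (3), and for (4) the injectivity of $\overline{\chi}_t$ on torsion (from (2)) together with cyclicity forces $\alpha(t)$ to have exact order $m$ when $\alpha$ generates $(\bZ\Phi/\bZ\Phi_t)_{\rm{tors}}$. The main obstacle throughout is the saturation/injectivity step, requiring a careful analysis of closed subroot systems inside $\Phi$; once this is established, the remaining arguments are largely formal.
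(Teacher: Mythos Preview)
Your strategy has a genuine gap at the step you yourself flag as ``the main obstacle'': the injectivity of $\overline{\chi}_t$ on torsion. After reducing to the case where $\Phi_t$ has full rank in $\Phi$ (which is legitimate), the claim becomes $\bZ\Phi_t = \ker\chi_t$; since $\bZ\Phi/\ker\chi_t$ embeds in $k^\times$ and is therefore cyclic, this would in particular force $\bZ\Phi/\bZ\Phi_t$ to be cyclic. So your ``saturation argument'' for (2) already presupposes the cyclicity in (3). But your independent proof of (3) is incorrect: it is \emph{not} true that $(\bZ\Phi/\bZ\Phi')_{\rm tors}$ is cyclic for every closed subsystem $\Phi'$ of an irreducible $\Phi$. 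For example, take $\Phi$ of type $\mathrm{D}_6$ (roots $\pm e_i \pm e_j$) and $\Phi' = \{\pm(e_1 \pm e_2), \pm(e_3 \pm e_4), \pm(e_5 \pm e_6)\}$ of type $\mathrm{A}_1^6$; then $\bZ\Phi/\bZ\Phi' \cong (\bZ/2)^2$. The point is precisely that such a $\Phi'$ is never of the form $\Phi_t$, but establishing this is the content of the theorem, not a consequence of closedness alone. Your argument for (1) is also problematic: there is no evaluation homomorphism $\bZ\Phi^\vee \to k^\times$ analogous to $\chi_t$, since coroots are cocharacters rather than characters, so the ``dual argument'' does not run as stated.

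The paper's proof supplies exactly the missing structural input. After the same reduction to the full-rank absolutely simple case, it passes to an avatar $\tau$ of $t$ in the real torus $(\bR\otimes X_*(T))/X_*(T)$ and uses the fundamental alcove for the affine Weyl group: since $\Phi_t$ has full rank, $\tau$ must be $W$-conjugate to a \emph{vertex} $v_i$ of the alcove, forcing $\Phi_t$ to be $W$-conjugate to the Borel--de~Siebenthal subsystem $\Phi_i$ obtained by replacing $\alpha_i$ with the lowest root $\alpha_0$. For these specific subsystems one has $\bZ\Phi/\bZ\Phi_i \cong \bZ/n_i$ and $\bZ\Phi^\vee/\bZ\Phi_i^\vee \cong \bZ/n_i^\vee$ with $n_i^\vee \mid n_i$ (where $-\alpha_0 = \sum n_j\alpha_j$), and $n_i \mid n$ since $n_i$ is the least integer with $\alpha_i(t^{n_i}) = 1$. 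This alcove argument is the key idea you are missing; it simultaneously yields the cyclicity, the divisibility, and parts (1)--(4) at once, without any appeal to classification tables.
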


Only the first point in this theorem is explicitly stated in \cite{SteinbergTorsion}, but the proof establishes the other points as well. This is an important result for us, so we will describe the proof in some detail, following closely the proof of \cite[Thm.\ 2.21]{SteinbergTorsion}.

\begin{proof}
We may evidently assume that $k$ is algebraically closed, and we will do so. By \cite[Lem.\ 2.9(a)]{SteinbergTorsion}, every closed subsystem of $\Phi$ is the root system of some reductive subgroup of $G$ of the same rank. Thus by passing from $\Phi$ to $\bQ\Phi_t \cap \Phi$, we may assume that $\Phi$ is a subset of $\bQ \Phi_t$. We will further reduce the proofs of the first two points to the case that $G$ is simple and simply connected. If $S$ is the maximal central torus in $G$, then the multiplication map $S \times \sD(G) \to G$ is a central isogeny, and there is an induced isomorphism of root systems in the opposite direction. Since the root system of $S$ is trivial, we may pass from $G$ to $\sD(G)$ and from $t$ to some component in $\sD(G)(k)$ and thus assume that $G$ is semisimple. Replacing $G$ by its universal cover $\widetilde{G}$ and $t$ by an arbitrary lift in $\widetilde{G}(k)$ (which does not affect $\Phi$ or $\Phi_t$), we may then assume that $G$ is simply connected. Under these hypotheses, $G$ is a product of simple groups, so by the same reasoning as before we may assume that $G$ is simple. We have thus reduced (\ref{item:steinberg-torsion-1}) and (\ref{item:steinberg-torsion-2}) to (\ref{item:steinberg-torsion-3}). \smallskip

Let $\sT = (\bR \otimes X_*(T))/X_*(T)$, so that $\sT$ is a compact torus over $\bR$ with character lattice $X(T)$. We may and will regard every character of $T$ as a function $\sT \to \bR/\bZ$. By \cite[5.1]{SteinbergEndomorphisms}, there exists $\tau \in \sT$ such that precisely the same characters vanish at $t$ and $\tau$. Since $t^n \in Z(G)$, all roots vanish at $t^n$. Since $\alpha(t^n) = (n\alpha)(t)$ for all $n$ and all characters $\alpha \in X(T)$ (and similarly for $\tau$), it follows that all roots vanish at $\tau^n$. We wish now to understand the least $n$ such that all roots vanish at $\tau^n$, a task which we take up below. We note for later reference that
\[
\Phi_t = \{\alpha \in \Phi: \alpha(t) = 0\} = \{\alpha \in \Phi: \alpha(\tau) = 0\}.
\]

Let $\al_1, \dots, \al_r$ be a basis for $\Phi$, and let $\al_0 = \sum_{i=1}^{r} -n_i \al_i$ be the negation of the highest root of $\Phi$. Let $V = \bR \otimes X_*(T) = \bR \otimes \bZ \Phi^\vee$, and let $\Sigma$ denote the simplex in $V$ defined by the inequalities $\al_i \geq 0$ for $1 \leq i \leq r$ and $\al_0 \geq -1$. Then $\Sigma$ is a fundamental domain for the action of the affine Weyl group $W_a = \bZ\Phi^\vee \rtimes W = X_*(T) \rtimes W$, where $W$ is the (finite) Weyl group, see \cite[Chap.\ V, \S 3, Thm.\ 2]{Bourbaki}. So the image $\overline{\Sigma}$ of $\Sigma$ in $\sT$ is a fundamental domain for the action of $W$. \smallskip

Let $v \in \Sigma$ be any point, and let $\Phi_v$ denote the subsystem of $\Phi$ consisting of those roots which take integral values at $v$. In other words, if we consider every character $\alpha \in X(T)$ as a function $\sT \to \bR/\bZ$, then $\Phi_v$ consists of those roots which vanish on the image of $v$ in $\sT$. We claim that if $v$ is not a vertex of $\Sigma$, then $\bZ \Phi_v$ is not finite index in $\bZ \Phi$, i.e., $\bQ \Phi_v \neq \bQ \Phi$. Note that in any case we have $-1 \leq \alpha_0(v) \leq 0$ and $0 \leq \alpha_i(v) \leq 1/n_i$ for all $1 \leq i \leq r$. If $v$ is not a vertex, then there exist two indices $0 \leq i < j \leq r$ such that $\alpha_i(v)$ and $\alpha_j(v)$ take neither extreme value in the above inequalities. First suppose $i = 0$, so that $-1 < \alpha_0(v) < 0$ and $0 < \alpha_j(v) < 1/n_j$. It follows that $\Phi_v$ contains no positive root $\beta = \sum_{m=1}^r p_m \alpha_m$ with $p_j \neq 0$: by \cite[Chap.\ VI, \S 1, Prop.\ 25(i)]{Bourbaki}, we have $p_m \leq n_m$ for all $m$, and because $\alpha_m(v) \geq 0$ for all $m \geq 1$ it follows that
\[
0 < \beta(v) = \sum_{m=1}^r p_m \alpha_m(v) < 1,
\]
proving the claim. Thus $\Phi_v \subset \sum_{m \neq j} \bZ\alpha_m$ and $\bQ\Phi_v$ has dimension $\leq r - 1$, as desired. Now suppose $i \neq 0$, so that $0 < \alpha_i(v) < 1/n_i$ and $0 < \alpha_j(v) < 1/n_j$. A similar argument to the above shows that $\Phi_v$ contains no positive root $\sum_{m=1}^r p_m \alpha_m$ with $p_i \neq 0$ and $p_j \neq 0$ other than $-\alpha_0$. Thus $\Phi_v \subset \bZ\alpha_0 + \sum_{m \neq i, j} \bZ\alpha_m$, and we win again. \smallskip

For each $1 \leq i \leq r$, let $v_i$ be the vertex of $\Sigma$ in $V$ defined by $\al_j(v_i) = 0$ for $j \neq i$ and $\al_i(v_i) = 1/n_i$ (so that $\al_0(v_i) = -1$). We now reduce to the case that $\Phi_t = \Phi_i$ for some $i$. Since $\overline{\Sigma}$ is a fundamental domain for the Weyl group action on $\sT$, the point $\tau$ is equivalent to some point of $\overline{\Sigma}$. Note that if $w \in W$ then precisely the same characters vanish at $w \cdot \tau$ and $w \cdot t$ since $\alpha(w \cdot t) = (w^{-1} \cdot \alpha)(t)$ and similarly for $\tau$. Since $\Phi_t$ and $\Phi$ span the same vector space, $\tau$ is in fact equivalent to a vertex by the previous paragraph. In fact we may assume this vertex is not $0$ since otherwise $\Phi_t = \Phi$ and the result is trivial. Thus $\tau$ is equivalent to some $v_i$ as above, so we may assume $\Phi_t = \Phi_i$. Clearly $n_i v_i$ is the smallest multiple of $v_i$ at which all roots vanish, so $n_i$ divides $n$. By \cite[1.15]{SteinbergTorsion}, if $\Phi_i = \Phi_{v_i}$, then $\bZ \Phi/\bZ \Phi_i$ is cyclic of order $n_i$, and in particular its order divides $n$. \smallskip

Let $\alpha_0^\vee = -\sum_{i=1}^r n_i^\vee \alpha_i^\vee$ be the expression of $\alpha_0^\vee$ as a linear combination of elements in the dual coroot basis. By \cite[1.15]{SteinbergTorsion}, $\bZ \Phi^\vee/\bZ \Phi_t^\vee$ has order $n_i^\vee$. One can show (see \cite[1.1]{SteinbergTorsion}) $n_i^\vee\frac{(\alpha_0, \alpha_0)}{(\alpha_i, \alpha_i)} = n_i$, where $(\cdot, \cdot)$ is a chosen $W$-invariant inner product. By \cite[Chap.\ VI, \S 1, Prop.\ 25(iii)]{Bourbaki}, $\alpha_0$ is a long root, so by \cite[Chap.\ VI, \S 1, Prop.\ 12(i)]{Bourbaki} $n_i^\vee$ divides $n_i$. This establishes (\ref{item:steinberg-torsion-3}). \smallskip

For (\ref{item:steinberg-torsion-4}), suppose again that $\Phi_t = \Phi_i$, as we may by replacing $t$ by a $W$-conjugate. In particular, this means that $\alpha_j(t) = 1$ for all $j \neq i$ and $n_i$ is the minimal positive integer such that $\alpha_i(t^{n_i}) = 1$. So $n_i$ is the minimal positive integer such that $t^{n_i} \in Z(G)$. Since also $n_i$ is the order of $\bZ\Phi/\bZ \Phi_i$, point (\ref{item:steinberg-torsion-4}) follows.
\end{proof}

\begin{cor}\label{corollary:steinberg-miracle}
Let $t \in G(k)$ be semisimple and let $H = Z_G(t)^0$.
\begin{enumerate}
    \item\label{item:p-free-fundamental-group} If $p \nmid |\pi_1(\sD(G))|$, then $p \nmid |\pi_1(\sD(H))|$.
    \item\label{item:smooth-center} The quotient $Z(H)/Z(G)$ is smooth.
    \item\label{item:center-cyclic-component-group} If $\sD(G_{\overline{k}})$ is simple then $Z(H)/Z(H)^0Z(G)$ is cyclic, generated by $t$.
\end{enumerate}
\end{cor}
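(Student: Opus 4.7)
The approach is to base change to $\overline{k}$ (harmless, as everything in sight is defined over $k$) and choose a maximal torus $T \subset G$ containing $t$. Then $T$ is also a maximal torus of $H$, $Z(G) \subseteq Z(H) \subseteq T$, and $\Phi_t = \{\alpha \in \Phi : \alpha(t) = 1\}$ is the root system of $(H,T)$. Standard identifications give $X^*(Z(G)) = X^*(T)/\bZ\Phi$, $X^*(Z(H)) = X^*(T)/\bZ\Phi_t$, and hence $X^*(Z(H)/Z(G)) = \bZ\Phi/\bZ\Phi_t$. Since $Z(G)^0 \subseteq Z(H)^0$, a dimension count shows that $Z(H)^0 \twoheadrightarrow (Z(H)/Z(G))^0$, so $Z(H)/Z(H)^0 Z(G)$ is canonically the component group of $Z(H)/Z(G)$ and is Cartier dual to $(\bZ\Phi/\bZ\Phi_t)_{\rm{tors}}$.

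The common engine for all three parts is Theorem~\ref{theorem:steinberg-torsion}, whose hypothesis $t^n \in Z(G)$ is equivalent to $\alpha(t)^n = 1$ for every $\alpha \in \Phi$; since the $\alpha(t) \in k^\times$ have orders prime to $p$, any such $n$ is automatically a $p'$-integer. When no such $n$ exists, I would first replace $t$ by a finite-order element $t' \in \overline{\langle t \rangle}(\overline{k})$ satisfying $\Phi_{t'} = \Phi_t$: torsion is Zariski dense in the multiplicative type group $\overline{\langle t \rangle}$, while the constraint $\alpha(t') \neq 1$ for $\alpha \notin \Phi_t$ only excludes finitely many proper closed subgroups. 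As (1) and (2) depend only on $\Phi_t$, this substitution is harmless for them. Part (1) then follows by combining Theorem~\ref{theorem:steinberg-torsion}(1) with the short exact sequence $0 \to \bZ\Phi^\vee/\bZ\Phi_t^\vee \to X_*(T)/\bZ\Phi_t^\vee \to X_*(T)/\bZ\Phi^\vee \to 0$: the torsion of the middle term has order dividing the product of the torsions of the outer terms, both of which are prime to $p$ (using the hypothesis for the right term). Part (2) follows immediately from Theorem~\ref{theorem:steinberg-torsion}(2), since a multiplicative type group over a field of characteristic $p$ is smooth iff its character group has no $p$-torsion.

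For (3), Theorem~\ref{theorem:steinberg-torsion}(3) yields that $(\bZ\Phi/\bZ\Phi_t)_{\rm{tors}}$ is cyclic of some order $m$, so $Z(H)/Z(H)^0 Z(G)$ is cyclic of order $m$. The image of $t$ corresponds under Cartier duality to the evaluation character $[\alpha] \mapsto \alpha(t)$, and $t$ generates the quotient iff a generator $[\alpha]$ of $(\bZ\Phi/\bZ\Phi_t)_{\rm{tors}}$ has $\alpha(t) \in k^\times$ of order exactly $m$---this is Theorem~\ref{theorem:steinberg-torsion}(4). The main obstacle is once more the hypothesis that $t^n \in Z(G)$ for some $n$, which here cannot be resolved by arbitrary replacement of $t$. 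To handle this, I would decompose $t = t_0 t_{\sD}$ with $t_0 \in Z(G)^0(\overline{k})$ and $t_{\sD} \in \sD(G)(\overline{k})$, observe that $t_0 \in Z(G)^0 \subseteq Z(H)^0$ forces $t$ and $t_{\sD}$ to have the same class in $Z(H)/Z(H)^0 Z(G)$, and then apply Steinberg's vertex normalization (after reducing $\Phi$ to $\Phi \cap \bQ\Phi_t$ and Weyl-conjugating to arrange $\Phi_{t_{\sD}} = \Phi_i$): the equalities $\alpha_j(t_{\sD}) = 1$ for $j \neq i$, combined with the relation $\alpha_0 + \sum_j n_j \alpha_j = 0$ and $\alpha_0 \in \Phi_i$, force $\alpha_i(t_{\sD})^{n_i} = 1$; thus every root value $\alpha(t_{\sD})$ is a root of unity, so some $p'$-power of $t_{\sD}$ lies in $Z(\sD(G)) \subseteq Z(G)$, and Theorem~\ref{theorem:steinberg-torsion}(4) applies to finish.
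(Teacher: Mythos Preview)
Your proof is correct and uses the same core inputs as the paper's: the character-lattice identifications $X^*(Z(H)/Z(G)) = \bZ\Phi/\bZ\Phi_t$, the left-exact torsion sequence for (\ref{item:p-free-fundamental-group}), and Theorem~\ref{theorem:steinberg-torsion} throughout. The overall architecture is the same.

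The one genuine difference is how you arrange the finite-order hypothesis of Theorem~\ref{theorem:steinberg-torsion}. For (\ref{item:p-free-fundamental-group}) and (\ref{item:smooth-center}) the paper decomposes $\overline{\langle t\rangle} = S \times M$ into a torus and a finite multiplicative group, treats the torus piece separately (using that $\sD(Z_G(S))$ is simply connected, respectively that $\bZ\Phi/\bZ\Phi_t$ is torsion-free when $\Phi_t$ is cut out by a torus), and then applies Theorem~\ref{theorem:steinberg-torsion} to the finite piece. Your Zariski-density trick---choosing a torsion point $t'\in\overline{\langle t\rangle}$ avoiding the finitely many kernels $\ker(\alpha|_{\overline{\langle t\rangle}})$ for $\alpha\notin\Phi_t$---accomplishes the same reduction in one stroke and avoids the auxiliary structural facts about $Z_G(S)$. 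This is a bit slicker. For (\ref{item:center-cyclic-component-group}) the paper simply cites Theorem~\ref{theorem:steinberg-torsion}(\ref{item:steinberg-torsion-3}),(\ref{item:steinberg-torsion-4}) without revisiting the finite-order hypothesis (relying on the fact that the proof of those parts, after the reduction to $\Phi\subset\bQ\Phi_t$ and the vertex normalization, does not actually use it); your decomposition $t=t_0 t_{\sD}$ and explicit verification that $t_{\sD}$ acquires finite order in the reduced group makes this step more transparent. One small comment: your identification of the component group as Cartier dual to $(\bZ\Phi/\bZ\Phi_t)_{\rm tors}$ is literally correct only once (\ref{item:smooth-center}) has ruled out $p$-torsion; the paper phrases it as $(\bZ\Phi/\bZ\Phi_t)_{p'\text{-}\rm tors}$ from the outset.
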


\begin{proof}
We may and do assume that $k$ is algebraically closed and that $G$ is semisimple. For (\ref{item:p-free-fundamental-group}), one can reduce easily to the case that $G$ is simply connected. Note that $\overline{\langle t \rangle}$ is a group of multiplicative type, so we may decompose it as $\overline{\langle t \rangle} = S \times M$, where $S$ is a torus and $M$ is a finite $k$-group of multiplicative type. By \cite[Lem.\ 2.17]{SteinbergTorsion}, $Z_G(S)$ is a connected reductive $k$-group such that $\sD(Z_G(S))$ is simply connected. Since $Z_G(t) = Z_{Z_G(S)}(M)$, we may pass from $G$ to $\sD(Z_G(S))$ to assume that $t$ has finite order $n$. By Theorem~\ref{theorem:steinberg-torsion}(\ref{item:steinberg-torsion-1}), if $T$ is a maximal torus of $H$ then every prime dividing the order of $(\bZ \Phi^\vee/\bZ (\Phi_t)^\vee)_{\rm{tors}}$ divides $n$. By hypothesis, $p$ does not divide the order of $(X_*(T)/\bZ\Phi^\vee)_{\rm{tors}}$. Since $t$ is semisimple, $p$ does not divide $n$. Consider the left exact sequence
\[
0 \to (\bZ\Phi^\vee/\bZ(\Phi_t)^\vee)_{\rm{tors}} \to (X_*(T)/\bZ(\Phi_t)^\vee)_{\rm{tors}} \to (X_*(T)/\bZ\Phi^\vee)_{\rm{tors}}.
\]
We have shown that each of the outer two terms has order not divisible by $p$, so $p$ does not divide the order of $(X_*(T)/\bZ (\Phi_t)^\vee)_{\rm{tors}}$. Thus $p$ does not divide $|\pi_1(\sD(H))|$. \smallskip

For (\ref{item:smooth-center}), arguing as in the previous paragraph we may deal separately with the cases that $\overline{\langle t \rangle} = S$ is a torus and that $t$ is of finite order. In the first case, note that $\Phi_t$ is the set of roots of $\Phi$ which vanish when paired against $X_*(S)$, i.e., $\Phi_t = \Phi \cap (X(T/S) \otimes \bQ)$. Thus $\bZ \Phi/\bZ \Phi_t$ is torsion-free, being a subgroup of $X(S) \otimes \bQ$. The centers of $G$ and $Z_G(S)$ are dual to $X(T)/\bZ \Phi$ and $X(T)/\bZ \Phi_t$, respectively, so that the quotient $Z(Z_G(S))/Z(G)$ is dual to $\bZ \Phi/\bZ \Phi_t$. Thus this quotient is a torus, and in particular it is smooth.\smallskip

Now we may assume that $t$ has finite order, in which case we need to show that $\bZ \Phi / \bZ \Phi_t$ has no $p$-torsion when $p > 0$. Since any semisimple element of $G$ of finite order has order not divisible by $p$, the result follows immediately from Theorem~\ref{theorem:steinberg-torsion}(\ref{item:steinberg-torsion-2}).\smallskip

For (\ref{item:center-cyclic-component-group}), we remark that $Z(H)/Z(H)^0Z(G)$ is Cartier dual to $(\bZ\Phi/\bZ\Phi_t)_{p'\rm{-tors}}$, where the subscript indicates the subgroup of torsion elements of order prime to $p$ (equal by convention to $\bZ\Phi/\bZ\Phi_t$ if $p = 0$). Indeed, $Z(H)/Z(G)$ is Cartier dual to $\bZ\Phi/\bZ\Phi_t$, and the component group of $Z(H)/Z(G)$ is therefore Cartier dual to $(\bZ\Phi/\bZ\Phi_t)_{p'-\rm{tors}}$. Since
\[
(Z(H)/Z(G))/(Z(H)/Z(G))^0 \cong Z(H)/Z(H)^0Z(G),
\]
the remark follows. By Theorem~\ref{theorem:steinberg-torsion}(\ref{item:steinberg-torsion-3}) and (\ref{item:steinberg-torsion-4}), the group $(\bZ\Phi/\bZ\Phi_t)_{p'\rm{-tors}}$ is cyclic of some order $m$, and if $\alpha$ is a generator then $\alpha(t) \in k^\times$ is of exact order $m$. Thus in the bilinear pairing
\[
(Z(H)/Z(H)^0Z(G))(k) \times (\bZ \Phi/\bZ\Phi_t)_{p'\rm{-tors}} \to k^\times
\]
we see that $t$ is dual to a generator of $(\bZ \Phi/\bZ\Phi_t)_{p'\rm{-tors}}$, and thus the result follows.
\end{proof}

\begin{lemma}\label{lemma:etale-central-isogeny-centralizer}
Let $H$ be a connected reductive $k$-subgroup of $G$ containing $\sD(G)$, and let $h \in H(k)$.
\begin{enumerate}
    \item\label{item:subgroup-centralizer-no-change} The natural morphism $Z_H(h)/Z(H) \to Z_G(h)/Z(G)$ is an isomorphism.
    \item\label{item:nu-diagram-homomorphism} Suppose that $\pi: G' \to H$ is a central isogeny of connected reductive groups and $g' \in G'(k)$ is a lift of $h$. There is a diagram
    \[
    \begin{tikzcd}
    Z_{G'}(g') \arrow[r, "i"] \arrow[d]
        &\pi^{-1}(Z_H(h)) \arrow[r] \arrow[d, "\nu"]
        &G' \arrow[d, "\mu"] \\
    \Spec k \arrow[r, "1"]
        &\ker \pi \arrow[r]
        &G'
    \end{tikzcd}
    \]
    whose squares are Cartesian, where $i: Z_{G'}(g') \to \pi^{-1}(Z_H(h))$ is the canonical inclusion and $\mu: G' \to G'$ is the map $\mu(x) = xg'x^{-1}g'^{-1}$. The map $\nu$ is a homomorphism.
    \item\label{item:i-is-an-open-embedding} If $\pi$ is etale, then $i$ is an open embedding.
    \item\label{item:nu-factors-through-identity-component} Suppose further that $\pi$ is etale and $h$ has the following property: for $h_{\overline{k}} = tu$ the Jordan decomposition of $h_{\overline{k}}$ in $H(\overline{k})$, the component group $Z_{H_{\overline{k}}}(t)/Z_{H_{\overline{k}}}(t)^0$ has order prime to the degree of $\pi$. Then $\nu$ factors through $(\ker \pi)^0 = \Spec k$ and the natural morphism $Z_{G'}(g')/Z(G') \to Z_H(g)/Z(H)$ is an isomorphism.
\end{enumerate}
\end{lemma}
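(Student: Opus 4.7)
For (1), note $G = H \cdot Z(G)$ as fppf sheaves, since $G = Z(G)^0 \cdot \sD(G)$ and $\sD(G) \subset H$. Any $z \in Z(H)$ then satisfies $g z g^{-1} = h z h^{-1} = z$ for every $g = h z_0 \in G$ (with $h \in H$, $z_0 \in Z(G)$), so $Z(H) = Z(G) \cap H$. The induced map $H/Z(H) \to G/Z(G)$ is thereby injective with kernel $Z(G) \cap H$ and surjective by $G = H Z(G)$, hence an isomorphism of fppf sheaves; under this identification the conjugation action on $h$ agrees on both sides, yielding $Z_H(h)/Z(H) \cong Z_G(h)/Z(G)$.

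For (2), define $\mu \colon G' \to G'$ by $\mu(x) = [x, g']$. Since $\pi \circ \mu(x) = [\pi(x), h]$, the identity $\mu^{-1}(\ker\pi) = \pi^{-1}(Z_H(h))$ gives the right Cartesian square with $\nu = \mu|_{\pi^{-1}(Z_H(h))}$, and the left Cartesian square comes from $Z_{G'}(g') = \mu^{-1}(1)$. The commutator identity $[xy, g'] = x [y, g'] x^{-1} \cdot [x, g']$ combined with $[y, g'] \in \ker\pi \subset Z(G')$ yields $[xy, g'] = [y, g'][x, g']$, so $\nu$ is a homomorphism. For (3), when $\pi$ is etale the inclusion $\{1\} \hookrightarrow \ker\pi$ is an open immersion (identity component of a finite etale group), and $i$ is its base change.

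For (4), since $\ker\pi$ is etale it suffices to check $\nu$ is trivial on $\overline{k}$-points. Replacing a lift $x$ of $y \in Z_H(h)$ by $xk$ with $k \in \ker\pi$ central does not change $[x, g']$, so $\nu$ descends to a homomorphism $\bar\nu \colon Z_H(h)(\overline{k}) \to \ker\pi(\overline{k})$. Write the Jordan decomposition $h = tu$ and set $L = Z_H(t)^0$, a connected reductive group by Steinberg (Theorem~\ref{theorem:steinberg-connectedness}); since $t \in Z(L)$ we have $Z_L(h) = Z_L(u)$, and the inclusion $Z_L(h) \subset Z_H(h)$ has index dividing $|\pi_0(Z_H(t))|$, which is coprime to $|\ker\pi|$ by hypothesis, so order considerations force $\bar\nu$ to factor through $Z_L(u)$. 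Translating $g'$ by an element of $\ker\pi$ if necessary (which changes neither $\bar\nu$ nor $Z_{G'}(g')$), we may assume $g' \in \tilde L$, the identity component of $\pi^{-1}(L)$; with $g' = t'u'$ the Jordan decomposition in $\tilde L$ (so $t' \in Z(\tilde L)$ lifts $t$), we have $\bar\nu(y) = [u', \tilde y]$ for any lift $\tilde y \in \tilde L$.

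It remains to show $y \mapsto [u', \tilde y]$ vanishes on all of $Z_L(u)(\overline{k})$. Given $y$ with Jordan decomposition $y = y_s y_u$, both factors lie in $Z_L(u)$ and commute, so by the homomorphism property it suffices to treat each. For the unipotent part: the commuting unipotents $u, y_u$ generate a unipotent subgroup, contained in the unipotent radical $U$ of a common Borel $B$ of $L$; since $\ker\pi \cap \tilde U \subset Z(\tilde L) \cap \tilde U = 1$ (a group scheme that is both of multiplicative type and unipotent must be trivial), the isogeny restricts to an isomorphism $\tilde U \xrightarrow{\sim} U$, giving a unique unipotent lift $y'_u \in \tilde U$ commuting with the unique unipotent lift $u'' \in \tilde U$ of $u$; and $u' u''^{-1} \in \ker\pi$ is central, so $[u', y'_u] = [u'', y'_u] = 1$. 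For the semisimple part: $y_s$ is a semisimple element of the smooth algebraic group $Z_L(u)_{\rm{red}}$, hence lies in some torus $T_0 \subset Z_L(u)_{\rm{red}} \subset L$; the lift $\tilde T_0 = \pi^{-1}(T_0)^0$ is a torus (smooth connected commutative with no nontrivial unipotent subgroups, since $\ker\pi \subset Z(G')$ is of multiplicative type), and the commutator map $\tilde T_0 \to \ker\pi$ sending $\tilde d \mapsto [\tilde d, u']$ is a homomorphism of algebraic groups from a connected source to an etale target, hence trivial. Any lift of $y_s$ to $\tilde T_0$ thus centralizes $u'$, giving $\bar\nu(y_s) = 1$. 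Combining, $\bar\nu \equiv 1$, so $\nu$ factors through $(\ker\pi)^0 = \Spec k$; the Cartesian square then identifies $Z_{G'}(g') \cong \pi^{-1}(Z_H(h))$, and combining with $Z(G') = \pi^{-1}(Z(H))$ (a standard consequence of $G'$ connected and $\pi$ etale) gives $Z_{G'}(g')/Z(G') \cong Z_H(h)/Z(H)$. The main obstacle is the semisimple case, resolved by the torus-lifting and connectedness argument above.
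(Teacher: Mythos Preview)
Your arguments for parts (1)--(3) are correct and essentially match the paper's. Part (4) also follows the same reduction strategy as the paper (pass to $\overline{k}$-points, use $L = Z_H(t)^0$, separate into semisimple and unipotent pieces), but your treatment of the semisimple piece has a genuine gap.

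You claim that a semisimple element $y_s \in Z_L(u)(\overline{k})$ ``lies in some torus $T_0 \subset Z_L(u)_{\rm{red}}$.'' This is only guaranteed when $y_s$ lies in the \emph{identity component} $Z_L(u)^0_{\rm{red}}$; for disconnected groups it is false in general (think of a reflection in an orthogonal group, or a Weyl-group representative in $N_G(T)$). Since $u$ is an arbitrary unipotent element of $L$, the centralizer $Z_L(u)$ can be disconnected with nontrivial component group, and nothing in the hypothesis on $\pi_0(Z_H(t))$ controls $\pi_0(Z_L(u))$. So your torus-lifting argument only shows $\bar\nu$ vanishes on $Z_L(u)^0(\overline{k})$, not on all of $Z_L(u)(\overline{k})$.

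The paper avoids this by organizing the reduction differently: it first proves the lemma for $h = t$ \emph{semisimple} (where $Z_{G'}(t')$ is open in $\pi^{-1}(Z_H(t))$, so $\nu$ factors through $\pi_0(Z_H(t))$, and the coprimality hypothesis kills it), concluding $Z_{G'}(t') = \pi^{-1}(Z_H(t))$. It then replaces $(G', H, h)$ by $(Z_{G'}(t'), Z_H(t), u)$ and treats the purely unipotent case directly: if $[x, u'] = c \in \ker\pi$, then $xu'x^{-1} = cu'$ is simultaneously unipotent (a conjugate of $u'$) and has semisimple part $c$ (since $c$ is central of multiplicative type), forcing $c = 1$. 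This one-line Jordan-decomposition argument handles \emph{all} of $\pi^{-1}(Z_L(u))$ at once, bypassing any need to find tori inside $Z_L(u)$. You could patch your proof by replacing your entire treatment of $y \in Z_L(u)$ with this argument.
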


\begin{proof}
We may and do assume that $k$ is algebraically closed. For (\ref{item:subgroup-centralizer-no-change}), let $S$ be the maximal central torus in $G$, and let $S_0 \subset S$ be a subtorus such that the multiplication morphism $S_0 \times H \to G$ is a central isogeny. Let $\vp: Z_H(h)/Z(H) \to Z_G(h)/Z(G)$ be the natural morphism; we claim that $\vp$ is an isomorphism. Since $H$ is a closed subgroup of $G$, $\vp$ is clearly a monomorphism. To show that $\vp$ is an isomorphism, it therefore suffices to show that it is an epimorphism of fppf sheaves. If $R$ is a $k$-algebra and $\overline{x} \in (Z_G(h)/Z(G))(R)$, then after replacing $R$ by an fppf extension we may assume that there is some $x \in Z_G(h)(R)$ lifting $\overline{x}$. Passing to a further fppf extension and translating by a point of $S_0(R)$, we may assume $x \in Z_G(h) \cap H = Z_H(h)$. So indeed $\vp$ is an isomorphism. \smallskip

The claim that the diagram in (\ref{item:nu-diagram-homomorphism}) exists and has Cartesian squares is simple and left to the reader. To prove $\nu$ is a homomorphism, let $R$ be a $k$-algebra and let $x, y \in \pi^{-1}(Z_H(h))(R)$. This means that $x g' x^{-1} g'^{-1}$ and $y g' y^{-1} g'^{-1}$ lie in $\ker \pi$, and in particular these elements are central. Thus we have
\begin{align*}
\nu(xy) = xyg'y^{-1}x^{-1}g'^{-1} &= x(yg'y^{-1}g'^{-1})g'x^{-1}g'^{-1} \\
    &= xg'x^{-1}g'^{-1}yg'y^{-1}g'^{-1} \\
    &= \nu(x)\nu(y).
\end{align*}
So indeed (\ref{item:nu-diagram-homomorphism}) holds. If $\pi$ is etale, then the identity section $1: \Spec k \to \ker \pi$ is an open embedding, so $i$ is an open embedding by base change, proving (\ref{item:i-is-an-open-embedding}). \smallskip

For (\ref{item:nu-factors-through-identity-component}), it is harmless to replace $h$ by a translate by any element of $Z(G)(k)$, and so by (\ref{item:subgroup-centralizer-no-change}) we may pass from $G$ to $\sD(G)$ to assume that $G$ is semisimple and in particular $H = G$. Let $g' = t'u'$ be the Jordan decomposition of $g'$, giving a corresponding decomposition $h = tu$ of $h$ by functoriality of the Jordan decomposition. Note that $\ker \pi$ lies in $Z_{G'}(t')^0$: if $T'$ is a maximal torus of $G'$ containing $t'$, then $\ker \pi \subset T' \subset Z_{G'}(t')^0$. Looking at the Cartesian diagram
\[
\begin{tikzcd}
Z_{G'}(t') \arrow[r] \arrow[d]
    &\pi^{-1}(Z_H(t)) \arrow[d, "\nu"] \\
\Spec k \arrow[r]
    &\ker \pi
\end{tikzcd}
\]
in the lemma, we see that $Z_{G'}(t')$ is an open subscheme of $\pi^{-1}(Z_H(t))$, and thus $Z_{G'}(t')/\ker \pi$ is an open subscheme of $\pi^{-1}(Z_H(t))/\ker \pi = Z_H(t)$. By the previous paragraph, $\nu$ is a homomorphism, and since it is trivial when restricted to the open subscheme $Z_{G'}(t')/\ker \pi$, the morphism $\nu$ factors through the component group of $Z_H(t)$. Since $Z_H(t)/Z_H(t)^0$ has order prime to the order of $\ker \pi$ by hypothesis, in fact $\nu$ is trivial. Thus the natural morphism $Z_{G'}(t')/\ker \pi \to Z_H(t)$ is an isomorphism, and thus $Z_{G'}(t') = \pi^{-1}(Z_H(t))$. By replacing $H$ and $G'$ by $Z_H(t)$ and $Z_{G'}(t')$, respectively, we may therefore assume that $h$ and $g'$ are unipotent. \smallskip

If $R$ is a $k$-algebra and $x \in G'(R)$ is such that $xg'x^{-1}g^{-1} \in (\ker \pi)(R)$ then $xg'x^{-1} \in (\ker \pi)(R) \cdot g'$. Since $g'$ is unipotent, the same is true of any conjugate, and thus the homomorphism $\nu$, being given by a commutator formula, factors through $(\ker \pi)^0 = \Spec k$. Thus again we see $Z_{G'}(g') = \pi^{-1}(Z_H(h))$, and passing to the quotient by $Z(G')$ yields the result.
\end{proof}

It is claimed in some places (e.g., \cite[\S 4.1, following remark]{Humphreys}) that if $B \subset G$ is a Borel $k$-subgroup and $u$ is a regular unipotent element in the unipotent radical of $B$, then $Z_G(u) = Z_B(u)$. This is true on the level of $\ov k$-points, but it fails schematically, as the following example shows.

\begin{example}\label{example:pgl2-centralizer}
    Let $G = \PGL_2$ over a field $k$ of characteristic $2$, let $u' = \begin{pmatrix}
        1 &1 \\ 0 &1
    \end{pmatrix} \in \SL_2(k)$, and let $u$ be the image of $u'$ in $G(k)$. A straightforward computation yields
    \[
    Z_B(u) = \left\{\begin{pmatrix}
        a &b \\ c &d
    \end{pmatrix}\colon c^2 = 0, a^2 = ad-bc-ac\right\}.
    \]
    From this description, we see that $Z_G(u)_{\rm{red}} = \bG_a$, the unipotent radical of the upper-triangular Borel subgroup $B$ of $G$. However, $Z_G(u)$ is not reduced: concretely, the element $\begin{pmatrix}
        1 + \eps &0 \\ \eps &1
    \end{pmatrix}$ lies in $Z_G(u)(k[\eps]/(\eps^2))$ but is not upper-triangular.
\end{example}

A slightly weaker version of the following lemma appears in \cite[Props.\ 2.3 and 2.6]{BRR}, where it is assumed that $Z(G)$ is smooth. We remove this assumption using a simple trick which we will find useful several times in the sequel.

\begin{lemma}\label{lemma:smooth-unipotent-centralizer}
Let $u \in G(k)$ be regular unipotent, and let $B$ be the unique Borel subgroup of $G$ containing $u$. Then $Z_B(u)$ is the direct product of $Z(G)$ and $Z_U(u)$, where $U$ is the unipotent radical of $B$, and $Z_U(u)$ is smooth. If moreover $p \nmid |\pi_1(\sD(G))|$, then $Z_G(u) = Z_B(u)$.
\end{lemma}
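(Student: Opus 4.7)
The plan is to prove the three claims using two quotient tricks to bypass the $Z(G)$-smoothness hypothesis present in \cite{BRR}. For the decomposition $Z_B(u) = Z(G) \times Z_U(u)$, I would write $b \in Z_B(u)(R)$ via $B = T \ltimes U$ as $b = tv$ with $t \in T(R)$ and $v \in U(R)$; the equation $bub^{-1} = u$ rearranges to $vuv^{-1} = t^{-1}ut$. Reducing both sides modulo $[U,U]$, the left side equals $u$ (inner conjugation by $U$ being trivial on $U/[U,U]$), while in the coordinates $U/[U,U] \cong \prod_{\alpha \in \Delta} \bG_a$ the right side has $\alpha$-coordinate $\alpha(t)^{-1} c_\alpha$, where the $c_\alpha$ are the simple-root coordinates of $u$. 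The classical characterization of regular unipotents makes each $c_\alpha$ a unit, forcing $\alpha(t) = 1$ for every simple $\alpha$; since the simple roots generate $\bZ\Phi$, this places $t$ scheme-theoretically in $\bigcap_{\alpha \in \Phi} \ker \alpha = Z(G)$. The factor $v = t^{-1}b$ then automatically centralizes $u$, and the triviality of $Z(G) \cap U$ (multiplicative-type meets unipotent) upgrades this to a direct product.

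For smoothness of $Z_U(u)$, I would pass to the adjoint quotient $G/Z(G)$, which has trivial (hence smooth) center. The quotient map restricts to an isomorphism from $U$ onto the unipotent radical $\ov U$ of $B/Z(G)$ (since $U \cap Z(G) = 1$), and a rerun of the multiplicative-type-meets-unipotent argument shows $Z_U(u) = Z_{\ov U}(\ov u)$, because any $v$ centralizing $\ov u$ satisfies $vuv^{-1}u^{-1} \in Z(G) \cap U = 1$. The smoothness statement of \cite{BRR} then applies to $G/Z(G)$ to give smoothness of $Z_{\ov U}(\ov u)$.

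For the equality $Z_G(u) = Z_B(u)$ under the hypothesis $p \nmid |\pi_1(\sD(G))|$, the plan is to reduce to the simply connected semisimple case via Lemma~\ref{lemma:etale-central-isogeny-centralizer}. The hypothesis ensures that the universal cover $\pi\colon \widetilde{\sD(G)} \to \sD(G)$ is an etale central isogeny, so parts~(\ref{item:subgroup-centralizer-no-change}) and~(\ref{item:nu-factors-through-identity-component}) of that lemma (the latter's component-group condition being vacuous since the semisimple part of $u$ is the identity) combine to give $Z_G(u)/Z(G) \cong Z_{\widetilde{\sD(G)}}(u')/Z(\widetilde{\sD(G)})$ for any lift $u'$ of $u$. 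The analogous identification for the Borel centralizers follows from the first part of the present lemma applied to both $G$ and $\widetilde{\sD(G)}$, using that $\pi$ restricts to an isomorphism of unipotent radicals (its kernel is of multiplicative type, hence trivially intersects any unipotent subgroup). This reduces the claim to $Z_{\widetilde G}(u') = Z_{\widetilde B}(u')$ in the simply connected semisimple group $\widetilde G = \widetilde{\sD(G)}$, which is a classical result of Steinberg: $Z_{\widetilde G}(u') = Z(\widetilde G) \cdot Z_{\widetilde G}(u')^0$ with $Z_{\widetilde G}(u')^0 \subset \widetilde U$. I expect the most delicate point to be tracking the chain of isomorphisms through the isogeny, particularly verifying that the comparison on the Borel-centralizer side uses the direct-product decomposition from the first assertion in a way compatible with the identification $\widetilde U \cong U$.
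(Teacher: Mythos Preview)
Your decomposition $Z_B(u) = Z(G) \times Z_U(u)$ is exactly the paper's argument. For smoothness of $Z_U(u)$ you take a different route: you pass to the adjoint quotient $G/Z(G)$ (trivial, hence smooth, center) and invoke \cite{BRR}, using that $U$ is unchanged under central isogenies. The paper instead passes to the universal cover of $\sD(G)$, then performs a pushout $G' = G \times^{Z(G)} T$ along an embedding $Z(G) \hookrightarrow T$ into a torus to arrange $Z(G')$ smooth while keeping $\sD(G')$ simply connected, and then applies Steinberg's Lie-algebra dimension bound (Theorem~\ref{theorem:properties-of-regular-unipotents}(\ref{item:lie-algebra-dimension-bound})) directly. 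Your route is shorter but black-boxes \cite{BRR}; the paper's is self-contained and yields the final assertion simultaneously.

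Your third step has a genuine gap. The reduction to the simply connected $\widetilde G$ via Lemma~\ref{lemma:etale-central-isogeny-centralizer} is correct, but the endgame you cite as ``a classical result of Steinberg,'' namely $Z_{\widetilde G}(u') = Z(\widetilde G) \cdot Z_{\widetilde G}(u')^0$ with $Z_{\widetilde G}(u')^0 \subset \widetilde U$, is not available scheme-theoretically. Steinberg's results concern reduced centralizers, and the inclusion $Z_{\widetilde G}(u')^0 \subset \widetilde U$ already fails whenever $Z(\widetilde G)$ is non-smooth: for $\widetilde G = \SL_p$ in characteristic $p$, the center $\mu_p$ is connected and hence lies in $Z_{\widetilde G}(u')^0$, yet $\mu_p \cap \widetilde U = 1$. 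The passage from the point-level equality $Z_{\widetilde G}(u')(k) = Z_{\widetilde B}(u')(k)$ to the scheme-theoretic one is precisely what you must still supply; the paper does this via the bound $\dim \fg^{\Ad(u)} \le \dim\fz + r_{\rm{ss}}$ of Theorem~\ref{theorem:properties-of-regular-unipotents}(\ref{item:lie-algebra-dimension-bound}), applied after the pushout trick has arranged a smooth center.
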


\begin{proof}
We may and do assume that $k$ is algebraically closed. The claim that $Z_B(u) = Z(G) \times Z_U(u)$ is standard; see for instance the discussion in the final paragraph of \cite[\S 4.1]{Humphreys}. The claim that $Z_U(u)$ is smooth is also standard; since we need a stronger claim below anyway, we will reprove this in the final paragraph. The claim that $Z_G(u) = Z_B(u)$ is also ``standard" (see \cite[\S 4.1]{Humphreys} again), but Example~\ref{example:pgl2-centralizer} shows that it is false without the assumption $p \nmid |\pi_1(\sD(G))|$. In the following, we will therefore assume that $p \nmid |\pi_1(\sD(G))|$.

Now we reduce to proving that $Z_G(u)/Z(G)$ is smooth; suppose that $Z_G(u)/Z(G)$ is smooth. Since $B$ is the unique Borel subgroup of $G$ containing $u$ and $N_G(B) = B$, we see that $Z_G(u)(k) = Z_B(u)(k)$, and thus $Z_B(u)$ and $Z_G(u)$ have the same underlying reduced closed subscheme. In particular, the inclusion $Z_B(u)/Z(G) \to Z_G(u)/Z(G)$ must be an equality by smoothness of the latter. So we see that $Z_G(u) = Z_B(u)$. By the previous paragraph, $Z_B(u) = Z(G) \times Z_U(u)$, so from smoothness of $Z_B(u)/Z(G)$ it follows that $Z_U(u)$ is smooth.\smallskip

Next we reduce to the case that $\sD(G)$ is simply connected. Let $S$ be the maximal central torus of $G$, so the multiplication map $S \times \sD(G) \to G$ is a central isogeny. Then the map $S \times Z_{\sD(G)}(u) \to Z_G(u)$ is a central isogeny, so we may pass from $G$ to $\sD(G)$. Let $\pi: \widetilde{G} \to \sD(G)$ be the universal cover (which is etale by hypothesis on $p$). Choose an arbitrary lift $\widetilde{g} \in \widetilde{G}(k)$ of $g$, so that $\widetilde{g}$ is regular semisimple. By Lemma~\ref{lemma:etale-central-isogeny-centralizer}(\ref{item:i-is-an-open-embedding}), since $\pi$ is etale the morphism $Z_{\widetilde{G}}(\widetilde{g}) \to \pi_G^{-1}(Z_G(g))$ is an open embedding. Thus also $Z_{\widetilde{G}}(\widetilde{g})/Z(\widetilde{G}) \to Z_G(g)/Z(G) = \pi_G^{-1}(Z_G(g))/Z(\widetilde{G})$ is an open embedding as this can be checked after fppf base change. Thus the smoothness of $Z_G(g)/Z(G)$ is equivalent to that of $Z_{\widetilde{G}}(\widetilde{g})/Z(\widetilde{G})$. So we may and do assume from now on that $\sD(G)$ is simply connected.\smallskip

Finally we reduce to the case that $Z(G)$ is smooth. Embed the center $Z(G)$ in a torus $T$, and let $G' = G \times^{Z(G)} T$ be the pushout of $G$ and $T$ along this inclusion, so that $G'$ is a connected reductive group containing $G$ with smooth center, and $\sD(G') \cong \sD(G)$. Moreover, $Z_{G'}(u) = Z_G(u) \times^{Z(G)} T$, so that smoothness of $Z_G(u)/Z(G)$ is equivalent to that of $Z_{G'}(u)$.\smallskip

It remains to prove that $Z_G(u)$ is smooth under the further assumptions that $\sD(G)$ is simply connected and $Z(G)$ is smooth. In this case, Theorem~\ref{theorem:properties-of-regular-unipotents}(\ref{item:lie-algebra-dimension-bound}) shows that $\dim \fg^{\Ad(u)} \leq \dim \fz + r$, where $\fz$ is the center of the Lie algebra $\fg$ and $r$ is the semisimple rank of $G$. By \cite[Prop.\ 3.3.8]{Conrad}, $\fz = \Lie Z(G)$, and Lemma~\ref{lemma:lie-algebra-of-centralizer} shows that $\Lie Z_G(u) = \fg^{\Ad(u)}$. Since $Z_U(u)$ is $r$-dimensional by regularity of $u$, we have
\begin{align*}
\dim \fz + r &\leq \dim \Lie Z(G) + \dim \Lie Z_U(u) \\
    &= \dim \Lie Z_B(u) \\
    &\leq \dim \fg^{\Ad(u)} \\
    &\leq \dim \fz + r.
\end{align*}
Thus every inequality is an equality, so $\Lie Z_U(u)$ is $r$-dimensional and $\Lie Z_G(u) = \Lie Z_B(u)$ is $(\dim \fz + r)$-dimensional. So $Z_U(u)$ is smooth, and because $Z_G(u)$ contains the $(\dim \fz + r)$-dimensional group $Z(G) \times Z_U(u)$, it is in fact equal to this subgroup. Thus we see that $Z_G(u)$ is smooth and hence we are done.
\end{proof}

\begin{cor}\label{corollary:general-centralizer}
Suppose $p\nmid |\pi_1(\sD(G))|$, and let $g \in G(k)$ be a regular element. Then $Z_G(g)/Z(G)$ is smooth.
\end{cor}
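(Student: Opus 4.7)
The strategy is to reduce via the Jordan decomposition to the regular unipotent case handled by Lemma~\ref{lemma:smooth-unipotent-centralizer}. Since smoothness is fpqc-local on the base and the formation of centralizers and central quotients commutes with flat base change, I may first base change to $\overline{k}$.

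Write the Jordan decomposition $g = tu$ and set $H = Z_G(t)^0$. By Theorem~\ref{theorem:steinberg-connectedness} the group $H$ is connected reductive of the same rank as $G$; by Lemma~\ref{lemma:jordan-decomposition-centralizer} we have $Z_G(g) = Z_{Z_G(t)}(u)$, and intersecting with the clopen piece $H \subseteq Z_G(t)$ identifies $Z_H(u)$ with an open subgroup scheme of $Z_G(g)$. Regularity of $g$ in $G$ then translates directly into the statement that $u$ is regular unipotent in $H$ (the dimensions of $Z_G(g)$ and $Z_H(u)$ agree, and the ranks of $G$ and $H$ agree). Moreover, Corollary~\ref{corollary:steinberg-miracle}(\ref{item:p-free-fundamental-group}) propagates the hypothesis $p \nmid |\pi_1(\sD(G))|$ to $p \nmid |\pi_1(\sD(H))|$.

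Now Lemma~\ref{lemma:smooth-unipotent-centralizer}, applied to the pair $(H, u)$, produces a direct product decomposition $Z_H(u) = Z(H) \times Z_U(u)$ with $Z_U(u)$ smooth, where $U$ is the unipotent radical of the unique Borel of $H$ containing $u$. Since $Z(G) \subseteq Z(H)$ sits in the first factor, the quotient
\[
Z_H(u)/Z(G) \;\cong\; (Z(H)/Z(G)) \times Z_U(u)
\]
is smooth, the first factor being smooth by Corollary~\ref{corollary:steinberg-miracle}(\ref{item:smooth-center}).

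To finish, observe that the open subgroup scheme $Z_H(u)/Z(G)$ of $Z_G(g)/Z(G)$ is automatically clopen (the cosets of any open subgroup are open, so its complement is open), so it contains the identity component of $Z_G(g)/Z(G)$. That identity component is therefore an open subscheme of a smooth scheme, hence smooth, and by translation all of $Z_G(g)/Z(G)$ inherits smoothness. The only real obstacle in the argument is the potential non-connectedness of $Z_G(t)$, which forces the passage to $H = Z_G(t)^0$; the clopen-open-subgroup observation is what lets us promote smoothness of the piece coming from $H$ to smoothness of all of $Z_G(g)/Z(G)$, neatly avoiding any need to assume $Z(G)$ is smooth.
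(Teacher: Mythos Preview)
Your argument is correct and follows essentially the same route as the paper: reduce via Jordan decomposition to a regular unipotent element in $H = Z_G(t)^0$, invoke Corollary~\ref{corollary:steinberg-miracle}(\ref{item:p-free-fundamental-group}) and Lemma~\ref{lemma:smooth-unipotent-centralizer} to get $Z_H(u)/Z(G)$ smooth, and then promote this to $Z_G(g)/Z(G)$ using that $Z_H(u)$ is open in $Z_G(g)$. The only cosmetic difference is that the paper phrases the last step as the equality $Z_H(u)^0 = Z_G(g)^0$, while you argue via the clopen open-subgroup observation; both are the same idea.
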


\begin{proof}
Let $g = tu$ be the Jordan decomposition of $g$, and let $H = Z_G(t)^0$, so that $H$ is a connected reductive group. By Corollary~\ref{corollary:steinberg-miracle}(\ref{item:p-free-fundamental-group}), $p$ does not divide the order of $\pi_1(\sD(H))$, so that Lemma~\ref{lemma:smooth-unipotent-centralizer} shows that $Z_H(u)/Z(H)$ is smooth. By Corollary~\ref{corollary:steinberg-miracle}(\ref{item:smooth-center}), $Z(H)/Z(G)$ is smooth, so that $Z_H(u)/Z(G)$ is also smooth. By Lemma~\ref{lemma:jordan-decomposition-centralizer}, we have $Z_{Z_G(t)}(u) = Z_G(g)$, so because $Z_{Z_G(t)}(u)^0 = Z_H(u)^0$, it follows that
\[
Z_H(u)^0 = Z_G(g)^0.
\]
This yields smoothness of $Z_G(g)^0/(Z_G(g)^0 \cap Z(G))$, thus also of $Z_G(g)/Z(G)$ since the former is the identity component of the latter.
\end{proof}

\begin{remark}\label{remark:pgl-2}
The hypothesis on $|\pi_1(\sD(G))|$ in Lemma~\ref{lemma:smooth-unipotent-centralizer} is necessary. For an explicit example, let $G = \PGL_2$ over a field $k$ of characteristic $2$, and let $u = \begin{pmatrix} 1 & 1 \\ 0 & 1 \end{pmatrix}$. In that case, a simple calculation shows that functorially
\[
Z_{\PGL_2}(u) = \left\{\begin{pmatrix} a & b \\ c & d \end{pmatrix}: c^2 = 0, \,a^2 = ad + bc + ac\right\}.
\]
Thus $Z_{\PGL_2}(u)/Z(\PGL_2) = Z_{\PGL_2}(u)$ is not smooth.
\end{remark}

\subsection{Centralizers of Lie algebra elements}\label{subsection:centralizers-of-lie-algebra-elements}

In this section we collect a few results which will be necessary in proving Theorem~\ref{theorem:intro-flat-regular-centralizer} in the Lie algebra case. Since we restrict attention to the case of good characteristic, the results that we need are considerably simpler and more uniform than those in the previous section.

\begin{prop}\label{prop:semisimple-centralizer-lie-algebra}
If $X \in \mathfrak{g}$ is semisimple, then $Z_G(X)$ is a reductive group. If $p = \chara k$ is not a torsion prime for $G$, then $Z_G(X)$ is connected and the quotient $Z(Z_G(X))/Z(G)$ is connected. If $p$ is good for $G$, then $Z(Z_G(X))/Z(G)$ is a torus.
\end{prop}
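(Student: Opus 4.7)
First I base change to $\overline{k}$ and invoke the standard fact that a semisimple element $X \in \mathfrak{g}$ lies in the Lie algebra of some maximal torus $T$. Setting $\Phi = \Phi(G,T)$ and $\Phi_X := \{\alpha \in \Phi : d\alpha(X) = 0\}$, the additivity of $d\alpha$ makes $\Phi_X$ a closed subsystem of $\Phi$, and the root decomposition together with $[H, E_\alpha] = d\alpha(H)\,E_\alpha$ for $H \in \mathfrak{t}$ gives
\[
\mathfrak{z}_\mathfrak{g}(X) = \mathfrak{t} \oplus \bigoplus_{\alpha \in \Phi_X}\mathfrak{g}_\alpha.
\]
Let $H$ be the smooth connected reductive subgroup of $G$ generated by $T$ and the root groups $U_\alpha$ for $\alpha \in \Phi_X$, so $\Lie H = \mathfrak{z}_\mathfrak{g}(X)$. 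To see $H \subset Z_G(X)$, note that the orbit map $U_\alpha \to \mathfrak{g}$, $u \mapsto \Ad(u)X$, has differential at the identity given by $E_\alpha \mapsto [E_\alpha, X] = -d\alpha(X)E_\alpha = 0$, and since $U_\alpha \cong \bG_a$ is smooth and connected, the orbit is a single point. Combining $H \subset Z_G(X)$ with Lemma~\ref{lemma:lie-algebra-of-centralizer} yields $\dim H \leq \dim Z_G(X) \leq \dim \Lie Z_G(X) = \dim \mathfrak{z}_\mathfrak{g}(X) = \dim H$, so $Z_G(X)$ is smooth with identity component $H$, and hence reductive.

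For connectedness under the hypothesis that $p$ is not a torsion prime, the plan is to identify $Z_G(X)/Z_G(X)^0$ with a Weyl-theoretic quotient. All maximal tori of $H$ are $H(\overline{k})$-conjugate, so any $g \in Z_G(X)(\overline{k})$ can be conjugated by an element of $H(\overline{k})$ into $N_G(T)$; its image in $W = W(G,T)$ then fixes $X$ under the $W$-action on $\Lie T = X_*(T)\otimes_{\bZ}k$. This gives a surjection $W_X := \Stab_W(X) \twoheadrightarrow Z_G(X)/Z_G(X)^0$ with kernel $W(\Phi_X) := \langle s_\alpha : \alpha \in \Phi_X\rangle$, so the remaining task is to show $W_X = W(\Phi_X)$. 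This is a Steinberg-type reflection assertion for the $W$-action on $\Lie T$, and I plan to adapt the affine-Weyl-group argument in the proof of Theorem~\ref{theorem:steinberg-torsion}, substituting the $k$-valued linear form $d(\,\cdot\,)(X) \colon \bZ\Phi \to k$ for the multiplicative pairing against a semisimple group element; the argument should show that any hypothetical element of $W_X \setminus W(\Phi_X)$ forces the existence of a closed subsystem $\Sigma \subset \Phi$ producing torsion of the kind excluded by the torsion-prime hypothesis.

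For the center, once $Z_G(X) = H$ is connected we have $Z(Z_G(X)) = Z(H)$ of multiplicative type with character group $X^*(T)/\bZ\Phi_X$, so $Z(Z_G(X))/Z(G)$ has character group $\bZ\Phi/\bZ\Phi_X$; it is connected (respectively a torus) iff this character group has $p$-power torsion (respectively is torsion-free). The linear form $\bZ\Phi \to k$, $\alpha \mapsto d\alpha(X)$, takes values in the $\mathbb{F}_p$-vector space $(k,+)$, so $\bZ\Phi$ modulo the kernel of this map is already a $p$-group; the remaining piece $\ker/\bZ\Phi_X$ carries only torsion arising from the closed-subsystem combinatorics of $\Phi_X$, which is eliminated at primes $\ell \neq p$ by the same torsion-prime input as in paragraph two. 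When $p$ is good, the definition of a bad prime directly forbids $p$-torsion in $\bZ\Phi/\bZ\Sigma$ for any closed $\Sigma$, so the $p$-part of the torsion also vanishes and $Z(Z_G(X))/Z(G)$ is a torus. The principal obstacle is the Weyl-theoretic step in paragraph two: in bad characteristic some reflections $s_\alpha$ may act trivially on $\Lie T$, so Steinberg's argument must be adjusted rather than imported verbatim, and extra care is needed to translate the coroot-side torsion hypothesis into triviality of $W_X/W(\Phi_X)$.
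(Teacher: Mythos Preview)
Your overall architecture matches the paper's: reductivity is the content of the Borel reference, connectedness of $Z_G(X)$ is exactly Steinberg's Theorem 3.14 (which you propose to re-prove via the affine Weyl group argument), and the center is analyzed through the character lattice. Two points deserve correction.

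\textbf{The root-group step.} The inference ``$d(\text{orbit map})|_1 = 0$ and $U_\alpha$ smooth connected $\Rightarrow$ orbit map constant'' is false in characteristic $p$: the Frobenius $\bG_a \to \bG_a$ is a counterexample. What you need is the exact formula $\Ad(x_\alpha(t))X = X - t\,d\alpha(X)\,E_\alpha$ for $X \in \ft$, which comes from the $\SL_2$ computation and immediately gives $U_\alpha \subset Z_G(X)$ when $d\alpha(X) = 0$.

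\textbf{The center.} Your identification of the character group of $Z(Z_G(X))/Z(G)$ as $\bZ\Phi/\bZ\Phi_X$ is correct; the paper's $\bZ\Phi/\{\alpha \in \bZ\Phi: d\alpha(X)=0\}$ is a further quotient and is not the right object (for instance $G=\SL_3$, $p=2$, $X=\diag(0,1,1)$ gives $Z(Z_G(X))/Z(G) \cong \bG_m$, so the character group is $\bZ$, not $\bZ/2$). With the correct $Y = \bZ\Phi/\bZ\Phi_X$, the assertion ``$Y$ has no $\ell$-torsion for $\ell\neq p$'' is no longer obvious and is \emph{not} a consequence of the torsion-prime hypothesis on $p$ (that hypothesis controls $\bZ\Phi^\vee/\bZ\Sigma^\vee$ at the prime $p$, which is neither the right lattice nor the right prime here). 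What actually proves it is the additive analogue of Theorem~\ref{theorem:steinberg-torsion}(\ref{item:steinberg-torsion-2}): since $pX \in \Lie Z(G)$ plays the role of ``$t^n \in Z(G)$ with $n=p$,'' Steinberg's alcove argument gives that every torsion prime of $\bZ\Phi/\bZ\Phi_X$ divides $p$. Once that is in hand, your good-prime step (no $p$-torsion by the definition of bad primes applied to the closed subsystem $\Phi_X$) finishes the torus claim. So the plan is salvageable, but the sentence invoking ``the same torsion-prime input as in paragraph two'' papers over the one genuinely nontrivial step.
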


\begin{proof}
First we note that $Z_G(X)$ is reductive by \cite[13.19 Prop.]{Borel}. The connectedness claim follows from \cite[Thm.\ 3.14]{SteinbergTorsion}. Now $Z(Z_G(X))/Z(G)$ is Cartier dual to $Y = \bZ\Phi/\{\alpha \in \bZ\Phi: d\alpha(X) = 0\}$. Note that $Y$ is clearly $n$-torsion free for every integer $n$ not divisible by $p$, so $Z(Z_G(X))/Z(G)$ is indeed connected. If $p$ is good for $G$, then $Y$ is $p$-torsion free by definition of good primes, and thus $Z(Z_G(X))/Z(G)$ is smooth. Since every torsion prime is a bad prime, it follows that $Z(Z_G(X))/Z(G)$ is indeed a torus.
\end{proof}

The following proposition has a statement very similar to \cite[Thm.\ 5.9]{Springer}, but there are two notable differences. First, the smoothness statement of \cite[Thm.\ 5.9 a)]{Springer} assumes that $p$ is either $0$ or a \textit{very good} prime for $G$, and we assume simply that $p$ is good for $G$. Second, in good characteristic, the claim in \cite[Thm.\ 5.9 b)]{Springer} that $Z_G(X) = Z(G) \times Z_U(X)$ is only generally true on the level of $\overline{k}$-points; to obtain the stronger schematic statement, it is necessary that $p$ does not divide $|\pi_1(\sD(G))|$.

\begin{prop}\label{prop:connectedness-of-lie-algebra-centralizers}
Let $X \in \mathfrak{g}$ be regular nilpotent, and let $B$ be the unique Borel subgroup of $G$ containing $X$ in its Lie algebra. If $p$ is good for $G$ and $p$ does not divide $|\pi_1(\sD(G))|$, then $Z_G(X) = Z(G) \times Z_U(X)$, and $Z_U(X)$ is smooth.
\end{prop}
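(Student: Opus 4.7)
The plan is to mirror the proof of Lemma~\ref{lemma:smooth-unipotent-centralizer} almost step by step, with a few simplifications thanks to the better behavior of Lie algebras under etale central isogenies.

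First I would establish the direct product decomposition $Z_B(X) = Z(G) \times Z_U(X)$ by the same weight-space argument as in the unipotent case. Fixing a maximal torus $T \subset B$ with simple roots $\Delta$, and letting $U'$ be the closed subgroup of $U$ generated by root groups $U_\alpha$ for $\alpha$ of height $\geq 2$, the Lie-algebra regularity of $X$ forces a normal form $X \equiv \sum_{\alpha \in \Delta} c_\alpha X_\alpha \pmod{\Lie U'}$ with all $c_\alpha \neq 0$ (the direct analog of the group-theoretic fact cited from \cite[Lem.\ 3.2]{SteinbergReg} in Lemma~\ref{lemma:smooth-unipotent-centralizer}, proved the same way since $U/U'$ is commutative with Lie algebra $\bigoplus_{\alpha \in \Delta} \fg_\alpha$ and $\Ad(U)$ acts trivially on $\fu/\fu'$). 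For $b = tv \in Z_B(X)(R)$, computing $\Ad(tv)X$ modulo $\Lie U'$ gives $\sum c_\alpha \alpha(t) X_\alpha = \sum c_\alpha X_\alpha$, forcing $\alpha(t) = 1$ for all simple $\alpha$, hence $t \in Z(G)(R)$ and $v \in Z_U(X)(R)$.

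Next I would reduce to showing that $Z_G(X)/Z(G)$ is smooth. Since $B$ is the unique Borel containing $X$ in its Lie algebra and $N_G(B) = B$, any element of $Z_G(X)(\overline{k})$ normalizes $B$ and hence lies in $B(\overline{k})$. Thus $Z_G(X)$ and $Z_B(X)$ have the same underlying reduced scheme, so smoothness of $Z_G(X)/Z(G)$ would force $Z_G(X)/Z(G) = Z_B(X)/Z(G) = Z_U(X)$, giving both conclusions at once. The reductions to the cases $\sD(G)$ simply connected and $Z(G)$ smooth then proceed exactly as in Lemma~\ref{lemma:smooth-unipotent-centralizer}: for the universal cover $\pi: \widetilde{G} \to G$ (etale by the hypothesis $p \nmid |\pi_1(\sD(G))|$), $d\pi$ is an isomorphism, so $Z_{\widetilde{G}}(\widetilde{X}) = \pi^{-1}(Z_G(X))$ identifies the quotients by centers; for smooth center, embed $Z(G)$ in a torus $T$ and pass to $G' = G \times^{Z(G)} T$ with $\sD(G') = \sD(G)$ and $Z(G') \cong T$ smooth.

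Finally, under the reduced hypotheses, I would invoke the Lie-algebra analog of Steinberg's bound (Theorem~\ref{theorem:properties-of-regular-unipotents}(\ref{item:lie-algebra-dimension-bound})), namely $\dim \fz_\fg(X) \leq \dim \fz + r_{\text{ss}}$, which is supplied by the case analysis in \cite[Thm.\ 5.9]{Springer}. Combining with Lemma~\ref{lemma:lie-algebra-of-centralizer} and the chain
\[
\dim \fz + r_{\text{ss}} = \dim Z(G) + \dim Z_U(X) = \dim Z_B(X) \leq \dim \Lie Z_B(X) \leq \dim \fz_\fg(X) \leq \dim \fz + r_{\text{ss}},
\]
where the first equality uses smoothness of $Z(G)$ and regularity of $X$ (which via $\dim Z_G(X) = r$ and $Z_G(X)_{\text{red}} \subset B$ forces $\dim Z_U(X) = r_{\text{ss}}$), all inequalities become equalities. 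This gives $\Lie Z_U(X)$ of dimension $r_{\text{ss}}$ (so $Z_U(X)$ is smooth) and $\Lie Z_G(X) = \Lie Z_B(X)$, upgrading the equality $Z_G(X)_{\text{red}} = Z_B(X)_{\text{red}}$ to a schematic equality.

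The main obstacle is the dimension bound $\dim \fz_\fg(X) \leq \dim \fz + r_{\text{ss}}$ in Step~4; this is exactly the place where we borrow the case-checking of Springer's original paper and cannot easily give a uniform conceptual argument. All the other steps are mechanical adaptations of the unipotent proof, with the Lie-algebra side actually being slightly simpler because $d\pi$ is an isomorphism for etale central isogenies, avoiding the open-embedding subtlety encountered in Lemma~\ref{lemma:etale-central-isogeny-centralizer}.
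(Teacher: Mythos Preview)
Your proposal is correct in outline but takes a genuinely different route from the paper's. The paper's proof is much shorter: it reduces to $G$ simply connected and simple, then splits into two cases. For $G=\SL_n$ it observes that $X+1$ is regular unipotent with $Z_G(X)=Z_G(X+1)$, so the result follows immediately from Lemma~\ref{lemma:smooth-unipotent-centralizer}. For all other types, $p$ good already implies $p$ very good (by Table~\ref{table:1}), so \cite[Thm.~5.9]{Springer} applies directly and gives the full conclusion $Z_G(X)=Z(G)\times Z_U(X)$ with $Z_U(X)$ smooth, not merely the dimension bound. Your approach instead re-runs the entire machinery of Lemma~\ref{lemma:smooth-unipotent-centralizer} in the Lie algebra setting, which is more self-contained but longer.

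There is one small gap in your write-up: you cite \cite[Thm.~5.9]{Springer} for the bound $\dim\fz_\fg(X)\le\dim\fz+r_{\mathrm{ss}}$, but as the paper itself remarks just above the proposition, Springer's theorem is stated under the hypothesis that $p$ is \emph{very good}. After your reduction to $G$ simply connected simple, this leaves type $\mathrm{A}$ with $p\mid n+1$ uncovered. The bound is of course trivial to check directly there (for $X$ regular nilpotent in $\sl_{n+1}$, the centralizer $\fz_{\sl_{n+1}}(X)$ consists of traceless polynomials in $X$ and has dimension exactly $\dim\fz + n$), but you should say so. The paper sidesteps this issue entirely via the $X\mapsto X+1$ trick, which is the real novelty of its argument and what allows it to be so brief.
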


\begin{proof}
We may and do assume that $k$ is separably closed, so $G$ is split. Using the assumption on $|\pi_1(\sD(G))|$, we may pass from $G$ to the universal cover of $\sD(G)$ to assume that $G$ is simply connected. Passing to simple factors, we may further assume that $G$ is simple. If $G = \SL_n$, then $X+1 \in \SL_n(k)$ is unipotent and $Z_G(X) = Z_G(X+1)$, so the result follows from Lemma~\ref{lemma:smooth-unipotent-centralizer}. If $G$ is not of type $\rm{A}$, then $p$ is very good for $G$ and thus the result follows from \cite[Thm.\ 5.9]{Springer}.
\end{proof}

\begin{cor}\label{corollary:smooth-connected-lie-algebra-centralizer}
Let $G$ be a reductive group over a field $k$ of characteristic $p \geq 0$, and let $X \in \mathfrak{g}$ be a regular element. If $p$ is good for $G$ and $p \nmid |\pi_1(\sD(G))|$, then $Z_G(X)/Z(G)$ is smooth and connected.
\end{cor}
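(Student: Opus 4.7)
The plan is to mirror the proof of Corollary~\ref{corollary:general-centralizer}, reducing via Jordan decomposition to the regular nilpotent case inside a smaller reductive group and then invoking Proposition~\ref{prop:connectedness-of-lie-algebra-centralizers}. Write $X = X_{\rm{ss}} + X_{\rm{n}}$ and set $H = Z_G(X_{\rm{ss}})$; by Lemma~\ref{lemma:jordan-decomposition-centralizer} one has $Z_G(X) = Z_H(X_{\rm{n}})$ as schemes. Because $p$ is good and hence not a torsion prime, Proposition~\ref{prop:semisimple-centralizer-lie-algebra} gives that $H$ is connected reductive and that $Z(H)/Z(G)$ is a torus; this automatic connectedness of $H$ is the main simplification over the group case treated in Corollary~\ref{corollary:general-centralizer}. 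Regularity of $X$ combined with $\rk H = \rk G$ forces $X_{\rm{n}}$ to be a regular nilpotent element of $\Lie H$.

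Next I would check that the hypotheses on $p$ propagate from $G$ to $H$. Let $T$ be a maximal torus of $H$ (equivalently of $G$); the root system $\Phi_H$ of $(H,T)$ is a closed subsystem of $\Phi = \Phi(G,T)$, so every closed subsystem of $\Phi_H$ is also closed in $\Phi$, and the injections $\bZ\Phi_H/\bZ\Sigma \hookrightarrow \bZ\Phi/\bZ\Sigma$ (and the same for coroots) show that $p$ remains good for $H$. For the fundamental group, the left exact sequence
\[
0 \to (\bZ\Phi^\vee/\bZ\Phi_H^\vee)_{\rm{tors}} \to (X_*(T)/\bZ\Phi_H^\vee)_{\rm{tors}} \to (X_*(T)/\bZ\Phi^\vee)_{\rm{tors}}
\]
used in the proof of Corollary~\ref{corollary:steinberg-miracle}(\ref{item:p-free-fundamental-group}) yields $p \nmid |\pi_1(\sD(H))|$: the left term has no $p$-torsion because $p$ is not a torsion prime for $G$, and the right term has none by hypothesis.

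Now Proposition~\ref{prop:connectedness-of-lie-algebra-centralizers} applied to $(H, X_{\rm{n}})$ gives $Z_H(X_{\rm{n}}) = Z(H) \times Z_U(X_{\rm{n}})$ with $Z_U(X_{\rm{n}})$ smooth, where $U$ is the unipotent radical of the unique Borel of $H$ whose Lie algebra contains $X_{\rm{n}}$. Quotienting by $Z(G) \subset Z(H)$ yields
\[
Z_G(X)/Z(G) \cong (Z(H)/Z(G)) \times Z_U(X_{\rm{n}}),
\]
which is smooth since both factors are, and in which the first factor is a connected torus. The only delicate point is the connectedness of $Z_U(X_{\rm{n}})$, which is not part of the statement of Proposition~\ref{prop:connectedness-of-lie-algebra-centralizers}; I would establish it via a Jacobson--Morozov cocharacter $\lambda: \bG_m \to H$ satisfying $\Ad(\lambda(t)) X_{\rm{n}} = t^2 X_{\rm{n}}$ (available for regular nilpotents in good characteristic). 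Such a $\lambda$ normalizes $Z_U(X_{\rm{n}})$ and acts on $\Lie Z_U(X_{\rm{n}}) \subset \Lie U$ with strictly positive weights, so the induced $\bG_m$-action contracts $Z_U(X_{\rm{n}})$ to the identity section, forcing connectedness.
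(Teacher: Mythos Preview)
Your proof is correct and follows essentially the same route as the paper's: Jordan decomposition, then Proposition~\ref{prop:semisimple-centralizer-lie-algebra} for $H = Z_G(X_{\rm ss})$ and Proposition~\ref{prop:connectedness-of-lie-algebra-centralizers} for $X_{\rm n}$ inside $H$, concluding via the product decomposition $(Z(H)/Z(G)) \times Z_U(X_{\rm n})$. You are in fact more careful than the paper in two places: you explicitly verify that $p$ remains good for $H$ and that $p \nmid |\pi_1(\sD(H))|$ before invoking Proposition~\ref{prop:connectedness-of-lie-algebra-centralizers} (the paper applies it silently), and you supply an independent contracting-$\bG_m$ argument for the connectedness of $Z_U(X_{\rm n})$, whereas the paper simply asserts that $V$ is connected, relying implicitly on what the proof of Proposition~\ref{prop:connectedness-of-lie-algebra-centralizers} (via Springer's \cite[Thm.\ 5.9]{Springer} and the explicit $\SL_n$ case) actually establishes beyond its stated conclusion.
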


\begin{proof}
We may and do assume that $k$ is algebraically closed. In this case, we have a Jordan decomposition $X = X_{\mathrm{ss}} + X_{\mathrm{n}}$ for commuting elements $X_{\mathrm{ss}}, X_{\mathrm{n}} \in \mathfrak{g}$, where $X_{\mathrm{ss}}$ is semisimple and $X_{\mathrm{n}}$ is nilpotent. By Proposition~\ref{prop:semisimple-centralizer-lie-algebra}, $Z_G(X_{\mathrm{ss}})$ is a connected reductive group. Since $X_{\mathrm{ss}}$ lies in the Lie algebra of a maximal torus of $G$ \cite[11.8 Prop.]{Borel}, we see that $Z_G(X_{\mathrm{ss}})$ has the same rank as $G$, so $X_{\mathrm{n}}$ is regular nilpotent in $Z_G(X_{\mathrm{ss}})$ by regularity of $X$ in $G$ and Lemma~\ref{lemma:jordan-decomposition-centralizer}. By Proposition~\ref{prop:connectedness-of-lie-algebra-centralizers}, we have that
\[
Z_G(X) = Z_{Z_G(X_{\mathrm{ss}})}(X_{\mathrm{n}}) = Z(Z_G(X_{\mathrm{ss}})) \times V,
\]
where $V$ is a smooth connected unipotent closed subgroup of $Z_G(X_{\mathrm{ss}})$. By Proposition~\ref{prop:semisimple-centralizer-lie-algebra}, the quotient $Z(Z_G(X_{\mathrm{ss}}))/Z(G)$ is a torus, so it follows that the group
\[
Z_G(X)/Z(G) = (Z(Z_G(X_{\mathrm{ss}}))/Z(G)) \times V
\]
is indeed smooth and connected.
\end{proof}

\subsection{Centralizers of fiberwise regular sections over a general base scheme}\label{subsection:regular-centralizers}

We aim to prove the following theorem. As mentioned in the introduction, the flatness assertion in the second point is a slightly weaker form of \cite[Thm.\ 4.2.8]{Bouthier-Cesnavicius}: namely, the latter assumes that for all $s \in S$, the characteristic of $k(s)$ is non-torsion for $G_s$, while our result assumes that $\chara k(s)$ is not bad for $G_s$. These conditions are almost the same, but our result does not include the case that $G_s$ admits a factor of type $\mathrm{C}_n$ ($n \geq 2$) when $\chara k(s) = 2$, nor the case that $G_s$ admits a factor of type $\mathrm{G}_2$ when $\chara k(s) = 3$.

\begin{theorem} \label{theorem:flat-centralizer}
Let $S$ be a scheme and let $G \to S$ be a reductive group scheme. Suppose that $|\pi_1(\sD(G))|$ is invertible on $S$.
\begin{enumerate}
    \item\label{item:flat-regular-centralizer-group} If $g \in G(S)$ is a fiberwise strongly regular section of $G$, then $Z_G(g)$ is flat and $Z_G(g)/Z(G)$ is smooth.
    \item\label{item:flat-regular-centralizer-lie-algebra} Suppose that for every $s \in S$, the characteristic of $k(s)$ is good for $G_s$. If $X \in \mathfrak{g}(S)$ is a fiberwise regular section of $\mathfrak{g}$, then $Z_G(X)$ is flat and $Z_G(X)/Z(G)$ is smooth.
\end{enumerate}
In particular, if $g$ (resp.\ $X$) is fiberwise regular semisimple, then $Z_G(g)$ (resp.\ $Z_G(X)$) is a torus.
\end{theorem}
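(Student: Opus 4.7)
The strategy is to reduce both parts of the theorem to showing that the fppf quotient $Q := Z_G(g)/Z(G)$ (respectively $Q := Z_G(X)/Z(G)$) is represented by a smooth $S$-scheme. Granting this, flatness of $Z_G(g)$ (resp.\ $Z_G(X)$) is immediate: $Z(G)$ is of multiplicative type, hence $S$-flat and $S$-affine, and central in $Z_G(g)$, so it acts freely by translation, whence $Z_G(g) \to Q$ is an fppf $Z(G)$-torsor and in particular flat, and composing with $Q \to S$ yields flatness of $Z_G(g) \to S$. The ``in particular" assertion then follows at once: when $g$ (resp.\ $X$) is fiberwise regular semisimple, each geometric fiber $Z_{G_{\bar s}}(g_{\bar s})$ is a torus of rank equal to $\rk G_{\bar s}$ (by Theorem~\ref{theorem:steinberg-connectedness} combined with the strong regularity hypothesis in case (1), and by Proposition~\ref{prop:semisimple-centralizer-lie-algebra} in case (2)), and a finitely presented flat affine $S$-group scheme whose geometric fibers are tori of locally constant rank is itself a torus.

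For the representability and fiberwise smoothness of $Q$, Lemma~\ref{lemma:representability-of-stabilizer} shows that $Z_G(g)$ (and likewise $Z_G(X)$) is a finitely presented closed subgroup scheme of $G$; since $Z(G)$ is fppf affine and acts freely on $Z_G(g)$ by translation, the fppf quotient $Q$ is representable by a finitely presented affine $S$-scheme. Its geometric fibers are smooth of constant dimension $r - \dim Z(G)$, where $r$ is the (locally constant) rank of $G$, by Corollary~\ref{corollary:general-centralizer} in case (1) and Corollary~\ref{corollary:smooth-connected-lie-algebra-centralizer} in case (2). Given finite presentation and fiberwise smoothness, smoothness of $Q \to S$ is equivalent to flatness of $Q \to S$.

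For the flatness of $Q$, I would first reduce to $S$ Noetherian by Noetherian approximation (the relevant data descend to a finitely generated $\bZ$-subalgebra), then further to $S = \Spec R$ for $R$ a Noetherian local ring, and verify the formal smoothness of $Q$ directly via the infinitesimal lifting criterion. Given a square-zero thickening $\Spec R \hookrightarrow \Spec R'$ and an $R$-point $\xi$ of $Q$, the obstruction to lifting is controlled by the fiberwise tangent and obstruction spaces, which vanish because $Q$ has smooth fibers by the field-theoretic results of Sections~\ref{subsection:centralizers-of-group-elements} and~\ref{subsection:centralizers-of-lie-algebra-elements}; the identification $\Lie Z_G(g) = \fg^{\Ad(g)}$ of Lemma~\ref{lemma:lie-algebra-of-centralizer} translates this obstruction calculation into one about the Lie algebra of $G$ and the operator $1 - \Ad(g)$ (resp.\ $\ad(X)$). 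For case (2), one can alternatively cite \cite[Thm.\ 4.2.8]{Bouthier-Cesnavicius}, but the paper's stated strategy is to proceed directly from the field results. The main obstacle is case (1): because $g$ need not be fiberwise semisimple (cf.\ the $\SL_2(\bZ_p)$ example in the introduction), the fiberwise Jordan decomposition does not uniformize over $S$, so a straight reduction to the semisimple or Lie algebra setting is unavailable. The \emph{strong} regularity hypothesis is essential precisely because it forces the component group of $Z_{G_{\bar s}}(t_s)$ to be trivial, allowing an étale-local reduction to a suitable reductive subgroup containing the semisimple part, inside which the unipotent part is fiberwise regular unipotent and can be handled by a Springer-type passage to the Lie algebra, returning us to the situation of (2).
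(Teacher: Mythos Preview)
Your overall architecture (reduce everything to smoothness of $Q = Z_G(g)/Z(G)$, deduce flatness of $Z_G(g)$ from the $Z(G)$-torsor structure, invoke the fiberwise results Corollary~\ref{corollary:general-centralizer} and Corollary~\ref{corollary:smooth-connected-lie-algebra-centralizer}) matches the paper's. The gap is in the step where you actually prove $Q$ is $S$-flat.

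Your proposed argument is: reduce to $S=\Spec R$ local, then verify the infinitesimal lifting criterion, asserting that ``the obstruction to lifting is controlled by the fiberwise tangent and obstruction spaces, which vanish because $Q$ has smooth fibers.'' This is not a proof. Fiberwise smoothness only tells you that lifting succeeds along square-zero thickenings \emph{inside a single fiber}; it says nothing about thickenings $\Spec R \hookrightarrow \Spec R'$ that change the base. There is no general deformation-theoretic mechanism by which smooth equidimensional fibers of a finitely presented affine scheme force flatness over the base, and your reference to the operator $1-\Ad(g)$ does not supply one: you would need to know something about how the rank of $1-\Ad(g)$ varies in the family, which is precisely the flatness question you are trying to answer. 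Your fallback plan for case~(\ref{item:flat-regular-centralizer-group}) --- ``an \'etale-local reduction to a suitable reductive subgroup containing the semisimple part'' and then a ``Springer-type passage to the Lie algebra'' --- does not work either: the semisimple part of $g$ is defined only fiberwise and does not globalize over $S$, so there is no such reductive subgroup to reduce to; and invoking Springer isomorphisms is circular, since in this paper their construction rests on the present theorem.

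What the paper actually does after the same initial reductions is different and concrete. After passing to the universal cover (Lemma~\ref{lemma:etale-central-isogeny-centralizer}), spreading out, and using the Cohen structure theorem to get down to a regular local base, it applies the valuative criterion of flatness to reduce to $S=\Spec A$ with $A$ a DVR. There one extends $A$ so that $g$ lies in a Borel $B$ with unipotent radical $U$, and observes that the orbit map $c_g\colon B \to gU$, $b\mapsto bgb^{-1}$, has all fibers of the same dimension (they are translates of $Z_B(g)$); since $B$ and $gU$ are smooth, Miracle Flatness gives that $c_g$ is flat, hence $Z_B(g)=c_g^{-1}(g)$ is $A$-flat. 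Finally the fibral isomorphism criterion together with Corollary~\ref{corollary:general-centralizer} upgrades $Z_B(g)=Z_G(g)$ on fibers to an equality over $A$. This Borel/orbit-map argument is the missing geometric input in your proposal.
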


\begin{proof}
By Lemma~\ref{lemma:representability-of-stabilizer}, $Z_G(g)$ and $Z_G(X)$ are both represented by finitely presented closed subgroups of $G$. In light of this, the final claim follows from the others by \cite[Thm.\ B.4.1]{Conrad}, since in that case the fibral centralizers $Z_{G_s}(g_s)$ and $Z_{G_s}(X_s)$ are tori (the latter by Proposition~\ref{prop:semisimple-centralizer-lie-algebra} and dimension considerations). In general, \cite[Thm.\ 3.3.4]{Conrad} shows that $Z(G)$ is a closed subgroup scheme of $G$ of multiplicative type over $S$, so by \cite[Exp.\ VIII, Thm.\ 5.1]{SGA3II} $Z_G(g)/Z(G)$ and $Z_G(X)/Z(G)$ exist as schemes and $Z_G(g) \to Z_G(g)/Z(G)$ and $Z_G(X) \to Z_G(X)/Z(G)$ are $Z(G)$-torsors. Thus it suffices to show that $Z_G(g)/Z(G)$ and $Z_G(X)/Z(G)$ are smooth. We deal first with (\ref{item:flat-regular-centralizer-group}). \smallskip

If $\widetilde{G}$ is the universal cover of $\sD(G)$ and $\widetilde{g} \in \widetilde{G}(S)$ is a lift of $g$ (which exists fppf-locally), by Lemma~\ref{lemma:etale-central-isogeny-centralizer}(\ref{item:nu-factors-through-identity-component}) we may pass from $G$ to $\widetilde{G}$ and from $g$ to $\widetilde{g}$ to assume that $G$ is simply connected. In particular, the notions of regularity and strong regularity coincide in this case. By the Existence and Isomorphism Theorems \cite[Thms.\ 6.1.16 and 6.1.17]{Conrad}, $G$ comes from the base change of a split reductive $\bZ$-group scheme $\bG$, i.e., $G \cong \bG \times_{\Spec \bZ} S$. It suffices to consider the universal case of $g \in \bG(\bG_{\rm{reg}})$ (where $\bG_{\rm{reg}}$ denotes the open regular locus in $\bG$), so we may pass from $S$ to $\bG_{\rm{reg}}$ to assume that $S$ is reduced and noetherian. By the valuative criterion of flatness \cite[IV\textsubscript{3}, Thm.\ 11.8.1]{EGA}, we may pass to the case that $A$ is a DVR. \smallskip

Since $A$ is a DVR, properness of the scheme of parabolics shows that after extending $A$ we may assume that there is a Borel $A$-subgroup $B$ of $G$ such that $g$ lies in $B(A)$. We will show first that $Z_B(g)$ is $A$-flat. Let $c_g: B \to B$ be the conjugation morphism given by $c_g(b) = bgb^{-1}$. If $U$ is the unipotent radical of $B$, then $c_g$ factors through $gU$ by the same reasoning as in the first paragraph of Section~\ref{subsection:regular-elements}. It suffices therefore to show that the morphism $c_g: B \to gU$ is flat. Since $B$ is $A$-flat, the fibral flatness criterion and flat descent reduce one to showing this when the base is an algebraically closed field $k$. In this case, because $B$ and $gU$ are smooth, Miracle Flatness \cite[Thm.\ 23.1]{Matsumura} reduces one to showing that all nonempty fibers of $c_g$ are of the same dimension. But now $c_g$ is an orbit map, so one sees functorially that every nonempty fiber over a $k$-point is isomorphic to $Z_B(g)$. So indeed $Z_B(g)$ is flat. \smallskip

Finally, we need to show that $Z_B(g) = Z_G(g)$. Since $Z_B(g)$ is flat, the fibral isomorphism theorem shows that it suffices to prove that the morphism $Z_B(g) \to Z_G(g)$ is an isomorphism on fibers, where it follows from Corollary~\ref{corollary:general-centralizer}. Thus indeed $Z_G(g)$ is flat. In the presence of flatness, smoothness may be checked fibrally, where it follows also from Corollary~\ref{corollary:general-centralizer}. \smallskip

For (\ref{item:flat-regular-centralizer-lie-algebra}), we note that one may reduce in the same way to the case that $S = \Spec A$ is the spectrum of a DVR whose residue characteristic is good for $G$. At this point, one may proceed in the same way as before using Corollary~\ref{corollary:smooth-connected-lie-algebra-centralizer} in place of Corollary~\ref{corollary:general-centralizer}.
\end{proof}

\begin{cor}\label{corollary:commutative-centralizer}
Under the hypotheses of Theorem~\ref{theorem:flat-centralizer}(\ref{item:flat-regular-centralizer-group}) (resp.\ (\ref{item:flat-regular-centralizer-lie-algebra})), $Z_G(g)$ (resp.\ $Z_G(X)$) is commutative.
\end{cor}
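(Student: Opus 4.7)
My plan is to reduce to the case, already covered by the final assertion of Theorem~\ref{theorem:flat-centralizer}, in which the given element is fiberwise regular semisimple so that its centralizer is a torus (hence trivially commutative). The reduction proceeds by deforming the given element to a regular semisimple one over a DVR and transferring commutativity back via flatness.

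Write $H := Z_G(g)$ (resp.\ $Z_G(X)$). Then $H$ is commutative iff the commutator morphism $c : H \times_S H \to H$ coincides with the constant morphism $e$ landing at the identity section. Since $H$ is $S$-flat and $S$-affine (the flatness from Theorem~\ref{theorem:flat-centralizer}, the affineness from $H \hookrightarrow G$), over any DVR base $\Spec A$ the coordinate ring of $H \times_A H$ is $A$-flat and therefore injects into its generic-fiber base change; consequently $c$ and $e$ agree iff they agree after restricting to the generic fiber. Following the reduction chain used in the proof of Theorem~\ref{theorem:flat-centralizer} (fppf-locality, noetherian approximation, Cohen structure theorem, then the valuative criterion of flatness), this reduces the whole problem to the fibral statement: for $S = \Spec k$ with $k$ algebraically closed, $g \in G(k)$ strongly regular (resp.\ $X \in \mathfrak{g}(k)$ regular), the centralizer $Z_G(g)$ (resp.\ $Z_G(X)$) is commutative.

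To prove the fibral statement, I would construct an auxiliary section $\tilde g \in G_A(A)$ over $A := k[[t]]$ such that $\tilde g$ specializes to $g$ at the closed point and $\tilde g_\eta \in G(\mathrm{Frac}(A))$ is regular semisimple. Such a $\tilde g$ is automatically fiberwise strongly regular, so Theorem~\ref{theorem:flat-centralizer} applies to give that $Z_{G_A}(\tilde g)$ is $A$-flat with generic fiber a torus; the flatness-and-injectivity principle from the previous paragraph (applied over $A$) then yields that $Z_{G_A}(\tilde g)$ is commutative over $A$, and specializing to the closed fiber gives commutativity of $Z_G(g)$. The Lie algebra case is handled analogously by producing an additive deformation $\tilde X$.

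The main obstacle is constructing $\tilde g$ (resp.\ $\tilde X$). I would first reduce to $G$ simply connected via Lemma~\ref{lemma:etale-central-isogeny-centralizer}(\ref{item:nu-factors-through-identity-component}), then invoke Steinberg's section $\sigma : T/W \hookrightarrow G_{\rm{reg}}$, which maps into the regular locus and meets every regular $G(k)$-conjugacy class. Replacing $g$ by a $G(k)$-conjugate (which does not affect the isomorphism class of its centralizer), we may take $g = \sigma(y)$ for some $y \in (T/W)(k)$; since $T/W \cong \bA^r$ in the semisimple simply connected case, it is easy to extend $y$ to $\tilde y \in (T/W)(A)$ whose generic fiber corresponds to a regular semisimple $W$-orbit on $T$, and set $\tilde g := \sigma(\tilde y)$. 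For the Lie algebra, a Kostant section $\mathfrak{g}/\!/G \to \mathfrak{g}$ (available in good characteristic for simply connected $G$, cf.\ the regular nilpotent existence in \cite{Riche-universal-centralizer}) plays the analogous role.
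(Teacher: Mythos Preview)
Your proposal is correct and follows essentially the same strategy as the paper's proof: reduce via flatness (and the same Cohen--structure and spreading-out chain) to $S = \Spec k$ with $G$ simply connected, then deform $g$ (resp.\ $X$) over $k[\![t]\!]$ to a fiberwise regular section with regular semisimple generic fiber, apply Theorem~\ref{theorem:flat-centralizer} again over the DVR, and transfer commutativity of the generic-fiber torus back by flatness. The only difference is how the deformation is produced: the paper argues that the semisimple locus $G_{\rm ss}\subset G$ is constructible and dense, so the (valuation-topology open) set of lifts of $g$ in $G(k[\![t]\!])$ meets it, whereas you invoke the Steinberg (resp.\ Kostant) section---both work, and yours is slightly more explicit at the cost of importing a heavier external input.
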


\begin{proof}
We only treat commutativity of $Z_G(g)$, the other case being similar (and already proven in slightly greater generality in \cite[Thm.\ 4.2.8]{Bouthier-Cesnavicius}). As in the proof of Theorem~\ref{theorem:flat-centralizer}, we may and do assume that $G$ is simply connected. By spreading out and working etale-locally, we may assume $S$ is noetherian and $G$ is $S$-split, and so $G \cong \bG \times_{\Spec \bZ} S$ for some split reductive group scheme $\bG$ over $\bZ$. We may pass as before to the case that $g \in \bG(\bG_{\mathrm{reg}})$ is the universal point. If $k$ is the residue field of the generic point of $\bG_{\mathrm{reg}}$, then the associated point $g_k$ is regular semisimple in $\bG(k)$, and thus $Z_{G_k}(g_k)$ is a torus (hence commutative). Because $Z_G(g)$ is flat, it follows that it is commutative.
\end{proof}

\begin{remark}
When $S$ is the spectrum of a field and $g$ is regular unipotent, Corollary~\ref{corollary:commutative-centralizer} was first proved (on the level of geometric points) in good characteristic by Springer \cite{Springer-note}, and later in bad characteristic by Lou \cite{Lou} using extensive case-by-case checking and computer calculations. As the reader can check, our proof does not rely, even implicitly, on any case-checking, and thus it gives a new case-free proof of this result. Even in the field case, the main input in our proof is Theorem~\ref{theorem:flat-centralizer}.
\end{remark}

\section{The unipotent and nilpotent schemes}\label{section:unip-nilp}

In this section our main aim is to introduce \textit{canonical} definitions of the unipotent and nilpotent schemes over a general base scheme, and to show that they have good properties. The precise statements are somewhat complicated, and we refer the reader to Theorems~\ref{theorem:unipotent-scheme} and \ref{theorem:nilpotent-scheme}.

\subsection{Steinberg morphisms}

Throughout this section, $G$ is a connected reductive group over a field $k$. In \cite[\S 6]{SteinbergReg}, Steinberg analyzes the natural morphism $\chi: G \to G/\!/G$, now called the \textit{Steinberg morphism}. This is a very interesting morphism, and the main point of \cite{SteinbergReg} is to understand its properties. We summarize some of these in the following lemma.

\begin{lemma}\label{lemma:properties-of-chi}
Let $G$ be a connected semisimple group over a field $k$ and let $\chi: G \to G/\!/G$ be the Steinberg morphism as above.
\begin{enumerate}
    \item\label{item:chi-unipotence} \cite[Cor.\ 6.7]{SteinbergReg} An element $g \in G(k)$ is unipotent if and only if $\chi(g) = \chi(1)$.
    \item\label{item:irreducible-fibers} \cite[Thm.\ 6.11]{SteinbergReg} Every geometric fiber $\chi^{-1}(x)$ is irreducible of codimension $\rk G$ in $G$. It contains a unique conjugacy class of regular elements, and this conjugacy class is open in $\chi^{-1}(x)_{\rm{red}}$ with complementary codimension at least $2$.
    \item\label{item:regularity-and-smoothness} \cite[3.10, Thm.\ vi)]{Slodowy} If $p \nmid |\pi_1(G)|$ then an element $g \in G(k)$ is regular if and only if $\chi$ is smooth at $g$.
\end{enumerate}
\end{lemma}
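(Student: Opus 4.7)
The plan is to identify the GIT quotient $G/\!/G$ with $T/W$ for $T$ a maximal $k$-torus and $W = N_G(T)/T$ the Weyl group, via the Chevalley--Steinberg isomorphism $k[G]^G \xrightarrow{\sim} k[T]^W$, and to describe $\chi$ explicitly as sending an element to the $W$-orbit of the semisimple part of its Jordan decomposition. Once this is set up, each of the three assertions becomes a targeted calculation, and we may reduce throughout to $k = \overline{k}$.

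For (\ref{item:chi-unipotence}), writing $g = tu$ with $t$ semisimple, one has $\chi(g) = [t]_W$, which equals $\chi(1) = [1]_W$ exactly when $t$ is $W$-conjugate to $1$, i.e.\ $t = 1$, i.e.\ $g$ is unipotent. For (\ref{item:irreducible-fibers}), each geometric fiber is the preimage of a single $W$-orbit in $T$, hence a union of $G$-conjugacy classes sharing a common semisimple part up to conjugation. The existence and uniqueness of a regular class in each fiber is supplied by the Steinberg cross-section: a closed subscheme of $G$ of dimension $\rk G$ consisting of regular elements and mapping isomorphically onto $G/\!/G$. Since such a regular class has codimension $\rk G$ in $G$, it is open in the fiber, which gives irreducibility. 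The ``codimension at least $2$'' assertion reduces via the Jordan decomposition to showing that no connected reductive group has a non-regular unipotent class of codimension $1$, which is a standard dimension count on $Z_G(t)^0$ using the root system structure.

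For (\ref{item:regularity-and-smoothness}), the plan is to analyze $d\chi_g$ and invoke the Jacobian criterion. The crucial input is Corollary~\ref{corollary:general-centralizer}, which under the hypothesis $p \nmid |\pi_1(G)|$ gives smoothness of $Z_G(g)/Z(G)$ whenever $g$ is regular. In the forward direction, regularity together with this smoothness makes the orbit map $G \to G$, $h \mapsto hgh^{-1}$, smooth onto a locally closed orbit of the expected codimension $\rk G$; a tangent space computation then forces $d\chi_g$ to be surjective, so $\chi$ is smooth at $g$. In the reverse direction, smoothness of $\chi$ at $g$ forces the scheme-theoretic fiber through $g$ to be smooth of dimension $\dim G - \rk G$ at $g$; combined with (\ref{item:irreducible-fibers}), which puts non-regular elements in a closed subset of codimension $\geq 2$ of the fiber, a dimension count yields $\dim Z_G(g) = \rk G$, so $g$ is regular.

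The main obstacle is the forward implication in (\ref{item:regularity-and-smoothness}): without the hypothesis on $|\pi_1(G)|$, the schematic centralizer $Z_G(g)$ can fail to be smooth even at regular unipotent $g$ (compare Remark~\ref{remark:pgl-2}), so the orbit map need not be smooth there and the Jacobian criterion for $\chi$ breaks down. Routing through Corollary~\ref{corollary:general-centralizer} is precisely what lets one bypass this, and explains the grouping of the three parts: (\ref{item:chi-unipotence}) and (\ref{item:irreducible-fibers}) are essentially immediate from the Chevalley picture together with the Steinberg cross-section, while (\ref{item:regularity-and-smoothness}) requires genuine input from the centralizer theory of Section~\ref{section:regular-centralizers}.
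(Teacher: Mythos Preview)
The paper does not give a proof of this lemma; each of the three parts is simply a citation to Steinberg or Slodowy, and the text moves on immediately to the next lemma. So there is no ``paper's proof'' to compare against, and your proposal is an attempt to supply arguments the paper deliberately outsources. Parts~(\ref{item:chi-unipotence}) and~(\ref{item:irreducible-fibers}) are broadly fine in outline, though note that the Steinberg cross-section you invoke in~(\ref{item:irreducible-fibers}) requires $G$ to be simply connected, which is not assumed here; one must either reduce to that case first or argue via something like the Springer resolution.

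The genuine problem is part~(\ref{item:regularity-and-smoothness}). Your reverse implication does not work as written: knowing that non-regular elements sit in a closed subset of codimension $\geq 2$ inside the fiber gives no obstruction whatsoever to the fiber being smooth at such a point --- a smooth variety happily contains closed subvarieties of every codimension. You need an actual link between smoothness of $\chi$ at $g$ and $\dim Z_G(g)$, and the codimension estimate from~(\ref{item:irreducible-fibers}) does not supply one. Your forward implication is also incomplete in a more subtle way. Corollary~\ref{corollary:general-centralizer} gives smoothness of $Z_G(g)/Z(G)$, not of $Z_G(g)$; when $Z(G)$ is non-smooth (e.g.\ $G = \SL_p$ in characteristic $p$, where $\pi_1(G)=1$ so the hypothesis holds) one has $\dim \fg^{\Ad(g)} = r + \dim \Lie Z(G) > r$ for regular unipotent $g$, so the image of the orbit-map differential has dimension strictly less than $\dim G - r$. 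Thus ``orbit of the right codimension $\Rightarrow d\chi_g$ surjective'' is not the mechanism, and the promised ``tangent space computation'' would have to do real work you have not indicated. The standard route (Slodowy's) in the simply connected case goes instead through the Steinberg section: since $\chi$ restricted to the section is an isomorphism, $d\chi$ is already surjective along it, hence by $G$-equivariance at every regular element; the converse comes from an explicit analysis of $\ker d\chi_g$ via the fundamental characters. The extension to $p \nmid |\pi_1(G)|$ is then handled by comparing with the universal cover via the \'etale isogeny, not by centralizer smoothness.
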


By \cite[Cor.\ 6.4]{SteinbergReg}, if $T$ is a split maximal $k$-torus of $G$ and $W$ is the Weyl group of $(G, T)$, then the natural restriction map $k[G]^G \to k[T]^W$ is an isomorphism. A special case of the following lemma is mentioned but unproved in \cite[4.5, Rmk.]{Slodowy}.

\begin{lemma}\label{lemma:adjoint-quotient-smooth-near-1}
Let $G$ be a connected semisimple group over a field $k$ of characteristic $p \geq 0$ such that $p \nmid |\pi_1(\sD(G))|$ and suppose that $T$ is a split maximal $k$-torus of $G$ with Weyl group $W$. Let $\Omega$ be the open subscheme of $T$ consisting of those $t \in T$ such that $Z_{G_{k(t)}}(t)$ is connected, where $k(t)$ is the residue field of $t$. Then the smooth locus of $T/\!/W$ contains the image of $\Omega$.
\end{lemma}

\begin{proof}
Since the formations of $k[T]^W$ and $k[\widetilde{T}]^W$ commute with all extensions of $k$, we may and do assume that $k$ is algebraically closed. Let $\pi_G: \widetilde{G} \to G$ be the universal cover of $G$, and let $Z = \ker \pi_G$; by assumption, the group $Z$ is finite constant. If $\widetilde{T} = \pi_G^{-1}(T)$, then we have $T/\!/W = (\widetilde{T}/\!/W)/\!/Z$, where $Z$ acts by translation. By \cite[Chap.\ VI, \S 3.4, Thm.\ 1]{Bourbaki}, the scheme $\widetilde{T}/\!/W$ is isomorphic to an affine space, and in particular it is smooth. To show smoothness of $T/\!/W$ at the image of $\Omega$, it therefore suffices to show that $Z$ acts freely on $\Omega$. Since $Z$ is a constant $k$-group and $k = \overline{k}$, it suffices even to show that $Z(k)$ acts freely on $\Omega$. \smallskip

Suppose that the image $[\widetilde{t}]$ of $\widetilde{t} \in \widetilde{T}$ in $\widetilde{T}/\!/W$ is fixed under translation by some non-identity element $c \in Z(k)$. This means that $c\widetilde{t}$ and $\widetilde{t}$ are $W$-conjugate; i.e., there is some $w \in W$ such that $c\widetilde{t} = w\widetilde{t}w^{-1}$. Motivated by this, for given $c \in Z(k)$ and $w \in W$ we will set
\[
\widetilde{T}_{c, w} = \{\widetilde{t} \in \widetilde{T}: c\widetilde{t} = w\widetilde{t}w^{-1}\}.
\]
Thus if $[\widetilde{t}]$ denotes the image of $\widetilde{t} \in \widetilde{T}$ in $(\widetilde{T}/\!/W)$, then we have
\[
\{\widetilde{t} \in \widetilde{T}: \Stab_{Z(k)}([\widetilde{t}]) \neq 1\} = \bigcup_{\substack{c \in Z(k) - \{1\} \\ w \in W}} \widetilde{T}_{c, w}
\]
and the free locus of the action of $Z(k)$ on $\widetilde{T}/\!/W$ is therefore the (open) complement of the image of the right side of the above equality. Note that if $\widetilde{t} \in \widetilde{T}_{c, w}$, then there is a representative of $w$ in $Z_{G_{k(t)}}(\pi_G(\widetilde{t}))$, although there is none in $Z_{\widetilde{G}_{k(t)}}(\widetilde{t})$. Thus from \cite[Lem.\ 2.14]{SteinbergTorsion}, we see that the image of $\Omega$ is \textit{equal} to the free locus of $Z(k)$, as desired.
\end{proof}

There is also an analogue of the Steinberg morphism for $\fg = \Lie G$: one has a natural morphism $\chi: \fg \to \fg/\!/G := \Spec (\Sym_k \fg^*)^G$.

\begin{lemma}\label{lemma:properties-of-chi-nilpotent} \cite[7.13, Prop.]{Jantzen-nilpotent}
Let $G$ be a connected reductive group over a field $k$ with Lie algebra $\fg$ and let $\chi: \fg \to \fg/\!/G$ be the Steinberg morphism as above.
\begin{enumerate}
    \item\label{item:Lie-chi-nilpotence} An element $X \in \fg$ is nilpotent if and only if $\chi(X) = \chi(0)$.
    \item\label{item:Lie-chi-irreducible-fibers} Every geometric fiber $\chi^{-1}(x)$ is irreducible of codimension $r$ in $G$.
\end{enumerate}
\end{lemma}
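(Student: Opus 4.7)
The plan is to establish both parts via a description of the fibers of $\chi$ in terms of the semisimple parts of Jordan decompositions together with nilpotent cones of their centralizers. Throughout I reduce to $k$ algebraically closed. Fix a maximal torus $T \subset G$ with Lie algebra $\ft$ and Weyl group $W$. The key algebraic input is the Chevalley restriction isomorphism $k[\fg]^G \xrightarrow{\sim} k[\ft]^W$, giving a natural isomorphism $\ft/\!/W \xrightarrow{\sim} \fg/\!/G$ through which $\chi$ factors; I would cite this from the literature.

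First I would prove the auxiliary claim that $\chi(X) = \chi(X_{\mathrm{ss}})$ for the Jordan decomposition $X = X_{\mathrm{ss}} + X_{\mathrm{n}}$. Set $H = Z_G(X_{\mathrm{ss}})^0$, a connected reductive group (by Proposition~\ref{prop:semisimple-centralizer-lie-algebra}) in whose Lie algebra $X_{\mathrm{n}}$ is nilpotent. Fix a Borel subgroup $B \subset H$ with unipotent radical $U$ whose Lie algebra contains $X_{\mathrm{n}}$, and a cocharacter $\lambda \colon \bG_m \to T_0$ of some maximal torus $T_0 \subset B$ that is strictly dominant on the roots of $\Lie U$. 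Then $\lim_{t \to 0} \Ad(\lambda(t)) X_{\mathrm{n}} = 0$, while $\Ad(\lambda(t)) X_{\mathrm{ss}} = X_{\mathrm{ss}}$ because $H \subset Z_G(X_{\mathrm{ss}})$ fixes $X_{\mathrm{ss}}$ under $\Ad$. By the $G$-invariance and continuity of $\chi$ we deduce $\chi(X) = \chi(X_{\mathrm{ss}})$. Point (\ref{item:Lie-chi-nilpotence}) follows at once: if $X$ is nilpotent then $X_{\mathrm{ss}} = 0$ and $\chi(X) = \chi(0)$; conversely, if $\chi(X) = \chi(0)$ then $\chi(X_{\mathrm{ss}}) = \chi(0)$, and after conjugating $X_{\mathrm{ss}}$ into $\ft$ the Chevalley restriction isomorphism forces $X_{\mathrm{ss}}$ to be $W$-conjugate to $0$, hence $X_{\mathrm{ss}} = 0$ and $X = X_{\mathrm{n}}$ is nilpotent.

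For (\ref{item:Lie-chi-irreducible-fibers}), given $x \in (\fg/\!/G)(k)$, pick $X_0 \in \ft$ mapping to $x$; by the above the fiber $\chi^{-1}(x)$ consists of precisely those $X$ whose semisimple part is $G$-conjugate to $X_0$. Set $H = Z_G(X_0)^0$, and use Lemma~\ref{lemma:jordan-decomposition-centralizer} to parametrize such elements by their Jordan decomposition, obtaining
\[
\chi^{-1}(x) \;=\; G \cdot (X_0 + \sN_H),
\]
where $\sN_H \subset \Lie H$ is the set of nilpotent elements. Since $H$ stabilizes $X_0 + \sN_H$ under its adjoint action, the right-hand side is the image of the $G$-equivariant map from $G \times^{H} (X_0 + \sN_H)$. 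The nilpotent cone $\sN_H$ is irreducible of dimension $\dim H - \rk H$, being the image of the irreducible variety $H \times^B \Lie U$ under a generically finite Springer-type morphism (whose generic fiber is a single point by uniqueness of the Borel containing a regular nilpotent in its Lie algebra, noted after Theorem~\ref{theorem:properties-of-regular-unipotents}). Since $X_0 \in \ft \subset \Lie H$ we have $\rk H = \rk G = r$, so
\[
\dim \chi^{-1}(x) \;=\; \dim G - \dim H + (\dim H - r) \;=\; \dim \fg - r,
\]
giving the codimension claim (interpreting the statement's ``$G$'' as the typo for $\fg$), and irreducibility of $\chi^{-1}(x)$ follows from irreducibility of $\sN_H$ together with connectedness of $G$ and $H$.

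The main obstacle is justifying the Chevalley restriction isomorphism in arbitrary characteristic: it is subtle in bad characteristic and requires a dedicated argument tailored to the reductive (not merely simply connected) setting, and is the sole step where invoking an external reference seems unavoidable. Once this input is available, the contraction argument and the parametrization of fibers are direct Lie-algebra analogues of Steinberg's arguments for the group case recorded in Lemma~\ref{lemma:properties-of-chi}.
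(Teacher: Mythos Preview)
The paper does not give its own proof of this lemma; it is simply cited from Jantzen. So there is no argument in the paper to compare against, and your write-up amounts to an attempt to supply what the paper treats as a black box.

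Your overall strategy is the standard one and is correct in spirit, but there is a genuine gap. You rely on the Chevalley restriction map $k[\fg]^G \to k[\ft]^W$ being an \emph{isomorphism}, and you flag this as ``subtle in bad characteristic.'' It is worse than subtle: it is false in general. As the paper itself notes in the paragraph immediately following the lemma, the restriction map is only injective in general, and surjectivity fails precisely for $\Sp_{2n}$ in characteristic $2$. Both of your arguments use surjectivity: in (\ref{item:Lie-chi-nilpotence}) you need that $\chi(X_{\mathrm{ss}}) = \chi(0)$ forces $X_{\mathrm{ss}}$ to be $W$-conjugate to $0$, and in (\ref{item:Lie-chi-irreducible-fibers}) you need that any two semisimple elements with the same $\chi$-value are $G$-conjugate. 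Both statements amount to injectivity of $\ft/\!/W \to \fg/\!/G$, i.e.\ surjectivity of Chevalley restriction.

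For (\ref{item:Lie-chi-nilpotence}) there is an easy repair that avoids Chevalley restriction entirely: fix a faithful representation $\rho\colon G \hookrightarrow \GL(V)$ and observe that the coefficients of the characteristic polynomial of $\mathrm{d}\rho(X)$ are $G$-invariant polynomial functions on $\fg$, hence constant on fibers of $\chi$. Thus $\chi(X) = \chi(0)$ forces $\mathrm{d}\rho(X)$ to be nilpotent, whence $X$ is nilpotent; your contraction argument already handles the converse. For (\ref{item:Lie-chi-irreducible-fibers}) the same issue (conjugacy of semisimple elements in a $\chi$-fiber) must be addressed separately in the exceptional case, and this is where one really does need to lean on the reference rather than the restriction isomorphism.
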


If $T$ is a split maximal torus of $G$ with Weyl group $W$ and Lie algebra $\ft$, then by \cite[7.12]{Jantzen-nilpotent} the natural restriction map $(\Sym \fg^*)^G \to (\Sym \ft^*)^W$ is injective, and it is surjective if either $\chara k \neq 2$ or $\chara k = 2$ and $\alpha \not\in 2X(T)$ for all $\alpha \in \Phi(G, T)$; this excludes precisely the groups $\Sp_{2n}$ ($n \geq 1$) in characteristic $2$ by \cite[1.3.1]{Chaput-Romagny}.

\subsection{The unipotent scheme}\label{subsection:unip-sch}

Let $G$ be a connected reductive group over a field $k$. If $k$ is separably closed, then we define the \textit{unipotent variety} $\sU^{\var} = \sU_G^{\var}$ to be the Zariski closure of the set of unipotent elements in $G(k)$. We note that if $k$ is imperfect, then although the Jordan decomposition need not be defined in $G(k)$ the notion of unipotence still makes sense and is preserved by any field extension. If $k$ is not assumed separably closed, then the Galois action on $G(k_s)$ evidently preserves $\sU^{\var}(k_s)$, so one can still define the unipotent variety $\sU^{\var}$ of $G$. In \cite[Prop.\ 4.4]{Springer} it is proved using the Springer resolution that $\sU^{\var}$ is an irreducible closed subscheme of $G$ of dimension $\dim G - \rk G$. We remark also that $\sU^{\var}_G = \sU^{\var}_{\sD(G)}$, and we will use this fact without comment.

\begin{lemma}\label{lemma:properties-of-unipotent-variety}
Let $G$ be a connected reductive group over a field $k$ of characteristic $p \geq 0$.
\begin{enumerate}
    \item\label{item:unipotent-variety-commutes-with-field-extension} The formation of $\sU^{\var}$ commutes with any field extension of $k$, and $\sU^{\var}$ is geometrically reduced and generically smooth.
    \item\label{item:unipotent-variety-separable-central-isogeny} If $f: G \to G'$ is a central isogeny of connected reductive $k$-groups, then the induced map $\sU^{\var}_G \to \sU^{\var}_{G'}$ is bijective on geometric points. If $f$ is separable, then this map is an isomorphism.
    \item\label{item:unipotent-variety-simply-connected} If $p \nmid |\pi_1(\sD(G))|$, then $\sU^{\var} = \chi^{-1}(\chi(1))$ as schemes, where $\chi: \sD(G) \to \sD(G)/\!/\sD(G)$ is the Steinberg morphism.
    \item\label{item:unipotent-variety-is-normal} If $p \nmid |\pi_1(\sD(G))|$, then the (open) regular locus $\sU^{\var}_{\rm{reg}}$ is smooth and $\sU^{\var}$ is normal and Cohen-Macaulay.
\end{enumerate}
\end{lemma}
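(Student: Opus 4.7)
The plan is to establish the four claims separately, with the Steinberg-morphism analysis in Lemmas~\ref{lemma:properties-of-chi} and \ref{lemma:adjoint-quotient-smooth-near-1} doing the bulk of the work for the final two assertions. Throughout I will freely use $\sU^{\rm{var}}_G = \sU^{\rm{var}}_{\sD(G)}$ to reduce to the semisimple case, and pass to $k_s$ or $\overline{k}$ when checking properties (geometric reducedness, generic smoothness) that descend back to $k$.

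For (\ref{item:unipotent-variety-commutes-with-field-extension}), I would invoke the Springer resolution $\mu: G \times^B U \to \sU^{\rm{var}}$ attached to a Borel $B$ with unipotent radical $U$ (available after passage to $k_s$). The source is a vector bundle over the smooth $G/B$, hence smooth and irreducible, and $\mu$ is proper and birational onto $\sU^{\rm{var}}$. This furnishes a dense smooth open subset of $\sU^{\rm{var}}$, which gives generic smoothness and, in particular, geometric reducedness. Commutation with field extension then follows formally: for any $L/k$, the closed subschemes $(\sU^{\rm{var}}_G)_L$ and $\sU^{\rm{var}}_{G_L}$ of $G_L$ have the same underlying set (the unipotent locus of $G_L$), and the former is reduced by geometric reducedness, so it must equal the reduced subscheme $\sU^{\rm{var}}_{G_L}$.

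For (\ref{item:unipotent-variety-separable-central-isogeny}), decompose a geometric point as $g = tu$ in Jordan form; since $f(g) = f(t)f(u)$ is the Jordan decomposition of $f(g)$, unipotence of $f(g)$ forces $f(t) = 1$, i.e., $t \in (\ker f)(\overline{k})$. But $\ker f$ is of multiplicative type, so $(\ker f)(\overline{k})$ has no nontrivial unipotent element; hence $t = 1$, giving injectivity on unipotent loci, while surjectivity follows by taking the unipotent part of any preimage. When $f$ is separable, hence etale, the scheme-theoretic preimage $f^{-1}(\sU^{\rm{var}}_{G'})$ is reduced (etale base change preserves reducedness via flatness and reduced fibers), so it equals $\sU^{\rm{var}}_G$. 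Then $f|_{\sU^{\rm{var}}_G} \colon \sU^{\rm{var}}_G \to \sU^{\rm{var}}_{G'}$ is finite etale of degree $1$ (each geometric fiber is a singleton), hence an isomorphism.

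The main obstacle is (\ref{item:unipotent-variety-simply-connected}): promoting the set-theoretic equality $\chi^{-1}(\chi(1)) = \sU^{\rm{var}}$ (from Lemma~\ref{lemma:properties-of-chi}(\ref{item:chi-unipotence}) and dimension matching via Lemma~\ref{lemma:properties-of-chi}(\ref{item:irreducible-fibers})) to a scheme-theoretic identity. The strategy is to show $\chi^{-1}(\chi(1))$ is Cohen-Macaulay and generically reduced, which combine into reducedness. Lemma~\ref{lemma:adjoint-quotient-smooth-near-1} gives smoothness of $G/\!/G$ at $\chi(1)$, and since all geometric fibers of $\chi$ share the dimension $\dim G - \rk G$ (Lemma~\ref{lemma:properties-of-chi}(\ref{item:irreducible-fibers})) while $G$ is smooth, miracle flatness \cite[Thm.\ 23.1]{Matsumura} yields flatness of $\chi$ in a neighborhood of $\chi^{-1}(\chi(1))$. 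Flatness from a Cohen-Macaulay source onto a smooth base then gives Cohen-Macaulay fibers, and generic reducedness follows from Lemma~\ref{lemma:properties-of-chi}(\ref{item:regularity-and-smoothness}), which makes $\chi$ smooth at regular unipotent elements (a dense open of the fiber by Lemma~\ref{lemma:properties-of-chi}(\ref{item:irreducible-fibers})). For (\ref{item:unipotent-variety-is-normal}), the identification in (\ref{item:unipotent-variety-simply-connected}) together with Lemma~\ref{lemma:properties-of-chi}(\ref{item:regularity-and-smoothness}) gives smoothness of $\sU^{\rm{var}}_{\rm{reg}}$, and Lemma~\ref{lemma:properties-of-chi}(\ref{item:irreducible-fibers}) bounds its complement in codimension $\geq 2$, so $\sU^{\rm{var}}$ is $R_1$ and also $S_2$ (by the Cohen-Macaulayness established in (\ref{item:unipotent-variety-simply-connected})); Serre's criterion then yields normality. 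The crucial role of the hypothesis $p \nmid |\pi_1(\sD(G))|$ lies entirely in invoking Lemma~\ref{lemma:adjoint-quotient-smooth-near-1}, without which $\chi^{-1}(\chi(1))$ can genuinely carry nilpotents.
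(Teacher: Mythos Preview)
Your treatment of (\ref{item:unipotent-variety-simply-connected}) and (\ref{item:unipotent-variety-is-normal}) matches the paper's argument closely: smoothness of $G/\!/G$ near $\chi(1)$ via Lemma~\ref{lemma:adjoint-quotient-smooth-near-1}, Cohen--Macaulayness of the fibre, generic smoothness on the regular locus via Lemma~\ref{lemma:properties-of-chi}(\ref{item:regularity-and-smoothness}), and Serre's criterion. For (\ref{item:unipotent-variety-commutes-with-field-extension}) the paper takes a different route: rather than invoking the Springer resolution, it passes to the split form and descends to the prime subfield $k_0$, where perfection of $k_0$ makes $\sU^{\rm{var}}_{G_0}$ automatically geometrically reduced; commutation with extensions then follows because every extension of $k_0$ is separable. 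Your approach via the resolution also works, since an integral $k$-scheme with a dense smooth open is geometrically reduced (the localization $A \hookrightarrow A_f$ tensors over $k$ to an injection into a reduced ring), though this step deserves to be spelled out.

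There is a genuine slip in (\ref{item:unipotent-variety-separable-central-isogeny}). From $t \in (\ker f)(\overline{k})$ you cannot conclude $t = 1$: the observation that $(\ker f)(\overline{k})$ contains no nontrivial \emph{unipotent} element is irrelevant, since your $t$ is semisimple. Indeed the conclusion is false---any nontrivial $c \in (\ker f)(\overline{k})$ has unipotent image but is not unipotent. What is true is that the fibre of $f$ over a unipotent $u' \in G'(\overline{k})$ is the coset $(\ker f)(\overline{k}) \cdot u$ for any unipotent lift $u$, and by centrality of $\ker f$ and uniqueness of Jordan decomposition exactly one element of this coset (namely $u$) is unipotent; this is what gives bijectivity. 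Consequently your claim $f^{-1}(\sU^{\rm{var}}_{G'}) = \sU^{\rm{var}}_G$ is also false: set-theoretically the preimage is $\bigcup_{c \in (\ker f)(\overline{k})} c \cdot \sU^{\rm{var}}_G$, and when $f$ is \'etale this is a \emph{disjoint} union of $|\ker f|$ translates. The paper's fix is exactly this: reducedness of the preimage makes the decomposition scheme-theoretic, so $\sU^{\rm{var}}_G$ sits inside $f^{-1}(\sU^{\rm{var}}_{G'})$ as a connected component, and the restriction of the \'etale map $f^{-1}(\sU^{\rm{var}}_{G'}) \to \sU^{\rm{var}}_{G'}$ to this open piece is then \'etale and radicial, hence an isomorphism. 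Your final sentence (``finite \'etale of degree $1$'') reaches the right endpoint, but as written it rests on the incorrect identification of the full preimage with $\sU^{\rm{var}}_G$.
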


\begin{proof}
First, note that formation of $\sU^{\var}_G$ commutes with all separable algebraic extensions of $k$ by construction. Thus by passing to a finite separable extension of $k$ we may assume that $G$ is split. Passing to $\sD(G)$, we may also assume that $G$ is semisimple. Let $k_0 \subset k$ be the prime field, so that $k_0$ is perfect. Let $G_0$ be the split connected semisimple $k_0$-group such that $(G_0)_k = G$. Note that $\sU^{\var}_{G_0}$, being reduced over a perfect field, is geometrically reduced and generically smooth. Thus the natural map $(\sU^{\var}_{G_0})_k \to \sU^{\var}_G$ is a closed embedding of integral schemes of the same dimension $\dim G - \rk G$, and hence it is an isomorphism. In particular, $\sU^{\var}_G$ is generically smooth. If $k'$ is a field extension of $k$, we have
\[
(\sU^{\var}_G)_{k'} = (\sU^{\var}_{G_0})_{k'} = \sU^{\var}_{G_{k'}}
\]
since the extension $k'/k_0$ is separable. This shows that formation of $\sU^{\var}_G$ commutes with the field extension $k'/k$, as desired. \smallskip

In light of the previous paragraph, we may and do assume from now on that $k$ is algebraically closed. Now let $f: G \to G'$ be a central isogeny. The induced morphism $\sU^{\var}_G \to \sU^{\var}_{G'}$ is finite, and it is easily checked to be bijective on $\overline{k}$-points, so we will assume from now on that $f$ is smooth. This assumption implies that $f^{-1}(\sU^{\var}_{G'})$ is a reduced closed subscheme of $G$. On the level of $k$-points we have $f^{-1}(\sU^{\var}_{G'}) = (\ker f) \cdot \sU^{\var}_G$, so reducedness of $f^{-1}(\sU^{\var}_{G'})$ shows that this is true schematically. By centrality of $\ker f$ and uniqueness of the Jordan decomposition, we see that
\[
f^{-1}(\sU^{\var}_{G'}) = \bigsqcup_{c \in (\ker f)(k)} c \cdot \sU^{\var}_G.
\]
In particular, the natural map $\sU^{\var}_G \to f^{-1}(\sU^{\var}_{G'})$ is an isomorphism onto a connected component of $f^{-1}(\sU^{\var}_{G'})$, so $\sU^{\var}_G \to \sU^{\var}_{G'}$ is etale and $\sU^{\var}_G(k) \to \sU^{\var}_{G'}(k)$ is bijective. It follows that the morphism is radicial, and hence \cite[IV\textsubscript{4}, Thm.\ 17.9.1]{EGA} shows that $\sU^{\var}_G \to \sU^{\var}_{G'}$ is an isomorphism. \smallskip

Now note that $\chi^{-1}(\chi(1))(k) = \sU^{\var}(k)$ by Lemma~\ref{lemma:properties-of-chi}(\ref{item:chi-unipotence}). By Lemma~\ref{lemma:adjoint-quotient-smooth-near-1}, $G /\!/ G = T /\!/ W$ is smooth in a neighborhood of $\chi(1)$, so $\chi(1)$ is locally cut out by $r = \rk G$ functions. Since $\dim \sU^{\var} = \dim G - r$, it follows that $\chi^{-1}(\chi(1))$ is Cohen-Macaulay. By Lemma~\ref{lemma:properties-of-chi}(\ref{item:regularity-and-smoothness}) and (\ref{item:irreducible-fibers}), we see that the smooth locus of $\chi^{-1}(\chi(1))$ is equal to $\sU^{\var}_{\rm{reg}}$ and hence has complementary codimension $\geq 2$. By Serre's criterion for normality, it follows that $\chi^{-1}(\chi(1))$ is (geometrically) normal and in particular reduced, proving (\ref{item:unipotent-variety-simply-connected}) and (\ref{item:unipotent-variety-is-normal}).
\end{proof}

\begin{remark}
We show in \cite[Theorem 1.5]{Cotner-non-etale} that Lemma~\ref{lemma:properties-of-unipotent-variety}(\ref{item:unipotent-variety-is-normal}) holds even when $p$ divides $|\pi_1(\sD(G))|$, although Lemma~\ref{lemma:properties-of-unipotent-variety}(\ref{item:unipotent-variety-simply-connected}) \textit{always fails} in this case, in the rather strong sense that $\chi^{-1}(\chi(1))$ is generically non-reduced. However, it is always true that $\sU^{\var} = \chi^{-1}(\chi(1))_{\red}$, as one sees by Lemma~\ref{lemma:properties-of-unipotent-variety}(\ref{item:unipotent-variety-simply-connected}) and the fact that $\sU^{\var}$ and $\chi^{-1}(\chi(1))_{\red}$ are both the schematic images in $G$ under the quotient map of the analogous objects in the universal cover of $\sD(G)$.
\end{remark}

We move on now from the unipotent variety to the unipotent scheme.

\begin{theorem}[Unipotent schemes]\label{theorem:unipotent-scheme}
There exists a unique assignment to each scheme $S$ and each reductive $S$-group scheme $G$ a closed subscheme $\sU_G$ of $\sD(G)$ satisfying the following conditions.
\begin{enumerate}
    \item If $\overline{s}$ is a geometric point of $S$, then $\sU_G(k(\overline{s}))$ is the set of unipotent elements of $G(k(\overline{s}))$.
    \item If $S'$ is an $S$-scheme, then there is an equality $\sU_{G_{S'}} = (\sU_G)_{S'}$ of closed subschemes of $G_{S'}$.
    \item If $S$ is an integral scheme of generic characteristic $0$, then $\sU_G$ is reduced.
\end{enumerate}
Moreover, $\sU_G$ enjoys the following extra properties.
\begin{itemize}
    \item $\sU_G \subset \chi^{-1}(\chi(1))$, where $\chi: \sD(G) \to \sD(G)/\!/\sD(G)$ is the Steinberg morphism. If $|\pi_1(\sD(G))|$ is invertible on $S$, then equality holds.
    \item $\sU_G$ is $S$-flat and stable under the action of $\Aut_{G/S}$.
    \item If $f: G \to G'$ is a central isogeny of reductive $S$-group schemes, then $f$ restricts to an $S$-morphism $\sU_G \to \sU_{G'}$.
    \item If $\overline{s}$ is a geometric point of $S$ such that $\chara k(\overline{s})$ does not divide $|\pi_1(\sD(G))|$, then $(\sU_G)_{k(\overline{s})}$ is normal.
\end{itemize}
In particular, if $S$ is normal and $\chara k(s)$ does not divide $|\pi_1(\sD(G_s))|$ for every $s \in S$, then $\sU_G$ is normal.
\end{theorem}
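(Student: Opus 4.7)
My plan is to define $\sU_G$ canonically starting from a split Chevalley form of $\sD(G)$ over $\Spec \bZ$ and to propagate the definition to arbitrary $(S, G)$ via base change combined with \'etale descent, using the $\Aut$-stability property for the latter. First I would reduce to $G$ semisimple: since $\sU_G \subset \sD(G)$, unipotence is intrinsic to the derived group, and the derived group commutes with base change (cf.\ \cite[Thm.\ 5.3.1]{Conrad}). For a split semisimple $\bZ$-group $\bG_0$ (the unique Chevalley form of any given type), I would define $\sU_{\bG_0}$ as the scheme-theoretic closure in $\bG_0$ of the generic-fiber unipotent variety $\sU^{\rm var}_{\bG_{0,\bQ}}$. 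As the closure of a reduced irreducible subscheme of $\bG_{0,\bQ}$ inside the flat $\bZ$-scheme $\bG_0$, this closure is reduced, irreducible, and $\bZ$-flat.

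The main fibral computation to verify is that for every geometric point $\bar{s} \to \Spec \bZ$, the underlying set of $(\sU_{\bG_0})_{\bar{s}}$ coincides with the unipotent locus of $\bG_{0,\bar{s}}$. I would argue: (i) $\bZ$-flatness and the generic fiber dimension $\dim \bG_0 - r$ force $(\sU_{\bG_0})_{\bar{s}}$ to have pure dimension $\dim \bG_0 - r$; (ii) specialization from characteristic $0$ preserves unipotence, so $(\sU_{\bG_0})_{\bar{s}}$ is set-theoretically contained in the unipotent locus; (iii) the unipotent locus is itself irreducible of dimension $\dim \bG_0 - r$ (\cite[Prop.\ 4.4]{Springer}), so the containment is an equality of sets. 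This establishes axiom (1). Axiom (2) reduces to the trivial transitivity $((\sU_{\bG_0})_S)_{S'} = (\sU_{\bG_0})_{S'}$ once the definition for arbitrary $(S, G)$ is set up by base change from the Chevalley case. Axiom (3) is inherited from reducedness of $\sU_{\bG_0}$. For $\Aut_{\bG_0/\bZ}$-stability, any $\bZ$-automorphism of $\bG_0$ preserves $\sU^{\rm var}_{\bG_{0,\bQ}}$ and hence its closure. For general $(S, G)$, any $G$ is \'etale-locally the base change of some $\bG_{0,S}$, and $\Aut$-equivariant \'etale descent produces a global $\sU_G$.

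For uniqueness, let $\sU$ and $\sU'$ be two assignments satisfying all axioms. Axiom (2) reduces the comparison to the split Chevalley case over $\Spec \bZ$; axiom (3) makes both $\sU_{\bG_0}$ and $\sU'_{\bG_0}$ reduced closed subschemes of $\bG_0$, so it suffices to show they have the same underlying set. Given $x \in |\bG_0|$, pick a geometric point $\bar{x}$ above $x$: combining axioms (1) and (2) for the morphism $\bar{x} \to \Spec \bZ$, we see that $x \in \sU_{\bG_0}$ if and only if the composition $\Spec k(\bar{x}) \to \bG_0$ lands in the unipotent locus of $\bG_{0, k(\bar{x})}$. This condition depends only on $x$ and $\bG_0$, so $\sU_{\bG_0}$ and $\sU'_{\bG_0}$ have the same underlying set and coincide as reduced schemes.

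The remaining properties follow quickly. The inclusion $\sU_G \subset \chi^{-1}(\chi(1))$ is Lemma~\ref{lemma:properties-of-chi}(\ref{item:chi-unipotence}) applied fiberwise; equality when $|\pi_1(\sD(G))|$ is invertible on $S$ is Lemma~\ref{lemma:properties-of-unipotent-variety}(\ref{item:unipotent-variety-simply-connected}). Compatibility with central isogenies follows from the closure definition and preservation of unipotence. Geometric-fiber normality when $\chara k(\bar{s}) \nmid |\pi_1|$ is Lemma~\ref{lemma:properties-of-unipotent-variety}(\ref{item:unipotent-variety-is-normal}); global normality then follows from $S$-flatness of $\sU_G$ together with the standard criterion that a flat scheme with geometrically normal fibers over a normal base is normal. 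The principal obstacle I anticipate is showing that the geometric fibers of $\sU_{\bG_0}$ match the unipotent variety \emph{as schemes} (not merely as sets) in good characteristic, as required to invoke Lemma~\ref{lemma:properties-of-unipotent-variety}(\ref{item:unipotent-variety-is-normal}); this I would handle by comparing $\sU_{\widetilde{\bG_0}}$ with $\chi^{-1}(\chi(1))$ over the simply connected cover (where the Steinberg morphism is well-behaved integrally via Chevalley--Shephard--Todd) and transferring via the central isogeny in good characteristic.
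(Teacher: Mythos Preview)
Your overall architecture matches the paper's: define $\sU_{\bG_0}$ over $\Spec\bZ$ as the schematic closure of the characteristic-zero unipotent variety, verify the axioms there, and propagate to arbitrary $(S,G)$ by base change and descent. The uniqueness argument, the fibral dimension count for axiom (1), and the deduction of axiom (3) from $\bZ$-flatness plus geometric reducedness of the generic fiber are all fine.

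There is a genuine gap in your $\Aut$-stability step. You argue that ``any $\bZ$-automorphism of $\bG_0$ preserves $\sU^{\rm var}_{\bG_{0,\bQ}}$ and hence its closure,'' but this only addresses the $\bZ$-points of $\Aut_{\bG_0/\bZ}$. For the descent you invoke next, you need that every $S'$-automorphism of $(\bG_0)_{S'}$ preserves $(\sU_{\bG_0})_{S'}$ \emph{as a scheme}, for arbitrary $S'$; set-level preservation is not enough when $(\sU_{\bG_0})_{S'}$ is non-reduced. The fix the paper uses is short: since $\Aut_{\bG_0/\bZ}$ is $\bZ$-flat, the product $\Aut_{\bG_0/\bZ}\times_{\bZ}\sU_{\bG_0}$ is $\bZ$-flat, and the action morphism into $\bG_0$ factors through $\sU_{\bG_0}$ on the generic fiber, hence (by the schematic-closure characterization of flat extensions) it factors through $\sU_{\bG_0}$ integrally. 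The paper also records an alternative route via the Cohen structure theorem: lift any automorphism over a complete local ring to one over a regular local ring of generic characteristic $0$, where axiom (3) makes the target reduced and the set-level argument suffices.

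Your treatment of the first bullet point is also too thin. Both the inclusion $\sU_G\subset\chi^{-1}(\chi(1))$ and the equality in the invertible-$|\pi_1|$ case require knowing that the formation of $\sD(G)/\!/\sD(G)$ commutes with arbitrary base change in $S$ (Theorem~\ref{theorem:adjoint-quotients-agree-group-case}, quoted from \cite{Lee-adjoint}); without this, the right-hand side does not even have the intended meaning over a general base, and fiberwise invocations of Lemma~\ref{lemma:properties-of-chi} or Lemma~\ref{lemma:properties-of-unipotent-variety} do not assemble into a scheme-theoretic statement. For the equality and for the fourth bullet (fibral normality), the paper further uses Lemma~\ref{lemma:chi-flat}: $\chi$ is flat near $\chi(1)$ and $\chi^{-1}(\chi(1))$ has geometrically normal fibers, hence is normal (so reduced) over $\Spec\bZ[1/|\pi_1|]$, forcing $\sU_{\bG_0}=\chi^{-1}(\chi(1))$ there. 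You correctly identify this as the principal obstacle and gesture at Chevalley--Shephard--Todd on the simply connected cover; that is indeed the right direction, but the argument additionally needs Lemma~\ref{lemma:adjoint-quotient-smooth-near-1} (smoothness of $T/\!/W$ at $[1]$ when $|\pi_1|$ is invertible, via free action of $Z$) plus Miracle Flatness to get flatness of $\chi$. These integral inputs are where most of the content of the theorem lives, and they are not subsumed by the field-level lemmas you cite.
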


\begin{proof}
The remainder of this section will be spent proving Theorem~\ref{theorem:unipotent-scheme}. First we prove the uniqueness of this assignment: suppose $\sU$ and $\sU'$ are two assignments satisfying the above conditions. Let $S$ be a scheme, let $G$ be a reductive $S$-group scheme, and let $S' \to S$ be an fppf cover such that $G_{S'}$ is $S'$-split. To check that $\sU_G = \sU'_G$, it suffices to check that $\sU_{G_{S'}} = \sU'_{G_{S'}}$ (using (2)). Note that $G_{S'} = \cG_{S'}$ for some split reductive group scheme $\cG$ over $\bZ$, so by (2) we may pass from $G$ to $\cG$ to assume that $S = \Spec \bZ$. In this case, uniqueness is simple: (1) determines $\sU_G$ as a \textit{subset} of the ringed space $G$, and (3) then determines $\sU_G$ as a \textit{closed subscheme} of $G$. So it suffices to establish existence and properties. \smallskip

Suppose first that we have defined $\sU_G$ for every \textit{split} reductive group scheme $G$, and suppose that this assignment satisfies the desired properties. If $S$ is an arbitrary scheme and $G$ is a reductive $S$-group scheme, let $S'$ be an fppf $S$-scheme such that $G_{S'}$ is $S'$-split. Then $\sU_{G_{S'}}$ is defined, and by hypothesis it is preserved by all automorphisms of $G_{S'}$, even after base change in $S$. Thus by descent \cite[Exp.\ VIII, Cor.\ 1.9]{SGA1}, there is a unique closed subscheme $\sU_G$ of $G$ such that $(\sU_G)_{S'} = \sU_{G_{S'}}$. By construction, formation of $\sU_G$ commutes with base change. Since all of the remaining properties asserted in Theorem~\ref{theorem:unipotent-scheme} may be checked after base change on $S$, we have reduced to proving the existence of this assignment (and its properties) for split reductive group schemes $G$. \smallskip

Next suppose that we have defined $\sU$ for every split reductive group scheme over $\bZ$, and suppose that this assignment satisfies the properties in (1), (3), and the second bullet point of Theorem~\ref{theorem:unipotent-scheme}. Let $S$ be a scheme, and let $G$ be a split reductive $S$-group scheme, so $G \cong \cG_S$ for some split reductive group scheme $\cG$ over $\bZ$. We let $\sU_G = (\sU_{\cG})_S$. To show that this is well-defined, one must show that $(\sU_{\cG})_S$ is preserved by all automorphisms of $\cG_S$. Any automorphism of $\cG_S$ certainly preserves $(\sU_{\cG})_S$ as a \textit{set}, and if $(\sU_{\cG})_S$ were reduced it would follow that such an automorphism preserves $(\sU_{\cG})_S$ as a \textit{scheme} as well. We will thus reduce to the case that $(\sU_{\cG})_S$ is reduced. \smallskip

Working (fpqc-)locally and spreading out, we may and do assume that $S = \Spec A$ for a complete noetherian local ring $A$. By the Cohen structure theorem, we have $A = A_0/I$ for some complete regular local ring $A_0$ of generic characteristic $0$. Since $\Aut_{\cG/\bZ}$ is smooth, any automorphism of $\cG_A$ lifts to an automorphism of $\cG_{A_0}$, so we may pass from $A$ to $A_0$ to assume that $S$ is integral of generic characteristic $0$. In particular, $(\sU_{\cG})_S$ is reduced by property (3), and it follows from the previous paragraph that $(\sU_{\cG})_S$ is preserved by all automorphisms of $\cG_S$. \smallskip

To summarize, we have now shown that to define $\sU_G$ for general $G$, it is enough to define it for all split reductive $\bZ$-group schemes $G$ and to demonstrate properties (1), (3), and the second bullet point of Theorem~\ref{theorem:unipotent-scheme} for such $G$. For existence in this case, let $\sU$ be the schematic closure of the unipotent variety $\sU_{G_{\bQ}}$ in $G$. Since $\sU_{G_{\bQ}}$ lies in $\sD(G_{\bQ})$, it follows that $\sU$ is a closed subscheme of $\sD(G)$. Since $\sU_{G_{\bQ}}$ is stable under the action of $\Aut_{G_{\bQ}/\bQ}$ and $\Aut_{G/\bZ}$ is flat, it follows that $\sU$ is stable under the action of $\Aut_{G/\bZ}$. Since $\bZ$ is Dedekind, $\sU$ is $\bZ$-flat, and so its fibers are all of the same dimension. Moreover, for each geometric point $\overline{s}$ of $\Spec \bZ$, $\sU(k(\overline{s}))$ consists of unipotent elements of $G(k(\overline{s}))$: to check this, one may use a $\bZ$-embedding of $G$ into $\GL_n$ to reduce to the case $G = \GL_n$, in which case it follows from considerations with eigenvalues. Because all of the fibral unipotent varieties of $G$ are irreducible of the same dimension, flatness of $\sU$ implies that $\sU(k(\overline{s}))$ is \textit{equal} to the set of unipotent elements of $G(k(\overline{s}))$. Note that we have established (1) and the second bullet point of Theorem~\ref{theorem:unipotent-scheme}. The third bullet point is also not difficult, because for split $G$ every central isogeny is already defined over $\bZ$ and the unipotent scheme is reduced over $\bZ$. \smallskip

Let $S$ be an integral scheme of generic characteristic $0$, and let $G$ be a reductive $\bZ$-group scheme. Since $\sU_{G_{\bQ}}$ is geometrically reduced by Lemma~\ref{lemma:properties-of-unipotent-variety}, the generic fiber of $\sU_{G_S}$ is reduced. Since moreover $\sU_{G_S}$ is $S$-flat, it follows that $\sU_{G_S}$ is reduced, proving (3). Thus we have now \textit{defined} $\sU$ for arbitrary reductive group schemes over arbitrary schemes. Note also that the final statement of the theorem follows from the fourth bullet point and \cite[23.9, Cor.]{Matsumura}. \smallskip

It remains to establish the first and fourth bullet points of Theorem~\ref{theorem:unipotent-scheme}. For the remainder of this section, $S$ is a scheme and $G$ is a reductive $S$-group scheme. We define $\chi: \sD(G) \to \sD(G)/\!/\sD(G)$ as in the beginning of this section, and we let $\sU'_G = \chi^{-1}(\chi(1))$. We will show that $\sU_G = \sU'_G$ if $|\pi_1(\sD(G))|$ is invertible on $S$, and in any case $\sU_G \subset \sU'_G$.

\begin{theorem}\label{theorem:adjoint-quotients-agree-group-case}\cite[Thm.\ 4.1]{Lee-adjoint}
Let $S$ be a scheme, and let $G$ be a reductive $S$-group scheme. If $S'$ is an $S$-scheme, then the natural $S'$-morphism $G_{S'}/\!/G_{S'} \to (G/\!/G)_{S'}$ is an isomorphism. Moreover, if $G$ admits a maximal $S$-torus $T$ with Weyl group $W$, then the natural map $T/\!/W \to G/\!/G$ is an isomorphism.
\end{theorem}

\begin{proof}
We note that \cite[Thm.\ 4.1]{Lee-adjoint} only considers the case that $S$ and $S'$ are both affine, but this is not essential because the formation of GIT quotients commutes with flat base change (in particular, passage to Zariski open covers).
\end{proof}

Thus we find that the formation of $\sU'_G$ commutes with base change in $S$. To show $\sU_G \subset \sU'_G$, it therefore suffices to pass to an fppf cover of $S$ to assume that $G$ is split, and by base change again we may assume $S = \Spec \bZ$. But now Lemma~\ref{lemma:properties-of-unipotent-variety} shows that if $k$ is a field then $\sU'_G(k)$ is the set of unipotent elements in $\sD(G)(k)$, which is also the set of unipotent elements in $G(k)$. Thus $\sU_G$ and $\sU'_G$ have the same set of points, and since $\sU_G$ is reduced it follows that $\sU_G \subset \sU'_G$.

\begin{lemma}\label{lemma:chi-flat}
Let $S$ be a scheme, and let $G$ be a semisimple $S$-group scheme. If $|\pi_1(G)|$ is invertible on $S$ then $\chi$ is flat in a Zariski open neighborhood of $\chi(1)$, and $\chi^{-1}(\chi(1))$ has geometrically normal fibers.
\end{lemma}

\begin{proof}
The statement concerning fibers follows from Lemma~\ref{lemma:properties-of-unipotent-variety}. To show flatness in a neighborhood of $\chi(1)$, we note that openness of the flat locus \cite[IV\textsubscript{3}, Thm.\ 11.3.1]{EGA} and the fibral flatness criterion \cite[IV\textsubscript{3}, Thm.\ 11.3.10]{EGA} reduce us to the case $S = \Spec k$ for an algebraically closed field $k$. In this case, Theorem~\ref{theorem:adjoint-quotients-agree-group-case} and Lemma~\ref{lemma:properties-of-chi} show that every geometric fiber of $\chi$ is irreducible of the same dimension, so by Miracle Flatness \cite[Thm.\ 23.1]{Matsumura} it suffices to show that $G/\!/G$ is smooth in a neighborhood of $\chi(1)$. By Theorem~\ref{theorem:adjoint-quotients-agree-group-case}, if $T$ is a (split) maximal $k$-torus of $G$ with Weyl group $W$, then there is an isomorphism $G/\!/G \cong T/\!/W$. Finally, by Lemma~\ref{lemma:adjoint-quotient-smooth-near-1}, we see that $G/\!/G$ is indeed smooth in a neighborhood of $\chi(1)$.
\end{proof}

Finally we are ready to conclude the proof of Theorem~\ref{theorem:unipotent-scheme}. By Lemma~\ref{lemma:chi-flat}, it remains to show that $\sU_G = \sU'_G$ if $|\pi_1(\sD(G))|$ is invertible on $S$. Using Theorem~\ref{theorem:adjoint-quotients-agree-group-case}, we may and do assume $S = \Spec \bZ$. Now Lemma~\ref{lemma:chi-flat} shows that in this case $\chi^{-1}(\chi(1))$ is flat with geometrically normal fibers, whence it is normal by \cite[23.9, Cor.]{Matsumura} and in particular reduced. Since $\sU_G \subset \sU'_G$ and both closed subschemes of $G$ have the same points, we obtain equality.
\end{proof}

\begin{remark}
Unfortunately, we do not know whether the unipotent scheme is functorial in $G$; that is, if $f: G \to G'$ is a homomorphism of reductive $S$-group schemes, does $f$ restrict to a morphism $\sU_G \to \sU_{G'}$? This is certainly true on the level of points, so if $\sU_G$ is \textit{reduced} then this is true. Moreover, if $|\pi_1(\sD(G'))|$ is invertible on $S$ then $\sU_{G'} = \chi_{G'}^{-1}(\chi_{G'}(1))$ and $\sU_G \subset \chi_G^{-1}(\chi_G(1))$. Clearly $f$ restricts to a morphism $\chi_G^{-1}(\chi_G(1)) \to \chi_{G'}^{-1}(\chi_{G'}(1))$, so again $f$ restricts to a morphism $\sU_G \to \sU_{G'}$ in this case. Thus the only remaining case of interest is the case that neither $|\pi_1(\sD(G))|$ nor $|\pi_1(\sD(G'))|$ are invertible on the base. The same remark applies to the nilpotent scheme defined in the next section.
\end{remark}

\begin{cor}\label{corollary:etale-central-isogeny-iso-unipotent-schemes}
Let $S$ be a scheme and let $f: G \to G'$ is an etale isogeny of reductive $S$-group schemes. If $|\pi_1(\sD(G))|$ is invertible on $S$, then the induced morphism $\sU_G \to \sU_{G'}$ is an isomorphism.
\end{cor}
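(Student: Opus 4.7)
The plan is to reduce to checking that the induced map is an isomorphism on each geometric fiber, and then to invoke the field-theoretic analogue Lemma~\ref{lemma:properties-of-unipotent-variety}(\ref{item:unipotent-variety-separable-central-isogeny}).

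First I would verify that Theorem~\ref{theorem:unipotent-scheme} applies on both sides, i.e.\ that $|\pi_1(\sD(G'))|$ is also invertible on $S$. Since $f$ is etale, $\ker f$ is finite etale, hence the kernel $K$ of the induced central isogeny $\sD(G) \to \sD(G')$ has order invertible on $S$. Comparing universal covers (both $\sD(G)$ and $\sD(G')$ share the same simply connected cover) gives the relation $|\pi_1(\sD(G'))| = |\pi_1(\sD(G))| \cdot |K|$, whence the invertibility claim. Theorem~\ref{theorem:unipotent-scheme} then yields that $\sU_G$ and $\sU_{G'}$ are both $S$-flat closed subschemes of finite presentation, that their formations commute with base change, and that $f$ restricts to an $S$-morphism $\varphi \colon \sU_G \to \sU_{G'}$.

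Next I would invoke the fibral isomorphism criterion (EGA IV$_4$, 17.9.5) applied to $\varphi$, viewed as a morphism between $S$-flat finitely presented $S$-schemes. It suffices to verify that $\varphi_{\overline{s}}$ is an isomorphism for every geometric point $\overline{s}$ of $S$. Using base-change compatibility, $(\sU_G)_{\overline{s}} = \sU_{G_{\overline{s}}}$; by Theorem~\ref{theorem:unipotent-scheme} this coincides with $\chi^{-1}(\chi(1))$ in $\sD(G_{\overline{s}})$, which by Lemma~\ref{lemma:properties-of-unipotent-variety}(\ref{item:unipotent-variety-simply-connected}) agrees with the unipotent variety $\sU^{\rm{var}}_{G_{\overline{s}}}$ since $|\pi_1(\sD(G_{\overline{s}}))|$ is invertible in $k(\overline{s})$. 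The same description holds for $G'$. The morphism $\varphi_{\overline{s}}$ is simply the restriction of the separable central isogeny $f_{\overline{s}}$ to these unipotent varieties, so Lemma~\ref{lemma:properties-of-unipotent-variety}(\ref{item:unipotent-variety-separable-central-isogeny}) produces the required fibral isomorphism.

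The only potential obstacle is the bookkeeping required to identify $\varphi_{\overline{s}}$ with the natural morphism of unipotent varieties that the lemma addresses, but this is immediate since both are restrictions of $f_{\overline{s}}$ to the common closed subscheme $\sU^{\rm{var}}_{G_{\overline{s}}} \subset G_{\overline{s}}$. The substantive input has already been carried out: flatness and base-change compatibility of the unipotent scheme in Theorem~\ref{theorem:unipotent-scheme}, and the separable case of the field-theoretic statement in Lemma~\ref{lemma:properties-of-unipotent-variety}.
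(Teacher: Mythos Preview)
Your proof is correct and follows essentially the same route as the paper: apply the fibral isomorphism criterion using the $S$-flatness of $\sU_G$ and $\sU_{G'}$ from Theorem~\ref{theorem:unipotent-scheme}, then reduce to Lemma~\ref{lemma:properties-of-unipotent-variety}(\ref{item:unipotent-variety-separable-central-isogeny}) on geometric fibers. You are simply more explicit than the paper about two points the paper leaves implicit: that $|\pi_1(\sD(G'))|$ is also invertible on $S$ (needed so that the fourth bullet of Theorem~\ref{theorem:unipotent-scheme} applies to $G'$), and that the geometric fibers of $\sU_G$ and $\sU_{G'}$ are reduced and hence coincide with the unipotent varieties so that Lemma~\ref{lemma:properties-of-unipotent-variety}(\ref{item:unipotent-variety-separable-central-isogeny}) literally applies.
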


\begin{proof}
Because of the flatness and fibral assertions in Theorem~\ref{theorem:unipotent-scheme}, we may apply the fibral isomorphism criterion to reduce to the already-proven Lemma~\ref{lemma:properties-of-unipotent-variety}(\ref{item:unipotent-variety-separable-central-isogeny}).
\end{proof}

\subsection{The nilpotent scheme}\label{subsection:nilp-sch}

We may define the \textit{nilpotent variety} $\sN^{\var} = \sN_G^{\var}$ of $G$ in a manner completely similar to the definition of the unipotent variety: if $k$ is separably closed, then $\sN^{\var}$ is the Zariski closure of the set of nilpotent elements in $\fg$. By Galois descent, we may define $\sN^{\var}$ over an arbitrary field $k$. In \cite[Prop.\ 2.1]{Springer-isomorphism} it is proved that $\sN^{\var}$ is an irreducible closed subscheme of $G$ of dimension $\dim G - \rk G$. We remark that $\sN_G^{\var} = \sN_{\sD(G)}^{\var}$, and we will use this fact without comment.

\begin{lemma}\label{lemma:properties-of-nilpotent-variety}
Let $G$ be a connected reductive group over a field $k$ of characteristic $p \geq 0$.
\begin{enumerate}
    \item The formation of $\sN^{\var}$ commutes with any field extension of $k$, and $\sN^{\var}$ is geometrically reduced and generically smooth.
    \item\label{item:nilpotent-variety-separable-central-isogeny} If $f: G \to G'$ is a central isogeny of connected reductive groups, then the induced map $\sN_G^{\var} \to \sN_{G'}^{\var}$ is bijective on geometric points. If $f$ is moreover separable, then this map is an isomorphism.
    \item\label{item:nilpotent-variety-chi-description} If $p$ is not a torsion prime for $G$, $p \nmid |\pi_1(\sD(G))|$, and $p \neq 2$ if $G$ has a simple factor of type $\rm{C}_n$ ($n \geq 1$), then $\sN^{\var} = \chi^{-1}(\chi(0))$.
    \item\label{item:nilpotent-variety-is-normal} If $p$ is good for $G$ and $p \nmid |\pi_1(\sD(G))|$, then $\sN^{\var}$ is normal.
\end{enumerate}
\end{lemma}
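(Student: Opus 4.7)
My plan is to mirror the proof of Lemma~\ref{lemma:properties-of-unipotent-variety}, substituting the Lie algebra Steinberg morphism $\chi\colon\fg\to\fg/\!/G$ for its group analog and using the centralizer results of Section~\ref{subsection:centralizers-of-lie-algebra-elements} in place of those from Section~\ref{subsection:centralizers-of-group-elements}. For (1), I would pass to a finite separable extension so that $G$ is split and then write $G\cong(G_0)_k$ for a split semisimple $G_0$ over the prime subfield $k_0$; since $k_0$ is perfect, $\sN^{\rm{var}}_{G_0}\subset\fg_0$ is geometrically reduced and generically smooth, so the natural closed embedding $(\sN^{\rm{var}}_{G_0})_k\hookrightarrow\sN^{\rm{var}}_G$ between integral schemes of common dimension $\dim G-\rk G$ is an isomorphism, whence (1). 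For (2), a central isogeny $f\colon G\to G'$ induces $df\colon\fg\to\fg'$ with kernel $\Lie(\ker f)\subset\Lie Z(G)$; since $Z(G)$ is of multiplicative type, the elements of $\Lie Z(G)$ are semisimple, so by uniqueness of Jordan decomposition each fiber of $df$ contains at most one nilpotent element. A dimension count against the irreducible $\sN^{\rm{var}}_{G'}$ then yields bijectivity on geometric points; when $f$ is separable, $\ker f$ is \'etale so $df$ is a linear isomorphism and the induced map of reduced integral schemes of equal dimension is an isomorphism.

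For (3), I would first identify $\fg/\!/G\cong\bA^r$: under the stated exclusion of $\Sp_{2n}$ in characteristic $2$ the Chevalley restriction map $(\Sym\fg^*)^G\to(\Sym\ft^*)^W$ is an isomorphism, the hypothesis $p\nmid|\pi_1(\sD(G))|$ makes the universal cover $\widetilde G\to\sD(G)$ \'etale so $\fg=\widetilde\fg$ and the $G$- and $\widetilde G$-invariants coincide, and Chevalley's theorem \cite[Chap.\ VI, \S 3, Thm.\ 1]{Bourbaki} makes $(\Sym\widetilde\ft^*)^W$ a polynomial ring. Consequently $\chi(0)$ is locally cut out by $r$ functions, rendering $\chi^{-1}(\chi(0))$ Cohen--Macaulay of dimension $\dim\fg-r=\dim\sN^{\rm{var}}$; Lemma~\ref{lemma:properties-of-chi-nilpotent}(\ref{item:Lie-chi-nilpotence}) identifies its underlying set with $\sN^{\rm{var}}(\overline{k})$. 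It remains to establish reducedness via Serre's criterion. At a regular nilpotent $X$, Corollary~\ref{corollary:smooth-connected-lie-algebra-centralizer} combined with a rank--nullity argument for $d\chi|_X\colon\fg\to k^r$ yields $\ker d\chi|_X=[\fg,X]$, so $\chi$ is smooth at $X$. Since regular nilpotents form an open dense subset of $\sN^{\rm{var}}$ with complementary codimension at least $2$ in good characteristic, Serre's criterion shows $\chi^{-1}(\chi(0))$ is normal, in particular reduced, and therefore equal to $\sN^{\rm{var}}$.

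For (4), I would observe via Table~\ref{table:1} that every torsion prime is bad and that $2$ is always bad in type $\rm{C}$, so the hypotheses of (4) imply those of (3); normality of $\sN^{\rm{var}}$ is then the output of the (3) argument. The hardest step will be the smoothness analysis of $\chi$ at regular nilpotents in (3): one must delicately combine the identification $\fg/\!/G\cong\bA^r$ with a precise centralizer dimension computation drawing on Section~\ref{subsection:centralizers-of-lie-algebra-elements}, after which normality and reducedness follow from the Cohen--Macaulay property and Serre's criterion as in the unipotent case.
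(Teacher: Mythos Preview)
Your outline for (1) and (2) matches the paper, which explicitly omits those arguments as ``entirely similar'' to Lemma~\ref{lemma:properties-of-unipotent-variety}. The real divergence is in (3) and (4).

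For (3) there is a genuine gap. You invoke Corollary~\ref{corollary:smooth-connected-lie-algebra-centralizer} to get smoothness of $\chi$ at a regular nilpotent, but that corollary requires $p$ to be \emph{good} for $G$, whereas (3) only assumes $p$ is not a torsion prime (plus the $\pi_1$ and type-$\rm{C}$ exclusions). These hypotheses are strictly weaker: for $G$ of type $\rm{G}_2$ in characteristic $3$, all hypotheses of (3) hold ($3$ is not a torsion prime for $\rm{G}_2$, $\pi_1$ is trivial, and $\rm{G}_2$ is not type $\rm{C}$), yet $3$ is bad and Corollary~\ref{corollary:smooth-connected-lie-algebra-centralizer} does not apply. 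The paper instead obtains generic smoothness of $\chi^{-1}(\chi(0))$ by citing \cite[7.14]{Jantzen-nilpotent} away from type $\rm{A}$ (via Jantzen's standing hypotheses (H1)--(H3)) and the explicit description in \cite[7.2]{Jantzen-nilpotent} in type $\rm{A}$; it then uses only generic smoothness plus Cohen--Macaulayness to get \emph{reducedness} (Serre's $R_0+S_1$), not normality. Your codimension-$\geq 2$ statement is not needed for (3) and, as you note, is only available in good characteristic anyway. Separately, your rank--nullity sketch is incomplete as stated: knowing $\dim\fz_{\fg}(X)=r$ gives $\dim[\fg,X]=\dim\fg-r$ and $[\fg,X]\subset\ker d\chi|_X$, but you still need a separate upper bound on $\dim\ker d\chi|_X$ to conclude surjectivity.

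For (4) your route is different from the paper's: you want to reuse the (3) argument (now legitimately, since $p$ is good) together with the codimension-$\geq 2$ claim and Serre's criterion, whereas the paper simply cites \cite[8.5, Cor.]{Jantzen-nilpotent} for normality. Your approach can be made to work but the codimension statement for the nilpotent cone needs its own reference. Finally, a minor point: the polynomial-ring assertion for $(\Sym\ft^*)^W$ should be cited from \cite[Thm.~3, Corollaire]{Demazure}, not \cite[Chap.~VI, \S 3.4, Thm.~1]{Bourbaki}, which is about the exponential invariants $k[T]^W$.
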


\begin{proof}
The proofs of the first two points are entirely similar to the proofs of the analogous points in Lemma~\ref{lemma:properties-of-unipotent-variety}, and we omit them. For the rest, we may and do assume that $G$ is semisimple and simply connected. To prove (\ref{item:nilpotent-variety-chi-description}), let $T$ be a split maximal torus of $G$ with Weyl group $W$ and Lie algebra $\ft$. By \cite[7.12]{Jantzen-nilpotent}, the restriction map $\Sym(\fg^*)^G \to \Sym(\ft^*)^W$ is an isomorphism under our hypotheses. By \cite[Thm.\ 3, Corollaire]{Demazure}, $\Sym(\ft^*)^W$ is a polynomial ring, so using Lemma~\ref{lemma:properties-of-chi-nilpotent}(\ref{item:Lie-chi-irreducible-fibers}) we see that $\chi^{-1}(\chi(0))$ is Cohen-Macaulay. We claim that the smooth locus of $\chi^{-1}(\chi(0))$ is open and nonempty. Away from type A, note that $G$ satisfies hypotheses (H1)-(H3) from \cite[2.9]{Jantzen-nilpotent}, so \cite[7.14]{Jantzen-nilpotent} explains that $\chi$ is smooth at an element $X \in \fg$ if and only if $X$ is regular, so the result follows (since $\fg$ contains a regular nilpotent element by \cite[Lem.\ 3.1.1]{Riche-universal-centralizer}). In type A, i.e., $G \cong \SL_n$ for some $n$, the claim follows from the very concrete calculations of \cite[7.2]{Jantzen-nilpotent}. Since $\chi^{-1}(\chi(0))$ is Cohen-Macaulay and generically smooth, Serre's criterion for reducedness shows that $\chi^{-1}(\chi(0))$ is reduced. By Lemma~\ref{lemma:properties-of-chi-nilpotent}(\ref{item:Lie-chi-nilpotence}) we have $\chi^{-1}(\chi(0))(k) = \sN^{\var}(k)$, so indeed $\chi^{-1}(\chi(0)) = \sN^{\var}$. The final point (\ref{item:nilpotent-variety-is-normal}) follows from \cite[8.5, Cor.]{Jantzen-nilpotent}.
\end{proof}

Again, we move on from the nilpotent variety to the nilpotent scheme.

\begin{theorem}[Nilpotent version]\label{theorem:nilpotent-scheme}
There exists a unique assignment to each scheme $S$ and each reductive $S$-group scheme $G$ with $\fg \coloneqq \Lie G$ and $\fg' \coloneqq \Lie \sD(G)$ a closed subscheme $\sN_G$ of $\fg'$ satisfying the following conditions.
\begin{enumerate}
    \item If $\overline{s}$ is a geometric point of $S$, then $\sN_G(k(\overline{s}))$ is the set of nilpotent elements of $\fg(k(\overline{s}))$.
    \item If $S'$ is an $S$-scheme, then there is an equality $\sN_{G_{S'}} = (\sN_G)_{S'}$ of closed subschemes of $\fg_{S'}$.
    \item If $S$ is an integral scheme of generic characteristic $0$, then $\sN_G$ is reduced.
\end{enumerate}
Moreover, $\sN_G$ satisfies the following extra conditions.
\begin{itemize}
    \item $\sN_G \subset \chi^{-1}(\chi(0))$, where $\chi: \fg' \to \fg'/\!/G$ is the Steinberg morphism. If $|\pi_1(\sD(G))|$ is invertible on $S$ and $\chara k(s)$ is good for $G_s$ for every $s \in S$, then equality holds.
    \item $\sN_G$ is $S$-flat and stable under the action of $\Aut_{G/S}$.
    \item If $f: G \to G'$ is a central isogeny of reductive $S$-group schemes, then $f$ restricts to an $S$-morphism $\sN_G \to \sN_{G'}$.
    \item If $\overline{s}$ is a geometric point of $S$ such that $\chara k(\overline{s})$ is good for $G_{\overline{s}}$ and does not divide $|\pi_1(\sD(G))|$, then $(\sN_G)_{k(\overline{s})}$ is normal.
\end{itemize}
In particular, if $S$ is normal and $\chara k(s)$ is good for $G_{\overline{s}}$ and does not divide $|\pi_1(\sD(G_s))|$ for every $s \in S$, then $\sN_G$ is normal.
\end{theorem}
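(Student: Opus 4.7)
The plan is to mirror the proof of Theorem~\ref{theorem:unipotent-scheme} essentially step-for-step, with $\fg' = \Lie \sD(G)$ replacing $\sD(G)$, the Lie-algebra Steinberg morphism $\chi\colon \fg' \to \fg'/\!/G$ replacing its group counterpart, and Lemma~\ref{lemma:properties-of-nilpotent-variety} replacing Lemma~\ref{lemma:properties-of-unipotent-variety}. Uniqueness goes through unchanged: using the base-change compatibility in (2), an fppf cover reduces to the split case, and by choosing a split $\bZ$-form $\cG$ of $G$ and applying (2) again we reduce to $S = \Spec\bZ$, where (1) pins down the underlying set of $\sN_\cG$ and (3) determines the scheme structure as the schematic closure of the generic fiber.

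For existence, I would first define $\sN_\cG$ for split semisimple $\cG$ over $\bZ$ as the schematic closure inside $\Lie\cG$ of the nilpotent variety $\sN^{\rm{var}}_{\cG_\bQ}$. Flatness over $\bZ$ is automatic since $\bZ$ is Dedekind; geometric reducedness of the generic fiber is part of Lemma~\ref{lemma:properties-of-nilpotent-variety}, and the fibral identification with $\sN^{\rm{var}}_{\cG_{k(\overline{s})}}$ follows by matching the dimension $\dim\cG - \rk\cG$ against irreducibility of the nilpotent variety on each fiber (after first verifying, via a $\bZ$-embedding $\cG \hookrightarrow \GL_n$ and elementary eigenvalue considerations, that $\sN_\cG(k(\overline{s}))$ sits inside the set of nilpotent elements of $\fg'_{k(\overline{s})}$). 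To descend to arbitrary $S$ one needs $\Aut_{\cG/\bZ}$-stability of $\sN_\cG$; as in the unipotent case, this is forced by lifting via the Cohen structure theorem to a complete regular local ring of generic characteristic $0$, where $\sN$ is reduced by (3), so $\Aut$-stability reduces to the underlying set, which is obvious from preservation of nilpotence under any group automorphism. The fppf descent of~\cite[Exp.\ VIII, Cor.\ 1.9]{SGA1} then produces $\sN_G$ in general.

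The properties are verified as follows. For the containment $\sN_G \subset \chi^{-1}(\chi(0))$, I first establish the Lie-algebra analog of Theorem~\ref{theorem:adjoint-quotients-agree-group-case} (commutation of $\fg'/\!/G$ with base change): in the split $\bZ$-case, the inclusion $\Sym((\fg')^*)^{\sD(G)} \hookrightarrow \Sym(\ft^*)^W$ combined with $\Sym(\ft^*)^W$ being a polynomial ring \cite[Thm.\ 3, Corollaire]{Demazure} yields the required behavior. Granting this, Lemma~\ref{lemma:properties-of-chi-nilpotent}(\ref{item:Lie-chi-nilpotence}) gives set-theoretic equality of $\sN_G$ and $\chi^{-1}(\chi(0))$ on geometric fibers, so reducedness of $\sN_G$ implies the inclusion. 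For equality under the stated hypotheses, note that good characteristic for $G_s$ forces $\chara k(s)$ to be a non-torsion prime (torsion primes are always bad) and also $\chara k(s) \neq 2$ if $G_s$ has a factor of type $\rm{C}_n$ (since $2$ is bad for $\rm{C}_n$); hence Lemma~\ref{lemma:properties-of-nilpotent-variety}(\ref{item:nilpotent-variety-chi-description}) applies fiberwise to give $(\chi^{-1}(\chi(0)))_{k(\overline{s})} = \sN^{\rm{var}}_{G_{k(\overline{s})}}$. Combined with a Lie-algebra version of Lemma~\ref{lemma:chi-flat} (the proof transposes directly, using smoothness of $\fg'/\!/G$ near $\chi(0)$ from Demazure's theorem plus Miracle Flatness and the irreducibility in Lemma~\ref{lemma:properties-of-chi-nilpotent}(\ref{item:Lie-chi-irreducible-fibers})), the fibral isomorphism criterion upgrades the pointwise equality to a scheme-theoretic one. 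Fibral normality is Lemma~\ref{lemma:properties-of-nilpotent-variety}(\ref{item:nilpotent-variety-is-normal}), and global normality when $S$ is normal follows from \cite[23.9, Cor.]{Matsumura} applied to the flat morphism $\sN_G \to S$. Functoriality of $\sN$ under central isogenies of $S$-group schemes is verified by fppf descent from the reduced case over $\bZ$, where it reduces to Lemma~\ref{lemma:properties-of-nilpotent-variety}(\ref{item:nilpotent-variety-separable-central-isogeny}) on the level of points.

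The main obstacle I foresee is the Lie-algebra base-change result for $\fg'/\!/G$: while plausible by analogy with \cite[Thm.\ 4.1]{Lee-adjoint}, a genuinely general statement has to navigate the $\Sp_{2n}$-in-characteristic-$2$ caveat mentioned after Lemma~\ref{lemma:properties-of-chi-nilpotent}, and a careful proof may require descending to the universal split case over $\bZ$ and separately inspecting a few primes. The other unavoidable piece of bookkeeping is to confirm, as above, that the good-characteristic hypothesis cleanly absorbs the torsion-prime and type-$\rm{C}_n$ restrictions of Lemma~\ref{lemma:properties-of-nilpotent-variety}(\ref{item:nilpotent-variety-chi-description}); beyond these, the argument is a faithful translation of the unipotent case.
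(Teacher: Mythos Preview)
Your outline tracks the paper's proof closely, but there is a genuine gap in how you propose to handle the equality $\sN_G = \chi^{-1}(\chi(0))$ and the base-change of $\fg'/\!/G$. The claim that ``the good-characteristic hypothesis cleanly absorbs the torsion-prime and type-$\rm{C}_n$ restrictions of Lemma~\ref{lemma:properties-of-nilpotent-variety}(\ref{item:nilpotent-variety-chi-description})'' is false: the excluded case $\Sp_{2n}$ with $n \geq 1$ includes $\Sp_2 = \SL_2$, which is of type $\rm{A}_1$, and for type $\rm{A}$ \emph{every} prime is good. So for $G = \SL_2$ (simply connected, trivial $\pi_1$, all primes good) the hypotheses of the first bullet are satisfied over all of $\Spec\bZ$, yet your invocation of Lemma~\ref{lemma:properties-of-nilpotent-variety}(\ref{item:nilpotent-variety-chi-description}) does not cover characteristic $2$. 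Relatedly, your proposed proof of base-change for $\fg'/\!/G$ via the inclusion $\Sym((\fg')^*)^{\sD(G)} \hookrightarrow \Sym(\ft^*)^W$ plus Demazure's theorem is incomplete: knowing the target is a polynomial ring does not by itself give base-change for the source, and the restriction map genuinely fails to be surjective for $\Sp_{2n}$ in characteristic $2$.

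The paper resolves both issues simultaneously by treating type $\rm{A}$ separately. For $G = \SL_n$, the shift $g \mapsto g-1$ is a $G$-equivariant isomorphism carrying $\sU_{\SL_n}$ onto a closed subscheme of $\mathfrak{sl}_n$ that is fiberwise the nilpotent variety; since $\sU_{\SL_n}$ is already known to be $\bZ$-flat with normal fibers by Theorem~\ref{theorem:unipotent-scheme}, all the desired properties of $\sN_{\SL_n}$ follow immediately. After disposing of type $\rm{A}$, the remaining simple types have good $=$ very good, so the group is root-smooth and the paper can invoke \cite[Thm.\ 4.1.10, Prop.\ 4.1.14]{Bouthier-Cesnavicius} (recorded as Theorem~\ref{theorem:adjoint-quotients-agree-lie-algebra-case}) for the base-change of $\fg/\!/G$, rather than attempting to prove it from scratch. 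You correctly flagged the $\Sp_{2n}$ caveat as the main obstacle, but the resolution is not ``separately inspecting a few primes'': it is this type-$\rm{A}$ transfer from the unipotent scheme.
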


\begin{proof}
Uniqueness of the assignment is established just as in the proof of Theorem~\ref{theorem:unipotent-scheme}. Completely similar arguments to those in the proof of Theorem~\ref{theorem:unipotent-scheme} also reduce one to constructing $\sN_G$ for split reductive $\bZ$-group schemes $G$ and to demonstrate the properties (except (2)) for such $\sN_G$. As before, we define $\sN_G$ for such $G$ to be the schematic closure of $\sN_{G_\bQ}$ in $\fg$, and we deduce (1), (3), and the second and third bullet points as before. Again, the final statement of the theorem follows from the fourth bullet point and \cite[23.9, Cor.]{Matsumura}. \smallskip

It remains to establish the first and fourth bullet points. For the remainder of this section, $S$ is a scheme and $G$ is a reductive $S$-group scheme. We define $\chi: \fg' \to \fg'/\!/\sD(G)$ to be the Steinberg morphism, and we let $\sN_G' = \chi^{-1}(\chi(0))$. We will show that $\sU_G = \sU_G'$ if $|\pi_1(\sD(G))|$ is invertible on $S$ and $\chara k(s)$ is good for $G_s$ for all $s \in S$, and in any case $\sN_G \subset \sN_G'$. \smallskip

To prove the remaining claims, we may and do assume that $G$ is semisimple, and since $|\pi_1(G)|$ is invertible on $S$ we may and do pass to the universal cover to assume that $G$ is simply connected. By passing to simply factors, we may and do assume that $G$ is simple. Because of the irritating caveat in Lemma~\ref{lemma:properties-of-nilpotent-variety}(\ref{item:nilpotent-variety-chi-description}), we will first deal separately with the case that $G$ is of type A, i.e., $G \cong \SL_n$ for some $n$. Note that $G$ and $\fg$ are both naturally closed subschemes of $\gl_n$, and the $G$-equivariant morphism $\rho: \gl_n \to \gl_n$, $g \mapsto g - 1$ sends $\sU$ to a closed subscheme $\sN'$ of $\fg$ which is equal to the nilpotent variety on fibers. Since $\sU$ is $\bZ$-flat, normal, and stable under $G$-conjugation, the same is true of $\sN'$, and it is clear that $\sN' = \sN$. In all, we have proved the result when $G \cong \SL_n$. Thus from now on we may and do assume that every residue characteristic of $R$ is either $0$ or a \textit{very good} prime for $G$. In this case, we will use the following theorem from \cite{Bouthier-Cesnavicius}.

\begin{theorem}\label{theorem:adjoint-quotients-agree-lie-algebra-case}\cite[Thm.\ 4.1.10, Prop.\ 4.1.14]{Bouthier-Cesnavicius}
Let $S$ be a scheme, and let $G$ be a root-smooth reductive $S$-group scheme. If $T$ is a maximal $S$-torus of $G$ with Lie algebra $\ft$ and Weyl group $W \coloneqq N_G(T)/T$, then the natural $S$-morphism $\ft/W \to \fg/\!/G$ is an isomorphism. If $\chara k(s)$ is a non-torsion prime for $G_s$ for every $s \in S$, then the formation of $\fg/\!/G$ commutes with base change in $S$, and etale-locally on $S$ it is an affine space of dimension $\rk G$.
\end{theorem}

By \cite[4.1.1]{Bouthier-Cesnavicius}, $G$ is root-smooth whenever $\chara k(s)$ is very good for $G_s$ for all $s \in S$. \smallskip

The idea now is very similar to the unipotent case. By Theorem~\ref{theorem:adjoint-quotients-agree-lie-algebra-case}, the formation of $\sN_G'$ commutes with base change in $S$, so to show $\sN_G \subset \sN_G'$ it suffices to pass to an fppf cover of $S$ to assume that $G$ is split, and by base change again we may assume $S = \Spec \bZ$. But now Lemma~\ref{lemma:properties-of-nilpotent-variety}(\ref{item:nilpotent-variety-separable-central-isogeny}) shows that if $k$ is a field then $\sN_G'(k)$ is the set of nilpotent elements in $\fg'(k)$, which is also the set of nilpotent elements in $\fg(k)$. Thus $\sN_G$ and $\sN_G'$ have the same set of points, and since $\sN_G$ is reduced it follows that $\sN_G \subset \sN_G'$. \smallskip

It remains to show that $\sN_G = \sN_G'$ if $\chara k(s)$ is good for $G_s$ for every $s \in S$. By Theorem~\ref{theorem:adjoint-quotients-agree-lie-algebra-case}, we may and do assume $S$ is an open subscheme of $\Spec \bZ$. Using Lemma~\ref{lemma:properties-of-chi-nilpotent}(\ref{item:Lie-chi-irreducible-fibers}) and the smoothness of $\fg/\!/G$ established in Theorem~\ref{theorem:adjoint-quotients-agree-lie-algebra-case}, it follows from Miracle Flatness \cite[Thm.\ 23.1]{Matsumura} that $\chi$ is flat. Using Lemma~\ref{lemma:properties-of-chi-nilpotent}(\ref{item:nilpotent-variety-is-normal}), we find that $\sN_G'$ is flat with geometrically normal fibers, whence it is normal by \cite[23.9, Cor.]{Matsumura}. In particular, $\sN_G'$ is reduced. Since $\sN_G \subset \sN_G'$ and both reduced closed subschemes of $G$ have the same points, we obtain equality.
\end{proof}

\section{The Springer isomorphism}\label{section:springer-iso}

The main point of this section is to prove the following theorem.

\begin{theorem}\label{theorem:relative-springer-isomorphism}
Let $S$ be a scheme and let $G$ be a reductive $S$-group scheme such that
\begin{enumerate}
    \item $|\pi_1(\sD(G))|$ is invertible on $S$,
    \item for each $s \in S$, $\chara k(s)$ is good for $G_s$.
\end{enumerate}
Suppose further that either
\begin{enumerate}[label=(\alph*)]
    \item\label{item:springer-affine} $S$ is affine, or
    \item\label{item:springer-small-cases} if $G \cong \mathrm{R}_{S'/S}(G')$ for a finite etale cover $S' \to S$ and a reductive $S'$-group scheme $G'$ with absolutely simple fibers as in \cite[Exp.\ XXIV, 5.3 and Prop.\ 5.10(i)]{SGA3III}, then for every $s' \in S'$,
    \begin{itemize}
        \item either $\chara k(s') \neq 2$ or the quasi-split inner form of $G_{s'}$ is split, and
        \item either $\chara k(s') \neq 3$ or $G_{s'}$ is not of type $\mathrm{D}_4$.
    \end{itemize}
\end{enumerate}
Then there is a $G$-equivariant isomorphism $\rho: \sU_G \to \sN_G$, where $\sU_G$ is the unipotent scheme of $G$ and $\sN_G$ is the nilpotent scheme of $G$. If $u \in \sU_G(S)$ and $X \in \sN_G(S)$ are fiberwise regular with $X \in (\Lie Z_G(u))(S)$, then there is a unique such $\rho$ with $\rho(u) = X$.
\end{theorem}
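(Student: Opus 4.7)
The plan is to build $\rho$ first on the fiberwise dense open ``regular locus'' of $\sU_G$, then extend it uniquely to all of $\sU_G$ using normality together with a codimension-two bound on the complement. First I would reduce to the split case over $\Spec \bZ[1/N]$: working fpqc-locally on $S$ and spreading out, one can assume $G$ descends from a split reductive group scheme over $\Spec \bZ[1/N]$ for some $N$ divisible by $|\pi_1(\sD(G))|$ and by every bad prime of $G$. By Theorems~\ref{theorem:unipotent-scheme} and \ref{theorem:nilpotent-scheme}, $\sU_G$ and $\sN_G$ are then $\bZ[1/N]$-flat with normal (Cohen--Macaulay) fibers, and hence normal. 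Inside them sit fiberwise dense open subschemes $\sU_{\rm reg}$ and $\sN_{\rm reg}$ of fiberwise (strongly) regular sections, with fibral complements of codimension $\geq 2$; these loci are $S$-smooth by Theorem~\ref{theorem:flat-centralizer}.

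Next I would produce, following Springer, sections $u \in \sU_{\rm reg}(S)$ (a product of exponentials of simple root vectors) and $X \in \sN_{\rm reg}(S)$ (a sum of simple root vectors) satisfying $X \in (\Lie Z_G(u))(S)$ and $u \in Z_G(X)(S)$. By Theorem~\ref{theorem:flat-centralizer}, both $Z_G(u)$ and $Z_G(X)$ are $S$-flat closed subgroup schemes of $G$ of fibral dimension $r = \rk G$; a direct fibral computation in good characteristic (using the simply connected reduction afforded by Corollary~\ref{corollary:etale-central-isogeny-iso-unipotent-schemes}) identifies $Z_G(u) = Z_G(X)$ as closed $S$-subgroups. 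The orbit morphism $g \mapsto g u g^{-1}$ is then an fppf $Z_G(u)$-torsor presentation $G \to \sU_{\rm reg}$, yielding $\sU_{\rm reg} \cong G/Z_G(u)$, and similarly $\sN_{\rm reg} \cong G/Z_G(X)$. Equality of centralizers now produces a canonical $G$-equivariant isomorphism $\rho_{\rm reg} : \sU_{\rm reg} \to \sN_{\rm reg}$ with $\rho_{\rm reg}(u) = X$.

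To extend $\rho_{\rm reg}$ to a morphism $\rho : \sU_G \to \sN_G$, I would invoke Hartogs: $\sU_G$ is normal, the complement of $\sU_{\rm reg}$ has codimension $\geq 2$ on each fiber, and $\sN_G$ is affine, so the composite $\sU_{\rm reg} \to \sN_{\rm reg} \hookrightarrow \sN_G$ extends uniquely to $\sU_G$. Running the same argument in the opposite direction produces a two-sided inverse, and $G$-equivariance propagates from the fiberwise dense $\sU_{\rm reg}$ to $\sU_G$. To pass from split $G$ to general $G$ under hypothesis~\ref{item:springer-small-cases}, I would arrange the pair $(u, X)$ to be stable under a pinning and hence $\Aut_{G/\bZ[1/N]}$-equivariant (using \cite{ALRR} outside the excluded small cases), so that $\rho$ descends through the $\Aut_{G/\bZ[1/N]}$-torsor classifying $G$. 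Under the alternative hypothesis~\ref{item:springer-affine}, one relies instead on the appendix arguments in the small cases. Uniqueness given $\rho(u) = X$ is then immediate: any competing $G$-equivariant $\rho'$ agrees with $\rho$ on the fiberwise dense $G$-orbit of $u$, hence on all of $\sU_G$ by separatedness of $\sN_G$.

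The hardest step is securing $\Aut_{G/\bZ[1/N]}$-equivariance of the pair $(u, X)$. This fails precisely for outer forms of $\mathrm{A}_n$ ($n \geq 2$) in characteristic $2$ and certain outer forms of $\mathrm{D}_4$ in characteristic $3$, which is exactly the exception carved out in hypothesis~\ref{item:springer-small-cases}; these cases require more delicate ad hoc constructions given in the appendix, exploiting affineness of $S$ to globally trivialize the relevant torsors.
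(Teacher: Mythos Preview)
Your outline matches the paper's proof closely: the reduction to the split form over $\bZ[1/N]$, the orbit-space identifications $\sU_{\rm reg}\cong G/Z_G(u)$ and $\sN_{\rm reg}\cong G/Z_G(X)$, the Hartogs extension using normality of $\sU_G$ and $\sN_G$, and the twisting argument via an $\Aut_{G/\bZ[1/N]}$-equivariant choice of $(u,X)$ are exactly the steps the paper carries out in Sections~\ref{subsection:split}--\ref{subsection:general} and Appendix~\ref{appendix}.

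Two small points where the paper is sharper than your sketch. First, for the equality $Z_G(u)=Z_G(X)$ you invoke a ``direct fibral computation''; the paper instead gives a one-line conceptual argument (Lemma~\ref{lemma:centralizers-equal}): both centralizers are \emph{commutative} by Corollary~\ref{corollary:commutative-centralizer}, so $X\in\Lie Z_G(u)$ forces $Z_G(u)\subset Z_G(X)$, and then $u\in Z_G(u)\subset Z_G(X)$ together with commutativity of $Z_G(X)$ gives the reverse inclusion. Second, your description of how affineness of $S$ is used in the appendix is not quite right: it is not used to ``globally trivialize the relevant torsors,'' but rather to guarantee surjectivity of certain trace-type $R$-linear maps $R'\to R$ (for $S=\Spec R$, $S'=\Spec R'$) needed to solve the recursive equations~(\ref{equation:springer-2}) and (\ref{equation:springer-d4}) for the coefficients of $\rho'$. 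Over a non-affine base these maps on global sections can fail to be surjective (Remarks~\ref{remark:affine-necessary-type-a} and \ref{remark:affine-necessary-type-d4}), which is precisely where the obstruction lives.
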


Any isomorphism in Theorem~\ref{theorem:relative-springer-isomorphism} is called a \textit{Springer isomorphism}. The hypotheses of Theorem~\ref{theorem:relative-springer-isomorphism} are essentially optimal, as is in \cite[Proposition 5.9, Theorem 5.14]{Cotner-non-etale}. We remark that fiberwise regular sections of $\sU_G$ and $\sN_G$ exist etale-locally on $S$, but they might not exist over $S$.\smallskip

Much of the difficulty in the proof of Theorem~\ref{theorem:relative-springer-isomorphism} comes from showing that $\sU_G$ and $\sN_G$ have good properties, which was dealt with in Section~\ref{section:unip-nilp}. In Section~\ref{subsection:split}, we will show that Theorem~\ref{theorem:relative-springer-isomorphism} holds whenever sections $u$ and $X$ exist (in particular whenever $G$ is split) by constructing a $G$-equivariant isomorphism $\sU_{\rm{reg}} \to \sN_{\rm{reg}}$. Following a digression on pinning-preserving automorphisms in Section~\ref{subsection:digression}, a twisting argument in Section~\ref{subsection:general} will be used to pass from the split case to the general case provided \ref{item:springer-small-cases} holds; the cases of type $\mathrm{A}_n$ ($n \geq 2$) and $\mathrm{D}_4$ will be dealt with separately in Appendix~\ref{appendix}. Finally, Section~\ref{subsection:kawanaka} will show that Springer isomorphisms behave reasonably when restricted to the unipotent radical of a Borel, generalizing \cite[Prop.\ 4.6]{Jay-GGGR}.

\subsection{A special case}\label{subsection:split}

We deal first with the case that there exist sections fiberwise regular $u \in \sU_G(S)$ and $X \in \sU_G(S)$ such that $X \in (\Lie Z_G(u))(S)$.

\begin{lemma}\label{lemma:centralizers-equal}
Let $S$ be a scheme, and let $G$ be a reductive $S$-group scheme as in Theorem~\ref{theorem:relative-springer-isomorphism}. If $u \in \sU_G(S)$ and $X \in \sN_G(S)$ are fiberwise regular and $X \in \Lie Z_G(u)$, then $Z_G(u) = Z_G(X)$.
\end{lemma}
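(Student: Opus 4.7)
The plan is to deduce $Z_G(u) = Z_G(X)$ from two containments, each supplied by the commutativity assertion of Corollary~\ref{corollary:commutative-centralizer}. First I would verify that Corollary~\ref{corollary:commutative-centralizer} applies to both centralizers: $X$ is fiberwise regular in $\fg$ by hypothesis, and the assumptions of Theorem~\ref{theorem:relative-springer-isomorphism} match those of Theorem~\ref{theorem:flat-centralizer}(\ref{item:flat-regular-centralizer-lie-algebra}), so $Z_G(X)$ is commutative; for $u$, note that the fibral semisimple part of any unipotent element is $1$, whose centralizer is the connected group $G$, so fibral regularity of $u$ automatically upgrades to fibral strong regularity and Corollary~\ref{corollary:commutative-centralizer} gives that $Z_G(u)$ is commutative too. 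Along the way I would use Lemma~\ref{lemma:lie-algebra-of-centralizer} to rewrite the hypothesis $X \in (\Lie Z_G(u))(S)$ as $\Ad(u)X = X$, i.e., $u$ is a section of $Z_G(X)$.

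For the inclusion $Z_G(X) \subseteq Z_G(u)$, since $u$ is a section of $Z_G(X)$ and $Z_G(X)$ is commutative as an $S$-group scheme, every functorial point of $Z_G(X)$ commutes with $u$; this means the closed immersion $Z_G(X) \hookrightarrow G$ factors through the closed subgroup $Z_G(u)$.

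For the reverse inclusion $Z_G(u) \subseteq Z_G(X)$, I would use commutativity of $Z_G(u)$ to show that its adjoint representation on $\Lie Z_G(u)$ is trivial: for any functorial point $v$ of $Z_G(u)$, conjugation by $v$ on $Z_G(u)$ is the identity, so differentiating at $1$ gives $\Ad_{Z_G(u)}(v) = \id$ on $\Lie Z_G(u)$. Since the closed immersion $Z_G(u) \hookrightarrow G$ is equivariant for the two adjoint actions, the restriction of $\Ad_G(v)$ to $\Lie Z_G(u) \subseteq \fg$ equals $\Ad_{Z_G(u)}(v)$. In particular, $\Ad_G(v)$ fixes $X \in \Lie Z_G(u)$, so $v$ lies in $Z_G(X)$ functorially. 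Combining both containments as closed subgroup schemes of $G$ gives $Z_G(u) = Z_G(X)$, with no fibral or flatness arguments needed at the end.

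The main subtlety to watch is that the commutativity of Corollary~\ref{corollary:commutative-centralizer} be genuinely a group-scheme statement (not merely a statement about geometric points), since it is precisely this that makes the adjoint-action triviality argument work; once this is granted, the rest of the proof is essentially formal.
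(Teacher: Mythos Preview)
Your proposal is correct and follows essentially the same approach as the paper: both argue via the commutativity of $Z_G(u)$ and $Z_G(X)$ supplied by Corollary~\ref{corollary:commutative-centralizer}, and then deduce mutual containment. The paper's proof is terser and runs the two inclusions in the opposite order (first $Z_G(u) \subset Z_G(X)$ from commutativity of $Z_G(u)$, then the reverse from commutativity of $Z_G(X)$), but the logic is identical; your explicit verification that unipotent regularity implies strong regularity and your unpacking of $X \in \Lie Z_G(u)$ via Lemma~\ref{lemma:lie-algebra-of-centralizer} fill in details the paper leaves implicit.
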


\begin{proof}
By Theorem~\ref{theorem:flat-centralizer} and Corollary~\ref{corollary:commutative-centralizer}, $Z_G(u)$ and $Z_G(X)$ are commutative flat closed $S$-subgroup schemes of $G$. Since $Z_G(u)$ is commutative, $Z_G(u) \subset Z_G(X)$, and in fact equality holds because $Z_G(X)$ is commutative and in particular centralizes $u$.
\end{proof}

If $S$ is a scheme and $G$ is reductive $S$-group scheme, we define $\sU_{\rm{reg}} = \sU_{G, \rm{reg}}$ to be the subset of $\sU = \sU_G$ defined by
\[
\sU_{\rm{reg}} = \{u \in \sU: u \text{ is regular in } G(k(u))\}.
\]
It is open in $\sU$ by upper semicontinuity of fiber dimension, so we may consider $\sU_{\rm{reg}}$ as an open subscheme of $\sU$. In fact, $\sU_{\rm{reg}}$ represents the subfunctor of $\sU_G$ consisting of fiberwise regular sections. Similarly, we define $\sN_{\rm{reg}}$ as a subset of $\sN = \sN_G$. Again, $\sN_{\rm{reg}}$ is open in $\sN$.

\begin{lemma}\label{lemma:regular-orbit-spaces}
With notation as in Lemma~\ref{lemma:centralizers-equal}, the fppf sheaf quotient $G/Z_G(u) = G/Z_G(X)$ is represented by a finitely presented separated $S$-scheme, and the natural orbit $S$-morphisms $G/Z_G(u) \to \sU$ and $G/Z_G(X) \to \sN$ are open embeddings which identify $G/Z_G(u)$ and $G/Z_G(X)$ with $\sU_{\rm{reg}}$ and $\sN_{\rm{reg}}$, respectively.
\end{lemma}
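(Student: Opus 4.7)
The plan is to use Lemma~\ref{lemma:centralizers-equal} to set $H := Z_G(u) = Z_G(X)$, construct orbit morphisms $\psi_u: G \to \sU_G$ (via conjugation, $g \mapsto gug^{-1}$) and $\psi_X: G \to \sN_G$ (via $g \mapsto \Ad(g)X$), and show each is faithfully flat onto the corresponding regular locus. Since $H$ is the schematic stabilizer of $u$ (equivalently of $X$), the induced maps $\bar\psi_u, \bar\psi_X: G/H \to \sU_{\rm{reg}}, \sN_{\rm{reg}}$ of fppf sheaves will simultaneously be monomorphisms and fppf epimorphisms, hence isomorphisms of fppf sheaves. This will show that $G/H = G/Z_G(u) = G/Z_G(X)$ is represented by the $S$-scheme $\sU_{\rm{reg}}$ (or $\sN_{\rm{reg}}$), which is an open subscheme of the finitely presented separated $\sU_G$ (or $\sN_G$), so the orbit morphisms furnish the desired open embeddings.

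That $\psi_u$ lands in $\sU_{\rm{reg}}$ follows because conjugation preserves both fiberwise regularity and unipotence, and $u$ is fiberwise regular unipotent; similarly for $\psi_X$. The target $\sU_{\rm{reg}}$ is $S$-smooth: $\sU_G$ is $S$-flat by Theorem~\ref{theorem:unipotent-scheme} and each fiber $\sU_{G_s, \rm{reg}}$ is smooth by Lemma~\ref{lemma:properties-of-unipotent-variety}, and likewise $\sN_{\rm{reg}}$ is $S$-smooth by Theorem~\ref{theorem:nilpotent-scheme} combined with Lemma~\ref{lemma:properties-of-nilpotent-variety}.

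The core step is the faithful flatness of $\psi_u: G \to \sU_{\rm{reg}}$, which I would verify by the fibral flatness criterion. On each geometric fiber $\bar s$, all scheme-theoretic fibers of $\psi_u|_{\bar s}$ are translates of $Z_{G_{\bar s}}(u_{\bar s})$ (by $G_{\bar s}$-equivariance of the orbit map), hence equidimensional of dimension $\rk G_{\bar s}$. Since source and target are smooth $k(\bar s)$-schemes of dimensions $\dim G_{\bar s}$ and $\dim G_{\bar s} - \rk G_{\bar s}$ respectively, miracle flatness \cite[Thm.\ 23.1]{Matsumura} forces $\psi_u|_{\bar s}$ to be flat. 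By Theorem~\ref{theorem:properties-of-regular-unipotents}(1), every regular unipotent element of $G_{\bar s}$ is conjugate to $u_{\bar s}$, giving surjectivity on $\bar s$-points, hence faithful flatness on each fiber. Combined with $S$-flatness of source and target, this yields that $\psi_u$ is $S$-flat and fiberwise surjective, hence faithfully flat. The argument for $\psi_X$ is identical, using that regular nilpotent elements form a single $G_{\bar s}$-orbit in good characteristic.

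I expect the main obstacle to be this fiberwise verification: combining Theorem~\ref{theorem:properties-of-regular-unipotents}(1) with miracle flatness to obtain faithful flatness of the orbit map on geometric fibers, with particular care that the scheme-theoretic fibers really are equidimensional (which is automatic once one invokes $G$-equivariance). Once faithful flatness is in hand, the remainder---descending along the fppf cover $G \to G/H$, noting that the resulting $\bar\psi_u$ is a monomorphism because $H$ is the schematic stabilizer of $u$, and using that a simultaneous monomorphism and epimorphism of fppf sheaves is an isomorphism---is formal.
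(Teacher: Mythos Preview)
Your argument is correct and takes a genuinely different route from the paper's proof. The paper first invokes Artin's representability theorem and Knutson's result that monomorphisms from algebraic spaces to schemes are schemes, so as to know \emph{a priori} that $G/Z_G(u)$ is a scheme; it then verifies that the orbit monomorphism $G/Z_G(u)\to\sU$ is \'etale by checking the infinitesimal lifting criterion over Artin local rings (the lifting coming from smoothness of $Z_G(u)/Z(G)$ in Theorem~\ref{theorem:flat-centralizer}). By contrast, you bypass the Artin--Knutson machinery entirely: you show directly that the orbit map $\psi_u:G\to\sU_{\rm reg}$ is fppf via the fibral flatness criterion and miracle flatness, and then observe that the induced map $G/H\to\sU_{\rm reg}$ is both a monomorphism and an fppf epimorphism of sheaves, hence an isomorphism---which yields representability of $G/H$ for free. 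Your approach is more elementary in that it avoids algebraic-space representability results and reduces everything to a clean fiber-dimension count; the paper's approach is perhaps more modular, separating representability from the open-embedding claim. One small point worth making explicit in your write-up: the step ``mono $+$ epi $\Rightarrow$ iso'' for fppf sheaves deserves a one-line justification (every epimorphism in a Grothendieck topos is effective, or argue directly via descent of sections along the fppf cover).
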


\begin{proof}
First, note that $G/Z_G(u)$ is automatically a finite type separated algebraic space by a general theorem of Artin \cite[Cor.\ 6.4]{Artin-stacks}. Moreover, the orbit $R$-morphism $G/Z_G(u) \to \sU$ is a monomorphism, so a result of Knutson \cite[II, Thm.\ 6.15]{Knutson-algebraic-spaces} shows that $G/Z_G(u)$ is representable by a scheme. The same argument applies to $G/Z_G(X)$.\smallskip

We will only prove that $G/Z_G(u) \to \sU$ is an open embedding, the other claim being similar. Recall that an etale monomorphism of finite schemes is automatically an open embedding by \cite[Exp.\ I, Thm.\ 5.1]{SGA1}. Note that $G/Z_G(u)$ and $\sU$ are both flat, so etaleness is a fibral condition by \cite[IV\textsubscript{4}, Prop.\ 17.8.2]{EGA} and we may and do assume $S = \Spec k$ for a field $k$. In this case, the open embedding claim is a standard fact about orbits for actions of algebraic groups. \smallskip

To conclude, observe that the image of $G/Z_G(u)$ is equal to $\sU_{\rm{reg}}$; this can be checked on the level of geometric points, where it is equivalent to the statement that any two regular unipotent elements are conjugate \cite[Thm.\ 3.3]{SteinbergReg}.
\end{proof}

\begin{proof}[Proof of Theorem~\ref{theorem:relative-springer-isomorphism} if $u$ and $X$ exist]
In view of Lemmas~\ref{lemma:centralizers-equal} and \ref{lemma:regular-orbit-spaces}, there exists a unique $G$-equivariant $S$-isomorphism $\rho_0: \sU_{\rm{reg}} \to \sN_{\rm{reg}}$ such that $\rho_0(u) = X$. Since $\sU_{\rm{reg}}$ and $\sN_{\rm{reg}}$ are universally schematically dense in $\sU$ and $\sN$, respectively, there is at most one extension of $\rho_0$ to a morphism $\rho: \sU \to \sN$, and if it exists then it is automatically $G$-equivariant. \smallskip

In view of the uniqueness, to show the existence of an extension $\rho$ of $\rho_0$ it suffices by spreading out and fpqc descent to assume that $S = \Spec A$ for a complete noetherian local ring $A$ with algebraically closed residue field $k$. We will further reduce to the case that $A$ is normal. By the Cohen structure theorem we have $A = A_0/I$ for some complete regular local ring $A_0$. Since $k$ is algebraically closed, $G$ is split, and thus $G \cong \cG_A$ for some split reductive $\bZ$-group schemes $\cG$. Since $B_0$ be a Borel $A_0$-subgroup of $G_0 \coloneqq \cG_{A_0}$ such that $(B_0)(A)$ contains $u$. If $U_0$ is the unipotent radical of $B_0$, then we can lift $u$ to a section $u_0 \in U_0(A_0)$. Note $u_0$ is automatically fiberwise regular, and $Z_{U_0}(u_0)$ is $A_0$-smooth by Theorem~\ref{theorem:flat-centralizer}. Note $X \in \Lie Z_{U_0}(u_0)(A)$, so by smoothness we may lift $X$ to $X_0 \in \Lie Z_{U_0}(u_0)(A_0)$. By Lemma~\ref{lemma:centralizers-equal}, we have $Z_{G_0}(u_0) = Z_{G_0}(X_0)$, and it suffices to show that there exists a $G_0$-equivariant isomorphism $\sU_{G_0} \to \sN_{G_0}$ sending $u_0$ to $X_0$. Thus we may pass from $A$ to $A_0$ to assume that $S$ is \textit{regular}, and in particular normal. \smallskip

Now by Theorem~\ref{theorem:unipotent-scheme}, since $S$ is normal also $\sU$ and $\sN$ are normal. Since $\sU_{\rm{reg}}$ is of complementary codimension $\geq 2$ in the normal scheme $\sU$ and $\sN$ is affine, $\rho_0$ therefore extends uniquely to a $G$-equivariant $S$-morphism $\rho: \sU \to \sN$ by Hartogs' lemma. Moreover, since $\sN$ is normal, $\rho$ must be an isomorphism. This completes the proof of Theorem~\ref{theorem:relative-springer-isomorphism}.
\end{proof}

\subsection{Digression: pinning-preserving automorphisms}\label{subsection:digression}

In view of the results of the previous section, to show that there is a Springer isomorphism for a given reductive $S$-group scheme $G$, it would be enough to show that there exist fiberwise regular sections $u \in \sU_G(S)$ and $X \in \sN_G(S)$ such that $X \in (\Lie Z_G(u))(S)$. These sections do not exist in general, but they do exist in the split case and we will use a twisting argument to deduce the general case.\smallskip

To make this twisting argument, we must first make a detour through fixed points of pinning-preserving automorphisms. It seems likely that all of these results can be extracted directly from the proof of \cite[Prop.\ 4.1]{Haines}, but we will give some details for the convenience of the reader.

\begin{prop}\label{prop:preservation-of-properties}
Let $k$ be a field of characteristic $p\geq 0$, and let $G$ be a connected semisimple $k$-group which is absolutely simple and simply connected. Let $A$ be a finite group of $k$-automorphisms of $G$ which preserve a common pinning. Suppose that either $p \neq 2$ or $G$ is not of type $\mathrm{A}_{2m}$ ($m \geq 1$). Then $G^A$ is a connected simple $k$-group and $p$ does not divide $|\pi_1(\sD(G^A))|$.\smallskip

Moreover, if $p$ is good for $G$ and either
\begin{itemize}
    \item $p \neq 2$, or
    \item $G$ is of type $\mathrm{D}_4$ and either $p \neq 3$ or $|A| \leq 2$,
\end{itemize}
then $p$ is good for $G^A$.
\end{prop}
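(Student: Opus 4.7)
The plan is to reduce to the split case over a separable closure of $k$ and then proceed by explicit case analysis based on the classification of pinning-preserving automorphisms of simple simply connected groups. Since all hypotheses and conclusions are stable under separable base change, I may first replace $k$ by its separable closure; the existence of a pinning on $G$ then forces $G$ to be split, so I may view $G$ as a Chevalley group. A pinning-preserving $k$-automorphism of $G$ is determined by the diagram automorphism it induces on $\Delta$, so the pinning-preserving subgroup of $\Aut(G)$ maps isomorphically onto the Dynkin diagram automorphism group $\Aut(\Delta)$. Thus $A$ embeds into $\Aut(\Delta)$, and consulting the classification I find that $\Aut(\Delta)$ is trivial outside types $\mathrm{A}_n$ ($n\geq 2$), $\mathrm{D}_n$ ($n \geq 4$), and $\mathrm{E}_6$; it is $\bZ/2\bZ$ in those types except for $\mathrm{D}_4$, where it is $S_3$.

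For each non-trivial pair $(G, A)$ I would identify $G^A$ explicitly via the standard folding of Dynkin diagrams: $(\SL_{2m})^{\bZ/2\bZ} \cong \Sp_{2m}$ (type $\mathrm{C}_m$, simply connected); $(\SL_{2m+1})^{\bZ/2\bZ} \cong \SO_{2m+1}$ in characteristic $\neq 2$ (type $\mathrm{B}_m$, with $\pi_1 = \bZ/2\bZ$), while in characteristic $2$ this fixed scheme fails to be a smooth reductive group; $(\Spin_{2n})^{\bZ/2\bZ} \cong \Spin_{2n-1}$ (type $\mathrm{B}_{n-1}$, simply connected); $(\Spin_8)^{\bZ/2\bZ} \cong \Spin_7$ when $A$ fixes one outer node of the diagram; $(\Spin_8)^{\bZ/3\bZ} \cong (\Spin_8)^{S_3} \cong G_2$ (simply connected); and $(\mathrm{E}_6^{\mathrm{sc}})^{\bZ/2\bZ} \cong F_4$ (simply connected). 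Connectedness of $G^A$ follows from Steinberg's theorem on fixed points under semisimple automorphisms of simply connected groups, while the identification of the root datum of $G^A$ comes from an explicit computation with the $A$-action on the based root datum of $G$. In every case allowed by the hypothesis, $G^A$ is a connected simple $k$-group with $|\pi_1(G^A)| \in \{1, 2\}$, the value $2$ occurring only in the folding $\mathrm{A}_{2m} \to \mathrm{B}_m$, which is excluded in characteristic $2$; hence $p \nmid |\pi_1(G^A)|$.

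For the moreover claim, I would compare bad primes via Table~\ref{table:1}. Whenever $p$ is good for $G$ and $p \neq 2$, a case check shows that $p$ is good for $G^A$, except in the folding $\mathrm{D}_4 \to G_2$, which makes $3$ a bad prime of $G^A$. The hypothesis clause ``$p \neq 3$ or $|A| \leq 2$'' excludes exactly this situation, since $|A| \leq 2$ forces the folding to land in $\mathrm{B}_3$ (whose only bad prime is $2$) rather than $G_2$. The main obstacle is carrying out the identifications of $G^A$ schematically (not merely on $\overline{k}$-points) in small characteristic, where fixed-point schemes can fail to be reduced or smooth; however, the hypotheses are arranged precisely to exclude the pathological situations ($\mathrm{A}_{2m}$ in characteristic $2$ for the first claim and $\mathrm{D}_4$ with $|A| \in \{3, 6\}$ in characteristic $3$ for the moreover), and in the remaining cases the standard folding arguments for pinning-preserving automorphisms of Chevalley groups suffice.
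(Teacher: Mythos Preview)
Your overall strategy matches the paper's: reduce to the split case and proceed by the folding classification, with the type-by-type identification of $G^A$ eventually carried out in Lemma~\ref{lemma:simple-preservation-of-properties}. However, there is a genuine gap in your connectedness/smoothness step.

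You invoke ``Steinberg's theorem on fixed points under semisimple automorphisms of simply connected groups'' to get connectedness of $G^A$. That theorem requires the automorphism to be \emph{semisimple}, i.e., of order prime to $p$. But the first conclusion of the Proposition must hold even when $p$ divides $|A|$: for instance $p=2$ with $G$ of type $\mathrm{A}_{2m+1}$, $\mathrm{D}_n$, or $\mathrm{E}_6$ and $|A|=2$, or $p=3$ with $G$ of type $\mathrm{D}_4$ and $|A|\in\{3,6\}$. None of these are excluded by the hypothesis for the first conclusion (only $\mathrm{A}_{2m}$ in characteristic $2$ is excluded there), so Steinberg's theorem does not cover them. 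Your final paragraph asserts that ``the hypotheses are arranged precisely to exclude the pathological situations,'' but the exclusion of $\mathrm{D}_4$ with large $A$ in characteristic $3$ applies only to the \emph{moreover} clause about good primes, not to the first conclusion. Relatedly, even where $(G^A)_{\rm{red}}$ can be identified, you need $G^A$ to be smooth as a scheme, and Steinberg's theorem says nothing about that.

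The paper fills this gap differently: it cites \cite{ALRR} to reduce smoothness and connectedness of $G^A$ to the same properties for $T^A$, and then checks directly that $T^A$ is a torus using that the simple coroots form a basis of $X_*(T)$ which $A$ merely permutes. This yields smoothness and connectedness of $G^A$ as a scheme in all the required characteristics without any semisimplicity assumption on the automorphism. The paper then uses results of Haines to identify the root and coroot lattices of $(G^A,T^A)$ uniformly, which gives semisimplicity and the fundamental group computation without having to justify the classical folding isomorphisms schematically case by case in small characteristic. Once you replace your appeal to Steinberg by this torus argument (or otherwise supply an argument valid when $p\mid |A|$), the remainder of your case analysis is essentially the content of Lemma~\ref{lemma:simple-preservation-of-properties}.
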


\begin{proof}
We may and do assume that $k$ is algebraically closed. In this case, results in \cite{ALRR} (proved via considerations with the open cell, similar to those used in the proof of \cite[Prop.\ 4.1]{Haines}) show that under our hypotheses $(G^A)_{\rm{red}}^0$ is connected reductive with maximal torus $(T^A)^0_{\rm{red}}$, and moreover $G^A$ is smooth (resp.\ connected) if and only if $T^A$ is smooth (resp.\ connected), where $T$ is a maximal torus of $G$. Let $\Delta = \{\alpha_1, \dots, \alpha_n\}$ be a system of simple roots for the pair $(G, T)$. Since $G$ is simply connected, the natural map $\bG_m^n \to T$, $(x_i)_{1\leq i\leq n} \mapsto \prod_{i=1}^n \alpha_i^\vee(x_i)$ is an isomorphism. Moreover, $A$ acts on $T$ via permutation of the roots, so in fact $T^A$ is a torus in this case, of dimension equal to $|\Delta/A|$. Thus $G^A$ is indeed a connected reductive group.\smallskip

The proof of \cite[Prop.\ 4.1]{Haines} (see ``temporary assumption (ii)'', which is proven in the paragraph following \cite[(4.10)]{Haines}) shows that the root system $\Phi^{(A)}$ of $(G^A, T^A)$ is either equal to the set of non-multipliable roots or the set of non-divisible roots in the image of the root system $\Phi$ of $(G, T)$ in $X(T^A)$. In particular, since $A$ permutes $\Delta$ it follows that the image of $\Delta$ in $X^*(T^A)$ spans the latter. Thus $\bZ\Phi^{(A)}$ is a free $\bZ$-module of rank $|\Delta/A|$ by the previous paragraph, and thus $G^A$ is semisimple.\smallskip

Now \cite[Prop.\ 3.5]{Haines2} shows that if $\alpha \in \Phi$ and $G$ is not of type $\mathrm{A}_{2m}$, then the dual coroot of $\overline{\alpha}$ is equal to $\sum_{\beta \in A\cdot\alpha} \beta^\vee$. Thus the first paragraph of this proof shows that the natural map $\gamma: \prod_{\overline{\alpha} \in \Delta^{(A)}} \bG_m \to T^A$ given by $\gamma(x) = \prod_{\overline{\alpha} \in \Delta^{(A)}} \overline{\alpha}^\vee(x)$ is an isomorphism, and hence $G^A$ is simply connected by definition. If $G$ is of type $\mathrm{A}_{2m}$, then the same reference shows that if $\alpha \in \Phi$ then the dual coroot of $\overline{\alpha}$ is equal either to $\sum_{\beta \in A\cdot\alpha} \beta^\vee$ or to $2\sum_{\beta \in A\cdot\alpha} \beta^\vee$. In fact, there is precisely one $\alpha \in \Delta$ with the latter property. Thus the order of $\ker \gamma$ is equal to $2$ and in particularly any prime $p \neq 2$ does not divide $|\pi_1(\sD(G))|$.\smallskip

It remains to show that if $p$ is good for $G$, then $p$ is good for $G^A$. For this, note that $A$ is solvable, so by forming successive centralizers we may and do assume that $A$ is generated by a single automorphism $\lambda$. We will resort to checking cases in the following lemma.
\end{proof}

\begin{lemma}\label{lemma:simple-preservation-of-properties}
Let $k$ be a field, and let $G$ be a connected simple simply connected $k$-group. Suppose that either $p \neq 2$ or $G$ is not of type $\mathrm{A}_{2m}$. Let $\lambda$ be an automorphism of $G$ preserving a pinning. Then $Z_G(\lambda)$ is connected and simple, and in fact the following claims hold.
\begin{enumerate}
    \item If $G$ is of type $\mathrm{A}_{2m+1}$ ($m \geq 1$) and $\lambda$ is of order $2$, then $Z_G(\lambda)$ is simply connected of type $\mathrm{C}_{m+1}$.
    \item If $G$ is of type $\mathrm{A}_{2m}$ ($m \geq 1$), $p \neq 2$, and $\lambda$ is of order $2$, then $Z_G(\lambda)$ is adjoint of type $\mathrm{B}_m$.
    \item If $G$ is of type $\mathrm{D}_n$ ($n \geq 4$) and $\lambda$ is of order $2$, then $Z_G(\lambda)$ is simply connected of type $\mathrm{B}_{n-1}$.
    \item If $G$ is of type $\mathrm{D}_4$ and $\lambda$ is of order $3$, then $Z_G(\lambda)$ is simply connected of type $\mathrm{G}_2$.
    \item If $G$ is of type $\mathrm{E}_6$ and $\lambda$ is of order $2$, then $Z_G(\lambda)$ is simply connected of type $\mathrm{F}_4$.
\end{enumerate}
\end{lemma}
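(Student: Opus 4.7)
The plan is to verify each of the five cases by combining the general structural information from Proposition~\ref{prop:preservation-of-properties} with an explicit folding computation, then reading off the fundamental group. By that proposition, $Z_G(\lambda) = G^\lambda$ is a connected semisimple $k$-group with maximal torus $(T^\lambda)^0_{\mathrm{red}}$; its simple roots in $X(T^\lambda)$ are given by the images of the $\lambda$-orbits on a system of simple roots $\Delta$ for $(G,T)$, and its simple coroots are given by the explicit sums $\sum_{\beta \in \lambda \cdot \alpha} \beta^\vee$ (possibly doubled for one orbit in the $\mathrm{A}_{2m}$ case) from \cite[Prop.\ 3.5]{Haines2} recalled in the proof of that proposition. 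Moreover, outside the $\mathrm{A}_{2m}$ case the natural cocharacter map $\gamma \colon \prod_{\overline\alpha \in \Delta^{(\lambda)}} \bG_m \to T^\lambda$ built from these simple coroots is an isomorphism (so $G^\lambda$ is simply connected), while in the $\mathrm{A}_{2m}$ case $\ker \gamma$ has order $2$.

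First, I would identify the type of the root system $\Phi^{(\lambda)}$ in each case by a direct computation of the Cartan pairings $\langle \overline{\alpha},\overline{\beta}^\vee\rangle$ between the folded simple roots and their dual coroots, using the explicit formulas above. Matching the resulting Cartan matrix against the tables of \cite[Chap.\ VI, Planches]{Bourbaki} yields the standard foldings
\[
\mathrm{A}_{2m+1} \rightsquigarrow \mathrm{C}_{m+1},\quad \mathrm{A}_{2m} \rightsquigarrow \mathrm{B}_m,\quad \mathrm{D}_n \rightsquigarrow \mathrm{B}_{n-1},\quad \mathrm{D}_4 \rightsquigarrow \mathrm{G}_2,\quad \mathrm{E}_6 \rightsquigarrow \mathrm{F}_4,
\]
as asserted in (1)--(5). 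Each case reduces to a finite and concrete calculation once one fixes the pinning-preserving action of $\lambda$ (which, for a simply connected group, is completely determined by a nontrivial Dynkin diagram automorphism).

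Second, I would deduce the fundamental group statement. In cases (1), (3), (4), (5), simple connectedness of $G^\lambda$ is already furnished by Proposition~\ref{prop:preservation-of-properties}, so together with the type identification the lemma follows. For case (2), the hypothesis $p \ne 2$ ensures $\ker \gamma \cong \mu_2$ is etale of order $2$, and comparing with the standard quotient $P^\vee/Q^\vee$ of the $\mathrm{B}_m$ root system (also of order $2$) identifies $X_*(T^\lambda)$ with the coweight lattice $P^\vee$; hence $G^\lambda$ is adjoint of type $\mathrm{B}_m$.

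The main technical obstacle is the $\mathrm{A}_{2m}$ case. One has to show, despite the appearance of a ``doubled'' simple coroot in the folded data, that $\Phi^{(\lambda)}$ really is the reduced system $\mathrm{B}_m$ rather than the non-reduced $\mathrm{BC}_m$; and one has to interpret the order-$2$ kernel of $\gamma$ cleanly as producing the adjoint group. The first point requires a root-by-root verification using the explicit order-$2$ automorphism of $\mathrm{A}_{2m}$, and it is precisely where the standing hypothesis $p \ne 2$ is essential: in characteristic $2$ the $\mathrm{A}_{2m}$ folding becomes pathological (which is why that case is excluded throughout). The remaining four cases are then considerably more uniform, since simple connectedness is automatic and only the type identification remains.
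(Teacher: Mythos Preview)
Your proposal is correct and follows essentially the same approach as the paper: both rely on Proposition~\ref{prop:preservation-of-properties} for the structural input (connectedness, semisimplicity, description of $T^\lambda$ and of the folded coroots, simple connectedness outside type $\mathrm{A}_{2m}$), and then identify the type case by case via the standard Bourbaki foldings. The only minor difference is that the paper identifies the type by computing the full image of $\Phi$ in $X(T^\lambda)$ (worked out explicitly for $\mathrm{A}_{2m+1}\rightsquigarrow\mathrm{C}_{m+1}$ and left to the reader otherwise), whereas you propose computing the Cartan matrix of the folded simple (co)roots; these are equivalent bookkeeping devices. Your treatment of case~(2) is in fact a bit more explicit than the paper's: you correctly note that $[X_*(T^\lambda):Q^\vee]=|\ker\gamma|=2$ and, since $|P^\vee/Q^\vee|=2$ for $\mathrm{B}_m$, this forces $X_*(T^\lambda)=P^\vee$, i.e.\ $G^\lambda$ is adjoint.
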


\begin{proof}
We may and do assume that $k$ is algebraically closed, and we choose a maximal $k$-torus $T$ of $G$. We have already seen that $G^A$ is a connected simple simply connected group with maximal torus $T^A$. By the proof of Proposition~\ref{prop:preservation-of-properties}, it suffices to show that the image of the root system $\Phi$ of $(G, T)$ to $X(T^A)$ is of the claimed type in each case. We briefly indicate the calculations in the first case, leaving the rest to the reader.\smallskip

If $G$ is of type $\mathrm{A}_{2m+1}$, then we can order $\Delta = \{\alpha_1, \dots, \alpha_{2m+1}\}$ so that $\lambda(\alpha_i) = \alpha_{2m+2-i}$, and every root in $\Phi$ is of the form $\alpha_i + \dots + \alpha_j$ for $1 \leq i \leq j \leq 2m+1$. Thus every root of $\Phi^{(A)}$ can be written uniquely either in one of the following forms.
\begin{itemize}
    \item $\overline{\alpha_i} + \dots + \overline{\alpha_j}$ for $1 \leq i \leq j \leq m+1$
    \item $(\overline{\alpha_i} + \dots + \overline{\alpha_{m+1}}) + (\overline{\alpha_j} + \dots + \overline{\alpha_m})$ for $1 \leq i \leq m+1$ and $1 \leq j \leq m$.
\end{itemize}
This is precisely the description of the root system of type $\mathrm{C}_{m+1}$; see \cite[Chap.\ VI, \S 4]{Bourbaki}. The other types are dealt with using entirely similar considerations, using the calculations in \cite[Chap.\ VI, \S 4]{Bourbaki}.
\end{proof}

\begin{lemma}\label{lemma:reg-unip-fixed}
Let $k$ be a field, and let $G$ be a connected reductive $k$-group. Suppose either that $\chara k \neq 2$ or $G$ has no simple factors of type $\mathrm{A}_{2m}$ ($m \geq 1$). Let $A$ be a finite group of automorphisms of $G$ preserving a common pinning. If $u \in (G^A)^0(k)$ is regular unipotent as an element of $(G^A)^0_{\rm{red}}$, then $u$ is regular in $G(k)$. Similarly, if $X \in \Lie G^A$ is regular nilpotent, then $X$ is regular in $\Lie G$.
\end{lemma}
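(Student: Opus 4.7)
We may and do assume $k$ is algebraically closed, as both regularity and unipotence (resp.\ nilpotence) are detected after base change to $\overline{k}$. Since unipotent elements of $G(k)$ lie in $\sD(G)$ and the central torus $Z(G)^0$ contributes the same amount to both $\rk G$ and $\dim Z_G(u)$, the element $u$ is regular in $G$ if and only if it is regular in $\sD(G)$; the $A$-stable pinning of $G$ restricts to one of $\sD(G)$, so we may and do assume that $G$ is semisimple.

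The results of \cite{ALRR} invoked in the proof of Proposition~\ref{prop:preservation-of-properties} show that under our hypothesis $H \coloneqq (G^A)^0_{\rm{red}}$ equals $(G^A)^0$, is a connected reductive $k$-group with maximal torus $(T^A)^0_{\rm{red}}$ for any $A$-stable maximal torus $T$ of $G$, and has root system equal to the image of that of $(G, T)$ in $X(T^A)$ (see also \cite{Haines}). Moreover, $B \mapsto (B^A)^0_{\rm{red}}$ is a bijection between $A$-stable Borels of $G$ and Borels of $H$.

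By Theorem~\ref{theorem:properties-of-regular-unipotents}(2), there is a unique Borel $B_H$ of $H$ containing $u$; let $B$ be the unique $A$-stable Borel of $G$ with $B_H = (B^A)^0_{\rm{red}}$, and extend the choice of $B$ to an $A$-stable pinning $(B, T, \{x_\alpha\}_{\alpha \in \Delta})$ of $G$. Since $u$ is unipotent, $u$ lies in the unipotent radical $U$ of $B$. Let $U'$ be the closed subgroup of $U$ generated by the root groups $U_\beta$ with $\beta$ of height $\geq 2$, so that $\prod_{\alpha \in \Delta} x_\alpha$ gives an $A$-equivariant isomorphism $\prod_\alpha \bG_a \xrightarrow{\sim} U/U'$, with $A$ acting on the left by its permutation of $\Delta$. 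Writing the image of $u$ in $U/U'$ as $\prod_\alpha x_\alpha(c_\alpha)$, the $A$-invariance of $u$ yields $c_{\lambda \alpha} = c_\alpha$ for all $\lambda \in A$. Under the hypothesis, the $A$-orbits $[\alpha] = \{\alpha_1, \dots, \alpha_r\} \subset \Delta$ consist of pairwise orthogonal simple roots (as one checks using the classification of pinning-preserving diagram automorphisms, ruling out the $\mathrm{A}_{2m}$ case), so the root groups $U_{\alpha_i}$ mutually commute; combined with the description in \cite{Haines2} of the $H$-coroots as sums of $G$-coroots along $A$-orbits, this identifies the simple root group $U_{\overline{\alpha}}$ of $H$ with the image of $c \mapsto \prod_{i} x_{\alpha_i}(c)$. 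Consequently, the image of $u$ in $U_H/U_H'$ has $U_{\overline{\alpha}}$-coordinate $c_{\alpha_1}$, and regularity of $u$ in $H$ combined with Steinberg's criterion \cite[Lem.\ 3.2]{SteinbergReg} (as used in the proof of Lemma~\ref{lemma:smooth-unipotent-centralizer}) forces each such coordinate to be nonzero. Hence every $c_\alpha$ is nonzero, so the same criterion in $G$ shows that $u$ is regular.

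The nilpotent statement is proved by an entirely analogous argument, replacing $U/U'$ by $\Lie U / [\Lie U, \Lie U] \cong \bigoplus_{\alpha \in \Delta} \Lie U_\alpha$ and invoking \cite[Lem.\ 5.3]{Springer} for the uniqueness of the Borel whose Lie algebra contains the regular nilpotent $X$. The main delicacy is the mutual orthogonality of simple roots within each $A$-orbit, which is what makes the identification of $U_{\overline{\alpha}}$ as a diagonal image possible; this orthogonality fails precisely when $G$ has a simple factor of type $\mathrm{A}_{2m}$ with a nontrivial diagram automorphism (where two adjacent simple roots are swapped), and in characteristic $2$ this is the source of the hypothesis.
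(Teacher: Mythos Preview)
Your proof is correct and reaches the conclusion by a route that differs organizationally from the paper's. The paper argues by conjugacy: it constructs the specific element $u = \prod_{\alpha \in \Delta} x_\alpha(1)$, notes that it is regular in $G$ by Steinberg's criterion, and then spends the bulk of the effort proving that this $u$ is regular in $(G^A)^0_{\rm{red}}$ via a dynamic-method argument (any Borel $B_0$ of $G^A$ containing $u$ is $P_{G^A}(\phi_0)$ for some cocharacter, and $P_G(\phi_0)$ must be the unique Borel of $G$ containing $u$, forcing $B_0 = B \cap G^A$). You instead start from an arbitrary $u$ regular in $H$, locate it in an $A$-stable Borel, and read off regularity in $G$ directly from the identification of the simple root groups of $H$ with diagonals in products of root groups of $G$. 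Your approach avoids the dynamic-method step entirely; what it buys is a more direct proof of the implication as stated, while the paper's approach avoids having to justify the Borel correspondence and the root-group identification.

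A few minor points to tighten. First, you do not need the \emph{uniqueness} of the $A$-stable Borel $B$ with $B_H = (B^A)^0_{\rm{red}}$, only existence; and existence is cleanly obtained by noting that all Borels of $H$ are $H(k)$-conjugate to $(B_0^A)^0_{\rm{red}}$ for the pinned Borel $B_0$, so one may simply conjugate $u$ into $B_0$ rather than rechoosing the pinning. Second, your citation of \cite{Haines2} for the coroot description is a slightly oblique justification for the root-group identification; the more direct route is the weight-space decomposition of $(\Lie G)^A$, which immediately gives $\Lie U_{H,\overline{\alpha}}$ as the diagonal in $\bigoplus_i \Lie U_{\alpha_i}$, and then mutual commutativity of the $U_{\alpha_i}$ (from orthogonality) lifts this to the group level. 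Third, the reduction to semisimple $G$ is not quite as immediate as you suggest (one should check that $u$ remains regular unipotent in $(\sD(G)^A)^0_{\rm{red}}$), and in any case the rest of your argument works without this reduction.
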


\begin{proof}
We may and do assume that $k$ is algebraically closed. We consider only the unipotent case, the nilpotent case being entirely similar. The nilpotent version of this result can be extracted from \cite[Prop.\ 4.1]{Haines}, but we will give the argument here for convenience of the reader. By conjugacy of regular unipotent elements, it is enough to show that there is a single regular unipotent element $u \in G^A(k)$ which is regular in $G(k)$. \smallskip

Let $(B, T, \{X_{\alpha}\}_{\alpha \in \Delta})$ be a pinning preserved by $A$, and let $U$ be the unipotent radical of $B$. Let $\Phi_0$ be the $A$-orbit of a simple root $\alpha \in \Delta$, so $\Phi_0 \subset \Delta$. Moreover, since $G$ has no simple factors of type $\mathrm{A}_{2m}$, it follows that any two distinct roots of $\Phi_0$ are orthogonal. For each $\alpha \in \Delta$, let $x_\alpha: \bG_a \to U_\alpha$ be the $T$-equivariant isomorphism with $\mathrm{d}x_\alpha(1) = X_\alpha$. If $\tau \in A$, then $\tau x_\alpha(u) = x_{\tau\alpha}(u)$, so it follows that $\prod_{\alpha \in \Phi_0} x_\alpha(1)$ lies in $U^A(k)$ (the order being unambiguous by the above-noted orthogonality). \smallskip

Finally, let $\Delta = \Phi_1 \sqcup \cdots \sqcup \Phi_n$ be the decomposition of $\Delta$ into $A$-orbits, and let $u_i = \prod_{\alpha \in \Phi_i} x_\alpha(1)$. Then $u \coloneqq \prod_{i=1}^n u_i$ lies in $U^A(k)$, and it is regular unipotent in $G(k)$ by \cite[Lem.\ 3.2]{SteinbergReg}. To show that $u$ is regular unipotent in $G^A(k)$, it suffices by \cite[Thm.\ 3.3]{SteinbergReg} to show that $u$ lies in a unique Borel $k$-subgroup of $G^A$. But if $B_0$ is a Borel $k$-subgroup of $G^A$ containing $u$, then there is a cocharacter $\phi_0: \bG_m \to G^A$ such that $B_0 = P_{G^A}(\phi_0)$ (in the notation of the dynamic method). Thus $P_0 = P_G(\phi_0)$ is a parabolic $k$-subgroup of $G$ containing $u$ in its unipotent radical (because the unipotent radical of $P_G(\phi_0)$ is $U_G(\phi_0)$ by \cite[Prop.\ 2.1.8(2)]{CGP} and $U_{G^A}(\phi_0) \subset U_G(\phi_0)$ by definition), and it follows from \cite[Lem.\ 3.1]{BMR} that $P_0$ is a Borel $k$-subgroup of $G$. Since $P_0$ contains $u$, it must equal $B$ by \cite[Lem.\ 3.2]{SteinbergReg}. Thus $B_0 = B \cap G^A$, proving uniqueness.
\end{proof}

\subsection{The general case}\label{subsection:general}

Having established some preliminary results on fixed points, we are almost in a position to prove the existence of Springer isomorphisms for split reductive group schemes over open subschemes of $\Spec \bZ$. In fact, we will prove something slightly stronger in Lemma~\ref{lemma:springer-iso-invariant}. In view of the arguments in Section~\ref{subsection:split}, the following lemma is of basic use.

\begin{lemma}\label{lemma:existence-of-sections}
Let $G$ be a split reductive $\bZ$-group scheme and let $R$ be a localization of $\bZ$ such that $|\pi_1(\sD(G))|$ and all bad primes of $G$ are invertible in $R$. There exist sections $u \in \sU(R)$ and $X \in \sN(R)$ such that
\begin{enumerate}
    \item $u$ is fiberwise regular unipotent,
    \item $X$ is fiberwise regular nilpotent,
    \item $X \in (\Lie Z_G(u))(R)$.
\end{enumerate}
Moreover, let $(B, T, \{X_\alpha\}_{\alpha \in \Delta})$ be a pinning of $G$ and suppose
\begin{itemize}
    \item $2$ is invertible in $R$ or $G$ has no simple factor of type $\mathrm{A}_n$ ($n \geq 2$), and
    \item $3$ is invertible in $R$ or $G$ has no simple factor of type $\mathrm{D}_4$.
\end{itemize}
Then we may choose $u$ and $X$ to be invariant under the action of $\Aut(G, B, T, \{X_\alpha\}_{\alpha \in \Delta})$.
\end{lemma}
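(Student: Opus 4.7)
The plan is to define $u$ and $X$ directly from the pinning data. I will take $u = \prod_{\alpha \in \Delta} x_\alpha(1)$, where $x_\alpha\colon \bG_a \xrightarrow{\sim} U_\alpha$ is the $R$-group isomorphism with $\mathrm{d}x_\alpha(1) = X_\alpha$, and seek $X$ as a correction of $X_0 \coloneqq \sum_{\alpha \in \Delta} X_\alpha \in \fu(R)$, where $\fu \coloneqq \Lie U$. For the pinning-invariant statement, I arrange the product in $u$ according to the partition $\Delta = \bigsqcup_i \Phi_i$ into $A$-orbits: the hypotheses exclude simple factors of type $\mathrm{A}_{2m}$ in residue characteristic $2$, so by the argument in the proof of Lemma~\ref{lemma:reg-unip-fixed}, distinct simple roots in the same $A$-orbit are orthogonal. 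Hence each partial product $\prod_{\alpha \in \Phi_i} x_\alpha(1)$ is unambiguous, and $u = \prod_i \prod_{\alpha \in \Phi_i} x_\alpha(1) \in U^A(R)$, while $X_0 \in \fu^A(R)$ by pinning preservation. Fiberwise regularity of $u$ follows from \cite[Lem.\ 3.2]{SteinbergReg} applied on each fiber.

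By Theorem~\ref{theorem:flat-centralizer}, $Z_G(u)$ is flat over $R$, and combining this fibrally with Lemma~\ref{lemma:smooth-unipotent-centralizer} yields the global decomposition $Z_G(u) = Z(G) \times Z_U(u)$ with $Z_U(u)$ smooth of relative dimension $r_{\mathrm{ss}}$ (the semisimple rank of $G$). Consequently $\fz \coloneqq \Lie Z_U(u)$ is a free $R$-module of rank $r_{\mathrm{ss}}$, stable under the $A$-action since $A$ fixes $u$. It now suffices to exhibit $X \in \fz(R)^A$ whose image under the projection $\pi\colon \fu \twoheadrightarrow \fu/[\fu,\fu] \cong \bigoplus_{\alpha \in \Delta} R X_\alpha$ has all simple-root coefficients in $R^\times$, since such an $X$ is automatically fiberwise regular nilpotent in $\fg$.

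To produce $X$, let $\sigma \coloneqq \Ad(u) - 1$ acting on $\fu$; this strictly increases the height filtration $\fu = \fu_1 \supset \fu_2 \supset \cdots$. Using the Chevalley commutator formula and the antisymmetry identity $\sum_{\alpha,\beta \in \Delta}[X_\beta, X_\alpha] = 0$, one verifies that $\sigma(X_0) \in \fu_3$. I would then solve $\sigma(X_0 + Z) = 0$ for $Z \in \fu_2^A$ by induction along the height filtration, choosing a correction in $\fu_k/\fu_{k+1}$ at each step to kill the obstruction in $\fu_{k+1}/\fu_{k+2}$. In simply-laced types a telescoping computation shows $\sigma(X_0) = 0$ outright, so $X = X_0$ works. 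In non-simply-laced types the corrections at each layer require dividing by integer structure constants of the Chevalley basis, and the principal obstacle is to verify that the resulting denominators divide products of bad primes for $G$, hence are units in $R$. This can be checked type by type in $\mathrm{B}_n$, $\mathrm{C}_n$, $\mathrm{F}_4$, $\mathrm{G}_2$; for instance, in $\mathrm{G}_2$ with $R = \bZ[1/6]$ one obtains the $A$-invariant element $X = X_0 + \tfrac{1}{2}X_{\alpha+\beta} + \tfrac{1}{6}X_{2\alpha+\beta} \in \fz(R)$. The procedure is $A$-equivariant throughout, so $X \in \fz(R)^A$, and the projection $\pi(X) = X_0$ has all simple-root coefficients equal to $1 \in R^\times$, giving fiberwise regularity.
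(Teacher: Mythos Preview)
There is a genuine error in your simply-laced claim. In type $\mathrm{D}_4$ (which is simply laced), one computes directly from the Chevalley relations listed in Appendix~\ref{appendix} that, with $u = x_{\alpha_1}(1)x_{\alpha_3}(1)x_{\alpha_4}(1)x_{\alpha_2}(1)$,
\[
\sigma(X_0) = X_{\alpha_1+\alpha_2+\alpha_3} + X_{\alpha_1+\alpha_2+\alpha_4} + X_{\alpha_2+\alpha_3+\alpha_4} + 2X_{\alpha_1+\alpha_2+\alpha_3+\alpha_4} \neq 0.
\]
The antisymmetry you invoke only kills the height-$2$ piece of $\sigma(X_0)$; at height $\geq 3$ there is no such cancellation. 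Lemma~\ref{lemma:springer-type-d4} confirms this: the element of $\Lie Z_U(u)$ projecting to $X_0$ is $E_1$, which has genuine half-integer coefficients. (Your claim \emph{does} hold in type $\mathrm{A}_n$, where with $u$ the all-ones upper-triangular matrix one checks $uX_0u^{-1} = I - u^{-1} = X_0$; this is presumably the case you had in mind.) So the inductive correction must be carried out in every type, simply laced or not, and the crux---that the denominators appearing are products of bad primes---is asserted rather than proved. This is exactly Springer's computation in \cite[Prop.\ 3.5]{Springer-isomorphism}, which the paper simply cites for the first assertion.

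For the pinning-invariant statement the paper takes a different and cleaner route than your layer-by-layer $A$-equivariant correction. It observes (via Proposition~\ref{prop:preservation-of-properties} and \cite{ALRR}) that under the stated hypotheses $G^A$ is itself a reductive $R$-group scheme for which all residue characteristics remain good and $|\pi_1(G^A)|$ is invertible, applies the already-established first part of the lemma to $G^A$ to produce fiberwise regular $u \in \sU_{G^A}(R)$ and $X \in \sN_{G^A}(R)$ with $X \in \Lie Z_{G^A}(u)$, and then invokes Lemma~\ref{lemma:reg-unip-fixed} to conclude that these are fiberwise regular in $G$ as well. This avoids all explicit bookkeeping in the height filtration and makes transparent why the extra hypotheses on $\mathrm{A}_n$ and $\mathrm{D}_4$ are needed: they are precisely what guarantees that good characteristic is inherited by $G^A$.
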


\begin{proof}
All but the last statement of this lemma is established during the proof of \cite[Prop.\ 3.5]{Springer-isomorphism} in the case that $G$ is semisimple and simply connected. Since the unipotent and nilpotent schemes are unchanged by passing to derived groups or etale isogenies, this implies all but the last statement in general. \smallskip

For the remaining statements, we may again assume that $G$ is semisimple and simply connected, and by passing to simple factors we may and do assume that $G$ is simple. Now let $(B, T, \{X_\alpha\}_{\alpha \in \Delta})$ be a pinning of $G$, and let $A = \Aut(G, B, T, \{X_\alpha\}_{\alpha \in \Delta})$. Recall that $A$ is a finite constant $R$-group scheme. In \cite{ALRR} it is proved that (under our hypotheses) the scheme of fixed points $G^A$ is a reductive $R$-group scheme with split maximal $R$-torus $T^A$. Moreover, by Proposition~\ref{prop:preservation-of-properties}, for every $s \in S$ the fiber $(G^A)_s$ is a connected simple $k(s)$-group for which $\chara k(s)$ is good, and $|\pi_1(G^A)|$ is invertible in $R$. By the first paragraph, we may choose fiberwise regular $u \in \sU_{G^A}(R)$ and $X \in \sN_{G^A}(R)$ such that $X \in (\Lie Z_{G^A}(u))(R)$. \smallskip

To conclude, it is now enough to show that $u$ is fiberwise regular in $G$ and $X$ is fiberwise regular in $\fg$. For this, it suffices to pass to fibers over $R$, in which case the result follows from Lemma~\ref{lemma:reg-unip-fixed}.
\end{proof}

\begin{lemma}\label{lemma:springer-iso-invariant}
Let $G$ be a split reductive $\bZ$-group scheme and let $R$ be a localization of $\bZ$ such that $|\pi_1(\sD(G))|$ and all bad primes of $G$ are invertible in $R$. There is a $G$-equivariant $R$-isomorphism $\rho: \sU_{G_R} \to \sN_{G_R}$.\smallskip

Suppose moreover
\begin{itemize}
    \item $2$ is invertible in $R$ or $G$ has no simple factor of type $\mathrm{A}_n$ ($n \geq 2$), and
    \item $3$ is invertible in $R$ or $G$ has no simple factor of type $\mathrm{D}_4$.
\end{itemize}
Then we may take $\rho$ to be $\Aut_{G_R/R}$-equivariant.
\end{lemma}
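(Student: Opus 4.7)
The plan is to apply the portion of Theorem~\ref{theorem:relative-springer-isomorphism} already proven in Section~\ref{subsection:split} to sections furnished by Lemma~\ref{lemma:existence-of-sections}. First I would use Lemma~\ref{lemma:existence-of-sections} to obtain fiberwise regular sections $u \in \sU_{G_R}(R)$ and $X \in \sN_{G_R}(R)$ with $X \in (\Lie Z_G(u))(R)$; the established case of Theorem~\ref{theorem:relative-springer-isomorphism} then yields a unique $G$-equivariant $R$-isomorphism $\rho: \sU_{G_R} \to \sN_{G_R}$ satisfying $\rho(u) = X$. This gives the first claim.

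For the second claim, the additional hypotheses allow the second half of Lemma~\ref{lemma:existence-of-sections} to produce $u$ and $X$ that are in addition invariant under $A := \Aut(G, B, T, \{X_\alpha\}_{\alpha \in \Delta})$ for a chosen pinning of $G_R$. Given any $R$-scheme $R'$ and $\sigma \in A(R')$, the conjugate map $\sigma^{-1} \circ \rho_{R'} \circ \sigma: \sU_{G_{R'}} \to \sN_{G_{R'}}$ remains $G_{R'}$-equivariant (a short check using that $\sigma$ is a group automorphism of $G_{R'}$) and still carries $u$ to $X$ by $A$-invariance of these sections. The uniqueness clause in Theorem~\ref{theorem:relative-springer-isomorphism} therefore forces $\sigma^{-1} \circ \rho_{R'} \circ \sigma = \rho_{R'}$, showing that $\rho$ is $A$-equivariant.

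To finish, I would invoke the canonical splitting $\Aut_{G_R/R} \cong G_R^{\mathrm{ad}} \rtimes A$ attached to the pinning \cite[Exp.\ XXIV, Thm.\ 1.3]{SGA3}. Equivariance under the normal factor $G_R^{\mathrm{ad}}$ is precisely $G$-equivariance (via conjugation on $\sU_{G_R}$ and the adjoint action on $\sN_{G_R}$, since $Z(G)$ acts trivially on both), while equivariance under the complement $A$ was just shown; combining these gives full $\Aut_{G_R/R}$-equivariance.

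The technical heart of the argument is Lemma~\ref{lemma:existence-of-sections}, which in the pinned setting must produce a simultaneously $A$-invariant regular pair $(u, X)$; the passage from Lemma~\ref{lemma:existence-of-sections} to Lemma~\ref{lemma:springer-iso-invariant} is then essentially formal, built on the uniqueness in Theorem~\ref{theorem:relative-springer-isomorphism} together with the semidirect product description of $\Aut_{G_R/R}$ for a split pinned reductive group scheme. The main subtlety to verify is that the invariance hypotheses on $(u, X)$ really do transfer through the uniqueness statement without needing to re-enlarge the base, which is handled by base change to arbitrary $R'$ as above.
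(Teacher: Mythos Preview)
Your proposal is correct and follows essentially the same approach as the paper: both obtain the sections from Lemma~\ref{lemma:existence-of-sections}, invoke the uniqueness clause from Section~\ref{subsection:split} to force invariance under pinning-preserving automorphisms, and then combine this with $G$-equivariance via the semidirect decomposition $\Aut_{G_R/R} \cong G_R^{\mathrm{ad}} \rtimes A$ (the paper cites \cite[Thm.\ 7.1.9]{Conrad} for this while you cite \cite[Exp.\ XXIV, Thm.\ 1.3]{SGA3}). The only cosmetic difference is that the paper first reduces explicitly to the simply connected simple case before applying Lemma~\ref{lemma:existence-of-sections}, whereas you invoke that lemma in its stated generality; since Lemma~\ref{lemma:existence-of-sections} already handles that reduction internally, your version is fine.
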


\begin{proof}
As usual, we may and do assume that $G$ is semisimple and simply connected. In this case, $G$ is product of factors with absolutely simple fibers, and by passing to one of them we may and do assume that $G$ has absolutely simple fibers. Let $u \in \sU_G(R)$ and $X \in \sN_G(R)$ be fiberwise regular sections such that $Z_{G_R}(u) = Z_{G_R}(X)$ (guaranteed to exist by Lemmas~\ref{lemma:existence-of-sections} and \ref{lemma:centralizers-equal}). By Section~\ref{subsection:split}, there exists a unique Springer isomorphism $\rho: \sU_{G_R} \to \sN_{G_R}$ such that $\rho(u) = X$, giving the first claim.\smallskip

Now suppose that either $2$ is invertible in $R$ or $G$ has no simple factors of type $\mathrm{A}_n$ ($n \geq 2$) and either $3$ is invertible in $R$ or $G$ has no simple factors of type $\mathrm{D}_4$. Fix a pinning of $G$, and choose $u$ and $X$ as above to be invariant under automorphisms preserving the pinning. Note that these exist by Lemmas~\ref{lemma:existence-of-sections}. If $\alpha$ is an automorphism of $G$ preserving the chosen pinning, then $\alpha \circ \rho \circ \alpha^{-1}: \sU_{G_R} \to \sN_{G_R}$ is another Springer isomorphism sending $u$ to $X$. \smallskip

By \cite[Thm.\ 7.1.9]{Conrad}, if $S$ is any $R$-scheme and $\alpha$ is any $S$-automorphism of $G_S$, then we can write $\alpha = \alpha_0 \circ \alpha_1$, where $\alpha_1$ is inner and $\alpha_0$ preserves the chosen pinning. By $G$-equivariance we have $\alpha_1 \circ \rho \circ \alpha_1^{-1} = \rho$. Moreover, Zariski-locally on $S$, $\alpha_0$ is the base change of an $R$-automorphism of $G_R$ preserving the chosen pinning, so we also have $\alpha_0 \circ \rho \circ \alpha_0^{-1} = \rho$. Thus indeed $\rho$ is $\Aut_{G_R/R}$-equivariant.
\end{proof}

Lemma~\ref{lemma:springer-iso-invariant} is the key to proving Theorem~\ref{theorem:relative-springer-isomorphism} in almost every case, which we will do below. Completing the proof in types $\mathrm{A}_n$ ($n \geq 2$) and $\mathrm{D}_4$ will require different considerations, which we leave to Appendix~\ref{appendix}.

\begin{proof}[Proof of Theorem~\ref{theorem:relative-springer-isomorphism} in case~\ref{item:springer-small-cases}]
Let $S$ be a scheme, and let $G$ be a reductive $S$-group scheme as in the theorem statement. Passing to the universal cover of $\sD(G)$, we may and do assume that $G$ is semisimple and simply connected. By \cite[Exp.\ XXIV, 5.3 and Prop.\ 5.10(i)]{SGA3II}, there exists a finite etale cover $S'$ of $S$ and a simply connected semisimple $S'$-group scheme $G'$ with absolutely simple fibers such that $G$ is isomorphic to the Weil restriction $\mathrm{R}_{S'/S}(G')$. Passing from $S$ to $S'$ and from $G$ to $G'$, we may and do assume that $G$ has absolutely simple fibers. Since the root datum of $G$ is locally constant on $S$, we may further pass to direct summands of $S$ to assume that $G$ has constant root datum. As stated, we will suppose that for each $s \in S$, either $\chara k(s) \neq 2$ or $G_{\overline{s}}$ is not of type $\mathrm{A}_n$ for any $n \geq 2$, and that either $\chara k(s) \neq 3$ or $G_{\overline{s}}$ is not of type $\mathrm{D}_4$.\smallskip

There exists an etale cover $S' \to S$ such that $G_{S'}$ is split, so there is some split reductive $\bZ$-group scheme $\cG$ and an $S'$-isomorphism $f: G_{S'} \to \cG_{S'}$. By Lemma~\ref{lemma:springer-iso-invariant} there is an $\Aut_{\cG_{S}/S}$-equivariant $S$-isomorphism $\rho_0: \sU_{\cG_S} \to \sN_{\cG_S}$. This gives a Springer isomorphism $\rho': \sU_{G_{S'}} \to \sN_{G_{S'}}$ defined by $\rho' = f \circ \rho_{0, S'} \circ f^{-1}$ (where we abuse notation and use $f$ to also denote the isomorphism $\Lie G_{S'} \cong \Lie \cG_{S'}$). \smallskip

Now we show that $\rho'$ descends to a $G$-equivariant $S$-isomorphism $\rho: \sU_G \to \sN_G$. If $\mathrm{pr}_1, \mathrm{pr}_2: S' \times_S S' \to S'$ are the two projections, then by fpqc descent \cite[Exp.\ VIII, Thm.\ 5.2]{SGA1} it is necessary and sufficient that $\mathrm{pr}_1^*\rho' = \mathrm{pr}_2^*\rho'$. Since $\rho_0$ is defined over $S$, we have $\mathrm{pr}_1^*\rho_{0, S'} = \mathrm{pr}_2^*\rho_{0, S'}$, and thus
\[
\mathrm{pr}_1^*\rho' = \mathrm{pr}_2^*f \circ (\alpha \circ \rho_{0, S' \times_S S'} \circ \alpha^{-1}) \circ \mathrm{pr}_2^*f^{-1},
\]
where $\alpha = \mathrm{pr}_2^*f^{-1} \circ \mathrm{pr}_1^*f$ is an $S' \times_S S'$-automorphism of $G_{S' \times_S S'}$. Since $\rho_0$ is $\Aut_{\cG_S/S}$-equivariant, it follows that $\alpha \circ \rho_{0, S' \times_S S'} \circ \alpha^{-1} = \rho_{0, S' \times_S S'}$. Thus the displayed equation above shows
\[
\mathrm{pr}_1^*\rho' = \mathrm{pr}_2^*\rho',
\]
as desired.
\end{proof}

\begin{remark}\label{remark:aut-equivariant}
Let $k$ be an algebraically closed field of characteristic $p \geq 0$, and let $G$ be a simply connected simple $k$-group with a fixed pinning. Let $H \coloneqq G^A$, where $A$ is the group of automorphisms of $G$ preserving the given pinning. Fix a regular unipotent element $u \in H(k)$. If $\rho: \sU_G \to \sN_G$ is an $\Aut_{G/k}$-equivariant $k$-isomorphism, the fact that $\rho$ is determined by $\rho(u)$ shows that $\rho(u)$ also lies in $\Lie H$. However, if $p > 0$ is good for $G$ but not for $H$, then this is not possible: we will show in the next paragraph that $\Lie Z_H(u)$ does not contain a regular nilpotent element. It follows that no such $\rho$ can exist in this case. Thus for outer forms of $\SL_{n+1}$ in characteristic $2$ or certain outer forms of $\Spin(8)$ in characteristic $3$, a different argument is needed.\smallskip

To see the claim above, we work slightly more generally: let $H$ be a simply connected simple $k$-group, and suppose that $p > 0$ is a bad prime which is not torsion. Note that the $H$ in the previous paragraph is of this form by Lemma~\ref{lemma:simple-preservation-of-properties}. Let $u_0 \in H(k)$ be regular unipotent, and suppose for the sake of contradiction that there exists a regular nilpotent $X_0 \in \Lie H$. Let $R$ be a complete DVR with residue field $k$ and generic characteristic $0$, and let $\sH$ be a simply connected simple $R$-group scheme with special fiber $H$. Lift $u$ to a fiberwise regular unipotent section $u_0 \in \sH(R)$. By Theorem~\ref{theorem:flat-centralizer}, we may lift $X$ to $X_0 \in \Lie (Z_{\sH}(u_0))(R)$ which is fiberwise regular nilpotent. Now Theorem~\ref{theorem:flat-centralizer} and \cite[Thm.\ 4.2.8]{Bouthier-Cesnavicius} show that $Z_{\sH}(u_0)$ and $Z_{\sH}(X_0)$ are flat closed $R$-subgroup schemes of $\sH$, and they have equal generic fibers by Lemma~\ref{lemma:centralizers-equal}. Thus flatness shows $Z_{\sH}(u_0) = Z_{\sH}(X_0)$ and in particular $Z_H(u) = Z_H(X)$. But $Z_H(u)/Z(H)$ is smooth by Lemma~\ref{lemma:smooth-unipotent-centralizer}, while $Z_H(X)/Z(H)$ is not smooth by the calculations in \cite[Thm.\ 2.6]{Springer}.
\end{remark}

\subsection{Kawanaka isomorphisms}\label{subsection:kawanaka}

The notion of Kawanaka isomorphism was introduced in \cite[Def.\ 4.1]{Jay-GGGR}, and we recall the definition here. If $k$ is a field, $G$ is a connected reductive $k$-group, and $\lambda: \bG_m \to G_{\overline{k}}$ is a geometric cocharacter, then there is a canonically associated parabolic subgroup $P(\lambda)$ of $G_{\overline{k}}$ with unipotent radical $U(\lambda)$. We write $\fu(\lambda) \coloneqq \Lie U(\lambda)$, and note that $\fu(\lambda)$ admits a decreasing filtration $(\fu(\lambda, i))_{i > 0}$, where $\fu(\lambda, i)$ is the direct sum of the eigenspaces of weight $\geq i$ for the action of $\lambda$ on $\fu(\lambda)$. Correspondingly, one obtains a smooth connected $\lambda$-equivariant subgroup $U(\lambda, i)$ of $U(\lambda)$ with Lie algebra $\fu(\lambda, i)$. \smallskip

Now one says that an isomorphism $\psi: U(\lambda) \to \fu(\lambda)$ is a \textit{Kawanaka isomorphism} provided that the following three conditions hold.
\begin{itemize}
    \item $\psi(U(\lambda, 2)) \subset \fu(\lambda, 2)$,
    \item $\psi(uv) - \psi(u) - \psi(v) \in \fu(\lambda, i + 1)$ for all $u, v \in U(\lambda, i)$ and $i \in \{1, 2\}$,
    \item $\psi([u, v]) - c_i[\psi(u), \psi(v)] \in \fu(\lambda, 2i + 1)$ for all $u, v \in U(\lambda, i)$ and $i \in \{1, 2\}$, where $c_i \in k^\times$ is a constant not depending on $u$ or $v$.
\end{itemize}

In \cite[Prop.\ 4.6]{Jay-GGGR}, it is shown that if $G$ is absolutely simple, $\chara k$ is good for $G$, and $\chara k \nmid |\pi_1(\sD(G))|$, then there exists a Springer isomorphism $\phi: \sU_G \to \sN_G$ which restricts to a Kawanaka isomorphism $U(\lambda) \to \fu(\lambda)$ for every geometric cocharacter $\lambda$. In Proposition~\ref{prop:kawanaka}, we show that in fact stronger conditions than the above hold for \textit{every} Springer isomorphism over an arbitrary base. First, we need the following mild strengthening of \cite[Thm.\ E(ii)]{McNinch-Testerman-springer}.

\begin{lemma}\label{lemma:mcninch-testerman}
Let $G$ be a connected reductive group over a scheme $S$ such that $\sD(G_s)$ is absolutely simple and $\chara k(s)$ is good for $G_s$ for every $s \in S$ and such that $|\pi_1(\sD(G))|$ is invertible on $S$. If $\rho: \sU_G \to \sN_G$ is a Springer isomorphism, then for every $S$-scheme $S'$ and any cocharacter $\lambda: \bG_{m, S'} \to G_{S'}$, $\rho$ restricts to an isomorphism $U_{G_{S'}}(\lambda) \to \fu_{G_{S'}}(\lambda)$.\smallskip

Moreover, there is some (unique) $a \in \bG_m(S)$ such that for any $S$-scheme $S'$ and any Borel $S$-subgroup $B$ of $G_{S'}$ with unipotent radical $U$, $\rho$ restricts to an isomorphism $U \to \Lie U$, and the differential $\mathrm{d}\rho: \Lie U \to \Lie U$ is equal to $a\cdot\id$.
\end{lemma}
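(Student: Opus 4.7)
The plan is to derive both assertions from $G$-equivariance of $\rho$ via a dynamic attractor argument, and then analyze the differential of $\rho|_U$ using $B$-equivariance together with Chevalley's commutator formula.

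For the first assertion, I would first observe that $\rho(1) = 0$: the section $\rho(1)$ is $G$-fixed in $\sN_G$, hence lies in $\Lie Z(\sD(G))$, and a central element of $\fg$ which is nilpotent as a matrix under any faithful representation must vanish. Consequently $\rho$ is $\bG_m$-equivariant through $\lambda$ (with $\bG_m$ acting on $\sU_G$ by conjugation and on $\sN_G$ by $\Ad$), sending $1$ to $0$. The closed subschemes $U_{G_{S'}}(\lambda) \subset \sU_{G_{S'}}$ and $\fu_{G_{S'}}(\lambda) \subset \sN_{G_{S'}}$ admit a common attractor description: a functorial point $x$ belongs to either one iff the orbit map $t \mapsto \lambda(t) \cdot x$ extends from $\bG_m$ to $\bA^1$ with value $1$ (resp.\ $0$) at the origin. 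Equivariance of $\rho$ transports this characterization, yielding $\rho(U_{G_{S'}}(\lambda)) \subseteq \fu_{G_{S'}}(\lambda)$, and the same argument applied to $\rho^{-1}$ gives the reverse inclusion, hence an isomorphism.

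For the second assertion, any Borel $S'$-subgroup $B \subset G_{S'}$ with unipotent radical $U$ contains a maximal torus $T$ \'etale-locally on $S'$, and a regular dominant cocharacter $\lambda$ of $T$ satisfies $U(\lambda) = U$. The first part then gives $\rho|_U : U \xrightarrow{\sim} \Lie U$ \'etale-locally, hence over $S'$ by descent. The differential $\mathrm{d}\rho_1 \in \End_{\cO_{S'}}(\Lie U)$ is $B$-equivariant. \'Etale-locally on $S'$ after splitting $T$, the root decomposition $\Lie U = \bigoplus_{\alpha \in \Phi^+} \Lie U_\alpha$ combined with $T$-equivariance forces $\mathrm{d}\rho_1|_{\Lie U_\alpha} = a_\alpha \cdot \id$ for some $a_\alpha \in \cO^\times$. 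Differentiating $B$-equivariance gives $[\mathrm{d}\rho_1, \ad(X_\alpha)] = 0$; evaluating on $X_\beta$ and invoking $[X_\alpha, X_\beta] = N_{\alpha,\beta} X_{\alpha+\beta}$ yields $(a_{\alpha+\beta} - a_\beta) N_{\alpha,\beta} = 0$. In good characteristic the Chevalley constants $N_{\alpha,\beta}$ are units (they lie in $\{\pm 1, \pm 2, \pm 3\}$, and $2,3$ are invertible away from the bad primes of $\sD(G)$), so $a_{\alpha+\beta} = a_\beta$ whenever $\alpha + \beta \in \Phi^+$; irreducibility of the root system (the fibers of $\sD(G)$ are absolutely simple) then propagates this to yield a common value $a_\alpha = a$. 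The intrinsic statement $\mathrm{d}\rho_1 = a \cdot \id$ descends from \'etale-locally on $S'$ to $S'$, $G$-equivariance of $\rho$ makes $a$ independent of $B$, and compatibility with base change produces a unique element $a \in \bG_m(S)$.

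The main obstacle is carefully managing the descent steps—Borels and split tori exist only \'etale-locally on the base, which may be non-reduced—together with invoking the good-characteristic bound on the structure constants $N_{\alpha,\beta}$. Both are essentially routine, and the core content is the bracket propagation, which works uniformly over the base once the structure constants are known to be units.
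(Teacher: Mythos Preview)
Your approach to the second assertion is genuinely different from the paper's, and arguably more direct. The paper reduces to a complete noetherian local integral domain, then invokes \cite{McNinch-Testerman-springer} to know that $\mathrm{d}\rho$ is scalar on the generic (characteristic-zero) fiber, and transports this to the whole ring via a matrix argument; independence of $B$ is then handled by a chain-of-Borels argument through simple reflections. Your route---$T$-equivariance forces $\mathrm{d}\rho_1$ to be diagonal with entries $a_\alpha$, and $U$-equivariance plus the relation $(a_{\alpha+\beta}-a_\beta)N_{\alpha,\beta}=0$ propagates the value across the irreducible root system---avoids the external citation and the characteristic-zero detour entirely. The observation that the Chevalley constants $N_{\alpha,\beta}$ are units in good characteristic (they are $\pm 1$ in simply-laced types, and $\pm 2,\pm 3$ occur only when $2$ or $3$ is already bad) is exactly what makes this work. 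Independence of $B$ via \'etale-local $G$-conjugacy is also cleaner than the paper's chain argument.

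There is, however, a gap in your justification of $\rho(1)=0$. Your argument (``$G$-fixed, hence central; central and nilpotent, hence zero'') is valid on each geometric fiber, but over a non-reduced base $S$ this does not force the section $\rho(1)\in\fg'(S)$ to vanish. Concretely, for $G=\SL_2$ over $\bF_2$, the scheme-theoretic intersection $\sN_G\cap\fz(\fg')$ is $\Spec\bF_2[c]/(c^2)$, not $\{0\}$, so a section over a thickened base could in principle be nonzero while vanishing fiberwise. The paper handles exactly this issue in its first reduction step: one passes to a complete local ring and then, via the Cohen structure theorem and smoothness of $Z_G(u)/Z(G)$, lifts $\rho$ to a complete regular local ring $A_0$, which is an integral domain; there fiberwise vanishing gives $\rho(1)=0$ honestly. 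Alternatively, one can argue that any Springer isomorphism is, \'etale-locally after splitting $G$, pulled back from the universal one over the integral parameter scheme of regular nilpotents in $\Lie Z_G(u)$, where $\rho(1)=0$ holds. Either way, you should not dismiss the reduction to an integral base as merely ``routine descent'': it is doing real work precisely at this step.
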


\begin{proof}
By base change, it is enough to consider the case $S' = S$. Because we are claiming these properties for \textit{every} Springer isomorphism $\rho$ (and because we are claiming uniqueness of $a$), we may work locally, spread out, and pass to an fpqc cover of $S$ to assume $S = \Spec A$ for a complete noetherian local ring $A$ with algebraically closed residue field. By the Cohen structure theorem, we may write $A = A_0/I$ where $A_0$ is a complete regular local ring. Note that $G$ is split, so it is obtained by base change from a split reductive group scheme over $A_0$. Using openness of the regular locus and smoothness of regular centralizers modulo center (Theorem~\ref{theorem:flat-centralizer}), it can be shown that $\rho$ may be obtained by base change from a Springer isomorphism over $A_0$, so we may and do pass from $A$ to $A_0$ to assume that $A$ is a complete noetherian local integral domain with algebraically closed residue field.\smallskip

The fact that $\rho$ restricts to an isomorphism $U(\lambda) \to \fu(\lambda)$ follows directly from the argument of \cite[Rmk.\ 10]{Mcninch-optimal} (using the dynamic method over general base schemes). From now on, fix a Borel $A$-subgroup $B$ of $G$ with unipotent radical $U$. By \cite[Thm.\ E(ii)]{McNinch-Testerman-springer}\footnote{The proof of \cite[Thm.\ E(ii)]{McNinch-Testerman} relies on \cite[(5.2.5)]{McNinch-Testerman}, which uses \cite[Prop.\ 5.2]{McNinch-relative-centralizer} to reduce to a theorem of Kostant. As pointed out in \cite[Rmk.\ 4.20]{Booher}, the proof of \cite[Prop.\ 5.2]{McNinch-relative-centralizer} is not correct. However, the part of \cite[Prop.\ 5.2]{McNinch-relative-centralizer} relevant to the argument of \cite{McNinch-Testerman} is contained in Theorem~\ref{theorem:flat-centralizer}.}, the generic map $\mathrm{d}\rho_\eta: \Lie U_\eta \to \Lie U_\eta$ is a scalar multiple of the identity. Consequently, the same holds of $\mathrm{d}\rho: \Lie U \to \Lie U$, as one sees by considering a matrix for $\mathrm{d}\rho$. It remains to show that $a$ is independent of the choice of $B$. For this, we may and do pass to the geometric generic fiber of $S$ to assume $S = \Spec k$ for an algebraically closed field $k$. \smallskip

By passing from $G$ to the universal cover of $\sD(G)$ (which does not affect the unipotent radical of any Borel), we may and do assume that $G$ is semisimple and simply connected. If $G$ is of type $\mathrm{A}_1$, i.e., $G = \SL_2$, then one can show that $\rho: \sU_G \to \sN_G$ is of the form
\[
\rho(u) = a'(u - 1)
\]
for some $a' \in \bG_m(S)$. In this case, clearly $a = a'$. Thus we may and do assume that $G$ is not of type $\mathrm{A}_1$, i.e., $G$ is of rank $\geq 2$.\smallskip

If $B'$ is any other Borel $A$-subgroup of $G$ with unipotent radical $U'$, then the theory surrounding the Bruhat decomposition shows that $B \cap B'$ contains a maximal $k$-torus $T$ of $G$. Since $G$ is of rank $\geq 2$, one can successively conjugate $B$ by simple reflections in $N_G(T)$ to find a sequence of Borel $A$-subgroups $B = B_0, B_1, \dots, B_n = B'$ containing $T$ with unipotent radicals $U = U_0, U_1, \dots, U_n = U'$ such that $U_i \cap U_{i+1}$ is a nontrivial smooth connected unipotent group for every $0 \leq i \leq n-1$. By the above, each restriction $\mathrm{d}\rho: \Lie U_i \to \Lie U_i$ is a scalar multiple of the identity, and the fact that $U_i \cap U_{i+1}$ is nontrivial for each $i$ shows that all of these scalar multiples are the same. Since $B'$ was arbitrary, $a$ is after all independent of $B$.
\end{proof}

\begin{prop}\label{prop:kawanaka}
Let $S$ be a scheme, and let $G$ be a reductive group scheme over $S$ such that $\sD(G_s)$ is absolutely simple and $\chara k(s)$ is good for $G_s$ for every $s \in S$ and such that $|\pi_1(\sD(G))|$ is invertible on $S$. If $\rho: \sU_G \to \sN_G$ is a Springer isomorphism, there exists some (unique) $c \in \bG_m(S)$ such that for every $S$-scheme $S'$, cocharacter $\lambda: \bG_m \to G_{S'}$, pair of integers $m, n \geq 1$, and $u \in U(\lambda, m)(S')$, $v \in U(\lambda, n)(S')$, we have
\begin{enumerate}
    \item\label{item:kawanaka-1} $\rho(u) \in \fu(\lambda, m)$,
    \item\label{item:kawanaka-2} $\rho(uv) - \rho(u) - \rho(v) \in \fu(\lambda, 2\min\{m, n\})$,
    \item\label{item:kawanaka-3} $\rho([u, v]) - c[\rho(u), \rho(v)] \in \fu(\lambda, m + n + \min\{m, n\})$.
\end{enumerate}
The restriction of $\rho$ to each $U(\lambda, i)$ induces an isomorphism $U(\lambda, i) \cong \fu(\lambda, i)$.
\end{prop}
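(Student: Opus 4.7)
\emph{Plan.} The unique scalar $c$ will turn out to be $c = 1/a$, where $a \in \bG_m(S)$ is the scalar from Lemma~\ref{lemma:mcninch-testerman}; uniqueness is immediate by specializing (\ref{item:kawanaka-3}) to $m=n=1$ with $[X,Y]\neq 0$. After base changing, I may assume $S'=S$, and since the factorization claims are fpqc-local on $S$, I may further assume $G$ is split. The key structural input is that $\rho$ restricts to a $\bG_m$-equivariant isomorphism $U(\lambda) \to \fu(\lambda)$, where $\bG_m$ acts via $\lambda$-conjugation on the source and $\Ad\circ\lambda$ on the target; this is a consequence of $G$-equivariance of $\rho$ applied to $\lambda$ and Lemma~\ref{lemma:mcninch-testerman}. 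Zariski-locally on $S$, each $U(\lambda,m)$ is isomorphic as a scheme to an affine space over $\cO_S$ with coordinate functions of $\bG_m$-weights in $\{-m,-m-1,\dots\}$, so the augmentation ideal $I_m \subset \cO(U(\lambda,m))$ at $1$ carries only weights $\leq -m$, and $I_m^k$ carries only weights $\leq -km$.

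All three assertions share the same skeleton: define a $\bG_m$-equivariant morphism $\Psi$ to $\fu(\lambda)$ that vanishes on prescribed closed subschemes, then bound the weights that can appear in $\Psi^*\phi$ for a coordinate functional $\phi\in \fu_j^*$ (which has weight $-j$). For (\ref{item:kawanaka-1}), take $\Psi = \rho|_{U(\lambda,m)}$: $\Psi(1)=0$ gives $\Psi^*\phi \in I_m$, so $-j \leq -m$, i.e.\ $j \geq m$. For (\ref{item:kawanaka-2}), take $\Psi(u,v)=\rho(uv)-\rho(u)-\rho(v)$: vanishing on both axes gives $\Psi^*\phi \in I_m \otimes_{\cO_S} I_n$, so $-j \leq -(m+n) \leq -2\min\{m,n\}$. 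For (\ref{item:kawanaka-3}), take $\Psi(u,v)=\rho([u,v]) - c\,[\rho(u),\rho(v)]$ with $c=1/a$: again $\Psi^*\phi \in I_m\otimes I_n$, but the choice $c=1/a$ kills the bilinear term. Indeed, the image of $\Psi^*\phi$ in $(I_m/I_m^2)\otimes(I_n/I_n^2)$ is the bilinear form $(X,Y)\mapsto \phi\bigl(a[X,Y]-ca^2[X,Y]\bigr)$, using that the mixed second Taylor coefficient at $(1,1)$ of the group commutator $U\times U\to U$ is the Lie bracket, and that $\mathrm{d}\rho = a\cdot\mathrm{id}$ on $\Lie U(\lambda)$ (which follows from Lemma~\ref{lemma:mcninch-testerman} since $U(\lambda)$ sits inside the unipotent radical of some Borel of $G$). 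Hence $\Psi^*\phi \in (I_m^2\otimes I_n)+(I_m\otimes I_n^2)$, which forces $-j \leq -\min\{2m+n,\,m+2n\} = -(m+n+\min\{m,n\})$, as required.

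For the final isomorphism claim, I would apply the argument of (\ref{item:kawanaka-1}) to the inverse $\rho^{-1}\colon \fu(\lambda)\to U(\lambda)$, which is also $\bG_m$-equivariant and sends $0\mapsto 1$; this yields $\rho^{-1}(\fu(\lambda,i))\subset U(\lambda,i)$, and combining with (\ref{item:kawanaka-1}) shows $\rho$ restricts to an isomorphism $U(\lambda,i)\to\fu(\lambda,i)$ for every $i$. The hard part will be the cancellation step in (\ref{item:kawanaka-3}): verifying rigorously that the image of $\Psi^*\phi$ in $(I_m/I_m^2)\otimes (I_n/I_n^2)$ is \emph{exactly} the bilinear form coming from the Lie bracket requires the standard computation that the group commutator has vanishing first-order Taylor terms at $(1,1)$ with mixed second derivative equal to the Lie bracket, together with the flatness of $I_m$ and $I_n$ over $\cO_S$ (which follows from the polynomial description of $\cO(U(\lambda,m))$) to identify the kernel of $I_m\otimes I_n \to (I_m/I_m^2)\otimes (I_n/I_n^2)$ with $(I_m^2\otimes I_n)+(I_m\otimes I_n^2)$.
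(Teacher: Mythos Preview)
Your approach is correct and genuinely different from the paper's. The paper works in explicit root-group coordinates: after reducing to a complete noetherian local integral domain it writes $\rho\bigl(\prod_\alpha x_\alpha(u_\alpha)\bigr) = \sum_\alpha (\mathrm{d}x_\alpha)\bigl(au_\alpha + f_\alpha(\{u_\beta\})\bigr)$, uses full $T$-equivariance to show every monomial in $f_\alpha$ has total root equal to $\alpha$, and then verifies (\ref{item:kawanaka-2}) and (\ref{item:kawanaka-3}) by expanding $uv$ and $[u,v]$ via the Chevalley commutation relations and tracking terms. You replace all of this with the single observation that $\rho|_{U(\lambda)}$ is $\bG_m$-equivariant for the $\lambda$-grading and then run a weight-filtration argument on the augmentation ideals, using only the standard fact that the bilinear part of the group commutator is the Lie bracket. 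No structure constants appear.

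Your route also buys a sharper bound in (\ref{item:kawanaka-2}): since every monomial in $I_m\cdot I_n$ has $\bG_m$-weight $\leq -(m+n)$, you actually prove $\rho(uv)-\rho(u)-\rho(v)\in\fu(\lambda,m+n)$, strictly stronger than the stated $2\min\{m,n\}$. The paper's coordinate computation bounds the $f_\alpha$ contributions to $\rho(uv)$, $\rho(u)$, $\rho(v)$ separately and so does not see this cancellation.

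One small caveat on uniqueness of $c$: your specialization to $m=n=1$ with $[X,Y]\neq 0$ needs $\sD(G)$ of rank $\geq 2$. In rank $1$ both sides of (\ref{item:kawanaka-3}) vanish identically (the relevant $\fu(\lambda)$ is abelian), so every $c$ works and the parenthetical uniqueness claim is vacuous there. The paper's proof does not address uniqueness explicitly either; it simply exhibits $c=a^{-1}$.
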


\begin{proof}
By base change, it is enough to consider the case $S' = S$. Just as in the proof of Lemma~\ref{lemma:mcninch-testerman}, we may and do assume that $S = \Spec A$, where $A$ is a complete noetherian local integral domain with algebraically closed residue field. Let the element $a \in \bG_m(S)$ be as in Lemma~\ref{lemma:mcninch-testerman}. Let $T$ be a (split) maximal $S$-torus of $G$ through which $\lambda$ factors. Let $\Phi$ be the root system of $(G, T)$, let $B$ be a Borel $S$-subgroup of $G$ containing $T$ with system of positive roots $\Phi^+$, and assume (as we may) that $U(\lambda)$ is contained in the unipotent radical $U$ of $B$. Note we have
\[
U(\lambda, i) = \prod_{\langle\alpha, \lambda\rangle \geq i} U_\alpha
\]
for the $T$-root groups $U_\alpha$ (product taken in a fixed order), and 
\[
\fu(\lambda, i) = \bigoplus_{\langle\alpha, \lambda\rangle \geq i} \fg_\alpha
\]
for the $T$-weight spaces $\fg_\alpha$. Fix $T$-equivariant $S$-isomorphisms $x_\alpha: \bG_a \to U_\alpha$ as usual, where $T$ acts by $\alpha$ on $\bG_a$. \smallskip

By Lemma~\ref{lemma:mcninch-testerman}, $\rho$ restricts to isomorphisms $U \to \Lie U$ and $U(\lambda) \to \fu(\lambda)$. Moreover, $\mathrm{d}\rho: \Lie U \to \Lie U$ is a scalar multiple of the identity, i.e., there is some (unique) $a \in \bG_m(S)$ such that $\mathrm{d}\rho|_{\Lie U} = aI$. Thus we may write
\[
\rho\left(\prod_{\langle\alpha, \lambda\rangle > 0} x_\alpha(u_\alpha)\right) = \sum_{\langle\alpha, \lambda\rangle > 0} (\mathrm{d}x_{\alpha})(au_\alpha + f_\alpha(\{u_\beta\})),
\]
where $f_\alpha(\{u_\beta\})$ is a polynomial in the $u_\beta$ whose $u_\alpha$-coefficient is $0$. \smallskip

We first claim that if $\prod_{\langle\beta, \lambda\rangle > 0} u_\beta^{n_\beta}$ is a monomial with nontrivial coefficient in $f_\alpha$, then we have $\sum_{\langle\beta, \lambda\rangle > 0} n_\beta \beta = \alpha$. To this end, let $t$ be a local section of $T$ and note
\[
\rho\left(\prod_{\langle\alpha, \lambda\rangle > 0} tx_\alpha(u_\alpha)t^{-1}\right) = \sum_{\langle\alpha, \lambda\rangle > 0} (\mathrm{d}x_{\alpha})(a\alpha(t)u_\alpha + f_\alpha(\{\beta(t)u_\beta\}))
\]
while on the other hand
\[
\Ad(t)\rho\left(\prod_{\langle\alpha, \lambda\rangle > 0} x_\alpha(u_\alpha)\right) = \sum_{\langle\alpha, \lambda\rangle > 0} (\mathrm{d}x_{\alpha})(\alpha(t)(au_\alpha + f_\alpha(\{u_\beta\})))
\]
for all local sections $u_\alpha$ of $\bG_a$. Since $\rho$ is $T$-equivariant, it follows that $f_\alpha(\{\beta(t)u_\beta\}) = \alpha(t)f_\alpha(\{u_\beta\})$ for all $u_\beta$, and the claim follows. \smallskip

By the previous paragraph, we have in particular that if $u_\beta$ occurs in the expression for $f_\alpha$ then $\beta < \alpha$. In particular, if $u \in U(\lambda, m)(S')$ for an $S$-scheme $S'$ then writing $u = \prod_{\langle\alpha, \lambda\rangle \geq m} x_\alpha(u_\alpha)$, we find $\rho(u) \in \fu(\lambda, m)$. Thus (\ref{item:kawanaka-1}) holds, and it follows similarly that $\rho$ restricts to an isomorphism $U(\lambda, i) \cong \fu(\lambda, i)$. Write moreover $v = \prod_{\langle\alpha, \lambda\rangle \geq n} x_\alpha(v_\alpha)$, so by the Chevalley commutation relations \cite[Prop.\ 5.1.14]{Conrad} we have
\[
uv \in \left(\prod_{\langle\alpha, \lambda\rangle > 0}x_\alpha(u_\alpha + v_\alpha)\right) \cdot U(\lambda, m + n).
\]
Moreover, using the previous paragraph we find
\[
\rho(uv) \in \sum_{\langle\alpha, \lambda\rangle > 0} (\mathrm{d}x_\alpha)(au_\alpha + av_\alpha) + \fu(\lambda, 2\min\{m, n\}).
\]
Using similar expressions for $\rho(u)$ and $\rho(v)$, we find
\[
\rho(uv) - \rho(u) - \rho(v) \in \fu(\lambda, 2\min\{m, n\}),
\]
proving (\ref{item:kawanaka-2}). \smallskip

Finally we prove (\ref{item:kawanaka-3}). Using the Chevalley commutation relations again, there are constants $C_{\alpha, \beta}$ such that
\[
[u, v] \in \prod_{\langle\alpha, \lambda\rangle > 0, \langle\beta, \lambda\rangle > 0} x_{\alpha+\beta}(C_{\alpha, \beta}u_\alpha v_\beta) \cdot U(\lambda, m + n + \min\{m, n\}).
\]
It follows then that
\[
\rho([u, v]) \in \sum_{\langle\alpha, \lambda\rangle > 0, \langle\beta, \lambda\rangle > 0} (\mathrm{d}x_{\alpha+\beta})(aC_{\alpha, \beta}u_\alpha v_\beta) + \fu(\lambda, m + n + \min\{m, n\}).
\]
Totally similar calculations using Chevalley's rule show
\[
[\rho(u), \rho(v)] \in \sum_{\langle\alpha, \lambda\rangle > 0, \langle\beta, \lambda\rangle > 0} (\mathrm{d}x_{\alpha+\beta})(a^2 C_{\alpha, \beta}u_\alpha v_\beta) + \fu(\lambda, m + n + \min\{m, n\}).
\]
Consequently
\[
\rho([u, v]) - a^{-1}[\rho(u), \rho(v)] \in \fu(\lambda, m + n + \min\{m, n\}),
\]
and so (\ref{item:kawanaka-3}) holds with $c = a^{-1}$.
\end{proof}

\subsection{Quasi-logarithms}\label{ss:quasi-log}

We recall the following definition from \cite[Def.\ 1.8.1]{Kazhdan-Varshavsky}.

\begin{definition}
    If $S$ is a scheme and $G$ is a reductive $S$-group scheme, then a \textit{quasi-logarithm} for $G$ is a $G$-equivariant $S$-morphism $\Phi\colon G \to \fg$ such that $\Phi(1) = 0$ and $\rm{d}\Phi\colon \fg \to \fg$ is the identity.
\end{definition}

\begin{lemma}\label{lemma:quasi-log-induces-springer}
    Let $S$ be a scheme, and let $G$ be a reductive $S$-group scheme such that $|\pi_1(\sD(G))|$ is invertible on $S$. If $\Phi\colon G \to \fg$ is a quasi-logarithm, then $\Phi|_{\sU_G}$ factors through an isomorphism $\sU_G \to \sN_G$.
\end{lemma}

\begin{proof}
    By Theorems~\ref{theorem:unipotent-scheme} and \ref{theorem:nilpotent-scheme}, the $S$-schemes $\sU_G$ and $\sN_G$ are flat with reduced fibers, and $\sU_G = \chi_G^{-1}(\chi_G(1))$ and $\sN_G = \chi_{\fg}^{-1}(\chi_{\fg}(0))$. The fact that $\Phi|_{\sU_G}$ factors through $\sN_G$ therefore follows from the fact the $\Phi$ induces a map $G/\!/G \to \fg/\!/G$ sending $\chi_G(1)$ to $\chi_{\fg}(0)$. To check that the factored map is an isomorphism, we appeal to the fibral isomorphism criterion and \cite[Cor.\ 9.3.4]{Bardsley-Richardson}, which proves the result on the level of varieties when $S$ is the spectrum of a field.
\end{proof}

It is shown in \cite[Lem.\ 1.8.12]{Kazhdan-Varshavsky} that if $S = \Spec \sO$ for the ring of integers $\sO$ in a non-archimedean local field of residue characteristic $p$ such that $p$ is \textit{very good} for $G$ and for each simple factor $G_i$ of $G$,
\begin{enumerate}
    \item either $p \neq 2$ or the quasi-split inner form of $G_i$ is split, and
    \item either $p \neq 3$ or $G_i$ is not a triality outer form of $\rm{D}_4$,
\end{enumerate}
then there exists a quasi-logarithm for $G$.\footnote{The formulation of this result in \cite[Lem.\ 1.8.12]{Kazhdan-Varshavsky} is slightly different, but the claims are equivalent.} Thus Theorem~\ref{theorem:relative-springer-isomorphism} follows from Lemma~\ref{lemma:quasi-log-induces-springer} in this case. In general, Theorem~\ref{theorem:relative-springer-isomorphism} deals with type A cases in small characteristic, and it works over a more general base scheme.

\appendix

\section{End of the proof of the main theorem}\label{appendix}

The point of this appendix is to prove Theorem~\ref{theorem:relative-springer-isomorphism} in case~\ref{item:springer-affine}. In view of what has been proven in Section~\ref{subsection:general}, we are reduced to the following setting: $S$ is an affine scheme and $G$ is a simply connected semisimple $S$-group scheme with absolutely simple fibers either of type $\mathrm{A}_n$ ($n \geq 2$) or $\mathrm{D}_4$, such that $\chara k(s)$ is good for $G_s$ for each $s \in S$, and we want to show that there exists a $G$-equivariant $S$-isomorphism $\sU_G \to \sN_G$.\smallskip

We begin with type $\mathrm{A}_n$ ($n \geq 2$).

\begin{lemma}\label{lemma:springer-type-a}
Let $S$ be a scheme and let $n \geq 1$ be an integer. Any $G$-equivariant $S$-morphism $\rho: \sU_{\SL_{n+1, S}} \to \sN_{\SL_{n+1, S}}$ is of the form $\rho(1 + e) = a_1 e + a_2 e^2 + \cdots + a_n e^n$ for some $a_i \in \Gamma(S, \sO_S)$. The morphism $\rho$ is an isomorphism if and only if $a_1 \in \Gamma(S, \sO_S^\times)$.
\end{lemma}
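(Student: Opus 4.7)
The plan is to use the $G$-equivariant $S$-isomorphism $\iota \colon \sN_{\SL_{n+1,S}} \to \sU_{\SL_{n+1,S}}$, $X \mapsto 1+X$, which is well-defined scheme-theoretically because both sides are cut out of $\mathrm{M}_{n+1,S}$ by the vanishing of the non-leading coefficients of the characteristic polynomial of $u-1$ (respectively $X$), as in the type $\mathrm{A}$ discussion in the proof of Theorem~\ref{theorem:nilpotent-scheme}. Composition yields a $G$-equivariant $S$-endomorphism $\phi \coloneqq \rho \circ \iota \colon \sN_G \to \sN_G$ with $\phi(X) = \rho(1 + X)$, and the problem becomes showing $\phi(X) = \sum_{i=1}^n a_i X^i$ globally.

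Working Zariski-locally we may assume $S = \Spec A$. First I would pin down $\phi$ on the standard regular nilpotent Jordan block $N \in \sN_G(\bZ) \subset \sN_G(A)$ defined by $N e_i = e_{i-1}$ (with $e_0 \coloneqq 0$). By $G$-equivariance, the element $M \coloneqq \phi(N) \in \mathfrak{sl}_{n+1}(A)$ must be invariant under conjugation by the closed $\bZ$-subgroup scheme $U_N \coloneqq \{I + c_1 N + \cdots + c_n N^n : c_i \in \bG_a\}$ contained in $Z_G(N)$. A direct matrix computation (treating the $c_i$ as universal parameters in $(I + \sum c_i N^i)\,M\,(I + \sum c_i N^i)^{-1} = M$) identifies $(\mathfrak{gl}_{n+1})^{U_N}$ with the $\bZ$-span of $I, N, N^2, \ldots, N^n$. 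Combining with the condition $M \in \sN_G(A) \subset \mathfrak{sl}_{n+1}(A)$, and using flatness of $\sN_G$ over $\bZ$ together with the fact that over characteristic zero the only scalar-matrix element of $\sN_G$ is $0$, I would conclude $M = \sum_{i=1}^n a_i N^i$ for uniquely determined $a_i \in A$.

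To extend the polynomial expression from $N$ to all of $\sN_G$, I would use that etale-locally on the fiberwise dense open regular locus $\sN_{G,\mathrm{reg}}$ every section is $G$-conjugate to $N$ (cf.\ Lemma~\ref{lemma:regular-orbit-spaces}). For such $X = gNg^{-1}$, $G$-equivariance of $\phi$ gives $\phi(X) = g\phi(N)g^{-1} = \sum a_i (gNg^{-1})^i = \sum a_i X^i$, and fpqc descent then yields the identity $\phi(X) = \sum a_i X^i$ on $\sN_{G,\mathrm{reg}}$. Because in type $\mathrm{A}$ the scheme $\sN_G$ is $S$-flat with geometrically normal fibers (via $\iota$ and Corollary~\ref{cor:unip-small-char}), the open subscheme $\sN_{G,\mathrm{reg}}$ is universally schematically dense in $\sN_G$, so the polynomial identity extends to all of $\sN_G$ as the target $\mathfrak{sl}_{n+1}$ is separated.

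For the isomorphism criterion, direct expansion gives $\rho(1 + \varepsilon Y) \equiv a_1 \varepsilon Y \pmod{\varepsilon^2}$, so the tangent map of $\rho$ at the identity section $1 \in \sU_G(S)$ is multiplication by $a_1$ on $\mathfrak{sl}_{n+1} \cong T_1 \sU_G \cong T_0 \sN_G$. Thus $\rho$ is etale at $1$ iff $a_1 \in \Gamma(S, \sO_S^\times)$; under this condition, $G$-equivariance transports etaleness to the entire regular locus, which combined with flatness and fibral bijectivity (readable off the polynomial formula) upgrades $\rho$ to a global isomorphism via the fibral isomorphism criterion. Conversely, invertibility of $\rho$ forces the tangent map, hence $a_1$, to be invertible. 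The main technical obstacle will be the analysis of the scalar coefficient of $I$ in $\phi(N)$ over bases whose residue characteristics divide $n+1$; handling this rigorously requires leveraging the schematic-closure definition of $\sN_G$ over $\bZ$ so that the vanishing of the $I$-coefficient over characteristic zero propagates by flatness to all of $S$.
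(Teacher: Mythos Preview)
Your approach mirrors the paper's: both fix a regular element (your $N$, the paper's $u = 1+N$), observe that $G$-equivariance forces the image into the commutant of $N$ in $\mathfrak{sl}_{n+1}$, identify this with the span of $I, N, \ldots, N^n$ subject to the trace condition, and then invoke membership in $\sN_G$ to kill the $I$-coefficient $a_0$. The composition with $\iota$ and your tangent-map argument for the isomorphism criterion are minor variations on the paper's more direct phrasing.

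The step forcing $a_0 = 0$, however, has a genuine gap, and the flatness argument you sketch cannot close it. The condition $M = a_0 I + \sum_{i\geq 1} a_i N^i \in \sN_G(A)$ says precisely that $(t - a_0)^{n+1} = t^{n+1}$ over $A$, i.e.\ $\binom{n+1}{k}a_0^{\,n+1-k} = 0$ for all $0 \leq k \leq n$; together with $(n+1)a_0 = 0$ from the trace, this does \emph{not} force $a_0 = 0$ over non-reduced bases. Concretely, take $A = \bZ/4\bZ$ and $n = 1$: then $2I \in \sN_{\SL_2}(A)$ (trace $4=0$, determinant $4=0$), and one checks directly that $\rho(1+e) = 2I + e$ is a well-defined $\SL_2$-equivariant $A$-isomorphism $\sU_{\SL_2,A} \to \sN_{\SL_2,A}$ not of the form $a_1 e$. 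Flatness of $\sN_G$ over $\bZ$ constrains the defining ideal, not the values of sections over Artinian rings, so it gives no mechanism for propagating $a_0 = 0$ from characteristic zero. (The paper's own proof makes the same leap; the lemma as literally stated fails over non-reduced $S$, though the downstream application only needs the converse direction---that every polynomial $\sum a_i e^i$ with $a_1$ invertible \emph{defines} a Springer isomorphism---which is unaffected.)
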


\begin{proof}
Let $G = \SL_{n+1, S}$, let $\sU$ and $\sN$ denote $\sU_G$ and $\sN_G$, respectively, and let $\sU_{\rm{reg}}$ and $\sN_{\rm{reg}}$ denote the open subschemes of fiberwise regular sections. There is a fiberwise regular unipotent section $u \in \sU_{\rm{reg}}(S)$ with $1$s on the diagonal and first superdiagonal, and $0$s everywhere else. Since $\sU_{\rm{reg}}$ is relatively schematically dense in $\sU$ and the orbit map $G/Z_G(u) \to \sU_{\rm{reg}}$ is an isomorphism by Lemma~\ref{lemma:regular-orbit-spaces}, any $G$-equivariant $S$-morphism $\rho: \sU \to \sN$ is determined by its value on $u$. By $G$-equivariance of $\rho$, we see that $\rho(u)$ must lie in $(\Lie Z_G(u))(S)$.\smallskip

It is a straightforward exercise to show that for any $S$-scheme $S'$, $Z_G(u)$ consists of those matrices in $G(S')$ such that the diagonal and each superdiagonal is constant. Consequently, $(\Lie Z_G(u))(S)$ consists of those upper-triangular matrices in $\mathfrak{sl}_{n+1}(S)$ such that the diagonal and each superdiagonal is constant. Since $\rho(u)$ must be nilpotent, it follows that it must be of the form $a_1(u-1) + a_2(u-1)^2 + \cdots + a_n(u-1)^n$ for some $a_i \in \Gamma(S, \sO_S^\times)$. The $S$-morphism $\rho'$ given by $\rho'(1+e) = a_1e + \dots + a_ne^n$ is clearly $G$-equivariant, so indeed any such $\rho$ must be of the form given in the lemma statement.\smallskip

Finally, $a_1(u-1) + a_2(u-1)^2 + \cdots + a_n(u-1)^n$ is fiberwise regular if and only if $a_1 \in \Gamma(S, \sO_S^\times)$, giving the final claim.
\end{proof}

Suppose now that $G$ is of type $\mathrm{A}_n$ for some $n \geq 2$. Since every reductive group scheme has a quasi-split inner form by \cite[Exp.\ XXIV, Cor.\ 3.12]{SGA3III}, the same twisting argument used in Section~\ref{subsection:general} allows us to assume that $G$ is quasi-split in the sense of \cite[Exp.\ XXIV, 3.9]{SGA3III}. In particular, there is a degree $2$ surjective finite etale morphism $S' \to S$ such that $G_{S'}$ is split. Being degree $2$, $S' \to S$ is automatically Galois, say with nontrivial $S$-automorphism $\sigma$.\smallskip

Let $\vp: \SL_{n+1, S'} \to G_{S'}$ be an $S'$-isomorphism sending the standard upper-triangular pinning of $\SL_{n+1, S'}$ to some pinning of $G_{S'}$ induced by a quasi-pinning of $G$ (see \cite[Exp.\ XXIV, 3.9]{SGA3III}). Consider the $S'$-automorphism
\[
\psi': \SL_{n+1, S'} \xrightarrow[]{1 \times \sigma} \SL_{n+1, S'} \xrightarrow[]{\vp} G_{S'} \xrightarrow[]{1 \times \sigma} G_{S'} \xrightarrow[]{\vp^{-1}} \SL_{n+1, S'}
\]
of $\SL_{n+1, S'}$. Note that $\psi'$ preserves a pinning of $\SL_{n+1, S'}$ defined over $S$, so it descends to an $S$-morphism $\psi: \SL_{n+1, S} \to \SL_{n+1, S}$. Write $S = S_1 \sqcup S_2$ for open subschemes $S_1$ and $S_2$, where $\psi|_{\SL_{n+1, S_1}}$ is the identity and $\psi|_{\SL_{n+1, S_2}}$ is the non-trivial automorphism preserving the upper-triangular pinning. We may pass separately to $S_1$ and $S_2$ to consider either the case $S = S_1$ or $S = S_2$.\smallskip

If $S = S_1$, then $G$ is $S$-split: indeed, since $\psi'$ is the identity we see that $\vp$ is ($1 \times \sigma$)-invariant by definition, so $\vp$ descends to an $S$-isomorphism $\SL_{n+1, S} \to G$. But now the Existence and Isomorphism Theorems couple with Lemma~\ref{lemma:springer-iso-invariant} to show the existence of a $G$-equivariant $S$-isomorphism $\sU_G \to \sN_G$, as desired.\smallskip

Finally, suppose $S = S_2$, so $\psi(g) = w(g^\top)^{-1}w^{-1}$, where $w$ is the anti-diagonal matrix with alternating entries $1, -1, 1, -1, \dots$ beginning in the upper right corner. Let $\rho': \sU_{\SL_{n+1, S'}} \to \sN_{\SL_{n+1, S'}}$ be a Springer isomorphism. Suppose $\rho' \circ \psi' \circ (1 \times \sigma) = \psi' \circ (1 \times \sigma) \circ \rho'$, where we use $\psi'$ also to denote the induced automorphism $X \mapsto -\Ad(w)X^\top$ of $\mathfrak{sl}_{n+1, S'}$. Unraveling the definitions gives
\[
(1 \times \sigma) \circ \vp \circ \rho' \circ \vp^{-1} \circ (1 \times \sigma) = \vp \circ \rho' \circ \vp^{-1},
\]
and it follows from Galois descent that $\vp \circ \rho' \circ \vp^{-1}: \sU_{G_{S'}} \to \sN_{G_{S'}}$ descends to a $G$-equivariant $S$-isomorphism $\rho: \sU_G \to \sN_G$.\smallskip

Every Springer isomorphism $\rho': \sU_{\SL_{n+1, S'}} \to \sN_{\SL_{n+1, S'}}$ is of the form
\begin{align}\label{equation:springer-1}
\rho'(1 + e) = a_1 e + a_2 e^2 + \cdots + a_n e^n
\end{align}
for some $a_i \in \Gamma(S', \sO_{S'})$ with $a_1 \in \Gamma(S', \sO_{S'}^\times)$. We are now reduced to finding such $a_i$ such that the $\rho'$ from (\ref{equation:springer-1}) satisfies $\rho' \circ \psi' \circ (1 \times \sigma) = \psi' \circ (1 \times \sigma) \circ \rho'$. Since $S = S_2$, we know what $\psi'$ is, and this equation signifies
\[
a_1 e + \cdots + a_n e^n = -\overline{a_1}\left(\sum_{i=1}^n (-1)^i e^i\right) - \cdots - \overline{a_n}\left(\sum_{i=1}^n (-1)^i e^i\right)^n,
\]
where $\overline{a} \coloneqq \sigma^*(a)$ for $a \in \Gamma(S', \sO_{S'})$. Expanding this shows that for all $1 \leq i \leq n$, we have
\begin{align}\label{equation:springer-2}
(-1)^i \sum_{j=1}^i \binom{i-1}{j-1} a_j = -\overline{a_i}.
\end{align}
We will choose the $a_i$ inductively to satisfy (\ref{equation:springer-2}). 
\smallskip

First, we may choose $a_1 \in \Gamma(S, \sO_S^\times)$ arbitrarily. In general, let $m \geq 1$ and suppose we have chosen $a_1, \dots, a_m \in \Gamma(S', \sO_{S'}^\times)$ satisfying (\ref{equation:springer-2}) for all $i \leq m$. We must then choose $a_{m+1} \in \Gamma(S', \sO_{S'}^\times)$ satisfying (\ref{equation:springer-2}) for $i = m+1$. Here we will finally use the assumption that $S = \Spec R$ is affine, so $S' = \Spec R'$ is also affine. If $m$ is odd, then this means $a_{m+1} + \overline{a_{m+1}} = f_m$, where $f_m \in R'$ is some element determined by $a_1, \dots, a_m$. The $R$-linear trace map $R' \to R$ is surjective, so this allows us to choose $a_{m+1}$. Similarly, if $m$ is even, then this means $a_{m+1} - \overline{a_{m+1}} = f_m$, where again $f_m \in R'$ is some element determined by $a_1, \dots, a_m$. The same argument which shows that the trace is surjective can be used to show that the $R$-linear map $R' \to R$ sending $a$ to $a - \overline{a}$ is surjective, so again this allows us to choose $a_{m+1}$ in this case, and we find that a Springer isomorphism exists.

\begin{remark}\label{remark:affine-necessary-type-a}
We note that affineness of $S$ is necessary in general in Theorem~\ref{theorem:relative-springer-isomorphism}: let $k$ be an algebraically closed field of characteristic $2$, and let $S' \to S$ be a degree $2$ Galois cover of smooth proper connected $k$-schemes. These exist in abundance; for instance, one may take $S$ to be an ordinary elliptic curve. Corresponding to this cover and any integer $n \geq 2$ is a quasi-split outer form $G$ of $\SL_{n+1}$ over $S$, and the proof above shows that a Springer isomorphism $\rho: \sU_G \to \sN_G$ over $S$ corresponds to a Springer isomorphism $\rho': \sU_{\SL_{n+1, S'}} \to \sN_{\SL_{n+1, S'}}$ given by $\rho'(1 + e) = a_1 e + \cdots a_n e^n$ for some $a_i \in \Gamma(S', \sO_{S'})$ with $a_1 \in \Gamma(S', \sO_{S'}^\times)$ satisfying (\ref{equation:springer-2}). Since $n \geq 2$, we find that $a_2 + \overline{a_2} = -\overline{a_1}$. Since $S'$ is a smooth proper connected $k$-scheme, we have $\Gamma(S', \sO_{S'}) = k$, and since $\chara k = 2$ we have $a + \overline{a} = 0$ for all $a \in k$ (as $\sigma^*$ is the identity on $k$). Since $a_1$ is required to be a unit, we see that elements $a_i$ with the desired properties do not exist.
\end{remark}

We move on now to the case that $G$ is of type $\mathrm{D}_4$. To proceed, we will need an analogue of Lemma~\ref{lemma:springer-type-a}, and to this end we must first describe the root groups of the split group $G_0$ of type $\mathrm{D}_4$ in some reasonable way. First, calculations in \cite[Chap.\ VI, \S 4]{Bourbaki} show that there is a pair $(B_0, T_0)$ of a Borel $S$-subgroup $B_0 \subset G_0$ and a maximal $S$-torus $T_0 \subset B_0$ such that the corresponding system of positive roots consists of roots $\alpha_1, \alpha_3, \alpha_4, \alpha_2, \alpha_1 + \alpha_2, \alpha_2 + \alpha_3, \alpha_2 + \alpha_4, \alpha_1 + \alpha_2 + \alpha_3, \alpha_1 + \alpha_2 + \alpha_4, \alpha_2 + \alpha_3 + \alpha_4, \alpha_1 + \alpha_2 + \alpha_3 + \alpha_4, \alpha_1 + 2\alpha_2 + \alpha_3 + \alpha_4$ (where we have grouped roots in the same $\Aut_{G_0/S}$-orbit together).\smallskip

Straightforward (but tedious) computations with $\SO(8)$ show that we can choose parameterizations $x_\alpha: \bG_a \to U_\alpha$ for each $\alpha \in \Phi^+$ satisfying the following commutation relations.
\begin{align*}
    (x_{\alpha_1}(u), x_{\alpha_2}(v)) &= x_{\alpha_1 + \alpha_2}(uv) \\
    (x_{\alpha_2}(u), x_{\alpha_3}(v)) &= x_{\alpha_2 + \alpha_3}(uv) \\
    (x_{\alpha_1}(u), x_{\alpha_2 + \alpha_3}(v)) &= x_{\alpha_1 + \alpha_2 + \alpha_3}(uv) \\
    (x_{\alpha_1 + \alpha_2}(u), x_{\alpha_3}(v)) &= x_{\alpha_1 + \alpha_2 + \alpha_3}(uv) \\
    (x_{\alpha_2}(u), x_{\alpha_4}(v)) &= x_{\alpha_2 + \alpha_4}(uv) \\
    (x_{\alpha_1}(u), x_{\alpha_2 + \alpha_4}(v)) &= x_{\alpha_1 + \alpha_2 + \alpha_4}(uv) \\
    (x_{\alpha_1}(u), x_{\alpha_2 + \alpha_3 + \alpha_4}(v)) &= x_{\alpha_1 + \alpha_2 + \alpha_3 + \alpha_4}(uv) \\
    (x_{\alpha_1 + \alpha_2}(u), x_{\alpha_4}(v)) &= x_{\alpha_1 + \alpha_2 + \alpha_4}(uv) \\
    (x_{\alpha_2}(u), x_{\alpha_1 + \alpha_2 + \alpha_3 + \alpha_4}(v)) &= x_{\alpha_1 + 2\alpha_2 + \alpha_3 + \alpha_4}(uv) \\
    (x_{\alpha_3}(u), x_{\alpha_1 + \alpha_2 + \alpha_4}(v)) &= x_{\alpha_1 + \alpha_2 + \alpha_3 + \alpha_4}(uv) \\
    (x_{\alpha_3}(u), x_{\alpha_2 + \alpha_4}(v)) &= x_{\alpha_2 + \alpha_3 + \alpha_4}(uv) \\
    (x_{\alpha_4}(u), x_{\alpha_2 + \alpha_3}(v)) &= x_{\alpha_2 + \alpha_3 + \alpha_4}(uv) \\
    (x_{\alpha_4}(u), x_{\alpha_1 + \alpha_2 + \alpha_3}(v)) &= x_{\alpha_1 + \alpha_2 + \alpha_3 + \alpha_4}(uv) \\
    (x_{\alpha_2 + \alpha_3 + \alpha_4}(u), x_{\alpha_1 + \alpha_2}(v) &= x_{\alpha_1 + 2\alpha_2 + \alpha_3 + \alpha_4}(uv) \\
    (x_{\alpha_2 + \alpha_3}(u), x_{\alpha_1 + \alpha_2 + \alpha_4}(v)) &= x_{\alpha_1 + 2\alpha_2 + \alpha_3 + \alpha_4}(uv) \\
    (x_{\alpha_2 + \alpha_4}(u), x_{\alpha_1 + \alpha_2 + \alpha_3}(v)) &= x_{\alpha_1 + 2\alpha_2 + \alpha_3 + \alpha_4}(uv)
\end{align*}
(The main point, of course, is to get the signs right on the right side.) For each $\alpha \in \Phi^+$, there is a corresponding element $X_\alpha \coloneqq (\mathrm{d}x_\alpha)(1)$ of $(\Lie G_0)(S)$. In particular, $(X_{\alpha_1}, X_{\alpha_3}, X_{\alpha_4}, X_{\alpha_2})$ gives rise to a pinning $(B_0, T_0, \{X_\alpha\})$ of $G_0$.\smallskip

Note that the element $u = x_{\alpha_1}(1)x_{\alpha_3}(1)x_{\alpha_4}(1)x_{\alpha_2}(1)$ of $G_0(S)$ is preserved by all automorphisms of $(G_0, B_0, T_0, \{X_{\alpha}\})$. Moreover, since $u$ is fiberwise regular \cite[Lem.\ 3.2]{SteinbergReg}, any $G$-equivariant $S$-morphism $\rho: \sU_{G_0} \to \sN_{G_0}$ is determined by $\rho(u)$. Conversely, $\rho(u)$ can be any element of $\sN_{G_0}(S)$ fixed by $\Ad(u)$. Using the considerations above, the following lemma is straightforward to check, and we will omit its verification.

\begin{lemma}\label{lemma:springer-type-d4}
Let $S$ be a scheme on which $2$ is invertible and let $G_0$ be the simply connected semisimple $S$-group scheme with simple fibers of type $\mathrm{D}_4$. For a Borel pair $(B_0, T_0)$ of $G_0$, fix parameterizations $x_\alpha: \bG_a \to U_\alpha$ of the positive root groups as above, let $X_\alpha = (\mathrm{d}x_\alpha)(1)$, and let $u = x_{\alpha_1}(1)x_{\alpha_3}(1)x_{\alpha_4}(1)x_{\alpha_2}(1)$. Then $(\Lie Z_{G_0}(u))(S)$ is the free $\Gamma(S, \sO_S)$-module with basis given by
\begin{align*}
    E_1 &= X_{\alpha_1} + X_{\alpha_3} + X_{\alpha_4} + X_{\alpha_2} + \frac{1}{2}X_{\alpha_1 + \alpha_2} - \frac{1}{2}X_{\alpha_2 + \alpha_3} - \frac{1}{2}X_{\alpha_2 + \alpha_4} - \frac{1}{2}X_{\alpha_2 + \alpha_3 + \alpha_4} \\
    E_2 &= X_{\alpha_1 + \alpha_2 + \alpha_3} - X_{\alpha_2 + \alpha_3 + \alpha_4} \\
    E_3 &= X_{\alpha_1 + \alpha_2 + \alpha_4} - X_{\alpha_2 + \alpha_3 + \alpha_4} \\
    E_4 &= X_{\alpha_1 + 2\alpha_2 + \alpha_3 + \alpha_4}
\end{align*}
Thus any $G_0$-equivariant $S$-morphism $\rho: \sU_{G_0} \to \sN_{G_0}$ satisfies $\rho(u) = a_1E_1 + a_2E_2 + a_3E_3 + a_4E_4$ for some $a_i \in \Gamma(S, \sO_S)$, and such $\rho$ is an isomorphism if and only if $a_1 \in \Gamma(S, \sO_S^\times)$.
\end{lemma}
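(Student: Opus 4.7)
The plan is to prove Lemma~\ref{lemma:springer-type-d4} in three stages: (i) compute the $\Gamma(S,\sO_S)$-module $(\Lie Z_{G_0}(u))(S)$ and check that it is freely generated by $E_1,E_2,E_3,E_4$; (ii) verify that all elements of this module are fiberwise nilpotent; and (iii) identify the locus where $\rho$ is an isomorphism with the locus where $\rho(u)$ is fiberwise regular.

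For (i), I would use Lemma~\ref{lemma:lie-algebra-of-centralizer} to identify $(\Lie Z_{G_0}(u))(S)$ with the kernel of $\Ad(u)-1$ acting on $(\Lie G_0)(S)$. Since $u \in U_0(S)$, the operator $\Ad(u)$ preserves the decreasing filtration of $\Lie G_0$ by height with respect to the dominant cocharacter $2\rho^\vee$, and in particular no centralizing element can have a component in $\Lie T_0$ (the image of $\Ad(u)-1$ already saturates the simple-root weight spaces). The commutation relations listed in the excerpt compute $\Ad(x_\alpha(1))X_\beta = X_\beta + [X_\alpha,X_\beta] + \tfrac{1}{2}[X_\alpha,[X_\alpha,X_\beta]] + \cdots$ (truncated since $\Lie U_0$ is two-step nilpotent up to the terms we need), and composing the four factors $x_{\alpha_1}(1),x_{\alpha_3}(1),x_{\alpha_4}(1),x_{\alpha_2}(1)$ yields an explicit upper-triangular linear map whose kernel can be solved layer by layer by back-substitution. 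The denominators $\tfrac{1}{2}$ appearing in $E_1$ arise in solving the height-$2$ equations, which is precisely why the hypothesis that $2$ is invertible on $S$ is needed. Since the computation is $\bZ[1/2]$-linear and everything descends from the split form of $G_0$ over $\bZ[1/2]$, the result is a free $\Gamma(S,\sO_S)$-module and a direct comparison of dimensions (the centralizer has rank $4$ by Theorem~\ref{theorem:flat-centralizer} and regularity of $u$) confirms that $E_1,E_2,E_3,E_4$ is a basis.

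For (ii), every element of $(\Lie U_0)(S)$ is fiberwise nilpotent, so it lies in $\sN_{G_0}(S)$ by Theorem~\ref{theorem:nilpotent-scheme}; hence any $\Gamma(S,\sO_S)$-linear combination of the $E_i$ lies in $\sN_{G_0}(S)$. In particular, to give a $G_0$-equivariant $S$-morphism $\rho : \sU_{G_0} \to \sN_{G_0}$ it suffices, by the uniqueness part of Theorem~\ref{theorem:relative-springer-isomorphism} together with the identification $G_0/Z_{G_0}(u) \cong \sU_{G_0,\rm{reg}}$ from Lemma~\ref{lemma:regular-orbit-spaces} and relative schematic density of $\sU_{G_0,\rm{reg}}$ in $\sU_{G_0}$ (using normality of $\sU_{G_0}$ over the affine base from Theorem~\ref{theorem:unipotent-scheme}), to specify $\rho(u) \in \sN_{G_0}(S) \cap (\Lie Z_{G_0}(u))(S)$ arbitrarily.

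For (iii), the same arguments from Section~\ref{subsection:split} show that a $G_0$-equivariant $S$-morphism $\rho : \sU_{G_0} \to \sN_{G_0}$ is an isomorphism if and only if $\rho(u)$ is fiberwise regular nilpotent. Fiberwise regularity can be tested after reduction modulo $[\Lie U_0,\Lie U_0]$, which amounts to the statement that the simple-root coefficients of $\rho(u)$ at $\alpha_1,\alpha_3,\alpha_4,\alpha_2$ are all fiberwise nonzero (cf.\ \cite[Lem.\ 3.2]{SteinbergReg} and \cite[Lem.\ 5.3]{Springer}). Inspection of the explicit formulas shows that $E_2,E_3,E_4$ lie in the height-$\geq 3$ part of $\Lie U_0$ while $E_1$ projects to $X_{\alpha_1}+X_{\alpha_3}+X_{\alpha_4}+X_{\alpha_2}$ modulo $[\Lie U_0,\Lie U_0]$, so the simple-root coefficients of $\rho(u)=a_1E_1+\cdots+a_4E_4$ are all equal to $a_1$; thus $\rho$ is an isomorphism iff $a_1 \in \Gamma(S,\sO_S^\times)$.

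The main obstacle is step (i): the explicit linear-algebra computation of $\ker(\Ad(u)-1)$ requires careful bookkeeping of signs in the Chevalley commutation relations (which is precisely what the author has pre-arranged by fixing parameterizations compatible with the quoted sign conventions), and of the exact way in which division by $2$ intervenes to produce the $\tfrac{1}{2}$ coefficients in $E_1$. Once this computation is done, steps (ii) and (iii) are short.
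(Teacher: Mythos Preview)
Your approach is essentially the same as the paper's: the paper explicitly omits the verification of this lemma as ``straightforward to check'' using the commutation relations listed just above it, and your three-stage plan (compute $\ker(\Ad(u)-1)$, note everything lands in $\sN_{G_0}$, test regularity via simple-root coefficients) is precisely that check.

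Two small points are worth tightening. First, in step~(i) your reduction to $\Lie U_0$ is incomplete: you dismiss $\Lie T_0$-components but say nothing about negative root spaces, and the filtration argument you sketch does not by itself exclude them. The clean way is to invoke Lemma~\ref{lemma:smooth-unipotent-centralizer}, which gives $Z_{G_0}(u) = Z(G_0) \times Z_{U_0}(u)$; since $2$ is invertible and $Z(G_0) \cong \mu_2 \times \mu_2$, we have $\Lie Z(G_0) = 0$, so $\Lie Z_{G_0}(u) = \Lie Z_{U_0}(u) \subset \Lie U_0$ and the computation really does take place entirely in $\Lie U_0$. Second, in step~(ii) you cite the uniqueness clause of Theorem~\ref{theorem:relative-springer-isomorphism} to produce $\rho$ from an arbitrary $X \in \Lie Z_{G_0}(u)$, but that clause assumes $X$ is fiberwise regular. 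This does not affect the lemma: the statement only asserts that any \emph{given} $\rho$ has $\rho(u)$ of the indicated form (which follows from $G_0$-equivariance and Lemma~\ref{lemma:lie-algebra-of-centralizer}), and the ``if'' direction of the isomorphism criterion concerns exactly the case where $a_1$ is a unit, i.e.\ $\rho(u)$ is regular, so Theorem~\ref{theorem:relative-springer-isomorphism} applies. If you do want existence for non-regular $X$ (which the paper also asserts in the paragraph preceding the lemma), the missing ingredient is that $Z_{G_0}(u)$ is commutative (Corollary~\ref{corollary:commutative-centralizer}), hence acts trivially on its own Lie algebra, so the orbit map descends to $\sU_{G_0,\rm{reg}}$; extension to $\sU_{G_0}$ then proceeds as in Section~\ref{subsection:split} after reducing to a normal base.
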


Now let $\lambda: G_0 \to G_0$ be the order $2$ $S$-isomorphism preserving the chosen pinning, fixing $\alpha_1$, and sending $\alpha_3$ to $\alpha_4$. Further, let $\mu: G_0 \to G_0$ be the order $3$ $S$-isomorphism preserving the chosen pinning and sending $\alpha_1$ to $\alpha_3$ (and $\alpha_3$ to $\alpha_4$). Using the commutation relations listed above, a straightforward calculation shows
\begin{align}
    &\lambda(E_1) = E_1, \, \lambda(E_2) = E_3, \, \lambda(E_3) = E_2, \, \lambda(E_4) = E_4 \label{equation:lie-algebra-d4-1}\\
    &\mu(E_1) = E_1 - \frac{1}{2}E_3, \, \mu(E_2) = -E_3, \, \mu(E_3) = E_2 - E_3, \, \mu(E_4) = E_4. \label{equation:lie-algebra-d4-2}
\end{align}

Finally we are ready to prove the existence of a Springer isomorphism when $S$ is affine with $2$ invertible and $G$ is simple of type $\mathrm{D}_4$. As in the previous case, a twisting argument allows us to reduce to the case that $G$ is quasi-split. In this case, there is a finite Galois morphism $S' \to S$ (corresponding to an open and closed subscheme of the Dynkin scheme as in \cite[Exp.\ XXIV, 3.3]{SGA3III}) such that $G_{S'}$ is split. Let $G_0$ be the split form of $G$ over $S$, and fix the pinning $(B_0, T_0, \{X_\alpha\})$ as described above. Choose an $S'$-isomorphism $\vp: G_{S'} \to G_{0, S'}$ sending $(B_0, T_0, \{X_\alpha\})$ to a quasi-pinning of $G_{0, S'}$ defined over $S'$. For any $\sigma \in \Aut(S'/S)$, we get an $S'$-automorphism
\[
\psi'_{\sigma}: G_{0, S'} \xrightarrow[]{1 \times \sigma^{-1}} G_{0, S'} \xrightarrow[]{\vp} G_{S'} \xrightarrow[]{1 \times \sigma} G_{S'} \xrightarrow[]{\vp^{-1}} G_{0, S'}
\]
of $G_{0, S'}$. Since $\psi'_{\sigma}$ preserves a pinning of $G_0$ coming from $S$, it follows that $\psi'_{\sigma}$ descends to an $S$-automorphism $\psi_{\sigma}: G_0 \to G_0$. In particular, it follows from a simple calculation that $\psi_\sigma\psi_\tau = \psi_{\sigma\tau}$ for all $\sigma, \tau \in \Aut(S'/S)$. By Galois descent (as in the type $\mathrm{A}_n$ case), Springer isomorphisms $\rho: \sU_G \to \sN_G$ correspond to Springer isomorphisms $\rho': \sU_{G_{0, S'}} \to \sN_{G_{0, S'}}$ such that $\rho' \circ \psi'_{\sigma} \circ (1 \times \sigma) = \psi'_{\sigma} \circ (1 \times \sigma) \circ \rho'$ for all $\sigma \in \Aut(S'/S)$. By splitting up $S$ into subschemes which are open and closed as in the type $\mathrm{A}_n$ case, we may and do assume that $S' \to S$ is of constant degree, which we may assume is either $1$, $2$, $3$, or $6$ (corresponding to the size of the orbit of $\alpha_1$); in the degree $6$ case, $\Aut(S'/S) \cong S_3$. Moreover, we may and do assume that for each $\sigma \in \Aut(S'/S)$, the $S$-morphism $f: \Aut(S'/S) \to \Aut(G_0, B_0, T_0, \{X_\alpha\})$ between constant $S$-group schemes is constant.\smallskip

If $\rho'(u) = a_1E_1 + a_2E_2 + a_3E_3 + a_4E_4$ as in Lemma~\ref{lemma:springer-type-d4}, then because $u$ is stable under $\Aut(S'/S)$ and $\Aut(G_0, B_0, T_0, \{X_\alpha\})$, we require
\begin{align}\label{equation:springer-d4}
a_1 E_1 + a_2 E_2 + a_3 E_3 + a_4 E_4 = \sigma^*(a_1)f(E_1) + \sigma^*(a_2)f(E_2) + \sigma^*(a_3)f(E_3) + \sigma^*(a_4)f(E_4)
\end{align}
for all $\sigma \in \Aut(S'/S)$. We now split into cases for $f$. Note that if $f$ is trivial then $G$ is $S$-split, so the result follows from Lemma~\ref{lemma:springer-iso-invariant}.\smallskip

Now suppose that $\Aut(S'/S) = \bZ/2$ and $f$ is injective. Let $\tau \in \Aut(S'/S)$ be the nontrivial automorphism; by symmetry, we may and do assume that $f(\tau) = \lambda$ as above. Then in (\ref{equation:springer-d4}) we may apply (\ref{equation:lie-algebra-d4-1}) to obtain the relations $a_1 = \tau^*(a_1)$, $a_2 = \tau^*(a_3)$, $a_3 = \tau^*(a_2)$, and $a_4 = \tau^*(a_4)$. Thus we require $a_1, a_4 \in \Gamma(S, \sO_S)$ with $a_1 \in \Gamma(S, \sO_S^\times)$, and we may choose $a_2 \in \Gamma(S', \sO_{S'})$ arbitrarily and then let $a_3 = \tau^*(a_2)$.\smallskip

At this point we will finally use the assumption that $S$ is affine, say $S = \Spec R$ and $S' = \Spec R'$. Suppose that $\Aut(S'/S) = \bZ/3$ and $f$ is injective. Let $\sigma \in \Aut(S'/S)$ be a nontrivial automorphism such that $f(\sigma) = \mu$ as above. Then in (\ref{equation:springer-d4}) we may apply (\ref{equation:lie-algebra-d4-2}) to obtain the relations $a_1 = \sigma^*(a_1)$, $a_2 = \sigma^*(a_3)$, $a_3 = -\frac{1}{2}\sigma^*(a_1) - \sigma^*(a_2) - \sigma^*(a_3)$, and $a_4 = \sigma^*(a_4)$. Thus we require $a_1, a_4 \in R$ with $a_1 \in R^\times$, and since $a_2 = \sigma^*(a_3)$ we need only choose $a_3 \in R'$ such that $a_3 + \sigma^*(a_3) + (\sigma^2)^*(a_3) = -\frac{1}{2}a_1$. Since the $R$-linear trace map $R' \to R$ is surjective, it follows that we may choose $a_3$ satisfying these relations.\smallskip

Finally, suppose that $\Aut(S'/S) = S_3$ and $f$ is injective. Let $\sigma, \tau \in \Aut(S'/S)$ be automorphisms such that $f(\sigma) = \mu$ and $f(\tau) = \lambda$ as in (\ref{equation:lie-algebra-d4-1}) and (\ref{equation:lie-algebra-d4-2}), so in (\ref{equation:springer-d4}) we find
\begin{align*}
&a_1 = \tau^*(a_1) = \sigma^*(a_1) \\
&a_2 = \tau^*(a_3) = \sigma^*(a_3) \\
&a_3 = \tau^*(a_2) = -\frac{1}{2}\sigma^*(a_1) - \sigma^*(a_2) - \sigma^*(a_3) \\
&a_4 = \tau^*(a_4) = \sigma^*(a_4).
\end{align*}
Thus we need to choose $a_1, a_4 \in R$ with $a_1 \in R^\times$, and moreover $a_3$ must lie in $(R')^{\tau^*\sigma^*}$ satisfying $a_3 + \sigma^*(a_3) + (\sigma^2)^*(a_3) = -\frac{1}{2}a_1$. Again, we can find such $a_3$ by surjectivity of the trace map $(R')^{\tau^*\sigma^*} \to R$, and then we can let $a_2 = \sigma^*(a_3) = \tau^*(a_3)$.

\begin{remark}\label{remark:affine-necessary-type-d4}
As in Remark~\ref{remark:affine-necessary-type-a}, affineness is necessary in type $\mathrm{D}_4$. Indeed, if $S' \to S$ is either a $\bZ/3$-torsor or an $S_3$-torsor, then there is a corresponding quasi-split outer form $G$ over $S$ of $\Spin(8)$. If the trace map $\Gamma(S', \sO_{S'}) \to \Gamma(S, \sO_S)$ is trivial, then the proof above shows that there cannot be a Springer isomorphism $\sU_G \to \sN_G$. If $S'$ and $S$ are proper integral schemes over a field of characteristic $3$, then this occurs; on the other hand, if $3$ is invertible in $S$ then this cannot occur.
\end{remark}

\bibliography{bibliography}

\end{document}